\def\source{\operatorname{source}}
\def\target{\operatorname{target}}
\def\Rect{\operatorname{Rect}}
\font\co=lcircle10
\def\jr{\ \smash{\raise2pt\hbox{\co \rlap{\rlap{\char'005} \char'007}}
               \raise6pt\hbox{\rlap{\vrule height6.5pt}}
                \raise2pt\hbox{\rlap{\hskip4pt \vrule
          height0.4pt depth0pt
                width7.7pt}}}}
\def\textcross{\ \smash{\lower4pt\hbox{\rlap{\hskip4.15pt\vrule height14pt}}
                \raise2.8pt\hbox{\rlap{\hskip-3pt \vrule height.4pt depth0pt
                                width14.7pt}}}\hskip12.7pt}
\def\textelbow{\ \hskip.1pt\smash{\raise2.75pt%
                \hbox{\co \hskip 4.15pt\rlap{\rlap{\char'004} \char'006}
                \lower6.8pt\rlap{\vrule height3.5pt}
                \raise3.6pt\rlap{\vrule height3.5pt}}
                \raise2.8pt\hbox{%
                  \rlap{\hskip-7.15pt \vrule height.4pt depth0pt
width3.5pt}%
                  \rlap{\hskip4.05pt \vrule height.4pt depth0pt
width3.5pt}}}
                \hskip8.7pt}
\newtheorem{thm}{Theorem}[section]
\newtheorem{theorem}[thm]{Theorem}
\newtheorem{cor}[thm]{Corollary}
\newtheorem{corollary}[thm]{Corollary}
\newtheorem{conjecture}[thm]{Conjecture}
\newtheorem{lem}[thm]{Lemma}
\newtheorem{lemma}[thm]{Lemma}
\newtheorem{prop}[thm]{Proposition}
\newtheorem{proposition}[thm]{Proposition}
\theoremstyle{definition}
\newtheorem{defn}[thm]{Definition}
\newtheorem{example}[thm]{Example}
\newtheorem{definition}[thm]{Definition}
\newtheorem{rem}[thm]{Remark}  
\newtheorem{remark}[thm]{Remark}  
\numberwithin{equation}{section}
\newcommand{\abs}[1]{\left\vert#1\right\vert}
\newcommand{\set}[1]{\left\{#1\right\}}
\newcommand{\CC}{\mathbb C}
\newcommand{\C}{\mathbb{C}}
\newcommand{\Z}{\mathbb{Z}}
\newcommand{\x}{\times}
\newcommand{\xx}{\widetilde{\mathbf{x}}}
\newcommand{\A}{\mathcal{A}}
\newcommand{\Gl}{\operatorname{GL}}
\newcommand{\Fl}{\operatorname{Fl}}
\def\Fcal{\mathcal{F}}
\def\TT{\mathbb{T}}
\def\Xcal{\mathcal{X}}
\def\Acal{\mathcal{A}}
\newcommand{\overunder}[2]{
\!\begin{array}{c}
\scriptstyle{#1}\\[-.1in]
-\!\!\!-\!\!\!-\\[-.1in]
\scriptstyle{#2}
\end{array}
\!
}
\newcommand{\Le}{\rotatebox[origin=c]{180}{$\Gamma$}} 
\newcommand{\pathne}[1]{L^\nearrow_{#1}} 
\newcommand{\pathsw}[1]{L^\swarrow_{#1}}
\newcommand{\vertne}[1]{V^\nearrow(#1)} 
\newcommand{\vertsw}[1]{V^\swarrow(#1)}
\newcommand{\partne}[1]{\lambda^\nearrow(#1)} 
\newcommand{\partsw}[1]{\lambda^\swarrow(#1)}
\newcommand{\permne}[1]{u^\nearrow_{#1}} 
\newcommand{\ppermsw}[1]{\pi^\swarrow_{#1}}
\begin{document}
\title
{Cluster structures in Schubert varieties in the Grassmannian}
\date{\today}
\author{K. Serhiyenko}%
\address{Department of Mathematics,
            University of California at Berkeley,
            Berkeley, CA
            USA
}%
\email{khrystyna.serhiyenko@berkeley.edu}%
\author{M. Sherman-Bennett}
\address{Department of Mathematics,
            University of California at Berkeley,
            Berkeley, CA
            USA
}%
\email{m\_shermanbennett@berkeley.edu}%
\author{L. Williams}%
\address{Department of Mathematics,
            Harvard University,
            Cambridge, MA
            USA
}%
\email{williams@math.harvard.edu}%

\subjclass[2000]{14N35, 14M17, 57T15} \keywords{Grassmannians, cluster algebras}

\thanks{}
\subjclass{}%
\date{\today}

\begin{abstract}
In this article we explain how the 
	coordinate ring of each (open) Schubert variety in the Grassmannian can be identified with a 
	cluster algebra, whose combinatorial structure is encoded using 
(target labelings of)
Postnikov's \emph{plabic graphs}.  This result generalizes a theorem of 
 Scott \cite{Scott} for the Grassmannian,
and proves a folklore conjecture for Schubert varieties that has been 
believed by experts since \cite{Scott}, 
though the statement was not formally written down
until M\"uller-Speyer explicitly conjectured it  \cite{MullerSpeyer}.
To prove this conjecture we use a result of Leclerc \cite{Leclerc}, who used the module category of the preprojective algebra  
to prove that coordinate rings of 
many Richardson varieties in the complete flag variety 
can be identified with cluster algebras. 
	Our proof also uses a construction of  Karpman \cite{Karpman} 
to 
build plabic graphs associated to reduced expressions.
We additionally generalize our result to the setting of 
 skew Schubert varieties; 
the latter result 
 uses \emph{generalized} plabic graphs, i.e. plabic graphs
whose boundary vertices need not be labeled in cyclic order.
\end{abstract}

\maketitle
\setcounter{tocdepth}{1}
\tableofcontents

\section{Introduction}\label{sec:intro}

The main goal of this paper is to show that the coordinate ring of 
(the affine cone over)
any (open) Schubert variety
of the Grassmannian (embedded into projective space via the Pl\"ucker embedding)
 can be identified with a cluster algebra, 
 whose combinatorial structure is described explicitly in terms of 
plabic graphs.  
\emph{Cluster algebras} are a class of commutative rings
which were  introduced by Fomin and Zelevinsky \cite{ca1}; 
they are connected to 
many fields of mathematics including Teichm\"uller theory and quiver representations,
and their generators satisfy many nice properties, including the \emph{Laurent phenomenon}
\cite{ca1} 
and \emph{positivity theorem} \cite{LeeSchiffler, GHKK}. 
\emph{Plabic graphs} are certain planar bicolored
graphs which were introduced by Postnikov \cite{Postnikov}; plabic graphs (or rather an 
equivalent object, namely alternating strand diagrams) 
were subsequently used by 
Scott \cite{Scott} to show that the coordinate ring of the affine cone over the 
Grassmannian in its Pl\"ucker embedding \emph{admits a cluster algebra structure}, i.e. it can be identified with a cluster algebra.

There is a natural plabic graph 
generalization of Scott's construction which experts have believed
for some time should give a cluster structure for (open) Schubert varieties 
(and more generally, open
positroid varieties).  
This construction was stated explicitly as a conjecture in 
a recent paper of M\"uller-Speyer \cite{MullerSpeyer0}, who 
 provided some 
evidence in \cite{MullerSpeyer}.  The conjecture 
can be stated roughly as follows.
\begin{conjecture}\label{conj:vague}
	Let $G$ be a reduced plabic graph corresponding to an (open) Schubert (or more generally an open positroid) variety.
	Then the target labeling of the faces of $G$ (which we identify
	with a collection of Pl\"ucker coordinates)
together with the dual quiver of $G$ gives rise to a seed for a cluster
	structure on the coordinate ring of the open Schubert (or positroid) 
	variety.
\end{conjecture}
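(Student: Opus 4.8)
The plan is to derive \Cref{conj:vague}, in the case of (skew) Schubert varieties, from Leclerc's theorem \cite{Leclerc} on cluster structures for open Richardson varieties, by constructing an explicit dictionary between Leclerc's seeds and the plabic-graph seeds. First I would pass to the flag-variety setting: every open (skew) Schubert variety in $\operatorname{Gr}(k,n)$ is an open positroid variety, and every open positroid variety in the Grassmannian is isomorphic to an open Richardson variety $\mathcal{R}_{v,w}$ in the full flag variety $\Fl_n$. I would pin down which pairs $(v,w)$ arise this way, and check that they lie in the range where Leclerc proves that $\C[\mathcal{R}_{v,w}]$ genuinely \emph{equals} a cluster algebra (and not merely contains one). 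At the same time I would make explicit the comparison between the homogeneous coordinate ring of the affine cone over the open (skew) Schubert variety, with its Pl\"ucker generators, and $\C[\mathcal{R}_{v,w}]$ with Leclerc's generalized-minor generators, carefully tracking the extra $\C^\times$-factor and the associated frozen variable coming from the cone direction.

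Next I would fix a reduced word $\mathbf{i}$ attached to the (skew) Schubert variety. On one side, $\mathbf{i}$ determines one of Leclerc's seeds: an initial cluster of modules $M_{[a,b]}$ over the preprojective algebra (equivalently, a family of generalized minors), together with a quiver read off from $\mathbf{i}$. On the other side, Karpman's construction \cite{Karpman} turns $\mathbf{i}$ into a reduced plabic graph $G(\mathbf{i})$ for the corresponding positroid, equipped with the target labeling of its faces and with its dual quiver. The core of the argument is to prove that, under the comparison of the previous paragraph, the target face labels of $G(\mathbf{i})$ are exactly Leclerc's cluster variables and the dual quiver of $G(\mathbf{i})$ agrees with Leclerc's quiver, up to the usual conventions on arrows between frozen vertices. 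I expect to argue this by induction along Karpman's bridge-by-bridge construction of $G(\mathbf{i})$: adding a bridge corresponds to appending a letter to $\mathbf{i}$, hence to one step of Leclerc's recursive construction of the modules $M_{[a,b]}$, and one then checks that the change in target face labels matches the change in generalized minors while the local change in the dual quiver matches the local change in Leclerc's quiver. Granting this, Leclerc's theorem shows that the plabic-graph seed generates $\C[\mathcal{R}_{v,w}]$, hence — through the coordinate-ring comparison above — the homogeneous coordinate ring of the open (skew) Schubert variety; and since any two reduced plabic graphs for a fixed positroid are connected by moves under which the target labeling and dual quiver transform by cluster mutation, the statement then holds for all of them.

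The main obstacle will be the inductive matching just described: establishing a precise dictionary between Leclerc's preprojective-algebra module category and its cluster characters on one side, and Postnikov's plabic-graph combinatorics on the other, with Karpman's reduced-word construction as the bridge. The delicate point is to show that Leclerc's generalized minors translate, under the flag-to-Grassmannian comparison, into precisely the \emph{target} face labels of $G(\mathbf{i})$ — not the source labels, and not a twisted variant of them — and that the frozen parts of the two seeds, including arrows between frozen vertices, match exactly; the two frameworks are a priori unrelated, and the identification of seeds must be exact in order for the conclusion to follow. For skew Schubert varieties the same scheme applies, except that Karpman's construction now outputs \emph{generalized} plabic graphs whose boundary vertices need not be labeled cyclically, so one must additionally verify that the target labeling and dual quiver of these generalized graphs still land on a Leclerc seed.
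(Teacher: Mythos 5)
Your outline matches the paper's strategy at the top level: pass to the open Richardson variety, invoke Leclerc's theorem that $\C[\mathcal{R}_{v,w}]$ is a cluster algebra precisely when $w=xv$ is length-additive, use Karpman's bridge-graph construction to turn a reduced word into a plabic graph, and then show that the Karpman seed and the Leclerc seed coincide before propagating to all plabic graphs via move-equivalence. The one place your plan diverges from the paper, and where I'd press you, is in \emph{how} the two seeds get identified. You propose a bridge-by-bridge induction: append a letter to $\mathbf{i}$, add a bridge, add a module, and match the local changes. On the plabic-graph side this is exactly what the paper does, because adding a boundary bridge is genuinely local (it creates one new face and does not disturb the arrows among the old ones). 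On the Leclerc side it is \emph{not} a priori local: the quiver is the endomorphism quiver of a cluster-tilting object $U_{v,\mathbf{w}}$, and an arrow records an \emph{irreducible} morphism in $\operatorname{add} U_{v,\mathbf{w}}$. Enlarging the word enlarges the cluster-tilting object, and a morphism $U_i\to U_j$ that was irreducible in the smaller category can become reducible once the new summand $U_{t+1}$ is available to factor through. So the "local change to Leclerc's quiver" is not obviously well-defined, and your induction hypothesis would have to carry enough information about the module structure to rule this out.

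The paper sidesteps this by not inducting at all on the Leclerc side. It fixes one distinguished (standard/columnar) reduced word, computes the rectangles seed explicitly on both sides, and only then compares: \cref{cor:facelabels} and \cref{cor:dualquiver} compute the Karpman side by bridge induction, while \cref{thm-module-structure} gives a closed-form description of every $U_j$ as a region in a diagram, and \cref{thm:morphisms} (via \cref{lem:morphismsbasecase} for $v=w_K$ plus the category equivalence of \cref{morphism:ind}) determines all irreducible morphisms in one stroke rather than layer by layer. You could likely rescue your induction by proving, as an auxiliary lemma, that for the bridge order coming from a columnar word the newly-added summand never destroys an existing irreducible morphism; but note that this auxiliary lemma would require roughly the same explicit understanding of the modules that the paper develops anyway, so you would not save much work. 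One more thing worth making explicit in your write-up: Karpman's construction is run for the pair $(w_K,\mathbf{xw_K})$ and only afterwards relabeled by $v^{-1}$ (this is where the generalized plabic graphs enter for skew Schubert varieties), and the dimension mismatch between $\mathcal{R}_{v,w}$ and the affine cone $\widehat{\pi_k(\mathcal{R}_{v,w})}$ is handled by normalizing $\Delta_{v^{-1}([k])}=1$, i.e.\ deleting the corresponding frozen vertex rather than treating the cone variable as an extra frozen cluster variable.
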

Meanwhile, Leclerc \cite{Leclerc} constructed a subcategory $\mathcal{C}_{v,w}$ of the module category of the preprojective algebra, that has a cluster structure,
to show that the coordinate ring of 
each open Richardson variety $\mathcal{R}_{v,w}$ of the 
complete flag variety contains a subalgebra which is a cluster algebra; when 
$w$ has a factorization of the form $w=xv$ with 
$\ell(w) = \ell(x)+\ell(v)$,  he showed that 
this subalgebra coincides with the 
coordinate ring.  
Because open Schubert varieties are isomorphic to open Richardson varieties with 
the above property,
Leclerc's result implies that their coordinate rings admit a cluster structure.
However, Leclerc's description of the cluster structure is 
very different from the 
plabic graph description and is far from explicit:
e.g. his cluster quiver is defined in terms of 
morphisms of modules of the preprojective algebra. 

In this paper we prove 
\cref{conj:vague} for Schubert varieties by relating 
Leclerc's cluster structure to the conjectural one coming from plabic graphs.
We also generalize our result to construct cluster structures in skew Schubert varieties; 
 interestingly, these cluster structures for skew Schubert varieties
 depart from the one in 
\cref{conj:vague}, since they 
use \emph{generalized} plabic graphs (with 
boundary vertices which are not longer cyclically labeled).

Once we have proved that the coordinate rings of (open) Schubert and skew Schubert
varieties have cluster structures, we obtain a number of results ``for free''
 from the cluster theory, including the Laurent phenomenon
and the Positivity theorem for cluster variables.
As a consequence of our results we also obtain many combinatorially 
explicit cluster seeds for each (open) Schubert and skew Schubert
variety.  
Note that (open) Schubert varieties provide  examples
of cluster structures of all the finite type simply-laced cluster types ($ADE$), 
see 
\cref{sec:conclusion}.  
Combining our main results with 
\cite[Theorem 3.3]{MullerSpeyer0} and \cite{Muller}, we find that the coordinate rings of 
(open) Schubert and skew Schubert varieties (viewed as cluster algebras) are \emph{locally acyclic},
which implies that each one is finitely generated, normal, locally a complete
intersection, and equal to its own upper cluster algebra.
Combining our result with \cite[Theorem 1.2]{FordSer}, we find that the quivers giving 
rise to the cluster structures for Schubert and skew Schubert varieties admit
\emph{green-to-red sequences}, which by \cite{GHKK} implies that the cluster algebras have
\emph{Enough Global Monomials} and hence each coordinate ring has a canonical basis of 
theta functions, parameterized by the lattice of $g$-vectors.
Finally we obtain applications to the structure of indecomposable summands of cluster-tilting modules in $\mathcal{C}_{v,w}$ 
and the morphisms between them.

\subsection{Notation for the flag variety}

Let $\Gl_n$ denote the general linear group, $B$ the Borel subgroup of 
lower triangular matrices, $B^+$ the opposite Borel subgroup of 
upper triangular matrices, and 
$W=S_n$ the Weyl group (which is in this case the symmetric group on $n$ letters).
$W$ is generated by the simple reflections $s_i$ for $1 \leq i \leq n-1$, where 
$s_i$ is the transposition exchanging $i$ and $i+1$, and it contains 
a longest element, which we denote by $w_0$, with $\ell(w_0) = {n \choose 2}$.
The \emph{complete flag variety} $\Fl_n$ is the homogeneous
space $B\setminus \Gl_n$. 
Concretely, each element $g$ of $\Gl_n$ gives rise to a flag of 
subspaces $\{V_1 \subset V_2 \subset \dots \subset V_n\}$, 
where $V_i$ denotes the span of the top $i$ rows of $g$.
The action of $B$ on the left preserves the flag, so we can identify 
$\Fl_n$ with 
the set of \emph{flags} $\{V_1 \subset V_2 \subset \dots \subset V_n\}$
where $\dim V_i = i$.

Let $\pi:\Gl_n \to \Fl_n$  
denote the natural projection 
$\pi(g):= Bg$.  The Bruhat decomposition 
$$\Gl_n = \bigsqcup_{w\in W} BwB$$
projects to the Schubert decomposition 
$$\Fl_n = \bigsqcup_{w\in W} C_w$$
where $C_w = \pi(BwB)$ is the \emph{Schubert cell} associated to $w$, 
 isomorphic to $\C^{\ell(w)}$.
We also have the Birkhoff decomposition 
$$\Gl_n = \bigsqcup_{w\in W} BwB^+,$$ which projects to the 
{opposite} Schubert decomposition 
$$\Fl_n = \bigsqcup_{w\in W} C^w$$
where $C^w = \pi(B w B^+)$ is the \emph{opposite Schubert cell} 
associated to $w$, isomorphic to $\C^{\ell(w_0) - \ell(w)}$.

The intersection $$\mathcal{R}_{v,w}:=C^v \cap C_w$$
has been considered by Kazhdan and Lusztig \cite{KL} in relation to 
Kazhdan-Lusztig polynomials.  $\mathcal{R}_{v,w}$ is 
nonempty only if $v \leq w$ in the Bruhat order of $W$, 
and it is then a smooth irreducible locally closed subset of $C_w$ 
of dimension $\ell(w)-\ell(v)$.  Sometimes $\mathcal{R}_{v,w}$
is called an \emph{open Richardson variety} \cite{KLS} because its closure 
is a \emph{Richardson variety} \cite{Rich}.
We have a stratification of the complete flag variety
$$\Fl_n = \bigsqcup_{v \leq w} \mathcal{R}_{v,w}.$$



\subsection{Notation for 
 the Grassmannian}

Fix $1 < k < n$.
The parabolic subgroup 
$W_K = \langle s_1,\dots,s_{k-1}\rangle \times \langle s_{k+1}, s_{k+2},\dots,
s_{n-1} \rangle < W$ 
gives rise to a parabolic subgroup $P_K$ in $\Gl_n$, namely
$P_K = \bigsqcup_{w \in W_K} B \dot{w} B$, where $\dot{w}$ is a matrix representative for $w$
in $\Gl_n$.
$W_K$ contains a longest element, which we denote by  $w_K$,
with $\ell(w_K) = {k \choose 2} + {n-k \choose 2}$. 

The \emph{Grassmannian} $Gr_{k,n}$ 
is the homogeneous space 
$P_K\setminus \Gl_n$.
We can think of the 
{Grassmannian} $Gr_{k,n} = 
P_K\setminus \Gl_n$ more concretely as 
the set of all $k$-planes in an $n$-dimensional
vector space $\CC^n$. 
An element of
$Gr_{k,n}$ can be viewed as a full rank 
$k\times n$ matrix of rank $k$, modulo left
multiplication by invertible $k\times k$ matrices.  
That is, two $k\times n$ matrices of rank $k$ represent the same point in $Gr_{k,n}$ if and only if they
can be obtained from each other by invertible row operations.

For integers $a, b$, let $[a, b]$ denote $\{a, a+1, \dots, b-1, b\}$ if $a\leq b$ and the emoty set otherwise. We use the shorthand $[n]:=[1, n]$. Let $\binom{[n]}{k}$ the set of all $k$-element 
subsets of $[n]$. 

Given $V\in Gr_{k,n}$ represented by a $k\times n$ matrix $A$, for $I\in \binom{[n]}{k}$ we let $\Delta_I(V)$ be the maximal minor of $A$ located in the column set $I$. The $\Delta_I(V)$ do not depend on our choice of matrix $A$ (up to simultaneous rescaling by a nonzero constant), and are called the {\itshape Pl\"{u}cker coordinates} of $V$.
The Pl\"ucker coordinates give an embedding of $Gr_{k,n}$ into projective space of dimension
${n \choose k} - 1$.

We have the usual projection $\pi_k$ from the complete flag variety $\Fl_n$ to the 
Grassmannian $Gr_{k,n}$.
Let $W^K = W^K_{\min}$ and 
$W^K_{\max}$ denote the set of minimal- and
maximal-length coset representatives for $W_K \setminus W$; 
we also let
$^{K}W$ (or $^{K}_{\min}W$)
denote the set of 
minimal-length coset representatives for $W/W_K$. Such a permutation $\sigma \in S_n$ is called 
a \emph{Grassmannian permutation} of type $(k,n)$; it has the property that it has at most one descent, and when present, that descent must be in position $k$, i.e.
$\sigma(k)>\sigma(k+1)$.  

Rietsch studied the projections 
of the open Richardson varieties in the complete flag variety to 
partial flag varieties \cite{RietschThesis}.  In particular,
when  $v\in W^K_{\max}$
 (or when $w\in W^K_{\min}$), 
the projection $\pi_k$ is an isomorphism from $\mathcal{R}_{v,w}$ to 
 $\pi_k(\mathcal{R}_{v,w})$.  
We obtain a stratification 
$$Gr_{k,n}=\bigsqcup \pi_k(\mathcal{R}_{v,w})$$
where $(v,w)$ range over all $v\in W^K_{\max}$, $w\in W$, such that 
$v\leq w$.
 Following work of Postnikov \cite{Postnikov, KLS},
 the strata $\pi_k(\mathcal{R}_{v,w})$ are sometimes called 
 \emph{open positroid varieties}, while their closures
 are called \emph{positroid varieties}.

It follows from the definitions (see e.g. \cite[Section 6]{KLS}) that 
positroid varieties include Schubert and opposite Schubert varieties
in the Grassmannians, which we now define.

\begin{definition}
Let $I$ denote a $k$-element subset of $[n]$.
The \emph{Schubert cell} $\Omega_{I}$  is defined to be 
$$\Omega_{I} = \{A \in Gr_{k,n}\ \vert \ \text{the lexicographically minimal nonvanishing
Plucker coordinate of }A\text{ is }\Delta_{I}(A) \}.$$
	The \emph{Schubert variety} $X_{I}$ is defined to be the closure
	$\overline{\Omega_I}$ of $\Omega_I$.

	Meanwhile the \emph{opposite Schubert cell} $\Omega^{I}$  is defined to be 
$$\Omega^{I} = \{A \in Gr_{k,n}\ \vert \ \text{the lexicographically maximal 
nonvanishing
Plucker coordinate of }A\text{ is }\Delta_{I}(A) \}.$$
	The \emph{opposite Schubert variety} $X^{I}$ is defined to be the 
	closure $\overline{\Omega^I}$ of $\Omega^I$.
\end{definition}

It's easy to see 
that elements $v$ of $W^K_{\max}$ and elements $w$ of $W^K_{\min}$
are also in bijection with $k$-element subsets of $[n]$, which we
denote by $I(v)$ and $I(w)$, respectively.
When $w\in W^K_{\min}$, 
$\overline{\pi_k(\mathcal{R}_{e,w})}$  
is isomorphic to the  
\emph{opposite Schubert variety} $X^{I(w)}$ in the Grassmannian, which has dimension 
$\ell(w)$.  We therefore refer to 
${\pi_k(\mathcal{R}_{e,w})}$ as an \emph{open opposite Schubert variety}.
Similarly, when $v\in W^K_{\max}$,
$\overline{\pi_k(\mathcal{R}_{v,w_0})}$ is isomorphic to the 
\emph{Schubert variety} $X_{I(v)}$ in the Grassmannian, 
which has dimension 
$\ell(w_0)-\ell(v)$. 
We refer to ${\pi_k(\mathcal{R}_{v,w_0})}$ as an \emph{open Schubert variety}.
More generally, if $v\in W^K_{max}$ and $w\in W$ 
has a factorization of the form $w=xv$ which is \emph{length-additive}, i.e. where 
$\ell(w) = \ell(x)+\ell(v)$, then we refer to 
$\overline{\pi_k(\mathcal{R}_{v,w})}$ 
(respectively, ${\pi_k(\mathcal{R}_{v,w})}$)
as a \emph{skew Schubert variety} (respectively, open skew Schubert variety).
See \cref{sec:whichpositroids} for more discussion of skew Schubert varieties, 
including some justification for the terminology.


Let $\lambda$
 denote a Young diagram contained in a $k \times (n-k)$ rectangle.  
We can identify $\lambda$ with the lattice path $\pathsw{\lambda}$ in the rectangle taking steps west 
and south from the northeast corner of the rectangle to the southwest corner (where the $\swarrow$ indicates that the path``goes southwest").
If we label the steps of the lattice path from $1$ to $n$, then the labels of the 
south steps give a $k$-element subset of $[n]$ that we denote by $\vertsw{\lambda}$ (the ``vertical steps" of $\lambda$).
Conversely, each $k$-element subset $I$ of $[n]$ can be identified with  a 
Young diagram, which we denote by 
$\partsw{I}$. 
Since this gives a bijection between 
Young diagrams contained in a $k \times (n-k)$ rectangle 
and $k$-element subsets  of $[n]$, we also index Schubert and opposite 
Schubert cells and varieties by Young diagrams, denoting them 
$\Omega_{\lambda}$, $\Omega^{\lambda}$, 
$X_{\lambda}$, and $X^{\lambda}$, 
respectively.  The open Schubert and opposite Schubert varieties are denoted by 
$X_{\lambda}^\circ$, and $(X^{\lambda})^\circ$.
The dimension of $\Omega_{\lambda}$, $X_{\lambda}$, and $X_{\lambda}^{\circ}$ is $|\lambda|$, the number of boxes of 
$\lambda$, while the codimension of $\Omega^{\lambda}$, $X^{\lambda}$, and $(X^{\lambda})^\circ$ is $|\lambda|$.

\begin{remark}
Throughout this paper we will be primarily working with open Schubert (and skew-Schubert) varieties.  
The reader should be cautioned that 
we will mostly drop the adjective ``open'' from now on but will try to consistently use the notation 
$X_{\lambda}^\circ$ for clarity.
\end{remark}

We also associate with a Young diagram $\lambda$  
the \emph{Grassmannian permutation}
$\ppermsw{\lambda}$ of type $(n-k,n)$: 
in list notation, this permutation is obtained by first reading 
the labels of the horizontal steps of $\pathsw{\lambda}$, and then reading the labels of the 
vertical steps of $\pathsw{\lambda}$.
(Moreover any fixed points in positions $1,2,\dots, n-k$ are ``black"
and any fixed points in positions $n-k+1,\dots, n$ are ``white.")
Note that $\ell(\ppermsw{\lambda}) = |\lambda|$.

\begin{remark} ``Going northeast" along the lattice path $\pathsw{\lambda}$ gives rise to analogous bijections between Young diagrams in a $k \times (n-k)$ rectangle, $k$-element subsets of $n$, and Grassmannian permutations of type $(k, n)$. So we can define all the notations that we did before, 
	switching each $\swarrow$ to a $\nearrow$.  
	So a Young diagram $\lambda$ is identified with the lattice path $\pathne{\lambda}$ in the rectangle taking steps east and north from the southwest corner of the rectangle to the northeast corner. If we label the path with $1$ to $n$, the labels of the north steps give the $k$-element subset $\vertne{\lambda}.$ 
Similarly we define $\partne{I}$. 
\end{remark}


\subsection{The main result}

We now state the main result.  Note that 
the definitions of plabic graph and trip permutation can be found in 
\cref{sec:background}.


\begin{theorem}\label{thm:main}
Consider the Schubert variety $X_{\lambda}^\circ$ of $Gr_{k,n}$. 
Let $G$ be a reduced plabic graph (with boundary vertices
labeled clockwise from $1$ to $n$) with trip permutation 
$\ppermsw{\lambda}$.
Construct the dual quiver of $G$ and label its vertices by the Pl\"ucker coordinates
given by the target labeling of $G$, so as
to obtain a labeled seed 
$\Sigma^{\target}_G$ (see \cref{def:faces}, \cref{fig:plabic}, \cref{def:graphseed}).
Then the coordinate ring 
$\CC[\hat{X_{\lambda}^\circ}]$ 
of the (affine cone over) $X_{\lambda}^\circ$ coincides with the 
cluster algebra
$\mathcal{A}
(\Sigma^{\target}_G)$.
\end{theorem}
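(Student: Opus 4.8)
The plan is to deduce \cref{thm:main} from Leclerc's theorem on cluster structures for open Richardson varieties \cite{Leclerc} by matching Leclerc's seed with the plabic-graph seed $\Sigma^{\target}_G$. First I would reduce to the Richardson setting. Let $v\in W^K_{\max}$ be the element with $I(v)=\vertsw{\lambda}$; then, by the result of Rietsch quoted above, $\pi_k$ restricts to an isomorphism $\mathcal{R}_{v,w_0}\xrightarrow{\ \sim\ }X_{\lambda}^\circ$, and $w_0=xv$ with $x=w_0v^{-1}$ is a length-additive factorization (automatic since $w_0$ is the longest element), so Leclerc \cite{Leclerc} applies and gives $\CC[\mathcal{R}_{v,w_0}]$ the structure of a cluster algebra with an initial seed $\Sigma^{L}_{\mathbf i}$ built from a reduced word $\mathbf i$. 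A routine comparison of homogeneous coordinate rings — using that the frozen Pl\"ucker coordinate $\Delta_{I(v)}$ is nowhere vanishing on $X_{\lambda}^\circ$ — identifies $\CC[\hat{X_{\lambda}^\circ}]$ with $\CC[\mathcal{R}_{v,w_0}]$, and the two cluster algebras with one another, up to this distinguished invertible frozen variable. So it suffices to show that $\Sigma^{\target}_G$ and $\Sigma^{L}_{\mathbf i}$ generate the same cluster algebra inside the common ambient field.

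Second, since $\mathcal{A}(\Sigma^{\target}_G)$ depends on $G$ only through its mutation class, and since any two reduced plabic graphs with decorated trip permutation $\ppermsw{\lambda}$ are related by square moves (plus moves that do not alter the seed) — under which, by the three-term Pl\"ucker relations, the target labeling transforms by cluster mutation, cf.\ \cite{Scott,MullerSpeyer} — it is enough to verify the theorem for a single convenient $G$; likewise Leclerc's seeds attached to different reduced words are mutation-equivalent. I would therefore fix a reduced word $\mathbf i$ for $w_0$ adapted to the factorization $w_0=xv$ (carrying a distinguished subexpression for $v$), feed it into Karpman's construction \cite{Karpman} to obtain a reduced plabic graph $G_{\mathbf i}$, boundary labeled clockwise by $1,\dots,n$, with trip permutation $\ppermsw{\lambda}$, and then prove the seed identification $\Sigma^{\target}_{G_{\mathbf i}}=\Sigma^{L}_{\mathbf i}$ directly. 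This has two parts: (i) a bijection between the faces of $G_{\mathbf i}$ and the data indexing Leclerc's initial cluster and coefficient variables, under which the target Pl\"ucker label of a face equals the corresponding Leclerc variable — here one uses that, for the Grassmannian pair $(v,w_0)$, Leclerc's generalized minors restrict to honest Pl\"ucker coordinates on $\mathcal{R}_{v,w_0}$; and (ii) an arrow-by-arrow identification of the dual quiver of $G_{\mathbf i}$ with Leclerc's quiver. For (ii) I would not argue from the homological definition of Leclerc's quiver (via $\operatorname{Ext}^1$ in the module category $\mathcal{C}_{v,w}$ of the preprojective algebra) but instead use the combinatorial description of that quiver in terms of $\mathbf i$ — the Richardson analogue, due to Leclerc, of the Geiss--Leclerc--Schr\"oer and Buan--Iyama--Reiten--Scott descriptions for unipotent cells — and compare it with the dual plabic quiver of Karpman's graph. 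A likely additional ingredient is the M\"uller-Speyer twist: if Leclerc's labeling matches the \emph{source} labeling of $G_{\mathbf i}$ rather than the target one, I would compose with the twist automorphism of the open positroid variety (which intertwines source and target face labelings) before making the comparison.

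I expect the main obstacle to be step (ii) together with the bookkeeping in step (i): one must simultaneously line up Karpman's bridge decomposition of $G_{\mathbf i}$ and its face/quiver data, the chamber combinatorics of $\mathbf i$ with its distinguished subexpression, and Leclerc's (combinatorialized) seed, and this matching is delicate — especially the coloring and orientation conventions, the handling of frozen and boundary faces, and the source-versus-target discrepancy. Pinning down the combinatorial formula for Leclerc's quiver in the length-additive Richardson case, and checking that it is exactly the dual plabic quiver of Karpman's graph, is the technical heart of the argument. The remaining points — that Leclerc's inclusion is an equality of rings precisely because $w_0=xv$ is length-additive (so that \cref{thm:main} is a genuine coincidence and not merely an inclusion), and the affine-cone normalization — are comparatively routine, and the corollaries advertised in the introduction then follow formally.
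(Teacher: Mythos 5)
Your overall strategy is the one the paper actually pursues: reduce via Rietsch's projection to $\mathcal{R}_{v,w_0}$, invoke Leclerc's cluster structure on Richardson varieties with a reduced word $\mathbf{w}=\mathbf{x}\mathbf{v}$ adapted to the length-additive factorization, use Karpman's bridge construction to produce a distinguished plabic graph, match the Leclerc seed with the plabic seed, and propagate through mutations. So in outline you have it; but there is one step where you assume a tool that does not exist, and it is precisely the step you yourself flag as the technical heart.

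In step (ii) you propose to avoid the homological definition of Leclerc's quiver by instead using ``the combinatorial description of that quiver in terms of $\mathbf{i}$ --- the Richardson analogue, due to Leclerc, of the Geiss--Leclerc--Schr\"oer and Buan--Iyama--Reiten--Scott descriptions for unipotent cells.'' No such Richardson analogue is available. In Leclerc's paper the quiver $\Gamma_{U_{v,\mathbf{w}}}$ is defined as the endomorphism quiver of the cluster-tilting module, i.e.\ via counting irreducible morphisms in $\operatorname{add} U_{v,\mathbf{w}}$, and --- unlike the unipotent-cell case, where the modules are $\mathcal{V}_j=\operatorname{Soc}_{w_{(j)}^{-1}}(Q_{i_j})$ and the BIRS/GLS description applies --- here the summands are further truncated by $\mathcal{E}^\dagger_{v_{(j)}^{-1}}$, and there is no closed combinatorial formula for the resulting quiver in the literature. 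Determining which morphisms among the truncated modules are irreducible in $\operatorname{add} U_{v,\mathbf{w}}$ is genuinely hard (the paper gives an explicit example of a nonzero map that looks like an arrow but factors), and deriving an explicit description in the length-additive case is exactly what occupies Section~5 of the paper: one first computes the composition-factor diagrams of the $U_j$ as regions cut out by two lattice paths (\cref{thm-module-structure}), then classifies irreducible morphisms by a hands-on factorization argument in the base case $v=w_K$ (\cref{lem:morphismsbasecase}) and transfers to general $v$ via a category equivalence of Baumann--Kamnitzer--Tingley (\cref{morphism:ind}). Your plan treats this as bookkeeping; the paper treats it as the main theorem of that section, and rightly so.

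Two smaller remarks. First, when you say that square moves transform the target labeling by a three-term Pl\"ucker relation, this requires knowing that the four labels around a square face are of the form $Rab,\,Rbc,\,Rcd,\,Rad$ with $a,b,c,d$ in \emph{cyclic} order; that is not automatic for a generalized plabic graph whose boundary is not cyclically labeled, and the paper establishes it by first proving weak separation of target face labels (\cref{prop:rectweaklysep}, \cref{lem:cyclicsquares}). You should not elide this. Second, the source-versus-target discrepancy you anticipate is in fact handled in the paper not by the M\"uller--Speyer twist but by relabeling boundary vertices of the bridge graph by $v^{-1}$ (and optionally reflecting), under which source and target labelings interchange in a controlled way (\cref{lem:sameseed}); no twist automorphism is needed. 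These are corrections of mechanism rather than gaps, but the missing quiver formula in step (ii) is a genuine gap.
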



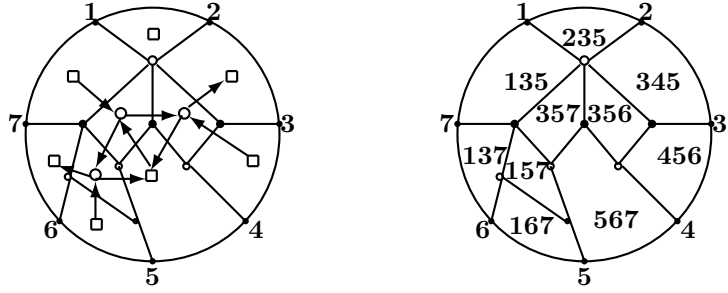
\begin{figure}[h]
\begin{center}
\setlength{\unitlength}{.8pt}
\begin{picture}(100,120)(-70,-70)
\thicklines
\put(15,30){\circle{4}}
\put(-18,0){\circle*{4}}
\put(15,0){\circle*{4}}
\put(47,0){\circle*{4}}
\put(13,30){\line(-31,23){25}}
\put(17,30){\line(31,23){25}}
\put(13,28){\line(-25,-23){30}}
\put(17,28){\line(25,-23){30}}
\put(15,28){\line(0,-1){29}}
\put(-18,0){\line(-1,0){28}}
\put(-18,0){\line(1,-1.1){17}}
\put(47,0){\line(-1,-1.2){16}}
\put(47,0){\line(1,0){28}}
\put(15,-5){\circle{120}}
\put(15,-65){\circle*{3}}
\put(-1,-20){\circle{3}}
\put(-12,48){\circle*{3}}
\put(31,-20){\circle{3}}
\put(42,48){\circle*{3}}
\put(-45,0){\circle*{3}}
\put(75,0){\circle*{3}}
\put(-25,-25){\circle{3}}
	\put(7,-46){\circle*{3}}
\put(-29,-46){\circle*{3}}
\put(59,-46){\circle*{3}}
\put(-29,-46){\line(5,20){5}}
	\put(-24,-25){\line(5,20){6.5}}
	\put(-24,-25){\line(10,-7){30}}
	\put(-1,-20){\line(5,-14){16}}
	\put(15,0){\line(-10,-12){16}}
	\put(15,0){\line(10,-12){16}}
\put(59,-46){\line(-20,20){26}}         
\put(-14,53){\makebox(0,0){$\mathbf{1}$}}
\put(44,53){\makebox(0,0){$\mathbf{2}$}}
\put(79,0){\makebox(0,0){$\mathbf{3}$}}
\put(64,-50){\makebox(0,0){$\mathbf{4}$}}
\put(15,-72){\makebox(0,0){$\mathbf{5}$}}
\put(-33,-50){\makebox(0,0){$\mathbf{6}$}}
\put(-50,0){\makebox(0,0){$\mathbf{7}$}}

\put(13,40){\line(0,1){5}}
\put(13,40){\line(1,0){5}}
\put(13,45){\line(1,0){5}}
\put(18,40){\line(0,1){5}}

\put(50,20){\line(0,1){5}}
\put(50,20){\line(1,0){5}}
\put(50,25){\line(1,0){5}}
\put(55,20){\line(0,1){5}}

\put(60,-20){\line(0,1){5}}
\put(60,-20){\line(1,0){5}}
\put(60,-15){\line(1,0){5}}
\put(65,-20){\line(0,1){5}}

\put(-34,-20){\line(0,1){5}}
\put(-34,-20){\line(1,0){5}}
\put(-34,-15){\line(1,0){5}}
\put(-29,-20){\line(0,1){5}}

\put(-14,-50){\line(0,1){5}}
\put(-14,-50){\line(1,0){5}}
\put(-14,-45){\line(1,0){5}}
\put(-9,-50){\line(0,1){5}}

\put(12,-27){\line(0,1){5}}
\put(12,-27){\line(1,0){5}}
\put(12,-22){\line(1,0){5}}
\put(17,-27){\line(0,1){5}}

\put(-25,20){\line(0,1){5}}
\put(-25,20){\line(1,0){5}}
\put(-25,25){\line(1,0){5}}
\put(-20,20){\line(0,1){5}}

\put(30,5){\circle{5}}
\put(0,5){\circle{5}}
\put(-12,-24){\circle{5}}
\put(32,8){{\vector(4,3){17}}}
\put(29,4){{\vector(-4,-7){15}}}
\put(2,4){{\vector(1,0){25}}}
\put(0,3){{\vector(-5,-10){12}}}
\put(-12,-26){{\vector(1,0){24}}}
\put(-14,-25){{\vector(-3,1){16}}}
\put(-12,-45){{\vector(0,1){18}}}
\put(14,-20){{\vector(-2,3){15}}}
\put(-20,20){{\vector(15,-13){17}}}
\put(60,-15){{\vector(-3,2){28}}}
\end{picture}
\qquad \qquad
\qquad \qquad
\begin{picture}(100,120)(-70,-70)
\thicklines
\put(15,30){\circle{4}}
\put(-18,0){\circle*{4}}
\put(15,0){\circle*{4}}
\put(47,0){\circle*{4}}
\put(13,30){\line(-31,23){25}}
\put(17,30){\line(31,23){25}}
\put(13,28){\line(-25,-23){30}}
\put(17,28){\line(25,-23){30}}
\put(15,28){\line(0,-1){29}}
\put(-18,0){\line(-1,0){28}}
\put(-18,0){\line(1,-1.1){17}}
\put(47,0){\line(-1,-1.2){16}}
\put(47,0){\line(1,0){28}}
\put(15,-5){\circle{120}}
\put(15,-65){\circle*{3}}
\put(-1,-20){\circle{3}}
\put(-12,48){\circle*{3}}
\put(31,-20){\circle{3}}
\put(42,48){\circle*{3}}
\put(-45,0){\circle*{3}}
\put(75,0){\circle*{3}}
\put(-25,-25){\circle{3}}
	\put(7,-46){\circle*{3}}
\put(-29,-46){\circle*{3}}
\put(59,-46){\circle*{3}}
\put(-29,-46){\line(5,20){5}}
	\put(-24,-25){\line(5,20){6.5}}
	\put(-24,-25){\line(10,-7){30}}
	\put(-1,-20){\line(5,-14){16}}
	\put(15,0){\line(-10,-12){16}}
	\put(15,0){\line(10,-12){16}}
\put(59,-46){\line(-20,20){26}}         
\put(-14,53){\makebox(0,0){$\mathbf{1}$}}
\put(44,53){\makebox(0,0){$\mathbf{2}$}}
\put(79,0){\makebox(0,0){$\mathbf{3}$}}
\put(64,-50){\makebox(0,0){$\mathbf{4}$}}
\put(15,-72){\makebox(0,0){$\mathbf{5}$}}
\put(-33,-50){\makebox(0,0){$\mathbf{6}$}}
\put(-50,0){\makebox(0,0){$\mathbf{7}$}}

\put(15,40){\makebox(0,2){$\mathbf{235}$}}
\put(15,15){\makebox(24,-20){$\mathbf{356}$}}
\put(15,15){\makebox(-25,-20){$\mathbf{357}$}}
\put(0,30){\makebox(-25,-20){$\mathbf{135}$}}
\put(50,20){\makebox(0,0){$\mathbf{345}$}}
\put(60,-15){\makebox(0,0){$\mathbf{456}$}}
\put(30,-45){\makebox(0,0){$\mathbf{567}$}}
\put(-10,-48){\makebox(0,0){$\mathbf{167}$}}
\put(-32,-15){\makebox(0,0){$\mathbf{137}$}}
\put(-12,-22){\makebox(0,0){$\mathbf{157}$}}
\end{picture}
\end{center}
	\caption{A plabic graph $G$ for $Gr_{3,7}$ with trip permutation 
	$\ppermsw{\lambda}=(2,4,6,7,1,3,5)$, for $\lambda=(4, 3, 2)$, together with the dual quiver of $G$ and the face labeling given by  target labels.
	The associated Le-diagram is a Young diagram of shape $\lambda$ 
	which is filled with $+$'s.}
\label{fig:plabic}
\end{figure}

\cref{thm:main} can be rephrased as follows:
\begin{itemize}
	\item Each of the (in general, infinitely many) cluster variables in 
$\mathcal{A}
(\Sigma^{\target}_G)$
		is a regular function on 
		$\hat{X_{\lambda}^\circ}$. 
	\item The cluster variables in 
$\mathcal{A}
(\Sigma^{\target}_G)$
		generate the ring 
		$\CC[\hat{X_{\lambda}^\circ}]$ 
		of regular functions on 
		$\hat{X_{\lambda}^\circ}$. 
\end{itemize}

We actually prove something a bit more general than \cref{thm:main};
we prove the following.

\begin{theorem}\label{thm:main2}
Consider the skew Schubert variety 
	${\pi_k(\mathcal{R}_{v,w})}$,
where $v\in W^K_{max}$ and 
$w$ has a length-additive factorization $w=xv$.
Let $G$ be a reduced plabic graph
(with boundary vertices labeled clockwise from $1$ to $n$) with 
trip permutation $vw^{-1} = x^{-1}$, and such that boundary lollipops
are white if and only if they are in $[k]$. 
	Apply $v^{-1}$ to the boundary vertices of $G$, obtaining
	the relabeled graph $v^{-1}(G)$, and apply the target labeling to 
	obtain the labeled seed
	$\Sigma^{\target}_{v^{-1}(G)}$. 
Then the coordinate
ring 	
	$\CC[\widehat{\pi_k(\mathcal{R}_{v,w})}]$
	of the (affine cone over) the 
skew Schubert variety 
	${\pi_k(\mathcal{R}_{v,w})}$
coincides with the cluster algebra 
$\mathcal{A}
	(\Sigma^{\target}_{v^{-1}(G)})$.
\end{theorem}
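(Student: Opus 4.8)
The plan is to deduce \cref{thm:main2} by comparing two cluster structures: the one coming from Leclerc's subcategory $\mathcal{C}_{v,w}$ of preprojective algebra modules, and the conjectural plabic-graph seed $\Sigma^{\target}_{v^{-1}(G)}$. Since open skew Schubert varieties $\pi_k(\mathcal{R}_{v,w})$ with $w = xv$ length-additive are isomorphic (via $\pi_k$, which is an isomorphism on $\mathcal{R}_{v,w}$ when $v \in W^K_{\max}$) to open Richardson varieties of the same type in $\Fl_n$, Leclerc's theorem already tells us that $\CC[\widehat{\pi_k(\mathcal{R}_{v,w})}]$ is \emph{some} cluster algebra, with an initial seed built from a rigid module in $\mathcal{C}_{v,w}$. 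So the entire content of the theorem is: \emph{the plabic-graph seed $\Sigma^{\target}_{v^{-1}(G)}$ is mutation-equivalent to (in fact literally can be chosen to be) one of Leclerc's seeds}, once we match up the ground rings (both are localizations dictated by the frozen variables, so one must check the frozen Plücker coordinates on $\pi_k(\mathcal{R}_{v,w})$ correspond to Leclerc's coefficients).

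The key steps, in order, would be: (1) Reduce to the Richardson variety $\mathcal{R}_{v,w}$ in $\Fl_n$ and recall precisely which Richardson varieties Leclerc handles (those with $w = xv$, $\ell(w)=\ell(x)+\ell(v)$) — exactly our hypothesis — together with the structure of his initial cluster-tilting object. (2) Use Karpman's construction to produce, from a chosen reduced word for $x$ (equivalently for the relevant Grassmannian-type data), a reduced plabic graph $G$ with trip permutation $x^{-1} = vw^{-1}$ and the prescribed lollipop coloring; this is the combinatorial bridge between reduced expressions and plabic graphs. (3) Identify the target-labeled face Plücker coordinates of $v^{-1}(G)$ with Leclerc's cluster/coefficient variables. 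Here one interprets the target labels (which live on $\binom{[n]}{k}$) as the images under the isomorphism $\pi_k^{-1}$ of suitable generalized minors / flag minors on $\mathcal{R}_{v,w}$, and matches these with the $\delta$-functions Leclerc attaches to the indecomposable summands of his rigid module along the chosen reduced word. (4) Match the dual quiver of $v^{-1}(G)$ with Leclerc's $\Ext^1$-quiver — this should follow because both quivers can be read off combinatorially from the same reduced word (via the standard "wiring diagram / chamber" description on one side and the plabic-graph square-move combinatorics on the other), so one shows the two recipes produce the same quiver. (5) Conclude: once one seed of the plabic cluster structure equals one seed of Leclerc's, the two cluster algebras coincide, and Leclerc's equals $\CC[\widehat{\pi_k(\mathcal{R}_{v,w})}]$ under the length-additivity hypothesis; relabeling boundary vertices by $v^{-1}$ accounts for the twist needed to pass from the Richardson picture to the positroid/Grassmannian Plücker picture.

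The main obstacle I expect is Step (3)–(4): making the dictionary between Leclerc's module-theoretic seed and the plabic-graph seed completely explicit and checking it on the nose. Leclerc's cluster variables are defard via homological data in $\mathcal{C}_{v,w}$ (morphism spaces between preprojective-algebra modules), whereas the plabic side is purely combinatorial/geometric (target labels of faces). Bridging these requires either (a) identifying Leclerc's variables with concrete (twisted) flag minors and then recognizing those minors as the target-labeled Plückers — which needs a careful twist/untwist argument relating the Richardson coordinates to positroid coordinates, likely invoking the Muller–Speyer twist — or (b) showing both seeds arise from the same reduced word by independent combinatorial recipes and invoking a uniqueness statement. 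I would pursue route (a), since the twist map is precisely the tool that converts source labels to target labels on plabic graphs, and it is what lets Leclerc's "source-side" flag minors become our target Plücker coordinates. A secondary, more bookkeeping-heavy obstacle is handling the frozen variables and the choice of ground ring carefully: one must verify that the localization at the frozen target-labeled Plückers (the "boundary" faces of $v^{-1}(G)$) agrees with the localization implicit in Leclerc's cluster algebra, so that the equality of cluster algebras is an equality of subrings of the same field of fractions $\CC(\pi_k(\mathcal{R}_{v,w}))$ and not merely an abstract isomorphism.

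Finally, once \cref{thm:main2} is established, \cref{thm:main} follows immediately as the special case $w = w_0$ (so $x = w_0 v^{-1}$, automatically length-additive), where $v^{-1}(G)$ can be taken to already have boundary labels $1,\dots,n$ in clockwise order and trip permutation $\ppermsw{\lambda}$ — i.e. no relabeling is visible because the Schubert case is the one where $\pi_k(\mathcal{R}_{v,w_0})$ is cut out by vanishing of Plückers rather than by a genuinely non-cyclic positroid condition.
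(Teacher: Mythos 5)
Your high-level roadmap is the same as the paper's: construct one distinguished seed (the rectangles seed of \cref{sec:seed}) from a Karpman bridge graph and separately from Leclerc's cluster-tilting module attached to a standard reduced expression, match them variable-by-variable and arrow-by-arrow, and then pass from $\CC[\mathcal{R}_{v,w}]$ to the Grassmannian via the projection $\pi_k$. Your step (5), however, conceals a genuine gap. You write that ``once one seed of the plabic cluster structure equals one seed of Leclerc's, the two cluster algebras coincide,'' and stop there. That equality by itself only handles the single graph produced by Karpman's algorithm; the theorem is stated for \emph{every} reduced plabic graph $G$ with trip permutation $vw^{-1}$, and after relabeling by $v^{-1}$ these are \emph{generalized} plabic graphs. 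For ordinary plabic graphs the fact that a square move realizes an exchange relation is \cref{lem:mutG}, folklore from \cite{Scott}; but for a generalized plabic graph the four boundary-trip endpoints around a square face are not automatically in cyclic order, so the purported exchange relation need not be a correctly signed three-term Pl\"ucker relation. The paper closes this gap with \cref{prop:rectweaklysep} (the target face labels of any $G$ move-equivalent to $G_{v,w}$ are pairwise weakly separated, proved by embedding into a larger honest plabic graph and quoting \cite{OPS}) and \cref{lem:cyclicsquares} (weak separation forces cyclic ordering around a square face), and then \cref{mutation} verifies that a square move therefore matches the categorical mutation of the cluster-tilting object via a genuine three-term Pl\"ucker relation. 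Without something playing the role of \cref{prop:rectweaklysep}--\cref{mutation}, your argument only proves the conclusion for the one bridge graph, not for arbitrary $G$.

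A secondary but real divergence: you propose to identify Leclerc's variables with target-labeled Pl\"uckers by routing through the M\"uller--Speyer twist. The paper does not invoke the twist. Instead, the identification is purely combinatorial: \cref{rem:genminor} shows how a generalized minor of a unipotent matrix projects directly to a Pl\"ucker coordinate on rows $v^{-1}[k]$, and \cref{lem:pluecker} together with \cref{cor:quiverlabels} show that, for the columnar reduced expression, these projected minors are exactly the Pl\"uckers of the rectangles seed. So the ``twist/untwist'' you anticipate is not needed once the standard reduced expression is fixed; introducing the twist would actually complicate the bookkeeping. You should also be aware that your step (4) --- matching the dual quiver of $v^{-1}(G)$ with the $\Ext^1$-quiver --- is where the bulk of the representation-theoretic work lives (\cref{thm:morphisms}, \cref{morphism:ind}, and the reduction to $v=w_K$ via \cite{BKT}); describing it as a ``uniqueness statement'' understates the difficulty of computing irreducible morphisms in $\text{add}\,U_{v,\mathbf{w}}$ explicitly. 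Your remark that \cref{thm:main} follows from \cref{thm:main2} by setting $w=w_0$ is correct and is essentially the argument in \cref{section:1.5}.
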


In the case of Schubert varieties, \cref{thm:main} resolves
\cref{conj:vague}, which has been believed to be true by experts for some time,
though it wasn't written down explicitly as a conjecture
until recently, see \cite[Conjecture 3.4]{MullerSpeyer0}.  Note that 
there is another version of the conjecture which uses the \emph{source
labeling} of $G$ instead of the 
target labeling \cite[Remark 3.5]{MullerSpeyer0}.
Both conjectures make sense more generally for positroid varieties
and arbitrary reduced plabic graphs (whose trip permutations can be
arbitrary decorated permutations).
However, the cluster structure that we give in \cref{thm:main2}
is different from either of the cluster structures proposed in 
\cite{MullerSpeyer0}.

Our strategy of proof is to find, for each skew Schubert variety, 
one distinguished seed coming from 
Leclerc's cluster structure, which we can describe completely explicitly.
 We then show that this seed agrees with 
a corresponding seed coming from the combinatorial construction  of
\cref{thm:main2}, and justify that mutations in both cluster structures agree.
We use a (modification) of a construction of 
Karpman \cite{Karpman} as a key tool in the proof.





\begin{remark}
In his thesis \cite{Chevalier}, Chevalier describes a cluster-tilting
object associated to Richardson varieties 
$\mathcal{R}_{v,w}$ where $v=w_K$ and $w \geq v$ in Bruhat order.
These Richardson varieties correspond to positroid
varieties in $Gr_{k,n}$ whose $\Le$-diagrams have shape
	$k \times (n-k)$.  (This case is somewhat complementary to the cases
that we consider in this paper, in the sense that Chevalier 
treats $\Le$-diagrams of shape $k \times (n-k)$ with 
arbitrary fillings, while on the other hand Schubert 
varieties correspond to $\Le$-diagrams of arbitrary shape
	whose boxes are all filled with $+$'s.)
	In the case of 
	the big open Schubert 
	variety in the Grassmannian (i.e. the positroid whose
	$\Le$-diagram is a $k \times (n-k)$ rectangle filled with 
	all $+$'s)  we get the same quiver as 
Chevalier does, but 
 different modules (and hence
different Pl\"ucker coordinates).  And in other cases 
	of overlap (i.e. skew-Schubert varieties with $v = w_K$)
	even our quivers are different from Chevalier's.
\end{remark}

\subsection{Outline of the paper}

Our paper is structured as follows.
In \cref{sec:background}, we give background on cluster structures, plabic graphs, 
and reduced expressions.
While each skew Schubert variety 
${\pi_k(\mathcal{R}_{v,w})}$
(where 
$v=w_K v'\in W^K_{max}$ 
and $w\in W$ has a length-additive factorization 
$\mathbf{w}=\mathbf{xv}=\mathbf{x w_K v'}$ into reduced expressions for $x$, $w_K$, and $v'$)
corresponds to an equivalence class of plabic graphs 
(more generally to a collection of cluster seeds), there is 
one among them which is particularly nice, which we call the \emph{rectangles seed}. 
In \cref{sec:seed}, we give an explicit description of the rectangles seed for a skew Schubert variety
${\pi_k(\mathcal{R}_{v,w})}$ as above,
 together with its dual cluster quiver.
In \cref{sec:Karpman} we describe a construction of Karpman \cite{Karpman} which produces
a bridge-decomposable plabic graph associated to a pair $(y,\mathbf{z})$, where 
$y^{-1} \in W^K_{\max}$, $\mathbf{z}$ is a reduced decomposition for $z$, and $y\leq z$.
And we show that if we perform her construction for the pair 
$(w_K, \mathbf{x w_K})$ and then relabel boundary vertices of the resulting plabic graph $G$ by $v^{-1}$, 
the target labeling of the dual quiver of $G$ gives rise to 
 the rectangles seed 
for ${\pi_k(\mathcal{R}_{v,w})}$. 
 In \cref{sec:Leclerc} we describe a construction of Leclerc \cite{Leclerc}, which produces a cluster seed
 associated to each pair $(v,\mathbf{w})$, where $v\in W^K_{max}$ and $v \leq w$.
 We also prove that for the choice $(v, \mathbf{w} = \mathbf{x w_K v'})$,  
Leclerc's construction gives rise to the rectangles seed.
In \cref{sec:proofs}, we build on the results of the previous sections to 
prove \cref{thm:main2} and then deduce \cref{thm:main} from it.  
 \cref{sec:conclusion}
gives applications of \cref{thm:main} and \cref{thm:main2}, 
and characterizes for which Schubert varieties the 
cluster structure is of finite type.
In \cref{sec:whichpositroids}, 
we give a concrete description of skew Schubert varieties.
And in \cref{sec:nonrealizable}, we give an example showing that outside of the skew-Schubert case, 
the cluster subalgebra 
of the coordinate
ring of ${\pi_k(\mathcal{R}_{v,w})}$ coming from Leclerc's construction is in general impossible to realize from a plabic graph.

\vskip .2cm

\noindent{\bf Acknowledgements:~}
We are grateful to Bernard Leclerc for numerous helpful discussions, and 
for bringing the work of Chevalier \cite{Chevalier} to our attention.
K.S. acknowledges support from the National Science Foundation Postdoctoral Fellowship MSPRF-1502881.
M.S.B acknowledges support by an NSF Graduate Research Fellowship
No. DGE-1752814. 
L. W. was partially supported by 
the NSF grant DMS-1600447.
Any opinions, findings
and conclusions or recommendations expressed in this material are those of 
the authors and do not necessarily reflect the views of the National
Science Foundation.

\section{Background on cluster structures and plabic graphs}\label{sec:background}




\subsection{Background on cluster structures}
Cluster algebras are a class of rings with a particular 
combinatorial structure; they were introduced by Fomin and Zelevinsky in \cite{ca1}.

\begin{definition}[Quiver]\label{quiver}
A \emph{quiver} $Q$ is a directed graph; we will assume that $Q$ has no 
loops or $2$-cycles.
Each vertex is designated either  \emph{mutable} or \emph{frozen}.
\end{definition}

\begin{definition}[Quiver Mutation]\label{def:mutation}
Let $q$ be a mutable vertex of quiver $Q$.  The quiver mutation 
$\mu_q$ transforms $Q$ into a new quiver $Q' = \mu_q(Q)$ via a sequence of three steps:
\begin{enumerate}
\item For each oriented two path $r \to q \to s$, add a new arrow $r \to s$
(unless $r$ and $s$ are both frozen, in which case do nothing).
\item Reverse the direction of all arrows incident to the vertex $q$.
\item Repeatedly remove oriented $2$-cycles until unable to do so.
\end{enumerate}
\end{definition}

We say that two quivers $Q$ and $Q'$ are \emph{mutation equivalent} if $Q$
can be transformed into a quiver isomorphic to $Q'$ by a sequence of mutations.

\begin{definition}
[\emph{Labeled seeds}]
\label{def:seed0}
Choose $M\geq N$ positive integers.
Let $\Fcal$ be an \emph{ambient field}
of rational functions
in $N$ independent
variables
over
$\CC(x_{N+1},\dots,x_M)$.
A \emph{labeled seed} in~$\Fcal$ is
a pair $(\xx, Q)$, where
\begin{itemize}
\item
$\xx = (x_1, \dots, x_M)$ forms a free generating
set for
$\Fcal$,
and
\item
$Q$ is a quiver on vertices
$1, 2, \dots,N, N+1, \dots, M$,
whose vertices $1,2, \dots, N$ are
\emph{mutable}, and whose vertices $N+1,\dots, M$ are \emph{frozen}.
\end{itemize}
We refer to~$\xx$ as the (labeled)
\emph{extended cluster} of a labeled seed $(\xx, Q)$.
The variables $\{x_1,\dots,x_N\}$ are called \emph{cluster
variables}, and the variables $c=\{x_{N+1},\dots,x_M\}$ are called
\emph{frozen} or \emph{coefficient variables}.
We often view the labeled seed as a quiver $Q$ where each vertex $i$ is labeled by 
the corresponding variable $x_i$.
\end{definition}

\begin{definition}
[\emph{Seed mutations}]
\label{def:seed-mutation0}
Let $(\xx, Q)$ be a labeled seed in $\Fcal$,
and let $q \in \{1,\dots,N\}$.
The \emph{seed mutation} $\mu_q$ in direction~$q$ transforms
$(\xx, Q)$ into the labeled seed
$\mu_q(\xx,  Q)=(\xx', \mu_q(Q))$, where the cluster
$\xx'=(x'_1,\dots,x'_M)$ is defined as follows:
$x_j'=x_j$ for $j\neq q$,
whereas $x'_q \in \Fcal$ is determined
by the \emph{exchange relation}
\begin{equation}
\label{exchange relation0}
x'_q\ x_q = 
\prod_{q \to r} x_r + \prod_{s \to q} x_s,
\end{equation}
where the first product is over all arrows $q \to r$ in $Q$
which start at $q$, and the second product is over all arrows $s\to q$ which 
end at $q$.
\end{definition}

\begin{remark}
It is not hard to check that seed mutation is an involution.
\end{remark}

\begin{remark}
Note that arrows between two frozen vertices of a quiver do not
affect seed mutation (they do not affect the mutated quiver
or the exchange relation).  For that reason, one may omit
arrows between two frozen vertices. 
\end{remark}

\begin{definition}
[\emph{Patterns}]
\label{def:patterns0}
Consider the \emph{$N$-regular tree}~$\TT_N$
whose edges are labeled by the numbers $1, \dots, N$,
so that the $N$ edges emanating from each vertex receive
different labels.
A \emph{cluster pattern}  is an assignment
of a labeled seed $\Sigma_t=(\xx_t, Q_t)$
to every vertex $t \in \TT_N$, such that the seeds assigned to the
endpoints of any edge $t \overunder{q}{} t'$ are obtained from each
other by the seed mutation in direction~$q$.
The components of
$\xx_t$ are written as $\xx_t = (x_{1;t}\,,\dots,x_{N;t}).$
\end{definition}

Clearly, a cluster pattern  is uniquely determined
by an arbitrary  seed.

\begin{definition}
[\emph{Cluster algebra}]
\label{def:cluster-algebra0}
Given a cluster pattern, we denote
\begin{equation}
\label{eq:cluster-variables0}
\Xcal
= \bigcup_{t \in \TT_N} \xx_t
= \{ x_{i,t}\,:\, t \in \TT_N\,,\ 1\leq i\leq N \} \ ,
\end{equation}
the union of clusters of all the seeds in the pattern.
The elements $x_{i,t}\in \Xcal$ are called \emph{cluster variables}.
The
\emph{cluster algebra} $\Acal$ associated with a
given pattern is the $\CC[x_{N+1}^{\pm1}, \dots, x_{M}^{\pm 1}]$-subalgebra of the ambient field $\Fcal$
generated by all cluster variables: $\Acal = \CC[c^{\pm 1}] [\Xcal]$.
We denote $\Acal = \Acal(\xx,  Q)$, where
$(\xx,Q)$
is any seed in the underlying cluster pattern.
In this generality,
$\Acal$ is called a \emph{cluster algebra from a quiver}, or a
\emph{skew-symmetric cluster algebra of geometric type.}
We say that $\Acal$ has \emph{rank $N$} because each cluster contains
$N$ cluster variables.
\end{definition}

\begin{remark}
One of the earliest definitions of cluster algebra defined 
it as  $\Acal = \CC[c] [\Xcal]$ instead of 
 $\Acal = \CC[c^{\pm 1}] [\Xcal]$.
 This is the definition 
Scott worked with in proving that the coordinate ring of the Grassmannian is a cluster algebra \cite{Scott}.  If one uses
\cref{def:cluster-algebra0} instead, then the statement is that the coordinate ring of the open Schubert variety
in the Grassmannian is a cluster algebra.  In fact the latter statement was 
verified in \cite[Section 3.3]{GSV03}, who exhibited an initial 
quiver which is the one from the \emph{rectangles seed} we discuss
in \cref{sec:seed}.
\end{remark}

\subsection{Background on plabic graphs}

In this section 
we review Postnikov's notion of \emph{plabic graphs} \cite{Postnikov}, which 
we will then use to define cluster structures in Schubert varieties.

\begin{definition}
A {\it plabic (or planar bicolored) graph\/}
is an undirected graph $G$ drawn inside a disk
(considered modulo homotopy)
with $n$ {\it boundary vertices\/} on the boundary of the disk,
labeled $1,\dots,n$ in clockwise order, as well as some
colored {\it internal vertices\/}.
These internal vertices
are strictly inside the disk and are
colored in black and white. 
An internal vertex of degree one adjacent to a boundary vertex is a \emph{lollipop}.
We will always assume that no vertices of the same color are adjacent, and that 
each boundary vertex $i$ is adjacent to a single internal vertex.

\end{definition}

See Figure \ref{G25} for an example of a plabic graph.
\begin{figure}[h]
\centering
\includegraphics[height=1in]{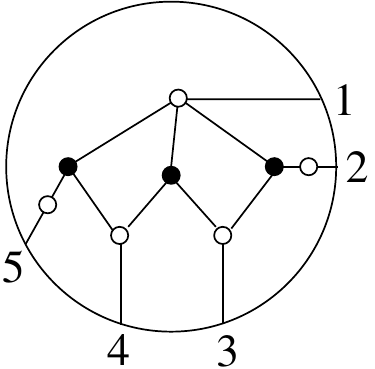}
\caption{A plabic graph}
\label{G25}
\end{figure}

\begin{defn} A \emph{generalized plabic graph} is a plabic graph with boundary vertices are labeled by $1, \dots, n$ in some order, not necessarily clockwise.
\end{defn}

Generalized plabic graphs naturally arise in the course of our arguments. Though we state all of the following definitions for plabic graphs for clarity, they can equally be made for generalized plabic graphs.

There is a natural set of local transformations (moves) of plabic graphs, which we now describe.
Note that we will always assume that a plabic graph $G$ has no isolated 
components (i.e. every connected component contains at least
one boundary vertex).  We will also assume that $G$ is \emph{leafless}, 
i.e.\ if $G$ has an 
internal vertex of degree $1$, then that vertex must be adjacent to a boundary
vertex.

(M1) SQUARE MOVE (Urban renewal).  If a plabic graph has a square formed by
four trivalent vertices whose colors alternate,
then we can switch the
colors of these four vertices (and add some degree $2$ vertices to preserve
the bipartiteness of the graph).

(M2) CONTRACTING/EXPANDING A VERTEX.
Any degree $2$ internal vertex not adjacent to the boundary can be deleted,
and the two adjacent vertices merged.
This operation can also be reversed.  Note that this operation can always be used
to change an arbitrary
 square face of $G$ into a square face whose four vertices are all trivalent.

(M3) MIDDLE VERTEX INSERTION/REMOVAL.
We can always remove or add degree $2$ vertices at will, subject to the 
  condition that the graph remains bipartite.

See \cref{M1} for depictions of these three moves.

\begin{figure}[h]
\centering
\includegraphics[height=.5in]{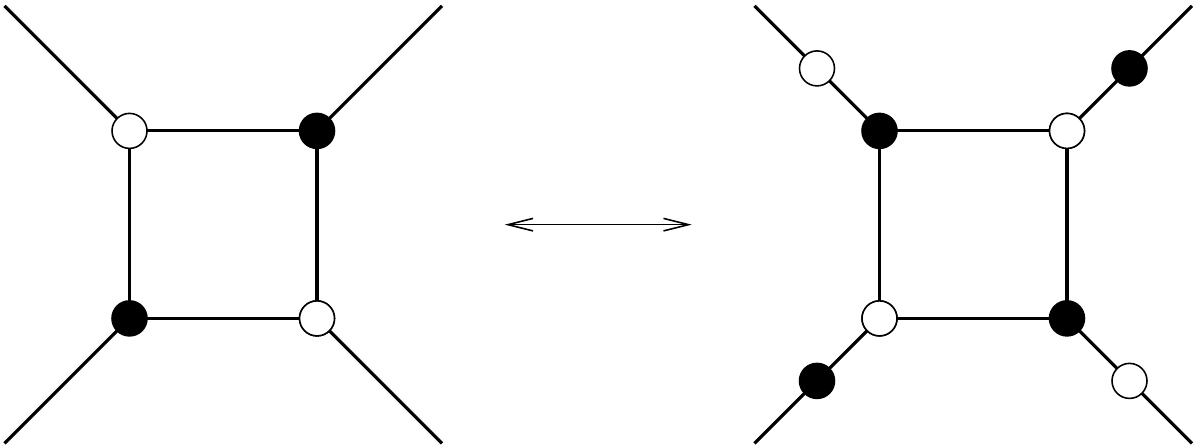}
\hspace{.5in}
\raisebox{6pt}{\includegraphics[height=.4in]{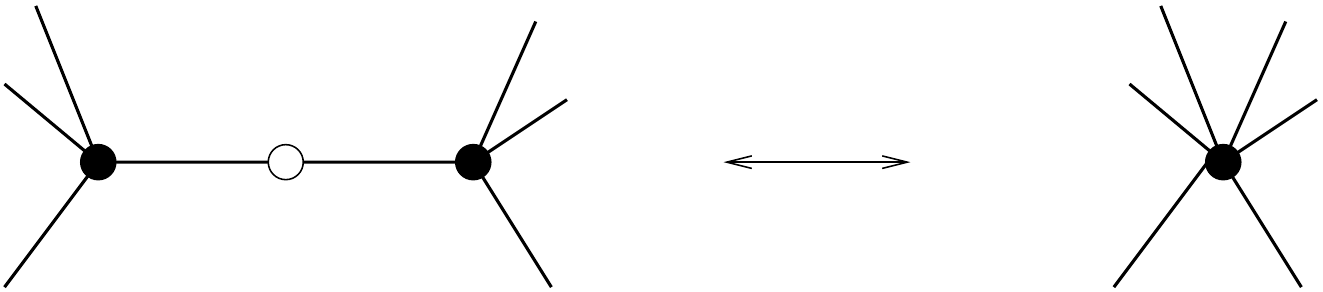}}
\hspace{.5in}
\raisebox{16pt}{\includegraphics[height=.07in]{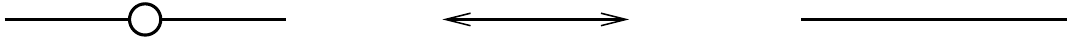}}
\caption{%
A square move, an edge 
contraction/expansion, and a vertex insertion/removal.}
\label{M1}
\end{figure}

(R1) PARALLEL EDGE REDUCTION.  If a plabic graph contains
two trivalent vertices of different colors connected
by a pair of parallel edges, then we can remove these
vertices and edges, and glue the remaining pair of edges together.

\begin{figure}[h]
\centering
\includegraphics[height=.25in]{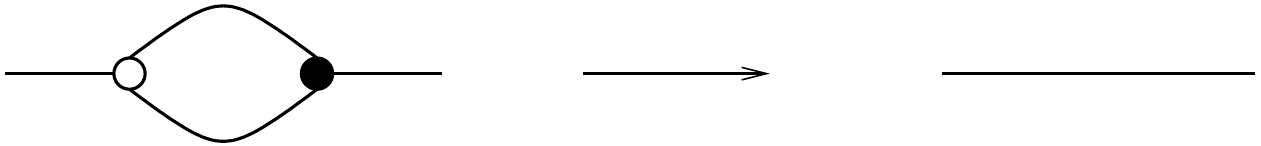}
\caption{Parallel edge reduction}
\label{R1}
\end{figure}

\begin{definition}
Two plabic graphs are called \emph{move-equivalent} if they can be obtained
from each other by moves (M1)-(M3).  The \emph{move-equivalence class}
of a given plabic graph $G$ is the set of all plabic graphs which are move-equivalent
to $G$.
A leafless plabic graph without isolated components
is called \emph{reduced} if there is no graph in its move-equivalence
class to which we can apply (R1).
\end{definition}

\begin{definition} A \emph{decorated permutation} $\pi^:$ is a permutation $\pi \in S_n$ together with a coloring $\set{i \ \vert \ \pi(i)=i} \to \{\text{black, white}\}$. 
\end{definition}

\begin{definition}\label{def:rules}
Given a reduced plabic graph $G$,
a  \emph{trip} $T$ is a directed path which starts at
some boundary vertex
$i$, and follows the ``rules of the road": it turns (maximally) right at a
black vertex,  and (maximally) left at a white vertex.
 Note that $T$ will also
end at a boundary vertex $j$; we then refer to this trip as
$T_{i \to j}$. Setting $\pi(i)=j$ for each such trip,
 we associate a (decorated) \emph{trip permutation}
$\pi_G=(\pi(1),\dots,\pi(n))$ to each reduced plabic graph $G$, where a fixed point $\pi(i)=i$ is colored white (black) if there is a white (black) lollipop at boundary vertex $i$.
We say that $G$ has \emph{type $\pi_G$}.
\end{definition}

As an example, the trip permutation associated to the
reduced plabic graph in Figure \ref{G25} is $(3,4,5,1,2)$.

\begin{remark}\label{rem:moves}
Note that the trip permutation of a plabic graph is preserved 
by the local moves (M1)-(M3), but not by (R1). For reduced plabic graphs the converse holds, namely 
it follows from \cite[Theorem 13.4]{Postnikov} 
that any two reduced plabic graphs with the same trip permutation are 
move-equivalent.
\end{remark}



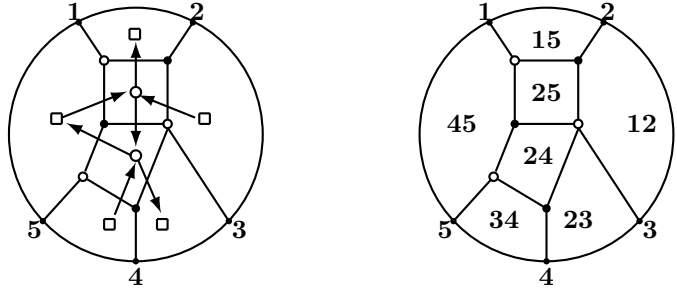
\begin{figure}[h]
\begin{center}
\setlength{\unitlength}{.8pt}
\begin{picture}(100,120)(-30,-70)
\thicklines
\multiput(1,0)(1,30){2}{\line(1,0){27.5}}
\multiput(0,1)(30,1){2}{\line(0,1){27.5}}

\put(15,15){\circle{5}}
\put(15,-15){\circle{5}}

\put(12,40){\line(0,1){5}}
\put(12,40){\line(1,0){5}}
\put(12,45){\line(1,0){5}}
\put(17,40){\line(0,1){5}}

\put(45,0){\line(0,1){5}}
\put(45,0){\line(1,0){5}}
\put(45,5){\line(1,0){5}}
\put(50,0){\line(0,1){5}}

\put(-25,0){\line(0,1){5}}
\put(-25,0){\line(1,0){5}}
\put(-25,5){\line(1,0){5}}
\put(-20,0){\line(0,1){5}}

\put(25,-50){\line(0,1){5}}
\put(25,-50){\line(1,0){5}}
\put(25,-45){\line(1,0){5}}
\put(30,-50){\line(0,1){5}}

\put(0,-50){\line(0,1){5}}
\put(0,-50){\line(1,0){5}}
\put(0,-45){\line(1,0){5}}
\put(5,-50){\line(0,1){5}}

\put(15,17){{\vector(0,1){22}}}
\put(15,13){{\vector(0,-1){24}}}
\put(-20,3){{\vector(2,.8){31}}}
\put(43,3){{\vector(-2,.8){27}}}
\put(13,-15){{\vector(-4,2){31}}}
\put(16,-17){{\vector(1,-2.3){11}}}
\put(5,-43){{\vector(1,2.5){10}}}

\put(0,30){\circle{4}}
\put(30,30){\circle*{4}}
\put(30,0){\circle{4}}
\put(0,0){\circle*{4}}
\put(-10,-25){\circle{4}}
\put(15,-40){\circle*{4}}
\put(0,0){\line(-10,-25){9}}
\put(15,-40){\line(-25,15){23}}
\put(15,-40){\line(20,50){15}}        
\put(15,-5){\circle{120}}
\put(15,-65){\circle*{3}}
\put(15,-40){\line(0,-1){24}}
\put(-12,48){\circle*{3}}
\put(42,48){\circle*{3}}
\put(-12,48){\line(12,-18){11}}
\put(42,48){\line(-12,-18){11}}
\put(-29,-46){\circle*{3}}
\put(59,-46){\circle*{3}}
\put(-29,-46){\line(19,21){18}}
\put(59,-46){\line(-19,29){29}}         
\put(-14,53){\makebox(0,0){$\mathbf{1}$}}
\put(44,53){\makebox(0,0){$\mathbf{2}$}}
\put(64,-50){\makebox(0,0){$\mathbf{3}$}}
\put(15,-72){\makebox(0,0){$\mathbf{4}$}}
\put(-33,-50){\makebox(0,0){$\mathbf{5}$}}
\end{picture}
\qquad \qquad
\begin{picture}(100,120)(-70,-70)
\thicklines
\multiput(1,0)(1,30){2}{\line(1,0){27.5}}
\multiput(0,1)(30,1){2}{\line(0,1){27.5}}
\put(0,30){\circle{4}}
\put(30,30){\circle*{4}}
\put(30,0){\circle{4}}
\put(0,0){\circle*{4}}
\put(-10,-25){\circle{4}}
\put(15,-40){\circle*{4}}
\put(0,0){\line(-10,-25){9}}
\put(15,-40){\line(-25,15){23}}
\put(15,-40){\line(20,50){15}}        
\put(15,-5){\circle{120}}
\put(15,-65){\circle*{3}}
\put(15,-40){\line(0,-1){24}}
\put(-12,48){\circle*{3}}
\put(42,48){\circle*{3}}
\put(-12,48){\line(12,-18){11}}
\put(42,48){\line(-12,-18){11}}
\put(-29,-46){\circle*{3}}
\put(59,-46){\circle*{3}}
\put(-29,-46){\line(19,21){18}}
\put(59,-46){\line(-19,29){29}}         
\put(-14,53){\makebox(0,0){$\mathbf{1}$}}
\put(44,53){\makebox(0,0){$\mathbf{2}$}}
\put(64,-50){\makebox(0,0){$\mathbf{3}$}}
\put(15,-72){\makebox(0,0){$\mathbf{4}$}}
\put(-33,-50){\makebox(0,0){$\mathbf{5}$}}

\put(15,40){\makebox(0,0){$\mathbf{15}$}}
\put(15,15){\makebox(0,0){$\mathbf{25}$}}
\put(11,-15){\makebox(0,0){$\mathbf{24}$}}
\put(60,0){\makebox(0,0){$\mathbf{12}$}}
\put(30,-45){\makebox(0,0){$\mathbf{23}$}}
\put(-5,-45){\makebox(0,0){$\mathbf{34}$}}
\put(-25,0){\makebox(0,0){$\mathbf{45}$}}
\end{picture}
\end{center}
\caption{A plabic graph $G$ together with
 $Q(G)$ and  its face labeling $\mathcal{F}_{\source}(G)$.
Here $\pi_G = (3,4,5,1,2)$.}
\label{fig:plabic2}
\end{figure}
Now we use the notion of trips to label each face of $G$
by a Pl\"ucker coordinate.
Towards this end, note that every trip
will partition the faces of a plabic graph into
two parts: those on the left of the trip, and those on the right
of a trip.

\begin{definition}\label{def:faces}
Let $G$ be a reduced plabic graph with $b$ boundary vertices.
For each one-way trip $T_{i\to j}$ with $i \neq j$, we place the label $i$
(respectively, $j$)
 in every face which is to the left of $T_{i\to j}$. If $i=j$ (that is, $i$ is adjacent to a lollipop), we place the label $i$ 
in all faces if the lollipop is white and in no faces if the lollipop is black.
We then obtain a labeling $\mathcal{F}_{\source}(G)$
(respectively, $\mathcal{F}_{\target}(G)$)
of faces of $G$ by subsets of $[b]$ which
we call the \emph{source} (respectively, \emph{target})
\emph{labeling} of $G$.  We identify each $a$-element subset of $[b]$
with the corresponding Pl\"ucker coordinate.
\end{definition}

The right-hand side of
\cref{fig:plabic2}
shows
a plabic graph with the face labeling
 $\mathcal{F}_{\source}(G)$.


We next associate a quiver to each plabic graph, and 
relate
quiver mutation to moves on plabic graphs.

\begin{definition}
Let $G$ be a reduced plabic graph.  We associate a quiver $Q(G)$ as follows.  The vertices of 
$Q(G)$ are labeled by the faces of $G$.  We say that a vertex of $Q(G)$ is \emph{frozen}
if the 
corresponding face is incident to the boundary of the disk, and is \emph{mutable} otherwise.
For each edge $e$ in $G$ which separates two faces, at least one of which is mutable, 
we introduce an arrow connecting the faces;
 this arrow is oriented so that it ``sees the white endpoint of $e$ to the left and the 
black endpoint to the right'' as it crosses over $e$.  We then remove oriented $2$-cycles
from the resulting quiver, one by one, to get $Q(G)$. See 
\cref{fig:plabic2}.
\end{definition}

\begin{definition}\label{def:graphseed}
Given a reduced plabic graph $G$, we let 
$\Sigma^{\target}_G$ (respectively, $\Sigma^{\source}_G$)  be the labeled seed consisting of the quiver $Q(G)$ 
with vertices labeled by the Pl\"ucker coordinates
 $\mathcal{F}_{\target}(G)$
(respectively,  $\mathcal{F}_{\source}(G)$).
Given a plabic graph $G$ on $n$ vertices and a permutation $v\in S_n$,
	we will sometimes use relabeled plabic graphs $v^{-1}(G)$ 
	(where the boundary vertices have been modified by applying 
	$v^{-1}$ to them).  We will refer to the corresponding seeds
	with the induced target labelings by e.g. 
	$\Sigma^{\target}_{v^{-1}(G)}$.
\end{definition}

The following lemma is straightforward, and is implicit in \cite{Scott}.

\begin{lemma}\label{lem:mutG}
If $G$ and $G'$ are related via a square move at a face,
	then $\Sigma^{\target}_G$ and $\Sigma^{\target}_{G'}$ are related via mutation at the corresponding vertex.
	Similarly for $\Sigma^{\source}_G$ and $\Sigma^{\source}_{G'}$. 
\end{lemma}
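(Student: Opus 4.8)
The plan is to unwind both sides of the claimed equivalence directly from the definitions, tracking what a square move does to (i) the faces of the plabic graph, (ii) the target labelings of those faces, and (iii) the arrows of the dual quiver, and then to match this against the definition of quiver mutation in \cref{def:mutation} together with the exchange relation \eqref{exchange relation0}. Recall that a square move (M1) is applied at a square face $f$ bounded by four trivalent vertices of alternating colors; only the interior of a small neighborhood of $f$ is altered, so all faces of $G$ other than $f$ and its four neighbors are unchanged, and $f$ itself survives as a face $f'$ of $G'$. The other faces of $G$ are left intact as sets, so the only possible change in the labeled seed is: the label on the face $f$, and the arrows incident to the vertex of $Q(G)$ indexed by $f$.

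First I would verify the quiver half, which is essentially the content already ``implicit in \cite{Scott}''. One checks that the four edges bounding $f$ contribute, via the white-left/black-right rule, exactly a cyclically oriented square among $f$ and (representatives of) its four neighboring faces before the move, and that switching the four vertex colors reverses all four of these arrows; meanwhile the diagonal arrows between the four neighbors (coming from edges of $G$ not on $\partial f$, or created after contracting the degree-$2$ vertices that (M2) inserts) are precisely the arrows ``$r\to s$'' added in step (1) of $\mu_f$, and the $2$-cycle cancellation in step (3) of $\mu_f$ matches the $2$-cycle removal built into the definition of $Q(G')$. So $Q(G') = \mu_f(Q(G))$ after identifying the vertex set via the face correspondence. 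This is a finite local check with a single picture (the alternating-color square and its four neighbors, before and after), so I would present it as such rather than grinding through all orientation subcases.

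Next I would verify the label half: that the target label of $f'$ in $G'$ equals the cluster variable produced by the exchange relation at $f$, i.e. $x_{f'}\, x_f = \prod_{f\to r} x_r + \prod_{s\to f} x_s$, with the products taken over the (at most two outgoing and at most two incoming) arrows at $f$ in $Q(G)$. Here I would use the standard fact (Postnikov/Scott; it follows from tracing how the four trips through the square face are rerouted) that if the face $f$ has target label $Sab$ and its four neighbors, read around $f$, have target labels $Sac,\ Sbc,\ Sbd,\ Sad$ for appropriate singletons, then the target label of $f'$ is $Scd$, and the three-term Plücker relation gives exactly $\Delta_{Sab}\Delta_{Scd} = \Delta_{Sac}\Delta_{Sbd} + \Delta_{Sad}\Delta_{Sbc}$; matching the two factors on each side with the two outgoing resp.\ two incoming arrows at $f$ (as determined in the quiver half) finishes it. When $f$ is on the boundary and has fewer than four neighbors, or some neighbor is frozen, the relation degenerates but the same computation goes through with the convention that an empty product is $1$. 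Finally, the source-labeling statement follows by the identical argument with ``left of $T_{i\to j}$'' replaced by ``right'', or alternatively by invoking the symmetry that reversing all trip directions swaps source and target labelings and reverses $Q(G)$, under which a square move is again a square move.

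The main obstacle is bookkeeping rather than conceptual: one must set up the notation for the four neighboring faces and their labels consistently (choosing the cyclic order and the auxiliary indices $a,b,c,d$) so that the orientations coming from the white-left/black-right rule line up exactly with the in/out decomposition of arrows used in the exchange relation, and one must handle the boundary/frozen degenerate cases without separate ad hoc arguments. I expect the cleanest route is to fix once and for all a ``standard square'' picture with specified vertex colors, compute the four trips through it in both $G$ and $G'$, read off all labels and arrows from that single picture, and note that every other configuration is obtained from it by the dihedral symmetry of the square together with a global color swap (the latter reversing all arrows and swapping source/target), so no case is genuinely new.
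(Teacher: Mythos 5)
Your proof is correct and follows exactly the approach the paper alludes to by calling the lemma ``straightforward, and is implicit in \cite{Scott}'': a local check that the square move reverses the arrows at $f$ and produces the arrows/cancellations of \cref{def:mutation}, together with the trip-rerouting computation showing the label changes by a three-term Pl\"ucker relation (the same computation reappears in the paper's \cref{mutation}, with the central label written $Rac\to Rbd$ rather than your $Sab\to Scd$). One small clarification worth making: a square face of a plabic graph (bounded by four edges, each separating it from another face) never touches the boundary circle, so $f$ itself is never frozen; the only ``degenerate'' cases are a frozen \emph{neighbor} (handled as you say by the frozen-variable convention) or coinciding neighbors, and your phrasing ``When $f$ is on the boundary and has fewer than four neighbors'' conflates these and should be tightened.
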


Because of \cref{lem:mutG}, we will subsequently refer to 
``mutating" at a nonboundary face of $G$, meaning that we mutate
at the corresponding vertex of quiver $Q(G)$.
Note that in general the quiver $Q(G)$ admits mutations at vertices
which do not correspond to moves of plabic graphs.  For example, $G$ might have a 
hexagonal face, all of whose vertices are trivalent; 
in that case, $Q(G)$ admits a mutation at the corresponding
vertex, but there is no move of plabic graphs which corresponds 
to this mutation.

\begin{remark}
The open positroid varieties $\pi_k(\mathcal{R}_{v,w}) \subseteq Gr_{k, n}$ are in bijection 
with a variety of combinatorial objects introduced by Postnikov in \cite{Postnikov}, including the 
	decorated permutations on $n$ letters with $k$ \emph{antiexcedances}.  Here we say that 
	$i\in [n]$ is an \emph{antiexcedance} if 
	 $\pi_{v,w}^{-1}(i)>\pi_{v, w}(i)$ or $i$ is a white lollipop.

As pointed out in \cite{shelling}, the trip permutation of $\pi_k(\mathcal{R}_{v,w})$ is $\pi_{v, w}:=v^{-1}w$ with all white fixed points lying in $v^{-1}([k])$ 
(see \cite[Section 2.4, Equation 2.27]{Karpman} for phrasing that is closer to ours). 
 The set of antiexcedances is exactly $v^{-1}([k])$.
	Clearly one can recover the pair $(v,w)$ from $\pi_{v,w}$ since $v \in W^K_{\max}$.
\end{remark}

\subsection{A fact about permutations}
 
We will need the following lemma on reduced expressions for permutations in $^K W$ and $W^K$. 
It is illustrated in \cref{fig:redexpression}.

\begin{lemma} \cite{Stem} \label{lem:redexpression}
Let $x \in {^KW}$ and let $\lambda:=\partne{x([k])}$. 
Choose a ``reading order" for the boxes of $\lambda$ such that each box is read before the box immediately below it and the box immediately to its right (that is, choose a standard Young tableaux of shape $\lambda$). 
Fill each box with a simple transposition; the box in row $r$ and column $c$ is filled with $s_{k-c+r}$.
Then reading the fillings of the boxes according to the reading order gives a reduced expression for $x$ (written  from right to left).
\end{lemma}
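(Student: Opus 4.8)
The plan is to prove the statement by induction on $|\lambda|$, with the inductive step being a ``corner–removal'' identity in which essentially all of the content is concentrated. As preparation I would record the facts that make the statement well posed: an element $x\in {}^{K}W$ is exactly a Grassmannian permutation of type $(k,n)$, i.e.\ $x(1)<\dots<x(k)$ and $x(k+1)<\dots<x(n)$; hence its Lehmer code is supported on positions $1,\dots,k$, with entries $x(i)-i$ that are weakly increasing, and arranging those entries into a Young diagram recovers $\lambda=\partne{x([k])}$. Thus $x\mapsto\lambda$ is a bijection onto the relevant set of diagrams and $\ell(x)=|\lambda|$. Consequently any word of length $|\lambda|$ in the $s_i$ whose product is $x$ is automatically reduced, so it suffices to show that the product of the box fillings, taken in the given reading order and composed as prescribed by ``written from right to left'', equals $x$.

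Next I would observe that a reading order as in the statement is precisely a linear extension of the poset on the boxes of $\lambda$ in which $(r,c)$ precedes both $(r+1,c)$ and $(r,c+1)$. So the box $b=(r,c)$ read last is a maximal element, i.e.\ an outer corner of $\lambda$ (no box below it, none to its right); deleting $b$ leaves a Young diagram $\lambda^-$, and the reading order restricts to a valid reading order of $\lambda^-$. Writing $x^-\in{}^{K}W$ for the permutation with $\partne{x^-([k])}=\lambda^-$, the crux is the claim that $\ell(x)=\ell(x^-)+1$ and $x=s_{k-c+r}\,x^-$. Granting this the induction closes immediately: by the inductive hypothesis the restricted reading order yields a reduced word for $x^-$, and prepending the filling $s_{k-c+r}$ of $b$ — which under the ``right to left'' convention is the leftmost letter — yields, by the claim, a word of length $\ell(x^-)+1=\ell(x)$ whose product is $x$, hence a reduced word for $x$. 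Since this holds for every valid reading order, and the base case $\lambda=\emptyset$, $x=e$ is trivial, the lemma follows.

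To prove the claim I would pass through the bijection $\lambda\leftrightarrow\vertne{\lambda}$ between Young diagrams and $k$-subsets of $[n]$. Deleting the outer corner $b=(r,c)$ transposes one adjacent (north-step, east-step) pair of the northeast lattice path at the location of $b$, which replaces a single element $a$ of $\vertne{\lambda}$ by $a\pm1$; translating the position of that step into the coordinates $(r,c)$ shows that $a$, or $a\mp1$, equals $k-c+r$. Since the permutation attached to a $k$-subset lists the subset in increasing order followed by its complement in increasing order, replacing $a$ by $a\pm1$ in $\vertne{\lambda}$ is exactly left multiplication by $s_{k-c+r}$, and the length statement is then immediate: because $x^-\in{}^{K}W$ is a union of two increasing blocks, the two values being interchanged lie in increasing positional order, so the interchange creates precisely one new inversion. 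As an alternative to the whole induction one can instead check the claim only for the southeast-most box, obtaining that the row-by-row reading order produces the classical ``staircase'' reduced word for $x$, and then note that any two valid reading orders are linked by transpositions of boxes that are adjacent in the order and incomparable in the box poset; incomparable boxes lie on diagonals $r-c$ differing by at least $2$, so their fillings $s_{k-c+r}$ and $s_{k-c'+r'}$ commute and the product is unchanged.

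The main obstacle is this corner–removal claim, and within it the convention tracking: one must align the diagram/$k$-subset dictionary, the orientation and labelling of the lattice path, the precise meaning of ``reduced expression written from right to left'', and the identity between the corner box $(r,c)$ and the index $k-c+r$, so that the elementary move on $\vertne{\lambda}$ is literally multiplication by $s_{k-c+r}$ with length increasing by exactly one. Everything else — the reduction to checking a product, the description of reading orders as linear extensions, and the commutation step — is routine.
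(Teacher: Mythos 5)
The paper does not actually prove this lemma; it is stated with a citation to \cite{Stem} and no argument is given, so there is no proof in the paper to compare against and your argument must stand on its own. Your strategy is sound and is essentially the natural one: since the word has length $|\lambda|=\ell(x)$, it is automatically reduced once you know its product is $x$; the reading orders are exactly the linear extensions of the box poset, so the last-read box is an outer corner; removing it leaves a smaller Grassmannian permutation $x^-$, and the passage from $x^-$ to $x$ is left multiplication by a single $s_j$ because under the diagram/$k$-subset dictionary it transposes an adjacent (north, east) pair of $\pathne{\lambda}$, while the length goes up by exactly one because the value $j$ sits in $x^-([k])$ and $j+1$ in its complement, so $(x^-)^{-1}(j)\le k<(x^-)^{-1}(j+1)$. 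Your closing alternative via commutations is also fine, since incomparable boxes lie on content diagonals $c-r$ differing by at least $2$.

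One caveat: the index in the lemma as printed, $s_{k-c+r}$, is a typo for $s_{k-r+c}$. You can see this from Figure~\ref{fig:redexpression} (row $1$ is $s_k,s_{k+1},\dots$ reading left to right), from the paper's own later usage of $s_{\ell}=s_{k-r+c}$ in the proof of \cref{lem:pluecker}, or directly from the lattice path: the north step along the left edge of box $(r,c)$ is step number $(c-1)+(k-r)+1=k-r+c$. You echoed the printed formula in your corner claim and then asserted rather than carried out the ``translating the position of that step into coordinates $(r,c)$'' computation, which is exactly where the discrepancy would have surfaced. With the corrected index, everything you wrote goes through.
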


\begin{figure}
\setlength{\unitlength}{0.7mm}
\begin{center}
 \begin{picture}(50,35)
  \put(5,32){\line(1,0){36}}
  \put(5,23){\line(1,0){36}}
  \put(5,14){\line(1,0){36}}
  \put(5,5){\line(1,0){18}}
  \put(5,-4){\line(1,0){9}}
  \put(5,-4){\line(0,1){36}}
  \put(14,-4){\line(0,1){36}}
  \put(23,5){\line(0,1){27}}
  \put(32,14){\line(0,1){18}}
  \put(41,14){\line(0,1){18}}
  \put(8,26){$1$}
  \put(17,26){$5$}
  \put(26,26){$8$}
  \put(35,26){$10$}
  \put(8,17){$2$}
  \put(17,17){$6$}
  \put(26,17){$9$}
  \put(35,17){$11$}
  \put(8,8){$3$}
  \put(17,8){$7$}
  \put(8,-1){$4$}
 \end{picture}
    \qquad
	\begin{picture}(50,35)
  \put(5,32){\line(1,0){36}}
  \put(5,23){\line(1,0){36}}
  \put(5,14){\line(1,0){36}}
  \put(5,5){\line(1,0){18}}
  \put(5,-4){\line(1,0){9}}
  \put(5,-4){\line(0,1){36}}
  \put(14,-4){\line(0,1){36}}
  \put(23,5){\line(0,1){27}}
  \put(32,14){\line(0,1){18}}
  \put(41,14){\line(0,1){18}}
  \put(7,26){$s_4$}
  \put(16,26){$s_5$}
  \put(25,26){$s_6$}
  \put(34,26){$s_7$}
  \put(7,17){$s_3$}
  \put(16,17){$s_4$}
  \put(25,17){$s_5$}
  \put(34,17){$s_6$}
  \put(7,8){$s_2$}
  \put(16,8){$s_3$}
		\put(7,-1){$s_1$}
  \end{picture}
\end{center}
\caption{
	\label{fig:redexpression} 
Let $x=(2, 4, 7, 8, 1, 3, 5, 6) \in {^KW}$,
 and $\partne{x([k])}=(4, 4, 2, 1)$. On the left, the 
 \emph{columnar} reading order for the boxes of $\partne{x([k])}$; on the right, the filling of $\partne{x([k])}$ with simple transpositions. 
 This reading order produces the reduced expression $\textbf{x}=s_6 s_7 s_5 s_6 s_3 s_4 s_5 s_1 s_2 s_3 s_4$ for $x\in {^KW}$, and the reduced expression
 $s_4 s_3 s_2 s_1 s_5 s_4 s_3 s_6 s_5 s_7 s_6$ for $x^{-1}\in {W^K}$.}
\end{figure}

Since the elements of $W^K$ are just the inverses
of the elements of ${^KW}$,
 one can also obtain reduced expressions for $y \in W^K$ by the process described in \cref{lem:redexpression}, using the partition $\partne{y^{-1}([k])}$. 
 The only difference is the resulting reduced expression for $y$ is written down from left to right. 

\begin{remark}\label{rem:columnar} For simplicity, we will always use the \emph{columnar} reading order, which reads the columns of $\lambda$ from top to bottom, moving left to right (see \cref{fig:redexpression}). We will call the resulting reduced expressions \emph{columnar expressions}.
\end{remark}

We will be particularly concerned with pairs 
$(v,w)$ where 
$v \in W^K_{\max}$ and $w$ has a \emph{length-additive factorization}
$w = xv$, i.e. $\ell(w)=\ell(x)+\ell(v)$.  We will often use reduced expressions for such permutations $w$ that reflect their length-additive factorizations.
 
 \begin{defn} \label{def:stdredexpression} Let $v \leq w$, with $v \in W^K_{\max}$ and $w=xv$ length-additive. Let $v=w_K v'$ be length-additive, where $v'$ is necessarily in $W^K_{\min}$. Then a \emph{standard reduced expression} for $w$ is a reduced expression $\textbf{w}=\textbf{x}\textbf{w}_K \textbf{v'}$, where $\textbf{x}$ and $\textbf{v'}$ are the columnar expressions for $x$ and $v'$, respectively, and $\textbf{w}_K$ is an arbitrary reduced expression for $w_K$.
 \end{defn}

\section{The rectangles seed associated
to a skew Schubert variety} 
\label{sec:seed}

\cref{def:labeledquiver} explains how to associate to a pair of permutations  a quiver whose vertices are labeled by Pl\"ucker coordinates.
The construction is illustrated in \cref{fig:combconstruct}.


\begin{definition}[\emph{The rectangles seed $\Sigma_{v, w}$}]
\label{def:labeledquiver}
Let $v \leq w$, where $v \in W_{max}^K$ and $w=xv$ is a length-additive factorization. Let $\lambda:=\partne{x([k])}$. If $b$ is a box of $\lambda$, let $\Rect(b)$ be the largest rectangle contained in $\lambda$ whose lower right corner is $b$.

We obtain a quiver $Q_{v, w}$ as follows: place one vertex in each box of $\lambda$. A vertex is mutable if it lies in a box $b$ of the Young diagram
	and the box immediately southeast of $b$ is also in $\lambda$. 
We add arrows between vertices in adjacent boxes, with all arrows pointing either up or to the left. Finally, in every $2 \times 2$ rectangle in $\lambda$, we add an arrow from the upper left box to the lower right box. Equivalently, we add an arrow from the vertex in box $a$ to the vertex in box $b$ if  

\begin{itemize}
\item $\Rect(b)$ is obtained from $\Rect(a)$ by removing a row or column.
\item $\Rect(b)$ is obtained from $\Rect(a)$ by adding a hook shape.
\end{itemize}

We then remove all arrows between two frozen vertices.

To obtain the \emph{rectangles seed} $\Sigma_{v, w}$, we label each vertex of $Q_{v,w}$ with a 
Pl\"ucker coordinate. For $b$ a box of $\lambda$, let $J(b):=\vertne{\Rect(b)}$. The label of the vertex in $b$ is $\Delta_{v^{-1}(J(b))}$. This labeled quiver $\Sigma_{v,w}$ gives a seed as in \cref{def:seed0}, where the Pl\"ucker coordinates labeling the vertices  give the extended cluster.
\end{definition}  


\begin{definition}\label{def:hook-maximal}
	Let $\lambda$ be a partition and let $b$ be a box of $\lambda$. We say that $\Rect(b)$ is \emph{frozen for $\lambda$} or \emph{$\lambda$-frozen}
	 if $b$  touches the south or east boundary of $\lambda$ (either along an edge or at the southeast corner). 
\end{definition}
Note that the $\lambda$-frozen rectangles correspond to the frozen vertices of 
$\Sigma_{v,w}$.
\begin{figure}[h]
\setlength{\unitlength}{1.3mm}
\begin{center}
 \begin{picture}(50,35)
  \put(5,32){\line(1,0){36}}
  \put(5,23){\line(1,0){36}}
  \put(5,14){\line(1,0){27}}
  \put(5,5){\line(1,0){18}}
  \put(5,5){\line(0,1){27}}
  \put(14,5){\line(0,1){27}}
  \put(23,5){\line(0,1){27}}
  \put(32,14){\line(0,1){18}}
  \put(41,23){\line(0,1){9}}
	 \put(8,26){$\ydiagram{1}$}
	 \put(17,26){$\ydiagram{2}$}
	 \put(26,26){$\ydiagram{3}$}
	 \put(35,26){$\ydiagram{4}$}
	 \put(8,18){$\ydiagram{1,1}$}
	 \put(17,18){$\ydiagram{2,2}$}
	 \put(26,18){$\ydiagram{3,3}$}
	 \put(8,10){$\ydiagram{1,1,1}$}
	 \put(17,10){$\ydiagram{2,2,2}$}
	 \put(16,26.5){{\vector(-1,0){6.5}}}
	 \put(25, 26.5){{\vector(-1,0){6}}}
	 \put(16,18){{\vector(-1,0){6.5}}}
	 \put(8.5,11.5){{\vector(0,1){4.5}}}
	 \put(8.5,20){{\vector(0,1){5}}}
	 \put(18,20){{\vector(0,1){5}}}
	 \put(10,25){{\vector(1,-1){6}}}
	 \put(19,25){{\vector(1,-1){6}}}
	 \put(10,16){{\vector(1,-1){6}}}
 \end{picture}
    \qquad
        \begin{picture}(50,35)
  \put(5,32){\line(1,0){36}}
  \put(5,23){\line(1,0){36}}
  \put(5,14){\line(1,0){27}}
  \put(5,5){\line(1,0){18}}
  \put(5,5){\line(0,1){27}}
  \put(14,5){\line(0,1){27}}
  \put(23,5){\line(0,1){27}}
  \put(32,14){\line(0,1){18}}
  \put(41,23){\line(0,1){9}}
		\put(9,26){\circle*{1}}
		\put(8,28){$237$}
		\put(18,26){\circle*{1}}
		\put(17,28){$236$}
	 \put(27,26){$\ydiagram{1}$}
		\put(26,28){$235$}	
	 \put(36,26){$\ydiagram{1}$}
		\put(35,28){$234$}
		\put(9,18){\circle*{1}}
		\put(9,19){$137$}
	 \put(17.5,18){$\ydiagram{1}$}
		\put(17,15){$367$}
	 \put(26.5,18){$\ydiagram{1}$}
		\put(25,15){$156$}
	 \put(8,10){$\ydiagram{1}$}
		\put(8,7){$127$}
	 \put(17,10){$\ydiagram{1}$}
		\put(17,7){$167$}
	 \put(16.5,26.5){{\vector(-1,0){6.5}}}
	 \put(25.5, 26.5){{\vector(-1,0){6}}}
	 \put(16.5,18){{\vector(-1,0){6.5}}}
	 \put(8.5,11.5){{\vector(0,1){4.5}}}
	 \put(8.5,20){{\vector(0,1){5}}}
	 \put(18,20){{\vector(0,1){5}}}
	 \put(10,25){{\vector(1,-1){6}}}
	 \put(19,25){{\vector(1,-1){6}}}
	 \put(10,16){{\vector(1,-1){6}}}

	\end{picture}
\end{center}
\caption{	\label{fig:combconstruct}  
	An example of $\Sigma_{v, w}$ for $k=3$, $n=7$, $v=w_K$ and $x=wv^{-1}=(3,5,7,1,2,4,6)$. 
	At the left, the $\lambda$-frozen rectangles are 
	$\ydiagram{4}$, $\ydiagram{3}$, $\ydiagram{3,3}$, $\ydiagram{2,2}$,
	$\ydiagram{2,2,2}$, $\ydiagram{1,1,1}$.  On the right, the same quiver is shown but rectangles have been replaced by the corresponding 
	$3$-element subsets of $[7]$, which should be interpreted as  Pl\"ucker coordinates.} 
\end{figure}
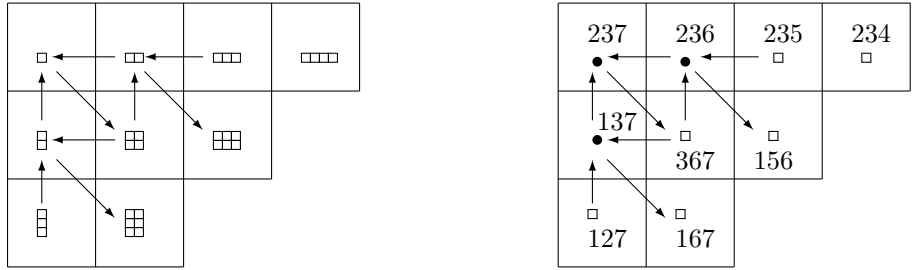

\begin{prop} \label{prop:rectanglesseed} Let ${\pi_k(\mathcal{R}_{v,w})}$ be a skew Schubert variety. Then the 
	rectangles seed 
	$\Sigma_{v, w}$ 
	is a seed for a cluster structure on the coordinate ring of (the affine cone over) 
	${\pi_k(\mathcal{R}_{v,w})}$, i.e. 
	$\CC[\widehat{\pi_k(\mathcal{R}_{v,w})}] = 
	\mathcal{A}(\Sigma_{v, w})$. 
\end{prop}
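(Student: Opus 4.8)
The strategy is to identify the rectangles seed $\Sigma_{v,w}$ with a seed of a cluster structure whose total algebra is already known to be $\CC[\widehat{\pi_k(\mathcal{R}_{v,w})}]$. There are two natural candidates for such a comparison seed, and the proof should go through both: first, the seed $\Sigma^{\target}_{v^{-1}(G)}$ coming from a plabic graph (as in \cref{thm:main2}), and second, a distinguished seed in Leclerc's cluster structure on $\CC[\widehat{\pi_k(\mathcal{R}_{v,w})}]$ coming from a standard reduced expression $\mathbf{w} = \mathbf{x}\mathbf{w}_K\mathbf{v}'$. Since \cref{sec:Karpman}, \cref{sec:Leclerc}, and \cref{sec:proofs} are devoted precisely to (i) building a bridge-decomposable plabic graph $G$ from Karpman's construction applied to $(w_K, \mathbf{x}\mathbf{w}_K)$ and showing that $\Sigma^{\target}_{v^{-1}(G)} = \Sigma_{v,w}$, (ii) showing Leclerc's seed for $(v, \mathbf{x}\mathbf{w}_K\mathbf{v}')$ also equals $\Sigma_{v,w}$, and (iii) checking that mutations are compatible so that the two cluster structures coincide and equal the coordinate ring, the cleanest route is to state this proposition as an immediate corollary once those ingredients are in place.

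\textbf{Steps, in order.} First I would invoke Leclerc's theorem \cite{Leclerc}: since $v \in W^K_{\max}$, the projection $\pi_k$ is an isomorphism from $\mathcal{R}_{v,w}$ onto $\pi_k(\mathcal{R}_{v,w})$, and since $w = xv$ is length-additive, Leclerc's subalgebra coincides with the full coordinate ring $\CC[\widehat{\mathcal{R}_{v,w}}]$, hence with $\CC[\widehat{\pi_k(\mathcal{R}_{v,w})}]$. This gives us a cluster algebra structure $\mathcal{A}(\Sigma^{\mathrm{Lec}}_{v,\mathbf{w}})$ on the coordinate ring for any choice of reduced expression $\mathbf{w}$; choose the standard reduced expression $\mathbf{w} = \mathbf{x}\mathbf{w}_K\mathbf{v}'$ of \cref{def:stdredexpression}. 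Second, I would quote the identification (to be proved in \cref{sec:Leclerc}) that the labeled seed $\Sigma^{\mathrm{Lec}}_{v,\mathbf{w}}$ for this standard reduced expression is exactly the rectangles seed $\Sigma_{v,w}$ of \cref{def:labeledquiver}: the Pl\"ucker coordinate $\Delta_{v^{-1}(J(b))}$ with $J(b) = \vertne{\Rect(b)}$ matches the module-theoretic cluster variable Leclerc attaches to the box $b$, and the quiver $Q_{v,w}$ (arrows up/left plus the $2\times 2$ diagonal arrows, equivalently the $\Rect$-remove-row/column and $\Rect$-add-hook rules) matches Leclerc's $\mathrm{Ext}^1$-quiver. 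Third, conclude $\CC[\widehat{\pi_k(\mathcal{R}_{v,w})}] = \mathcal{A}(\Sigma^{\mathrm{Lec}}_{v,\mathbf{w}}) = \mathcal{A}(\Sigma_{v,w})$. (Alternatively, and as a consistency check, the same equality follows by combining \cref{thm:main2} with the \cref{sec:Karpman} identification $\Sigma^{\target}_{v^{-1}(G)} = \Sigma_{v,w}$, giving a second independent proof.)

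\textbf{Main obstacle.} The substantive work is not in this proposition itself but in the two identifications it rests on, and of these the harder is matching Leclerc's seed with the rectangles seed. Showing the \emph{quivers} agree requires translating Leclerc's description of arrows in terms of $\mathrm{Hom}$/$\mathrm{Ext}^1$ of preprojective-algebra modules into the purely combinatorial rectangle rules; in particular one must verify there are no extra arrows and that $2$-cycles cancel in the same way on both sides. Showing the \emph{cluster variables} agree requires computing that Leclerc's modules, for a standard reduced expression, have Pl\"ucker-coordinate images $\Delta_{v^{-1}(\vertne{\Rect(b)})}$ — this is where the length-additive factorization $w = x w_K v'$ and the columnar reading order are used essentially, since they pin down which subexpression of $\mathbf{w}$ controls which box. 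A secondary but real subtlety is bookkeeping the relabeling by $v^{-1}$: one must check the frozen/mutable split (a box $b$ is mutable iff the box southeast of $b$ lies in $\lambda$, equivalently $\Rect(b)$ is not $\lambda$-frozen) is preserved under all three comparisons, and that boundary lollipops land in $[k]$ exactly as required in \cref{thm:main2}. Once these are settled in \cref{sec:Karpman} and \cref{sec:Leclerc}, the proof of \cref{prop:rectanglesseed} is a one-line assembly.
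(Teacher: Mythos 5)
Your proposal matches the paper's proof: the paper states this proposition as an immediate corollary of \cref{thm:seedscoincide} (which identifies $\Gamma_{U_{v,\mathbf{w}}}$ with $Q_{v,w}$ and the labels $\Delta_{P_j}$ with $\Delta_{v^{-1}(J(b))}$ via \cref{thm:morphisms} and \cref{lem:pluecker}), combined with Leclerc's \cref{Lec-seed}(e) which gives $\mathcal{A}(\Sigma^{\mathrm{Lec}}_{v,\mathbf{w}}) = \CC[\mathcal{R}_{v,w}]$ in the length-additive case. One small caveat on your ``alternative route'': deducing this proposition from \cref{thm:main2} is not genuinely independent, since the proof of \cref{thm:main2} itself relies on the Leclerc-side identification through the mutation compatibility argument of \cref{mutation}; it is, as you say, only a consistency check.
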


This result follows as an immediate corollary from \cref{thm:seedscoincide}, whose proof is the focus of \cref{sec:Leclerc}. 

In the following section, we discuss the generalized plabic graph whose dual quiver (with the target labeling) coincides
with  $\Sigma_{v, w}$, as well as the connections of this theorem to \cref{conj:vague}.

Recall that if $v\in W^K_{max}$ and $\lambda=\partsw{v^{-1}([k])}$, then ${\pi_k(\mathcal{R}_{v,w_0})}$ is the open Schubert variety $X^\circ_\lambda$. So as an immediate corollary to this result, we obtain the following.

\begin{cor} Let $v \in {W^K_{max}}$ and let $\lambda :=\partsw{v^{-1}([k])}$. Then the rectangles seed 
	$\Sigma_{v, w_0}$ 
	is a seed for the cluster structure on 
	$X^\circ_\lambda$, 
	i.e. 
	$\CC[\hat{X^\circ_\lambda}]= 
	\mathcal{A}(\Sigma_{v, w_0}).$ 
\end{cor}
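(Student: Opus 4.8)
The plan is to deduce the corollary directly from \cref{prop:rectanglesseed}, which in turn rests on \cref{thm:seedscoincide} (proved in \cref{sec:Leclerc}). So the only work here is to check that the corollary is the specialization of \cref{prop:rectanglesseed} to the case $w = w_0$, and that the two descriptions of the relevant partition agree. First I would observe that since $v \in W^K_{\max}$ and $w_0$ is the longest element, the factorization $w_0 = x v$ with $x = w_0 v^{-1}$ is automatically length-additive: indeed $\ell(w_0 v^{-1}) = \ell(w_0) - \ell(v^{-1}) = \ell(w_0) - \ell(v)$, using that $v \leq w_0$ and the standard length formula for multiplication by $w_0$. Hence $\pi_k(\mathcal{R}_{v,w_0})$ is a skew Schubert variety in the sense of the paper, and is moreover the open Schubert variety $X^\circ_\lambda$ by the identification recalled just before the corollary, where $\lambda = \partsw{v^{-1}([k])}$.

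Next I would reconcile the two partitions. In \cref{def:labeledquiver} the rectangles seed $\Sigma_{v,w_0}$ is built from $\lambda' := \partne{x([k])}$ where $x = w_0 v^{-1}$, whereas the corollary phrases the answer in terms of $\lambda = \partsw{v^{-1}([k])}$. The key point is that these two Young diagrams coincide: applying $w_0$ to a $k$-subset $I$ complements-and-reverses it, which on the level of lattice paths interchanges the ``northeast'' and ``southwest'' readings and the roles of $k$ and $n-k$ in a way that exactly matches the passage from $\partne{\cdot}$ to $\partsw{\cdot}$. Concretely, $x([k]) = w_0 v^{-1}([k])$, and one checks $\partne{w_0 J} = \partsw{J}$ for any $J \in \binom{[n]}{k}$ (equivalently, $\partne{w_0 J}$ is the transpose-free reflection of $\partsw{J}$; since both bijections land in Young diagrams in a $k\times(n-k)$ box and the $w_0$-twist is precisely the symmetry relating the two conventions introduced in the two \texttt{remark} environments after the Grassmannian permutation discussion, they agree). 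So $\lambda' = \lambda$ and $\Sigma_{v,w_0} = \Sigma_{v,w}$ in the notation of the corollary.

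Finally, with $w = w_0$ substituted into \cref{prop:rectanglesseed}, the conclusion $\CC[\widehat{\pi_k(\mathcal{R}_{v,w_0})}] = \mathcal{A}(\Sigma_{v,w_0})$ becomes exactly $\CC[\hat{X^\circ_\lambda}] = \mathcal{A}(\Sigma_{v,w_0})$ once we substitute the identification $\pi_k(\mathcal{R}_{v,w_0}) = X^\circ_\lambda$. This completes the proof. I do not anticipate a serious obstacle: the only genuinely substantive input, \cref{thm:seedscoincide} / \cref{prop:rectanglesseed}, is being cited, and the remaining content is the bookkeeping identity $\partne{w_0 v^{-1}([k])} = \partsw{v^{-1}([k])}$ together with the length-additivity of $w_0 = (w_0 v^{-1}) v$. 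If anything requires care, it is getting the lattice-path/complementation conventions exactly right so that the two partition formulas visibly match; this should be stated cleanly as a short lemma or handled inline with one sentence referencing the two reading conventions set up earlier.
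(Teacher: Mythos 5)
Your proof is correct and follows essentially the same route as the paper: identify $\pi_k(\mathcal{R}_{v,w_0})$ with $X^\circ_\lambda$ (and note that $w_0=(w_0v^{-1})v$ is automatically length-additive since $v\le w_0$), then apply \cref{prop:rectanglesseed} with $w=w_0$. The reconciliation $\partne{w_0 v^{-1}([k])}=\partsw{v^{-1}([k])}$ you carry out is true (reading the boundary path of the same Young diagram from the SW corner rather than the NE corner replaces each step label $i$ by $n+1-i$), but it is not actually needed to prove the corollary as stated: the $\lambda$ in the statement only names the Schubert variety and plays no role in the definition of $\Sigma_{v,w_0}$, so the conclusion is immediate once the variety is identified.
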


\section{Obtaining the rectangles seed from a bridge graph}\label{sec:Karpman}


Here we give a construction of a special kind of plabic graph -- a \emph{bridge graph} -- from a pair of permutations \cite{Karpman}, and explain
how to use this construction to produce the rectangles seed.

\subsection{Bridge graphs} 

One can obtain a plabic graph with arbitrary trip permutation by successively adding ``bridges" 
(see \cref{fig:bridge}) 
to a graph consisting entirely of lollipops. The plabic graphs created this way are \emph{bridge graphs}.
We will define them below, after reviewing 
the notion of  \emph{(bounded) affine permutations}. 

\begin{figure}
\centering
\includegraphics[width=0.75\textwidth]{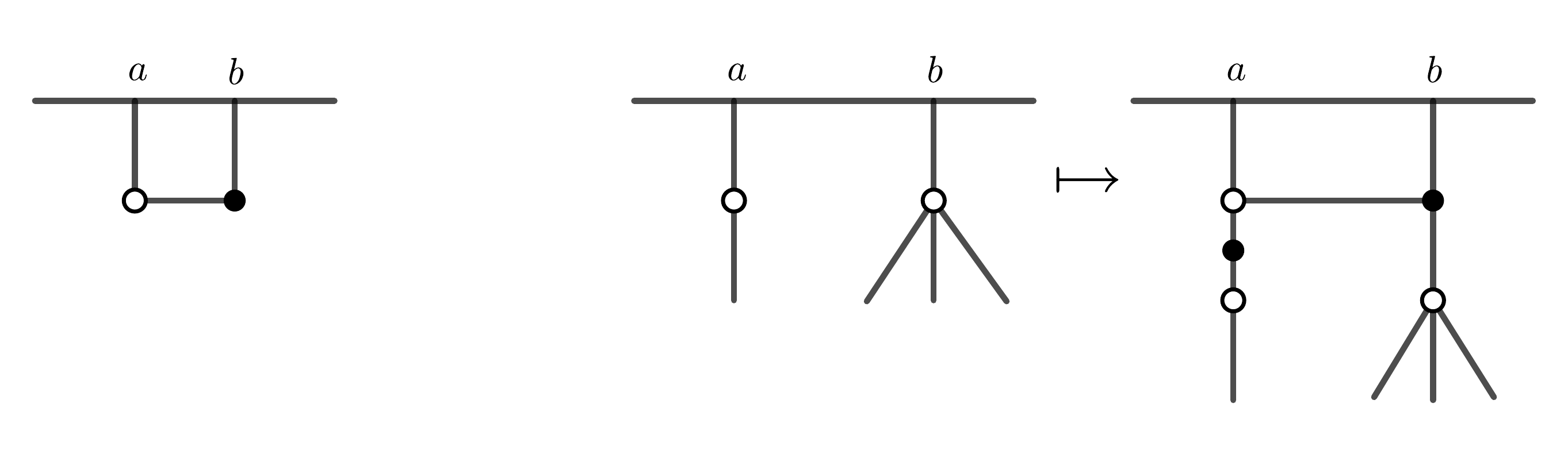}
\caption{\label{fig:bridge} On the left, an $(a~ b)$-bridge. On the right, an example of adding an $(a~ b)$-bridge to a plabic graph.}
\end{figure}

	An \emph{affine permutation} of order $n$ is a bijection $f: \Z \to \Z$ such that $f(i+n)=f(i)+n$ for all $i \in \Z$.

\begin{defn}[The bounded affine permutation associated to a decorated permutation]
If $\sigma$ is a decorated permutation of $[n]$, we define the bounded affine permutation $\tilde{\sigma}$ on $[n]$ as

\[\tilde{\sigma}(i):=\left\{ 
\begin{array}{l l} \sigma(i) & \text{if } \sigma(i)> i \text{ or } i \text{ is a black fixed point}\\
\sigma(i)+n & \text{if } \sigma(i)<i \text{ or } i \text{ is a white fixed point}
\end{array}
\right.
\]

and extend periodically to $\Z$.
In other words, to obtain a bounded affine permutation, add $n$ to all antiexcedances of $\sigma$ and then extend periodically to $\Z$.
\end{defn}

An \emph{$(a ~ b)$-bridge} is a collection of two vertices and three edges inserted at boundary vertices
$a$ and $b$ as in the left of 
\cref{fig:bridge}.
Let $G$ be a plabic graph with (bounded affine) trip permutation $\tilde{\sigma}_G$. For a pair of boundary vertices $a< b$, we say that the $(a ~ b)$-bridge is \emph{valid} if $\tilde{\sigma}_G(a)>\tilde{\sigma}_G(b)$, all boundary vertices $c$ between $a$ and $b$ are lollipops, and if $a$ (resp. $b$) is a lollipop it is white (resp. black). 

To add a bridge to $G$, choose boundary vertices $a, b$ such that the $(a ~ b)$-bridge is valid. Place a white (resp. black) vertex in the middle of the edge adjacent to $a$ (resp. $b$) and put an edge between these two vertices; if $a$ (resp. $b$) is a lollipop, we use the boundary leaf as the white (resp. black) vertex of the bridge. We then add degree two vertices as necessary to make the resulting graph bipartite.  A plabic graph obtained by successively adding valid bridges to a lollipop graph is called a \emph{bridge graph}.

Adding a bridge changes the trip permutation in a predictable way.

\begin{lem}[\protect{\cite[Proposition 2.5]{Karpman}}] \label{lem:bridgeperm} Suppose $G$ is a reduced plabic graph with (bounded affine) trip permutation $\tilde{\sigma}_G$. Let $1\leq a < b \leq n$ be vertices such that the $(a ~b)$-bridge is valid and let $G'$ be the plabic graph obtained by adding an $(a ~ b)$-bridge to $G$. Then $G'$ is reduced and has trip permutation $\tilde{\sigma}_G \circ (a ~ b)$.
\end{lem}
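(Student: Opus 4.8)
The statement to prove is \cref{lem:bridgeperm}: adding a valid $(a~b)$-bridge to a reduced plabic graph $G$ with bounded affine trip permutation $\tilde\sigma_G$ yields a reduced plabic graph $G'$ with trip permutation $\tilde\sigma_G\circ(a~b)$. Since this is attributed to Karpman's \cite[Proposition 2.5]{Karpman}, the honest plan is to cite it; but to make the excerpt self-contained I would reconstruct the argument as follows. The plan is to analyze how the trips of $G$ are modified by the insertion of the new white vertex $u$ (on the edge adjacent to $a$) and black vertex $v$ (on the edge adjacent to $b$) joined by the new edge $uv$.

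First I would set up the local picture at the bridge. A trip entering the bridge region along an old strand near $a$ or $b$ either passes through without interacting with the new edge, or gets diverted across $uv$. The key observation is that the ``rules of the road'' (turn maximally right at black, maximally left at white) force exactly the following rerouting: any trip that would have exited at $a$ now, upon reaching the white vertex $u$, turns left onto the bridge edge $uv$, arrives at the black vertex $v$, turns right, and exits where the old strand through $b$ went; symmetrically, a trip that would have exited at $b$ gets pushed onto the bridge and exits at $a$'s old destination. Because all boundary vertices strictly between $a$ and $b$ are lollipops (not met by any through-trip) and because the validity condition $\tilde\sigma_G(a)>\tilde\sigma_G(b)$ guarantees the two old strands ``cross'' in the right sense, the only change to the trip permutation is precomposition with the transposition $(a~b)$ — in bounded-affine language this is exactly $\tilde\sigma_{G'} = \tilde\sigma_G\circ(a~b)$ (one checks the antiexcedance bookkeeping: swapping $a,b$ swaps which of the two is the antiexcedance, consistent with adding $n$ appropriately). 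I would draw the small number of local cases (two strands through, versus one or both endpoints being lollipops) and check each.

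Next I would verify that $G'$ is reduced. The cleanest route is to use the characterization already quoted in \cref{rem:moves}: a leafless plabic graph without isolated components is reduced iff no graph in its move-equivalence class admits a parallel edge reduction (R1), equivalently (by Postnikov \cite[Theorem 13.4]{Postnikov}) iff its trip permutation has no ``bad self-intersections'' — concretely, iff the number of trips is correct and no trip uses the same edge twice in the same direction, etc. Since $\tilde\sigma_{G'} = \tilde\sigma_G\circ(a~b)$ and adding a bridge adds exactly one face and one independent cycle while keeping the graph connected and leafless, a count of faces versus $\ell$ of the decorated permutation (or: the dimension of the associated positroid cell goes up by exactly one, matching $\ell(\sigma_{G'}) = \ell(\sigma_G)+1$ for a valid bridge) shows $G'$ is reduced. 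Alternatively, one shows directly that no trip in $G'$ self-intersects badly: the only new strand segment is the one crossing $uv$, and the validity hypothesis is precisely what prevents it from creating a bad double crossing.

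The main obstacle is the case analysis for the trip rerouting when one or both of $a,b$ are lollipops, since then the ``old strand through $a$'' is degenerate (the white lollipop at $a$ sends the trip $a\to a$), and one must check that the bridge converts the white lollipop at $a$ together with the black lollipop (or through-strand) at $b$ into the correct non-fixed behavior — this is where the colors in the validity condition (white at $a$, black at $b$) are used. Once the local rerouting is pinned down in every case, both conclusions (the new trip permutation and reducedness) follow quickly, so I expect the bulk of the work to be in carefully drawing and checking these finitely many local configurations rather than in any global argument.
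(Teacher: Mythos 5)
The paper itself gives no proof of this lemma --- it is stated as a citation to Karpman's \cite[Proposition~2.5]{Karpman}, so there is no ``paper proof'' to compare against; you are reconstructing (or, honestly, re-deriving) her argument.

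Your overall plan --- local rerouting analysis at the bridge, plus a reducedness check via a length/face count or via no bad self-crossings --- is the right shape for such a proof. However, the central rerouting step as written is internally inconsistent and does not unambiguously produce the claimed formula. You say (i) ``any trip that would have exited at $a$ \dots exits where the old strand through $b$ went,'' and (ii) ``symmetrically, a trip that would have exited at $b$ \dots exits at $a$'s old destination.'' If (ii) means the new exit is $\tilde\sigma_G(a)$, that gives $\tilde\sigma_{G'}(\tilde\sigma_G^{-1}(b))=\tilde\sigma_G(a)$, whereas the natural reading of (i) --- the trip now terminates at $b$, because it simply follows the short boundary stub at $b$ --- gives $\tilde\sigma_{G'}(\tilde\sigma_G^{-1}(a))=b$. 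These are not related by any single transposition; only one of the two readings can be ``symmetric'' with the other, and only one candidate ($\tilde\sigma_{G'}(\tilde\sigma_G^{-1}(a))=b$, $\tilde\sigma_{G'}(\tilde\sigma_G^{-1}(b))=a$, i.e.\ $\tilde\sigma_{G'}=(a~b)\circ\tilde\sigma_G$, postcomposition) arises from a clean local analysis, yet the lemma claims \emph{pre}composition $\tilde\sigma_G\circ(a~b)$, i.e.\ $\tilde\sigma_{G'}(a)=\tilde\sigma_G(b)$ and $\tilde\sigma_{G'}(b)=\tilde\sigma_G(a)$. Whether you get pre- or post-composition is controlled entirely by which endpoint gets the white vertex and which the black one, together with the ``rules of the road'' convention, and you never pin these down. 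So while your conclusion matches the lemma, the argument you give does not actually force it; a correct proof must track exactly one of the two trips through the new edge $uv$ (the trip \emph{entering} at $a$, not the one exiting), work out the maximal-left/maximal-right choices at $u$ and at $v$ from the stated colors and orientation, and verify the combination yields $\tilde\sigma_{G'}(a)=\tilde\sigma_G(b)$, $\tilde\sigma_{G'}(b)=\tilde\sigma_G(a)$. The antiexcedance bookkeeping you sketch is fine --- the validity hypothesis $\tilde\sigma_G(a)>\tilde\sigma_G(b)\geq b>a$ guarantees the swapped values still define a bounded affine permutation --- but this check is downstream of the unresolved direction issue. The reducedness argument is only sketched, but the idea (one new face, $\ell$ goes up by exactly one) is sound; to make it rigorous you would appeal to Postnikov's criterion that a leafless graph is reduced iff its number of faces equals $\ell(\pi)+1$ plus the number of isolated boundary vertices, or to Karpman's own verification.
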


\begin{rem} Let $G_0$ be a lollipop graph, $(a_1 ~ b_1), \dots, (a_r ~ b_r)$ a sequence of bridges, and $G_i$ the graph obtained from adding bridge $(a_i ~ b_i)$ to $G_{i-1}$. We also assume that $(a_i ~ b_i)$ is a valid bridge for $G_{i-1}$. In the construction given above, new bridges are always added at the boundary and ``push" the existing faces towards the center of the disk (see \cref{ex:bridge}). One can also obtain $G_r$ from an empty graph by adding bridges in the opposite order, placing new bridges ``below" existing bridges, and adding lollipops at the end if necessary.  We will always use the former algorithm, but the latter can be useful as well.
\end{rem}

If $G'$ is obtained from a plabic graph $G$ by adding a valid bridge, all faces of $G'$ correspond to faces in $G$, except for the face bounded by the bridge.

\begin{lem} Suppose $G$ is a reduced plabic graph, $1\leq a < b \leq n$ vertices such that the $(a ~b)$-bridge is valid, and $G'$ the plabic graph resulting from adding an $(a ~ b)$-bridge to $G$. Then, using the target labeling, the labels of faces in $G$ coincide with the labels of corresponding faces in $G'$.
\end{lem}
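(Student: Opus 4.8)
The plan is to analyze how adding a valid $(a~b)$-bridge affects trips, and hence the target labeling of each face. First I would set up notation: let $G'$ be obtained from $G$ by inserting the $(a~b)$-bridge, which places a white vertex $u$ on the edge at boundary vertex $a$, a black vertex $v$ on the edge at boundary vertex $b$, and an edge $uv$ between them. Every face of $G'$ other than the new bridge face is naturally identified with a face of $G$, so the claim is about comparing the target label of a face $F$ of $G$ with the target label of the corresponding face $F'$ of $G'$. The key observation is that a trip $T_{i \to j}$ in $G$ is modified in a very controlled way when it passes through the bridge: by \cref{lem:bridgeperm}, the trip permutation changes from $\tilde\sigma_G$ to $\tilde\sigma_G \circ (a~b)$, so only the trips that start (or end) at $a$ or $b$ have their endpoints changed, and moreover the change amounts to swapping the roles of $a$ and $b$.

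The main step is a local trip-chasing argument. I would trace the trips in $G'$ near the bridge, using the rules of the road (turn right at black, left at white). A trip entering the bridge region along the edge previously adjacent to $a$ now hits the white vertex $u$, turns left onto $uv$, reaches the black vertex $v$, turns right onto the edge toward $b$; similarly a trip coming in toward $b$ gets rerouted toward $a$. Concretely: the trip that in $G$ was $T_{a \to \sigma(a)}$ becomes, in $G'$, the trip $T_{b \to \sigma(a)}$ (entering at $b$, crossing the bridge, then continuing exactly as the old trip from $a$ did), and dually $T_{b \to \sigma(b)}$ becomes $T_{a \to \sigma(b)}$. All other trips are unaffected away from a possible short detour, but crucially the \emph{target} endpoint of every trip is either unchanged, or is one of $a, b$ and gets swapped to the other. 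Now I would check what happens to each face $F$ (other than the bridge face): the trips of $G'$ passing a given side of $F$ are in bijection with the trips of $G$ passing the corresponding side of the corresponding face, and under this bijection the target endpoint of each trip is preserved. Indeed, for trips not ending at $a$ or $b$ the endpoint is literally the same; for the two trips whose endpoints are $a$ or $b$, one must verify that the face lies on the same side of the relabeled trip — this is where one uses that the bridge is inserted at the boundary between $a$ and $b$ with all intermediate boundary vertices being lollipops, so the new trip through the bridge and a face $F$ strictly interior to $G$ never has $F$ on its ``bridge portion,'' and the endpoint-swap $a \leftrightarrow b$ is consistent across all faces. Hence $\F_{\target}(G)(F) = \F_{\target}(G')(F')$ for every face $F$ of $G$.

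The hard part will be making the local trip-chasing fully rigorous, in particular confirming that the target label (as opposed to the source label) is exactly the quantity preserved — the source labeling would behave differently under the same argument, since the source endpoint of a rerouted trip changes in a way that is not matched by a face-side bijection. I expect the cleanest way to organize this is: (1) establish the explicit bijection between faces of $G$ and non-bridge faces of $G'$; (2) establish the bijection between trips of $G$ and trips of $G'$, recording for each trip its source and target in both graphs; (3) show that for any non-bridge face $F'$ and any trip $T'$ of $G'$, $F'$ lies to the left of $T'$ if and only if the corresponding face $F$ lies to the left of the corresponding trip $T$ of $G$; (4) conclude that the multiset of ``target labels to the left'' is the same, which by \cref{def:faces} is precisely the target label of the face. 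Steps (1)–(3) are essentially pictures, but step (3) requires care in the two cases where the trip's target is $a$ or $b$; I would handle those by noting that the rerouting through the bridge happens in a thin strip adjacent to the boundary arc between $a$ and $b$, which (since that arc contains only lollipops) borders no non-bridge face, so the ``side'' of $F'$ relative to $T'$ is determined entirely by the portion of $T'$ that coincides with the old trip $T$.
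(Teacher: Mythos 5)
The paper states this lemma without proof, so there is nothing to compare your argument against directly; what matters is whether your proposal actually works. The overall approach — match trips of $G$ and $G'$ so that targets are preserved, then show every non-bridge face lies on the same side of each matched pair — is the natural one, and I believe it can be completed along the lines you sketch. However, two points in your write-up are wrong as stated, and one of them is the crux of the argument.

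First, a bookkeeping slip: the rerouted trips are the two whose \emph{source} is $a$ or $b$ (equivalently, whose targets are $\sigma_G(a)$ and $\sigma_G(b)$), not the two whose target is $a$ or $b$. You state this correctly when you write $T_{a\to\sigma(a)}\leadsto T_{b\to\sigma(a)}$ and $T_{b\to\sigma(b)}\leadsto T_{a\to\sigma(b)}$, but then contradict it with ``the target endpoint... is one of $a,b$ and gets swapped'' and again with ``the two cases where the trip's target is $a$ or $b$.'' Trips with target $a$ or $b$ change only in the degenerate case that $a$ or $b$ is a lollipop; in general they are untouched.

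Second, and more substantively: the key justification ``the thin strip... borders no non-bridge face, so the side of $F'$ relative to $T'$ is determined entirely by the old portion of $T$'' is not literally true, and as phrased it would actually give the wrong answer. The rerouted portion of $T'=T_{b\to\sigma_G(a)}^{G'}$ consists of the segments $b\to v$, $v\to u$, and $u\to(\text{interior})$; along $b\to v$ the face immediately clockwise of $b$ (call it $F_1'$, which is \emph{not} the bridge face) is on the left of $T'$, while the corresponding segment of $T$ (namely $a\to$ interior) has only the face clockwise of $a$ on its left. So the rerouted part of $T'$ most certainly does border non-bridge faces, and naively it looks like $T'$ picks up $F_1'$ on its left ``for free.'' The lemma is true anyway, but because of a quieter topological fact that your phrasing obscures: the arc $[a,u]$ of $T$, the arc $[b,v]\cup[v,u]$ of $T'$, and the boundary arc from $a$ to $b$ together bound \emph{exactly} the new bridge face. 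Since non-bridge faces are disjoint from the bridge face, and $T$ and $T'$ agree outside the closure of the bridge face, they separate the disk minus the bridge face in the same way; in particular every non-bridge face is on the same side of $T$ as of $T'$. (Concretely, $F_1$ is already on the left of $T_{a\to\sigma_G(a)}^G$, just further along its interior path, so no contradiction arises.) Your step (3) would be correct if you replaced the ``borders no non-bridge face'' claim with this ``swept region is exactly the bridge face'' argument; as written, the justification has a genuine gap.
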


It is not hard to find the (target) label of the remaining face of $G'$.

\begin{definition} \label{def:grassmannnecklace} Let $\sigma$ be a decorated permutation of $[n]$. The \emph{Grassmann necklace} of $\sigma$ is a sequence $\mathcal{J}=(J_1, J_2,\dots, J_n)$ of subsets of $[n]$ where $J_1:=\set{i \in [n] \ \vert \ \sigma^{-1}(i) > i \text{ or }i \text{ is a white fixed point}}$ and 
 \[J_{i+1}:= \begin{cases}
(J_i \setminus \set{i}) \cup \set{\sigma(i)} & \text{ if } i \in J_i\\
J_i & \text{ else}.
\end{cases}
\]
\end{definition}

If $\sigma_{G'}$ is the trip permutation of $G'$, the boundary faces of $G'$ are labeled with the Grassmann necklace of $\sigma_{G'}$ \cite{OPS}. 
So the label of the face bounded by the $(a ~b)$-bridge is just the $(a+1)^{st}$ entry of the Grassmann necklace of $\sigma_{G'}$.

\subsection{Bridge graphs from pairs of permutations}

In \cite{Karpman}, Karpman gives an algorithm for producing a bridge graph with trip permutation $vw^{-1}$ from a pair $(v, \textbf{w})$, where $v^{-1}\in W^K_{\max}$ and $\textbf{w}$ is a reduced expression for some permutation $w \geq v$. We use a special case of her construction in the following definition.

\begin{defn} \label{bridgegraph} 
	Let $w\in W$ with a  length-additive factorization $w=xw_K$, where
	$x\in {^KW}$.
	Let $\textbf{x}=s_{i_r} \dots s_{i_1}$ be the columnar expression for $x$ (see \cref{rem:columnar}) 
	and let $\textbf{w}$ be a standard reduced expression for $w$ 
	(\cref{def:stdredexpression}).
We define $B_{w_K, \textbf{w}}$ to be the bridge graph obtained from the lollipop graph with white lollipops $[k]$ and black lollipops $[k+1, n]$ with bridge sequence $s_{i_1}, s_{i_2}, \dots, s_{i_r}$. 
\end{defn}

By \cite{Karpman}, $B_{w_K, \textbf{w}}$ is a reduced plabic graph. By \cref{lem:bridgeperm}, $B_{w_K, \textbf{w}}$ has (decorated) trip permutation $x^{-1}$ with fixed points in $[k]$ colored white.

\begin{example} \label{ex:bridge} Let $k=2$, $n=5$, $x=(3, 5, 1, 2, 4)$ and $w=xw_K$. The partition $\partne{x([2])}$ corresponding to $x([2])=\{x(1), x(2)\}$ is $(3, 2)$, and the columnar expression for $x$ is $\textbf{x}=s_4 s_2 s_3 s_1 s_2$. So the bridge sequence for $B_{w_K, \textbf{w}}$ is $(2 ~ 3), (1 ~2), (3~ 4), (2~3), (4 ~5)$. To build $B_{w_K, \textbf{w}}$, we start with the lollipop graph

\begin{center}
\includegraphics[width=0.3\textwidth]{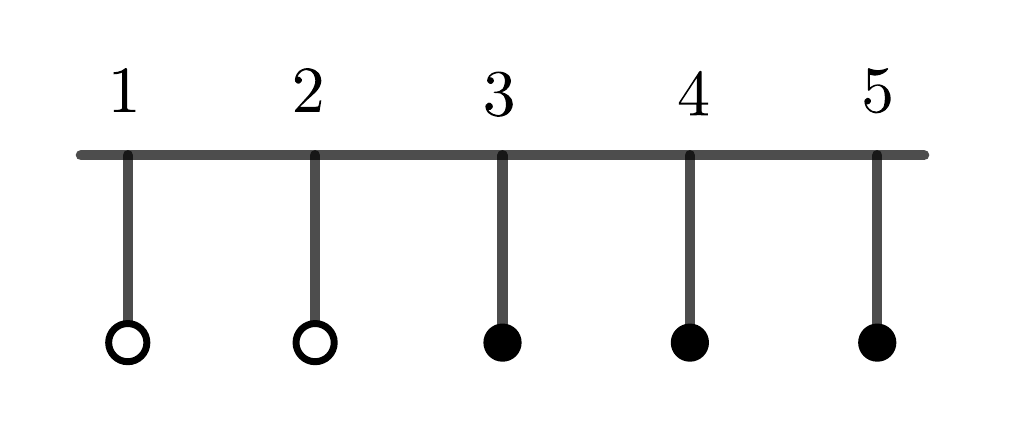}
\end{center}

then add the bridge $(2~3)$,

\begin{center}
\includegraphics[width=0.3\textwidth]{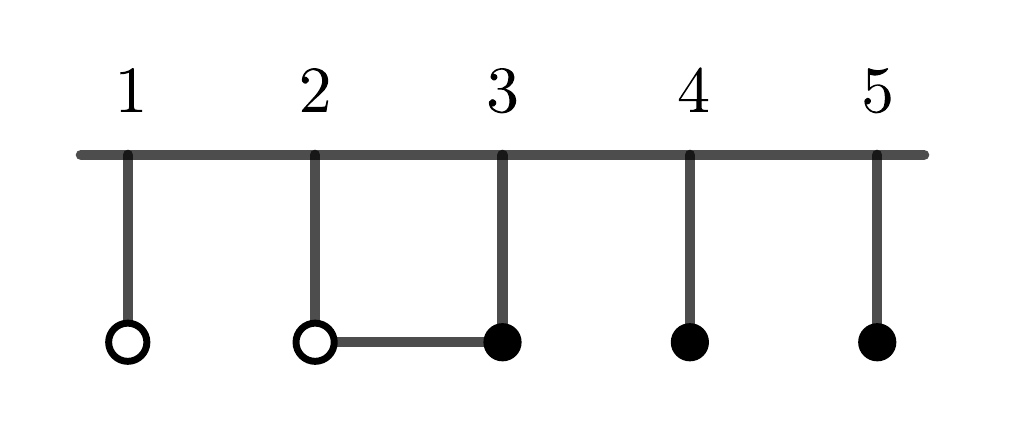}
\end{center}

the bridge $(1 ~ 2)$, 

\begin{center}
\includegraphics[width=0.3\textwidth]{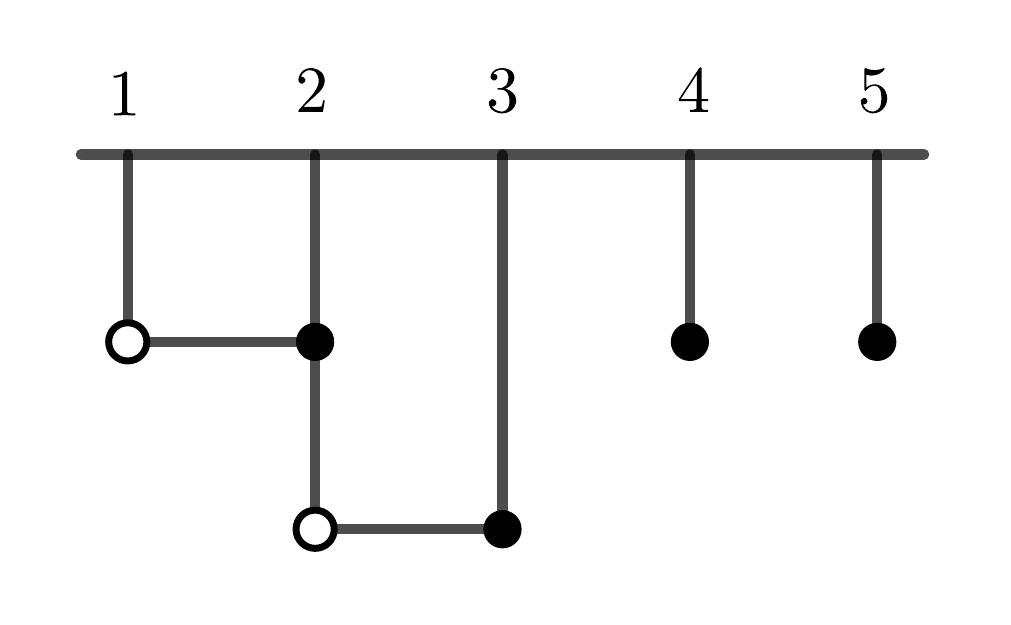}
\end{center}

and the bridges $ (3~ 4), (2~3), (4 ~5)$ to obtain the following graph.

\begin{center}
\includegraphics[height=3in]{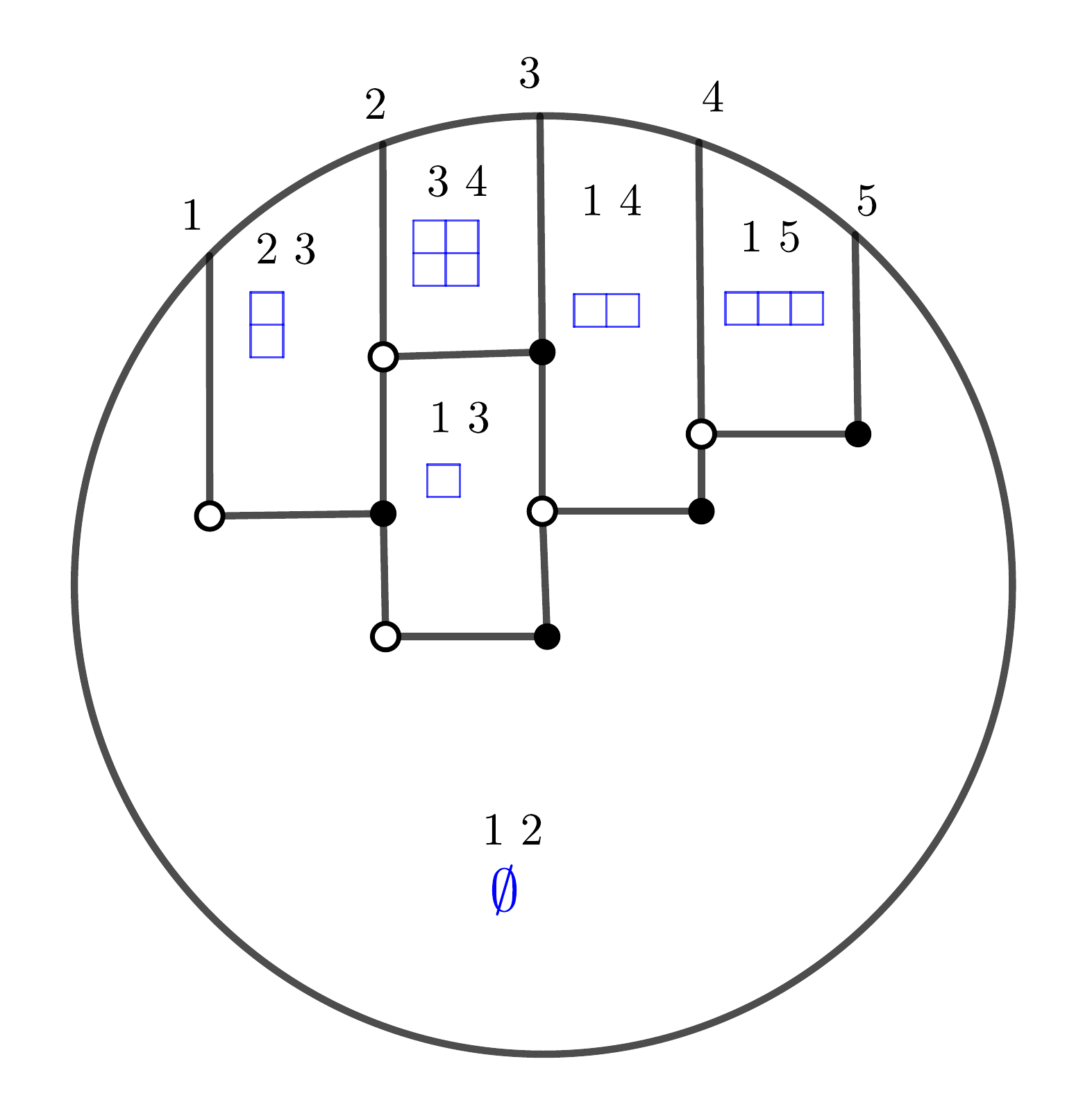}
\end{center}

Note that the (target) face labels of $B_{w_K, \textbf{w}}$ correspond to rectangles that fit inside of $\partne{x([2])}.$
\end{example}

%
%

The structure of $B_{w_K, \textbf{w}}$ will follow entirely from the structure of its Grassmann necklace. First, we need the following simple lemma. 

\begin{lemma} \label{lem:minlengthantiexc}
Let $x\in {^K W}$.

\begin{enumerate}
\item The fixed points of $x$ are $[p] \cup [q, n]$ for some $0 \leq p \leq k < q \leq n+1$.
\item For $i \in [k]$, $x(i)\geq i$.
\end{enumerate}
\end{lemma}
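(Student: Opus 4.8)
The plan is to prove \cref{lem:minlengthantiexc} directly from the characterization of $x \in {^KW}$ as a minimal-length coset representative for $W/W_K$, using the standard fact that $x \in {^KW}$ if and only if $\ell(xs_i) > \ell(x)$ for all $s_i \in W_K$, equivalently $x(i) < x(i+1)$ for all $i \in [k-1] \cup [k+1, n-1]$. In other words, the one-line notation of $x$ is increasing on the positions $1, 2, \dots, k$ and increasing on the positions $k+1, \dots, n$. This ``two increasing runs'' description is the key structural input, and everything else is elementary combinatorics of such permutations.

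For part (1), I would argue as follows. Suppose $x(j) = j$ is a fixed point with $j \le k$. Since $x$ is increasing on $[1,k]$ and $x(j) = j$, for any $i < j$ we have $x(i) < x(j) = j$, and since $x$ is injective with values in $[n]$ this forces $x(i) = i$ for all $i \le j$ (the values $x(1) < x(2) < \dots < x(j) = j$ must be exactly $1, 2, \dots, j$). Hence the set of fixed points in $[1,k]$ is an initial segment $[p]$ for some $0 \le p \le k$. Symmetrically, if $x(j) = j$ with $j \ge k+1$, then using that $x$ is increasing on $[k+1, n]$ and applying the same argument to the positions $j, j+1, \dots, n$ (the values $x(j) = j < x(j+1) < \dots < x(n) \le n$ must be exactly $j, j+1, \dots, n$), we get that $x$ fixes everything in $[j, n]$. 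So the fixed points in $[k+1, n]$ form a final segment $[q, n]$ for some $k+1 \le q \le n+1$ (with $q = n+1$ meaning no fixed points there). Combining, the fixed point set is $[p] \cup [q, n]$ with $0 \le p \le k < q \le n+1$, which is the claim; I should also note a Grassmannian permutation has at most one descent (in position $k$), so there cannot be fixed points straddling position $k$ unless forced, but the initial/final segment argument already handles this cleanly.

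For part (2), I want to show $x(i) \ge i$ for $i \in [k]$. Since $x$ is increasing on $[1,k]$, the values $x(1) < x(2) < \dots < x(k)$ are $k$ distinct positive integers, so $x(i)$ is at least the $i$-th smallest among them, hence $x(i) \ge x(1) + (i-1) \ge 1 + (i-1) = i$. This is immediate once the monotonicity on $[1,k]$ is in hand.

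The main (and really only) obstacle is simply citing or re-deriving the correct combinatorial characterization of ${^KW}$: that $x \in {^KW}$ means $x$ has at most one descent and it occurs at position $k$, equivalently $x$ restricted to $[1,k]$ and to $[k+1,n]$ is increasing. Given the conventions already fixed in the paper (Grassmannian permutations, $W/W_K$ coset representatives), this should be either quoted as standard or verified in a line; after that, both parts follow by the elementary initial-segment / monotonicity arguments sketched above, with no real difficulty.
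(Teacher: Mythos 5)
Your proof is correct and follows essentially the same route as the paper's: both rely on the characterization that $x \in {^KW}$ is increasing on $[1,k]$ and on $[k+1,n]$, and both derive part (1) by showing a fixed point $j\le k$ forces $x([j])=[j]$ (and symmetrically for $j\ge k+1$) and part (2) directly from the increasing condition on $[1,k]$. The extra care you take in spelling out the injectivity/counting step is fine but not a different approach.
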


\begin{proof}
For the first statement, recall that $x \in {^K W}$ implies $x(1)<x(2)< \cdots <x(k)$ and $x(k+1)<\cdots <x(n)$. Suppose $x(j)=j$ for some $j \in [k]$. Since for $i <j$, $x(i)<x(j)$, we must have that $x([j])=[j]$. The increasing condition described above then implies that $x(i)=i$ for $i<j$. An analogous argument shows that if $x(j)=j$ for some $j \in [k+1, n]$, then $x(\ell)=\ell$ for all $\ell >j$. 

The second statement is clear from the condition that $x(1)<x(2)< \cdots <x(k)$.
\end{proof}

\cref{prop:grassrect} shows that the Young diagrams associated to the 
Grassmann necklace of $y\in W^K_{\min}$ are the rectangles
which are frozen for $\lambda:=\partne{y^{-1}[k]}$
(cf \cref{def:hook-maximal}),
together with $\emptyset$.

\begin{prop} \label{prop:grassrect} Let $y \in W^K_{\min}$ with fixed points $[p] \cup [q, n]$, and let 
	$\lambda:=\partne{y^{-1}[k]}$. 
	We color the fixed points of $y$ in $[k]$ white and all others black. Let $\mathcal{J}=(J_1, \dots, J_n)$ be the Grassmann necklace of $y$. Then $\partne{J_i}=\emptyset$ for $i \in [p+1] \cup [q, n]$. For other $i$, $\partne{J_i}$ is a rectangle which is frozen for  $\lambda$, and $\partne{J_{i+1}}$ can be obtained from $\partne{J_i}$ by adding a column to $\partne{J_i}$ if the resulting rectangle fits inside of $\lambda$ (that is, if $y(i)>k$) or removing a row from $\partne{J_i}$ if it does not (that is, if $y(i) \leq k$). 
In particular, every $\lambda$-frozen rectangle 
	occurs as one of the $\partne{J_i}$.
\end{prop}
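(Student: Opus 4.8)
The plan is to analyze the Grassmann necklace $\mathcal{J} = (J_1, \dots, J_n)$ of $y$ directly from its recursive definition (\cref{def:grassmannnecklace}), translating each set-theoretic operation $J_i \rightsquigarrow J_{i+1}$ into an operation on Young diagrams via the bijection $I \mapsto \partne{I}$. First I would compute $J_1$ explicitly: since $y \in W^K_{\min}$ has fixed points exactly $[p] \cup [q,n]$ (by \cref{lem:minlengthantiexc}, applied to $y^{-1} \in {^KW}$, which forces the fixed points to be an initial and a final segment with the cut somewhere around position $k$), we have $J_1 = \{i : y^{-1}(i) > i \text{ or } i \text{ a white fixed point}\}$. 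Using that $y^{-1}(1) < \cdots < y^{-1}(k)$ and $y^{-1}(k+1) < \cdots < y^{-1}(n)$ (the defining inequalities for ${^KW}$) together with part (2) of \cref{lem:minlengthantiexc}, I would check that $J_1 = [k]$ (equivalently $\partne{J_1} = \emptyset$), and more precisely that the antiexcedances of $y$ are exactly $y^{-1}([k])$, so that $\partne{J_1} = \partne{y^{-1}([k])}$ would be the empty diagram sitting in the bottom-left corner of $\lambda$ — this is the starting rectangle.

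Next I would track the induction. At step $i$: if $i \notin J_i$ then $J_{i+1} = J_i$, and I need to argue this happens exactly when $i$ is a fixed point of $y$ in the ``wrong'' range, giving the stated ranges $[p+1] \cup [q,n]$ where $\partne{J_i} = \emptyset$ stays constant. If $i \in J_i$, then $J_{i+1} = (J_i \setminus \{i\}) \cup \{y(i)\}$, and the key computation is to see how removing the smallest-or-appropriate element $i$ and inserting $y(i)$ changes the associated lattice path $\pathne{\cdot}$. The crucial dichotomy is whether $y(i) > k$ or $y(i) \le k$: in the first case $i$ is an antiexcedance position being ``advanced,'' and I would show (by locating where $i$ and $y(i)$ sit among the vertical/horizontal step labels, and using the monotonicity of $y$ on $[k]$ and on $[k+1,n]$) that the Young diagram grows by a full column; in the second case the diagram loses a full row. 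Throughout, the fact that $y$ is order-preserving on each of the two blocks is what guarantees the change is a whole row/column rather than a more complicated move, and what keeps each $\partne{J_i}$ a genuine rectangle with its lower-right box on the south or east boundary of $\lambda$ (i.e. $\lambda$-frozen per \cref{def:hook-maximal}).

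Finally, for the ``in particular'' clause I would argue surjectivity: as $i$ runs over the non-fixed-point positions, the sequence of rectangles $\partne{J_i}$ starts at $\emptyset$, at each step either adds a column or removes a row, and must return to a configuration consistent with periodicity; combined with the explicit description of which steps add columns (the $k$ positions $i$ with $y(i) > k$, i.e. the antiexcedances) and which remove rows (the $n-k$ positions with $y(i) \le k$), one sees that every rectangle obtained from $\lambda$ by deleting some bottom rows and then some right columns — equivalently, every $\lambda$-frozen rectangle — appears. I expect the main obstacle to be the bookkeeping in the inductive step: precisely matching the element $y(i)$ being inserted into $J_i$ with the correct vertical step of the new lattice path, and verifying in both cases that the net effect on $\pathne{\cdot}$ is exactly one full column added or one full row removed (as opposed to, say, a partial change or a hook). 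This is where careful use of the inequalities defining ${^KW}$, the shape of the fixed-point set from \cref{lem:minlengthantiexc}, and an invariant such as ``$J_i \cap [k]$ and $J_i \setminus [k]$ are each intervals-after-the-action'' will be needed to keep the rectangle property manifestly preserved.
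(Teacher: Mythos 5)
Your plan---inducting along the necklace index $i$ and reading off $J_i \mapsto J_{i+1}$ directly from the structure of $y$---is a genuinely different route from the paper's. The paper inducts on $\ell(y)$: setting $z := ys_j$ for the last letter $s_j$ of the columnar word for $y$, it inherits from the inductive hypothesis for the necklace $\mathcal{I}$ of $z$ that $I_j = [a]\cup[b,c]$ and $I_{j+1}=[a+1]\cup[b+1,c]$, hence $j=b$ and $z(j)=a+1$, and then need only recompute the single entry $J_{j+1}$ where the two necklaces differ. Your route gives a closed-form picture of the whole necklace for one fixed $y$, but it has to establish from scratch the bookkeeping the paper gets for free: that at each nontrivial step the element removed (which by definition is $i$) coincides with the left endpoint $b$ of the second block, and that the element $y(i)$ inserted equals $a+1$ or $c+1$.

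Two points need correcting before your version goes through. First, the antiexcedance set of $y$ is $[k]$, not $y^{-1}([k])$; your conclusion $J_1=[k]$ is right, but $\partne{y^{-1}([k])}=\lambda$, not $\emptyset$, so the ``more precisely'' clause is a slip. (The clean argument: \cref{lem:minlengthantiexc}(2) applied to $y^{-1}\in{^KW}$ gives $y^{-1}(i)\geq i$ for $i\leq k$, so $[k]\subseteq J_1$, and $|J_1|=k$ forces equality.) Second, the invariant you float---that $J_i\cap[k]$ and $J_i\setminus[k]$ are intervals---is simply false: for $k=3$, $n=5$, $y=(4,1,2,5,3)\in W^K_{\min}$ one computes $J_3=\{1,3,4\}$, and $J_3\cap[3]=\{1,3\}$ is not an interval. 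The invariant that actually closes the induction is that $J_i=[a]\cup[i,c]$ with $a\geq p$ and $c\geq k$ for $p<i\leq q$, where $a$ increments exactly at those steps with $y(i)\leq k$ and $c$ exactly at those with $y(i)>k$. Because $y^{-1}\in{^KW}$ is increasing on each of $[k]$ and $[k+1,n]$, the $i\in(p,q)$ with $y(i)\leq k$ occur in increasing order of their values $y(i)$, which run through $p+1,\dots,k$, and the $i$ with $y(i)>k$ have values running through $k+1,\dots,q-1$ in order; this is what yields $y(i)=a+1$ or $y(i)=c+1$ at each step, and hence the row-removal/column-addition dichotomy, the rectangle shape, and $\lambda$-frozenness. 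With those repairs your outline is sound.
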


\begin{proof} We induct on the length of $y$. If $y=e$, the white fixed points of $y$ are $[k]$, so $J_i=[k]$ for all $i$, corresponding to the empty set. 

Now, consider $y \neq e$. Note that by \cref{lem:minlengthantiexc}, if $i \in [k]$ is not a fixed point of $y$, then $y^{-1}(i)>i$. This together with our choice of decoration implies that the antiexcedance set of $y$ is $[k]$.

 Suppose the columnar expression for $y$ ends in $s_j$. Then $z:=y s_j$ is an element of $W^K$ corresponding to the partition $\lambda':=\partne{z^{-1}([k])}$, which is $\lambda$ with the bottom box of the 
	rightmost column removed. In other words, in $\lambda$, the $j^{th}$ step is horizontal and the $(j+1)^{th}$ step is vertical, and vice versa in $\lambda'$. Again, we color the fixed points of $z$ in $[k]$ white and the fixed points in $[k+1, n]$ black, and let $\mathcal{I}=(I_1, \dots, I_n)$ be the Grassmann necklace of $z$. 

Note that $J_r =I_r$ for $r \leq j$, since the antiexcedances of both permutations are $[k]$ and $y(r)=z(r)$ for $r \neq j, j+1$. Note also that since $\ell(y)>\ell(z)$, $y(j)> y(j+1)$. As $y$ is a minimum length right coset representative, this implies $y(j)>k \geq y(j+1)$. From this, we can conclude neither $j$ nor $j+1$ are fixed by $y$; otherwise, \cref{lem:minlengthantiexc} would lead to a contradiction. So $J_{j+1}=(I_j \setminus \set{j}) \cup \set{y(j)}$ and $J_{j+2}=(J_{j+1} \setminus \set{j+1}) \cup \set{y(j+1)}$.

By definition, $z(j+1) > k \geq z(j)$. By induction, $\partne{I_j}$ is a rectangle, so $I_j=[a] \cup [b, c]$ for $0 \leq a \leq b, c \leq n$. There are 4 cases, depending on if $j$ or $j+1$ is fixed by $z$. The cases in which at least one of $j$ and $j+1$ is fixed are straightforward, so we just prove the last.

%
%

	Suppose neither $j$ nor $j+1$ are fixed by $z$, so $\lambda'$ is obtained from $\lambda$ by removing a box that is not in the left column or top row. Suppose $I_j=[a] \cup [b, c]$. Since $z(j) \leq k$, $\partne{I_{j+1}}$ is obtained from $\partne{I_j}$ by removing a row, and we have that $I_{j+1}=[a+1] \cup [b+1, c]$. In other words, $j=b$ and $z(j)=a+1$. $\partne{I_{j+2}}$ is obtained from $\partne{I_{j+1}}$ by adding a column, so $I_{j+2}=[a+1] \cup [b+2, c+1]$; that is, $z(j+1)=c+1$. So $J_{j+1}=[a] \cup [b+1, c+1]$, which means that $\partne{J_{j+1}}$ is the rectangle obtained from $\partne{J_j}$ by adding a column. This rectangle fits inside of $\lambda$ because of where we added a box and is also $\lambda$-frozen, since its lower right corner touches the southeastern boundary of $\lambda$. Computation shows that $J_{j+2}=I_{j+2}$, and thus $\partne{J_{j+2}}$ is obtained from $\partne{J_{j+1}}$ by removing a row. Since $I_r=J_r$ for $r \neq j+1$, and all of the rectangles $\partne{I_r}$ are $\lambda$-frozen for $r \neq j+1$, we are done.
\end{proof}

As a corollary, we obtain the structure of the face labels of the plabic graphs we are interested in.

 \begin{cor} \label{cor:facelabels} 
	Let $w\in W$ with a  length-additive factorization $w=xw_K$, where
	$x\in {^KW}$.
	Let $\textbf{x}=s_{i_r} \dots s_{i_1}$ be the columnar expression for $x$ 
	and  $\textbf{w}$ be a standard reduced expression for $w$. 
	 Let $\lambda:=\partne{x([k])}$. 
	 Then the set of face labels of $B_{w_K, \textbf{w}}$ (see \cref{bridgegraph}) with respect to the target labeling is $\set{\vertne{\Rect(b)} \ \vert \ b \text{ a box of } \lambda} \cup \set{\vertne{\emptyset}}$. The boundary face labels correspond to the $\lambda$-frozen rectangles and the empty set.
 \end{cor}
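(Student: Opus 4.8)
\textbf{Proof plan for \cref{cor:facelabels}.}

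The plan is to deduce this statement from \cref{prop:grassrect} together with the bridge-building process described in \cref{sec:Karpman}, tracking how face labels evolve as bridges are added. First I would recall that $B_{w_K,\textbf{w}}$ is built from the all-lollipop graph (white lollipops on $[k]$, black on $[k+1,n]$) by adding the bridge sequence $s_{i_1},\dots,s_{i_r}$ coming from the columnar expression $\textbf{x}=s_{i_r}\cdots s_{i_1}$. Let $G_0$ be the lollipop graph and $G_j$ the graph after adding the $j$-th bridge, so $G_r=B_{w_K,\textbf{w}}$. By \cref{lem:bridgeperm} and the discussion following \cref{bridgegraph}, the trip permutation of $G_j$ (in bounded affine form) is obtained from that of $G_0$ by composing with the appropriate transpositions, and at each stage the decorated trip permutation lies in $W^K$ with fixed points in $[k]$ colored white; in particular, after all $r$ bridges the trip permutation is $x^{-1}$, corresponding to $y=x^{-1}\in W^K_{\min}$ with $\partne{y^{-1}[k]}=\partne{x([k])}=\lambda$.

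The key point is the lemma (stated just before \cref{def:grassmannnecklace}) that adding a valid bridge leaves the target labels of all old faces unchanged, creating exactly one new face whose label is the $(a+1)$-st entry of the Grassmann necklace of the new trip permutation; combined with the fact (from \cite{OPS}) that the boundary faces of a reduced plabic graph are labeled by the Grassmann necklace of its trip permutation. So I would argue by induction on $j$: the face labels of $G_j$ are exactly the Grassmann necklace entries of the trip permutation $\sigma_j$ of $G_j$ (the boundary faces) together with the labels inherited from $G_{j-1}$ that have been pushed to the interior. Since every interior face of $B_{w_K,\textbf{w}}$ was a boundary face at the stage it was created, the full set of target labels of $B_{w_K,\textbf{w}}$ is the union $\bigcup_{j=0}^{r}\{\text{Grassmann necklace entries of }\sigma_j\}$. (One should also note that the bridge graph $B_{w_K,\textbf{w}}$ has exactly $|\lambda|+$ (number of boundary faces) faces, matching $|\{\vertne{\Rect(b)}\}| + |\{\lambda\text{-frozen rectangles}\}\cup\{\emptyset\}|$, so no labels are missed or doubled; this bookkeeping can be done by induction alongside the main claim, each bridge adding one face.)

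Now apply \cref{prop:grassrect} to $y=x^{-1}\in W^K_{\min}$, and more generally to each intermediate $y_j := (\sigma_j)^{-1}\in W^K_{\min}$ with $\lambda_j := \partne{y_j^{-1}[k]}$: the Young diagrams of the Grassmann necklace of $y_j$ are precisely the $\lambda_j$-frozen rectangles together with $\emptyset$. Since $\lambda_j$ is obtained from $\lambda_{j-1}$ by adding exactly one box $b_j$ (the box corresponding to the descent of the bridge $s_{i_j}$ in the columnar reading order), and since the new face created at step $j$ is a boundary face of $G_j$ labeled by a Grassmann necklace entry of $\sigma_j$, I would identify that new label with $\vertne{\Rect(b_j)}$: indeed $\Rect(b_j)$ is the unique $\lambda_j$-frozen rectangle that is not $\lambda_{j-1}$-frozen (its lower-right corner is the newly added box $b_j$, which touches the southeast boundary of $\lambda_j$). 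Thus, as $j$ ranges over $1,\dots,r$, the new labels are exactly $\vertne{\Rect(b)}$ as $b$ ranges over the boxes of $\lambda$, and the $\emptyset$ label (present already in $G_0$) persists; the boundary faces of the final graph $B_{w_K,\textbf{w}}$ are labeled by the Grassmann necklace of $x^{-1}$, i.e. the $\lambda$-frozen rectangles together with $\emptyset$, again by \cref{prop:grassrect}. This gives both assertions of the corollary.

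\textbf{Main obstacle.} The delicate step is the precise matching between the combinatorics of the columnar bridge sequence and the box-by-box growth of $\lambda_j$: one must verify that the $j$-th bridge $s_{i_j}$ indeed corresponds to adding the box $b_j$ whose $\Rect(b_j)$ is the new $\lambda_j$-frozen rectangle, and that $\Rect(b_j)$ agrees with the $(a_j+1)$-st Grassmann necklace entry produced by \cref{prop:grassrect} (which describes necklace entries changing by adding a column / removing a row). This requires carefully lining up the columnar reading order of \cref{rem:columnar}, the indexing of steps of the lattice path $\pathne{\lambda}$, and the case analysis in the proof of \cref{prop:grassrect}; the other steps (invariance of old labels, Grassmann necklace = boundary labels, induction on number of bridges) are essentially bookkeeping.
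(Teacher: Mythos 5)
Your proposal is correct and follows essentially the same route as the paper's own proof: both argue by induction on the bridge sequence, using \cref{lem:bridgeperm} to track the intermediate trip permutations $s_{i_1}\cdots s_{i_j}\in W^K_{\min}$, the invariance of old target labels under bridge addition, the identification of boundary-face labels with the Grassmann necklace (via \cite{OPS}), and \cref{prop:grassrect} to identify those necklace entries as $\lambda_j$-frozen rectangles together with $\emptyset$. The only cosmetic difference is that you pin down the label of the single new face at step $j$ as $\vertne{\Rect(b_j)}$ directly (via uniqueness of the newly frozen rectangle), whereas the paper argues slightly more loosely that every rectangle contained in $\lambda$ is frozen for some prefix; both close the same gap.
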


 \begin{proof}

Recall that the bridge sequence of $B_{w_K, \textbf{w}}$ is exactly the simple transpositions in the columnar expression for $x^{-1}$; that is $s_{i_1}, \dots, s_{i_r}$. After placing the $j^{th}$ bridge, we get a plabic graph with trip permutation $s_{i_1} \cdots s_{i_j}$ with fixed points in $[k]$ colored white. Since $s_{i_1} \cdots s_{i_j} \in W^K_{\min}$, by \cref{prop:grassrect}, its Grassmann necklace consists of 
	  rectangles that are frozen for the partition corresponding to $s_{i_1} \cdots s_{i_j}$. The face labels of the boundary faces are the Grassmann necklace of the trip permutation, with $I_j$ labeling the face immediately to the left of $j$. When we add the $(j+1)^{th}$ bridge, we introduce a new boundary face (whose label is a rectangle that is frozen for the partition corresponding to 
	  $s_{i_1} \cdots s_{i_{j+1}}$) and the labels of all other faces stay the same. An old boundary face may be pushed off of the boundary by the new face; this occurs precisely when its label is not frozen for the new partition. Further, it is clear that every rectangle that fits into $\lambda$ is frozen for a partition corresponding to some prefix of $s_{i_1} \cdots s_{i_r}$.
 \end{proof}
 
 We can also describe the dual quiver of $B_{w_K, \textbf{w}}$.
 
 \begin{prop} \label{cor:dualquiver}
Let $w$, $x$, and $\textbf{w}$ be as in \cref{cor:facelabels}, and let $\lambda:=\partne{x([k])}$. Let $\mu, \nu$ be rectangles contained in $\lambda$ which are not the empty partition. In the dual quiver of $B_{w_K, \textbf{w}}$, there is an arrow from the face labeled $\vertne{\mu}$ to the face labeled $\vertne{\nu}$ if
 
 \begin{itemize}
\item $\nu$ is obtained from $\mu$ by removing a row or column
\item $\nu$ is obtained from $\mu$ by adding a hook shape
\end{itemize}

unless both faces are on the boundary, in which case there is no arrow between them. There is also an arrow from the face labeled $\vertne{\mu}$, where $\mu$ is a single box, to the face labeled $[k]$.
\end{prop}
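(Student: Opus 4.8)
The plan is to build the dual quiver of $B_{w_K, \textbf{w}}$ directly from the bridge construction, tracking how each bridge changes the quiver, and then to match the result against the combinatorial description in \cref{def:labeledquiver}. The starting point is \cref{cor:facelabels}, which already tells us that the faces of $B_{w_K, \textbf{w}}$ are in bijection with the rectangles fitting inside $\lambda$ (plus $\emptyset$), with boundary faces corresponding to $\lambda$-frozen rectangles and $\emptyset$. So the only remaining task is to identify the arrows, and the natural way to do this is by induction on the number of bridges added, i.e.\ on $\ell(x)$.

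First I would set up the inductive step. Let $G_j$ be the graph obtained after $j$ bridges, with trip permutation $s_{i_1}\cdots s_{i_j} \in W^K_{\min}$ and associated partition $\lambda^{(j)}$; by \cref{prop:grassrect} its faces are the $\lambda^{(j)}$-frozen rectangles and $\emptyset$. Adding the $(j+1)$st bridge $(a~b)$ introduces exactly one new face, whose label is the new $\lambda^{(j+1)}$-frozen rectangle $\mu_{\text{new}}$ obtained (per the proof of \cref{prop:grassrect}) by adding a column to an existing rectangle when $\lambda^{(j+1)}$ is $\lambda^{(j)}$ with a box appended, while an old boundary face is pushed into the interior precisely when its rectangle is no longer frozen for $\lambda^{(j+1)}$. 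I would then analyze, case by case as in the proof of \cref{prop:grassrect} (according to whether $a$, $b$ are fixed points), which edges of the plabic graph are created near the new bridge and what the orientation rule ``white endpoint on the left, black on the right'' gives for the arrows incident to $\mu_{\text{new}}$. The key local computation is that the new face is glued between the faces labeled by the rectangle it was built from and the neighboring frozen rectangles, and an edge-by-edge check of the bridge picture (\cref{fig:bridge}) shows these are exactly the rectangles differing from $\mu_{\text{new}}$ by removing a row or column, or by adding a hook — with the one boundary-to-boundary arrow deletion accounting for the ``unless both faces are on the boundary'' clause, and the degree-$2$ vertices added for bipartiteness contributing only removable $2$-cycles. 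The arrow from a single box to $[k]$ comes from the very first bridge, where the face $\emptyset$ (labeled $[k]$) sits adjacent to the first interior face.

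The main obstacle I expect is the bookkeeping of orientations: one must verify that every arrow predicted by the rectangles rule actually appears with the correct direction after all the degree-$2$-vertex insertions and $2$-cycle cancellations, and — more delicately — that no \emph{spurious} arrows survive (for instance, between two rectangles that differ by more than a hook, or in the wrong direction). A clean way to handle this is to observe that since all reduced plabic graphs with the same trip permutation are move-equivalent (\cref{rem:moves}) and square moves correspond to quiver mutations (\cref{lem:mutG}), it suffices to check the arrow description is consistent with one square-move-compatible normalization; alternatively, one can appeal to the fact that $B_{w_K,\textbf{w}}$ is known to be reduced and just verify the arrows along the bridges inductively, since every edge of $B_{w_K, \textbf{w}}$ lies in some bridge. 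I would also cross-check the final answer on \cref{ex:bridge} and \cref{fig:combconstruct} to catch sign/direction errors. Finally, comparing the resulting description — arrow from $\vertne{\mu}$ to $\vertne{\nu}$ iff $\nu$ is $\mu$ minus a row/column or $\mu$ plus a hook, with no arrows between two boundary faces, plus the box-to-$[k]$ arrows — with \cref{def:labeledquiver} (after translating rectangles-with-lower-right-corner-$b$ to boxes $b$, noting $J(b) = \vertne{\Rect(b)}$) shows the two quivers agree, which is the content of the proposition.
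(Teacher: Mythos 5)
Your overall strategy -- induction on the number of bridges, tracking the newly created face, reading its neighbors off the proof of \cref{prop:grassrect}, and deducing arrows from the bridge's coloring -- is exactly the paper's. The match extends to the details of the base case (the first bridge gives the arrow from a single box to $[k]$) and the ``unless both faces are on the boundary'' caveat (the paper makes the proof uniform by coloring boundary vertices, adding arrows, then deleting all arrows between frozen vertices; this also absorbs the lollipop cases, where the new face shares two edges with the $[k]$ face and the two arrows cancel as a $2$-cycle).

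The one genuine gap is exactly the step you flag as your ``main obstacle'': what happens with the degree-$2$ vertices that the bridge construction might insert to restore bipartiteness. You offer two workarounds, but neither closes the gap. The move-equivalence route does not work as stated -- move-equivalence produces a \emph{different} plabic graph, whereas the proposition is about the dual quiver of $B_{w_K, \textbf{w}}$ itself, and ``consistent with a square-move-compatible normalization'' has no independent content one could check. The ``just verify the arrows along the bridges inductively'' route is the right idea but presupposes you already control the edge structure near each new bridge, which is precisely what degree-$2$ insertions would complicate. The paper's resolution is a sharp observation you are missing: because $\textbf{x}$ is the \emph{columnar} expression, between any two consecutive occurrences of $s_j$ in the bridge sequence $s_{i_1},\ldots,s_{i_r}$ there is both an $s_{j-1}$ and an $s_{j+1}$. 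This forces boundary vertex $j$ to already be adjacent to a black internal vertex and $j+1$ to already be adjacent to a white one when the $(j~j{+}1)$-bridge is placed, so \emph{no} degree-$2$ vertices ever need to be inserted, and the new face simply abuts its three neighbors (row-removal, column-removal, hook-addition) along single edges whose orientations are read directly off the bridge's two colors. Once you add this columnar-reading-order lemma, your argument goes through and is the paper's.
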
 

\begin{proof}
This follows from the proof of \cref{cor:facelabels} by induction on the number of bridges. 

To make the proof more uniform, we color all boundary vertices of $B_{w_K, \textbf{w}}$ adjacent to white (black) internal vertices black (white) and add arrows appropriately in the dual quiver. To obtain the statement of the proposition, just remove all arrows between frozen vertices.

Let $\textbf{x}=s_{i_r} \dots s_{i_1}$ be the columnar expression for $x$, so that $s_{i_1}, \dots, s_{i_r}$ is the bridge sequence for $B_{w_K, \textbf{w}}$. Note that $s_{i_1}=s_k$.

If there is only one single bridge, then $B_{w_K, \textbf{w}}$ has two faces, one face $f$ labeled with $[k] =\vertne{\emptyset}$ and the other face $f'$ labeled with $\vertne{\mu}$, where $\mu$ is a single box. From the coloring of vertices in a bridge, it is clear that the dual quiver has one arrow from $f'$ to $f$.

We examine what occurs after we place the final bridge $s_{i_r}=(j ~ j+1)$. Let $f'$ be the new face created by this bridge. Note that $j$ and $j+1$ cannot both be lollipops. Indeed, it is easy to see from the definition of the columnar reading order for $\lambda$ that $s_{i_r}$ is preceded by either a $s_{i_r -1}$ or a $s_{i_r +1}$ in the bridge sequence. If $j$ or $j+1$ is a lollipop, then the face $f'$ shares 2 edges with $f$, the face labeled with $[k]$. This means there are no edges between these faces in the dual quiver, since 2 shared edges results in an oriented 2-cycle. 

Note also that we do not have to add additional vertices of degree 2 after placing the bridge to make the graph bipartite; this is clear from the previous paragraph if $j$ or $j+1$ is a lollipop. If neither is a lollipop, from the columnar reading order, it is clear that there is a $s_{j-1}$ and a $s_{j+1}$ between each occurrence of $s_j$ in the sequence $s_{i_1}, \dots, s_{i_r}$, so $j$ is adjacent to a black internal vertex and $j+1$ is adjacent to a white internal vertex. This means that there is an arrow in the dual quiver between $f'$ and all adjacent faces that are not labeled with $[k]$. We discuss these arrows in the case when neither $j$ nor $j+1$ are lollipops, as the other cases are similar.

We know that $f'$ is labeled by (the vertical steps of) $\Rect(b)$, where $b$ is the last box of $\lambda$ in the columnar reading order. According to the proof of \cref{cor:facelabels}, to its right is the face labeled by (the vertical steps of) a partition $\nu$ obtained from $\Rect(b)$ by removing a row (since the partition obtained from $\Rect(b)$ by adding a column does not fit in $\lambda$). Similarly, to the left of $f'$ is the face labeled by (the vertical steps of) a partition $\nu'$ obtained from $\Rect(b)$ by removing a column. Below $f'$ is the face labeled by the partition obtained from $\Rect(b)$ by removing a hook shape. This, together with the color of vertices in bridges, gives the proposition.

\end{proof}

\subsection{Obtaining the rectangles seed from a plabic graph}

The goal of this section is to verify \cref{lem:sameseed}, which will 
be used in \cref{section:1.5} to 
deduce \cref{thm:main} from \cref{thm:main2}.

In what follows, when we say ``reflect a (generalized) plabic graph in the mirror", we mean the operation shown in \cref{fig:reflect}.

Now, we return to the setup of \cref{sec:seed}.

\begin{lemma}\label{lem:sameseed}  Let $v \leq w$ where $v \in W^K_{\max}$ and $w=xv$ is length-additive and let $\textbf{w}'$ be a standard reduced expression for $xw_K$. Consider the following generalized plabic graphs, with the indicated face labeling.
\begin{itemize}
\item $G_{v, w}$, obtained by applying $v^{-1}$ to the boundary vertices of $B_{w_K, \textbf{w}'}$, with target labels.
\item $G^{\text{mir}}_{v, w}$, obtained by applying $v^{-1}$ to the boundary vertices of $B_{w_K, \textbf{w}'}$ and reflecting in the mirror, with source labels.
\item $H_{v, w}$, obtained by applying $w^{-1}$ to the boundary vertices of $B_{w_K, \textbf{w}'}$, with source labels.
\item $H_{v, w}^{\text{mir}}$, obtained by applying $w^{-1}$ to the boundary vertices of $B_{w_K, \textbf{w}'}$ and reflecting in the mirror, with target labels.
\end{itemize}

The labeled dual quiver of each of these graphs, with the vertex labeled $v^{-1}([k])$ deleted, is $\Sigma_{v, w}$ (up to reversing all arrows).
\end{lemma}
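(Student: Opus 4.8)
The plan is to reduce all four assertions to a single computation about $B_{w_K,\mathbf{w}'}$ and then track how the two operations --- relabeling boundary vertices by a permutation, and reflecting in the mirror --- act on the target/source labelings and on the dual quiver. First I would record the basic equivariance facts. If $H$ is a (generalized) plabic graph on $n$ boundary vertices and $\tau\in S_n$, then relabeling the boundary of $H$ by $\tau$ sends $\mathcal{F}_{\target}(H)$ to $\tau(\mathcal{F}_{\target}(H))$ (apply $\tau$ to every subset appearing as a face label) and likewise for $\mathcal{F}_{\source}$, while the underlying graph, hence the dual quiver $Q(H)$, is unchanged. Second, reflecting a plabic graph in the mirror (as in \cref{fig:reflect}) reverses the cyclic order of trips, so it interchanges the roles of ``left'' and ``right'' of each trip; consequently it swaps $\mathcal{F}_{\source}$ and $\mathcal{F}_{\target}$, and it also reverses the color convention on the dual quiver, so $Q(H^{\mathrm{mir}})$ is $Q(H)$ with all arrows reversed. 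Combining these: the source labeling of $H^{\mathrm{mir}}$ equals the target labeling of $H$ (and vice versa), while the quiver picks up a global arrow-reversal. Granting these, I would check that the four graphs in the statement all carry the \emph{same} face labeling, namely $v^{-1}$ applied to $\mathcal{F}_{\target}(B_{w_K,\mathbf{w}'})$, with quivers that agree up to reversing all arrows. Indeed, $G_{v,w}$ is by definition $v^{-1}$ applied to $B_{w_K,\mathbf{w}'}$ with target labels; $G^{\mathrm{mir}}_{v,w}$ is the mirror of that with source labels, which by the mirror fact again gives target labels of $v^{-1}(B_{w_K,\mathbf{w}'})$ and reversed quiver. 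For $H_{v,w}$ and $H^{\mathrm{mir}}_{v,w}$ one uses $w^{-1}=v^{-1}x^{-1}$ together with the observation that applying $x^{-1}$ to the boundary of $B_{w_K,\mathbf{w}'}$ (whose trip permutation is $x^{-1}$, by \cref{bridgegraph} and \cref{lem:bridgeperm}) and then taking \emph{source} labels produces the same subsets as taking \emph{target} labels of $B_{w_K,\mathbf{w}'}$ directly; indeed for a trip $T_{i\to j}$, the target label $j$ in $B_{w_K,\mathbf{w}'}$ becomes the label $x^{-1}(j)$ after relabeling, and in the relabeled graph the trip runs $x^{-1}(i)\to x^{-1}(j)$, so $x^{-1}(j)$ is exactly its source label --- thus $\mathcal{F}_{\source}(x^{-1}(B_{w_K,\mathbf{w}'})) = x^{-1}(\mathcal{F}_{\target}(B_{w_K,\mathbf{w}'}))$. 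Applying the further relabeling by $v^{-1}$ and, where relevant, the mirror fact, reconciles all four.

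Having reduced to one labeled quiver, the remaining task is to identify $v^{-1}(\mathcal{F}_{\target}(B_{w_K,\mathbf{w}'}))$ together with $Q(B_{w_K,\mathbf{w}'})$, after deleting the vertex labeled $v^{-1}([k])$, with the rectangles seed $\Sigma_{v,w}$ of \cref{def:labeledquiver}. But this is precisely what \cref{cor:facelabels} and \cref{cor:dualquiver} give: by \cref{cor:facelabels} the target face labels of $B_{w_K,\mathbf{w}'}$ are $\{\vertne{\Rect(b)}: b\in\lambda\}\cup\{\vertne{\emptyset}\}$ with $\lambda=\partne{x([k])}$, and $\vertne{\emptyset}=[k]$ is the unique label that becomes $v^{-1}([k])$ after relabeling --- so deleting that vertex exactly removes the extraneous coordinate and leaves the labels $\Delta_{v^{-1}(J(b))}$ with $J(b)=\vertne{\Rect(b)}$, matching \cref{def:labeledquiver}. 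By \cref{cor:dualquiver}, the arrows of $Q(B_{w_K,\mathbf{w}'})$ among the non-$[k]$ faces are: $\vertne{\mu}\to\vertne{\nu}$ when $\nu$ is obtained from $\mu$ by deleting a row/column or adding a hook, with no arrows between two boundary faces --- which is verbatim the arrow rule in \cref{def:labeledquiver} (recalling that in \cref{def:labeledquiver} $\Rect(b)$ is obtained from $\Rect(a)$ by removing a row/column, or by adding a hook). One must also check the frozen/mutable designations agree: a vertex of $Q(B_{w_K,\mathbf{w}'})$ is mutable iff its face is interior, and by \cref{prop:grassrect} the interior faces are exactly those whose rectangles are \emph{not} $\lambda$-frozen, i.e. the boxes $b$ such that the box southeast of $b$ lies in $\lambda$ --- again matching \cref{def:labeledquiver}. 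Relabeling by $v^{-1}$ preserves interior-vs-boundary, so the designations transport correctly.

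The main obstacle, and the part deserving the most care, is the bookkeeping around the mirror operation and the source/target swap: one must be certain that ``reflect in the mirror'' reverses trip orientation in the sense that genuinely swaps $\mathcal{F}_{\source}\leftrightarrow\mathcal{F}_{\target}$ and simultaneously reverses every arrow of the dual quiver, and that these two effects combine (rather than cancel) with the boundary-relabeling effects in each of the four cases. The cleanest way to handle this is to prove the two equivariance lemmas (relabeling; mirroring) once and for all as small standalone claims --- e.g. stated as: ``For any reduced generalized plabic graph $H$ and $\tau \in S_n$, $\mathcal{F}_{\bullet}(\tau(H)) = \tau(\mathcal{F}_{\bullet}(H))$ for $\bullet\in\{\source,\target\}$ and $Q(\tau(H))=Q(H)$'' and ``$\mathcal{F}_{\source}(H^{\mathrm{mir}})=\mathcal{F}_{\target}(H)$, $\mathcal{F}_{\target}(H^{\mathrm{mir}})=\mathcal{F}_{\source}(H)$, and $Q(H^{\mathrm{mir}})=Q(H)^{\mathrm{op}}$'' --- and then simply compose. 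Everything else is a direct appeal to \cref{cor:facelabels}, \cref{cor:dualquiver}, and \cref{prop:grassrect}. A minor additional point to verify is that $v^{-1}([k])$ is indeed the image of $[k]=\vertne{\emptyset}$ and that this face is a boundary (frozen) face of $B_{w_K,\mathbf{w}'}$, so that ``deleting the vertex labeled $v^{-1}([k])$'' is unambiguous; this is immediate from \cref{cor:facelabels}.
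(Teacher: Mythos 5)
Your proposal is correct and follows essentially the same route as the paper: reduce all four graphs to the single labeled quiver of $B_{w_K,\mathbf{w}'}$ with target labels, using the facts that (i) boundary relabeling by $\tau$ applies $\tau$ to every face label and leaves the dual quiver unchanged, (ii) mirroring swaps source and target labels and reverses arrows, and (iii) since the trip permutation is $x^{-1}$, applying $w^{-1}=v^{-1}x^{-1}$ to source labels agrees with applying $v^{-1}$ to target labels; then invoke \cref{cor:facelabels} and \cref{cor:dualquiver} to identify the result with $\Sigma_{v,w}$. One small slip: in the sentence justifying the $H_{v,w}$ case, the displayed relation $\mathcal{F}_{\source}(x^{-1}(B_{w_K,\mathbf{w}'}))=x^{-1}(\mathcal{F}_{\target}(B_{w_K,\mathbf{w}'}))$ contradicts your own prose just before it --- since $j=x^{-1}(i)$ for the trip $T_{i\to j}$, we in fact have $x^{-1}(i)=j$, so the source label after relabeling by $x^{-1}$ is $j$ itself, giving $\mathcal{F}_{\source}(x^{-1}(B))=\mathcal{F}_{\target}(B)$ with no $x^{-1}$ on the right; your prose ("produces the same subsets as taking target labels of $B$ directly") and your final conclusion are correct, only that intermediate displayed equation mislabels which endpoint is the source.
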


\begin{proof}

Clearly $G_{v, w}$ and $H_{v, w}$ have the same (unlabeled) dual quiver as $B_{w_K, \mathbf{w}'}$; reflecting in the mirror reverses all arrows in the dual quiver. By \cref{cor:dualquiver}, the dual quiver of all of these graphs is $Q_{v, w}$ (see \cref{def:labeledquiver}), up to reversal of all arrows. 

Since the face labels of $G_{v, w}$ are obtained from those of $B_{w_K, \mathbf{w}'}$ by applying $v^{-1}$, it is clear from \cref{cor:facelabels} that the labeled dual quiver of $G_{v, w}$ is $\Sigma_{v, w}$. So it suffices to show that the face labels of $G_{v, w}$ agree with the face labels of the 3 other graphs.

Recall that the trip permutation of $B_{w_K, \mathbf{w}'}$ is $x^{-1}$. This implies that applying $v^{-1}$ to a target face label of $B_{w_K, \mathbf{w}'}$ gives the same set as applying $v^{-1}x^{-1}=w^{-1}$ to a source face label of $B_{w_K, \mathbf{w}'}$. Thus the face labels of $H_{v, w}$ are the same as the face labels of $G_{v, w}$.

Note that reflecting a generalized plabic graph in the mirror reverses all trips and also exchanges left and right. As a result, the target labels of $G_{v, w}$ are the same as the source labels of $G^{\text{mir}}_{v, w}$, and the source labels of $H_{v, w}$ are the same as the target labels of $H^{\text{mir}}_{v, w}$.

\end{proof}

\begin{figure}
\centering
\includegraphics[width=\textwidth]{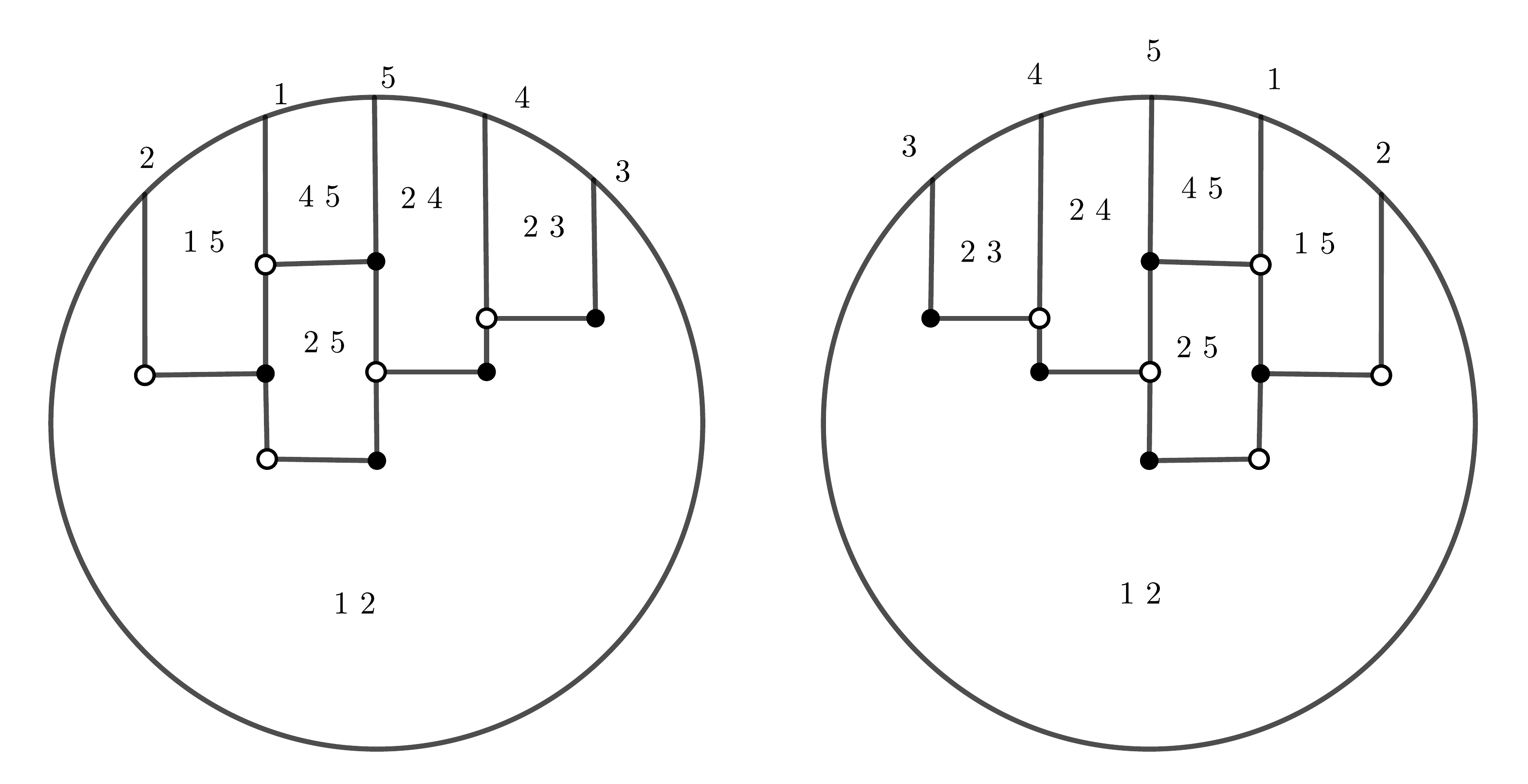}
\caption{\label{fig:reflect} Let $k=2$, $n=5$, $x=(3, 5, 1, 2, 4)$ and $w=xw_K$ as in \cref{ex:bridge}. On the left, we have applied $w_K^{-1}$ to the boundary vertices of $B_{w_K, \textbf{w}}$ to obtain $G_{w_K, w}$ (shown with target labels). On the right, we have ``reflected $G_{w_K, w}$ in the mirror" to obtain $G^{\text{mir}}_{w_K, w}$ (shown with source labels).}
\end{figure}

\begin{remark} \label{rem:lex_min}
Since we are actually interested in the affine cone over $\pi(\mathcal{R}_{v, w})$, we always assume that $\Delta_{v^{-1}([k])}$, the lexicographically minimal nonvanishing Pl\"ucker coordinate, is equal to $1$. This is why we delete the vertex labeled by $v^{-1}([k])$ in \cref{lem:sameseed}.
\end{remark}

Note that if $v=w_K$, $G^{\text{mir}}_{v, w}$ is a ``usual" plabic graph (that is, its boundary vertices are $1, \dots, n$ going clockwise). Similarly, if $w=w_0$, $H^{\text{mir}}_{v, w}$ is a usual plabic graph. So in these cases, the rectangles seed $\Sigma_{v, w}$ gives rise to the cluster structure conjectured in \cite[Conjecture 3.4]{MullerSpeyer0}. In general, $\Sigma_{v, w}$ gives rise to a different cluster structure; the cluster variables may differ and the frozen variables generally do not agree with the labels of the boundary faces (with either source or target labels) of a plabic graph corresponding to the positroid variety. However, the cluster structure given by $\Sigma_{v, w}$ is \emph{quasi-isomorphic} to the cluster structure conjectured in \cite[Conjecture 3.4]{MullerSpeyer0}. This means roughly that seeds in one cluster algebra can be obtained from seeds in the other by rescaling variables by Laurent monomials in frozen variables, in a way that is compatible with mutation (see \cite{Fraser}). Details will appear in \cite{FSB}.

\begin{remark} Applying $v^{-1}$ or $w^{-1}$ to the boundary vertices of $B_{w_K, \textbf{w}'}$ is a mysterious operation. This relabeling 
	takes a plabic graph associated to $\pi_k(\mathcal{R}_{w_K, w_Kx^{-1}})$ to one associated to $\pi_k(\mathcal{R}_{v, xv})$, and hence these positroid
	varieties are isomorphic.  
	We can describe the association of these two positroid varieties combinatorially in terms of \Le-diagrams: to obtain the \Le-diagram of $\pi_k(\mathcal{R}_{v, xv})$, rotate the \Le-diagram of $\pi_k(\mathcal{R}_{e, x^{-1}})$ by $180^{\circ}$, cut off boxes so it has shape $\partsw{v^{-1}([k])}$, and then perform \Le-moves until it satisfies the \Le-property (see \cref{sec:whichpositroids}). 
\end{remark}

\section{Obtaining the rectangles seed from Leclerc's categorical cluster structure}\label{sec:Leclerc}


\subsection{The categorical cluster structure for Richardson varieties}\label{sec:category-cluster}


We describe the categorical cluster structure on the coordinate ring of the Richardson variety $\mathcal{R}_{v, w}$ obtained in \cite{Leclerc}.  It involves representation theory of finite-dimensional algebras, see \cite{ASS, Sch14} 
for some background.  As we are only interested in the case of Grassmannians, we restrict our discussion to the construction in type $A$.  

Let $\Lambda$ be the \emph{preprojective algebra} over $\mathbb{C}$ of type $A$ and rank $n-1$.  It is the finite-dimensional path algebra of the \emph{double quiver}   

$$\overline{Q} = \xymatrix@!{1 \ar@/^/[r]^{\alpha_1}& \ar@/^/[l]^{\alpha_1^*} 2 \ar@/^/[r]^{\alpha_2}& \ar@/^/[l]^{\alpha_2^*} 3 \ar@/^/[r]^{\alpha_3}  &\cdots   \ar@/^/[l]^{\alpha_3^*} \ar@/^/[r]^{\alpha_4} & \ar@/^/[l]^{\alpha_4^*} n-1}$$  

on the vertex set $I=\{1, \dots, n-1\}$, subject to the relations
generated by 
$$\sum_{i} \alpha_i \alpha_i^*-\alpha_i^*\alpha_i=0.$$

In particular, the elements of $\Lambda$ are linear combinations of paths in the quiver modulo the relations, and multiplication is given by concatenation of paths.  Any finite-dimensional module $N$ over $\Lambda$ has an explicit realization in terms of the quiver.  In particular, $N$ is a collection 
$\{N_i\}_{i\in I}$ 
of finite-dimensional vector spaces over $\mathbb{C}$ 
for each vertex $i\in I$, together with a collection of linear maps $\phi_{\beta}: N_i\to N_j$ for every arrow $\beta:i\to j$ in the quiver.  Moreover, the composition of these linear maps must satisfy relations induced by the relations on the corresponding arrows. 

Let $\text{mod}\,\Lambda$ be the category of finite-dimensional $\Lambda$-modules.   For any $N\in \text{mod}\, \Lambda$ let $\abs{N}$ be the number of pairwise non-isomorphic indecomposable direct summands of $N$.  We use $\text{add} \, N$ to denote the additive closure of $N$, and  $\text{ind}\,N$ to denote the set of indecomposable direct summands of $N$.  Given a vertex $i$ in the quiver $\overline{Q}$,
  let $S_i$ denote the corresponding simple module and $Q_i$ denote the associated injective module.  The simple module $S_i$ is obtained by placing $\mathbb{C}$, a one-dimensional vector space, at vertex $i$ and $0$'s at the remaining vertices of the quiver. In this case, $\phi_{\beta}=0$ for all arrows $\beta$.  On the other hand, the injective $\Lambda$-module $Q_i$ also has a distinct structure, and we can represent $Q_i$ by its composition factors as follows.

$$\begin{smallmatrix}
&&&n-i&&&&&\\
& & n-i+1& &n-i-1&&&&\\
& \udots && n-i &&\ddots && \\
n-1 && \udots && \ddots && \ddots \\
 & \ddots &&\ddots && \udots && 1\\
&&\ddots && i && \udots  \\
&&&i+1&& i-1\\
&&&&i
\end{smallmatrix}
$$

In particular, when $n=6$ we obtain the following composition diagrams for the injective modules. 

$$Q_1=\begin{smallmatrix}5\\&4\\&&3\\&&&2\\&&&&1\end{smallmatrix}\hspace{.7cm}
Q_2=\begin{smallmatrix}&4\\5&&3\\&4&&2\\&&3&&1\\&&&2\end{smallmatrix}
\hspace{.7cm}
Q_3=\begin{smallmatrix}&&3\\&4&&2\\5&&3&&1\\&4&&2\\&&3\end{smallmatrix}
\hspace{.7cm}
Q_4=\begin{smallmatrix}&&&2\\&&3&&1\\&4&&2\\5&&3\\&4\end{smallmatrix}
\hspace{.7cm}
Q_5=\begin{smallmatrix}&&&&1\\&&&2\\&&3\\&4\\5\end{smallmatrix}
$$
These numbers can be interpreted as basis vectors or as composition factors (see \cite[Section 2.4]{GLS-KM}).  For example, 
the module $Q_2$ is an $8$-dimensional $\Lambda$-module with dimension vector $(d_1, d_2, d_3, d_4, d_5) = (1, 2, 2, 2, 1)$.  
In general, for every occurrence of $j\in I$ above we obtain the corresponding one-dimensional vector space $V_j\cong \mathbb{C}$ at vertex $j$ of the quiver.  Moreover, whenever we see a configuration $\begin{smallmatrix}j+1\\&j\end{smallmatrix}$ or $\begin{smallmatrix}& j-1\\j\end{smallmatrix}$ then the linear map between the corresponding spaces $V_{j+1}\to V_{j}$ or $V_{j-1}\to V_j$ is the identity.   We will often use this notation to denote other modules of $\Lambda$ that have a similar structure.   Moreover, in this notation, it is easy to see the top and socle of a given module $N$.   The top (resp. socle) of $N$ is a direct sum of simple modules $S_{i}$ such that the corresponding entry $i$ in the associated composition factor diagram lies at the top (resp. bottom).  In other words, there are no $i-1$ and no $i+1$ appearing directly above (resp. below) this $i$.  
For more information on preprojective algebras and their representation theory see \cite{GLS08,Rin96}. 



Next, for every $i\in I$ and $s_i\in W$ (where $W$ is the symmetric group on $n$ letters) we define two functors $\mathcal{E}_i=\mathcal{E}_{s_i}$ and $\mathcal{E}_i^{\dagger}=\mathcal{E}_{s_i}^{\dagger}$ on the category $\text{mod}\,\Lambda$.   Given $N\in \text{mod}\,\Lambda$ let $\mathcal{E}_i(N)$ be the kernel of a surjection 

$$\xymatrix{N\ar@{->>}[r]& S_i^a}$$

where $a$ is the multiplicity of $S_i$ in the top of $N$.  Note that $\mathcal{E}_i(N)$ is well-defined; it is obtained from $N$ by removing the $S_i$-isotypical part of its top.  Similarly, let $\mathcal{E}_i^{\dagger}(N)$ be the cokernel of an injection 

$$\xymatrix{S_i^b \,\,\ar@{^{(}->}[r]& N}$$

where $b$ is the multiplicity of $S_i$ in the socle of $N$.  The module $\mathcal{E}_i^{\dagger}(N)$ results from $N$ by taking away the $S_i$-isotypical part of its socle.  In terms of the corresponding composition factor diagrams, the diagram for $\mathcal{E}_i(N)$ (resp. $\mathcal{E}_i^{\dagger}(N)$) is obtained from that of $N$ by removing all entries $i$ appearing in the top (resp. bottom).  Moreover, for every $w\in W$ we can extend the definition to $\mathcal{E}_w, \mathcal{E}_w^{\dagger}$ by composing the functors associated to the simple reflections in a reduced expression for $w$.  It was shown in \cite[Proposition 5.1]{GLS08} that this definition does not depend on the choice of a reduced expression.  

Given $w\in W$, consider
$\mathcal{C}_w=\mathcal{E}_{w^{-1}w_0}(\text{mod}\,\Lambda)$ and $\mathcal{C}^w=\mathcal{E}_{w^{-1}}^{\dagger} (\text{mod}\,\Lambda)$, two subcategories of $\text{mod}\,\Lambda$ associated to $w$.  With this notation we can summarize the main theorem of \cite{Leclerc}.

\begin{theorem}\cite[Theorem 4.5]{Leclerc}\label{Lec-main-thm}
For every $v,w\in W$ with $v\leq w$, the subcategory $\mathcal{C}_{v,w}:= \mathcal{C}^v\cap \mathcal{C}_w$ has a cluster structure in the sense of \cite{BIRS}.  Moreover, $\mathcal{C}_{v,w}$ induces a cluster subalgebra in the coordinate ring $\mathbb{C}[\mathcal{R}_{v,w}]$, where the cardinality of the 
	extended cluster 
	is  equal to $\textup{dim}\,\mathcal{R}_{v,w}$.
\end{theorem}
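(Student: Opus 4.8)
The plan is to build the cluster structure on $\mathcal{C}_{v,w}$ categorically, using the general machinery of \cite{BIRS} for stably $2$-Calabi--Yau Frobenius categories, and then transport it to $\mathbb{C}[\mathcal{R}_{v,w}]$ via a cluster character of the type constructed in \cite{GLS08}. In outline: (i) show that $\mathcal{C}_{v,w}$ is a Hom-finite Frobenius category whose stable category is $2$-Calabi--Yau; (ii) exhibit an explicit cluster-tilting object $T\in\mathcal{C}_{v,w}$ with exactly $\ell(w)-\ell(v)$ non-isomorphic indecomposable summands; (iii) apply \cite{BIRS} to obtain the cluster structure on $\mathcal{C}_{v,w}$; and (iv) define a map $\varphi_{v,w}\colon \operatorname{obj}\mathcal{C}_{v,w}\to\mathbb{C}[\mathcal{R}_{v,w}]$ and check that it intertwines categorical mutation of cluster-tilting objects with Fomin--Zelevinsky seed mutation, so that its image generates a cluster subalgebra of the asserted rank.

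For (i) I would start from \cite{GLS08}: $\mathcal{C}_w=\mathcal{E}_{w^{-1}w_0}(\operatorname{mod}\Lambda)$ is an extension-closed Frobenius subcategory of $\operatorname{mod}\Lambda$, stably $2$-CY, with relative projective-injectives indexed by a subset of $I$, and dually for $\mathcal{C}^v=\mathcal{E}^\dagger_{v^{-1}}(\operatorname{mod}\Lambda)$. Using the commutation properties of the truncation functors $\mathcal{E}_i,\mathcal{E}_i^\dagger$ and the reduced-word independence of $\mathcal{E}_w,\mathcal{E}_w^\dagger$ from \cite[Proposition 5.1]{GLS08}, I would show that the intersection $\mathcal{C}_{v,w}=\mathcal{C}^v\cap\mathcal{C}_w$ is again extension-closed and Frobenius and that the $2$-CY property descends to its stable category; the hypothesis $v\le w$ is what guarantees that $\mathcal{C}_{v,w}$ is nonzero of the expected size. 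For (ii), fix a reduced word $\mathbf{w}=s_{i_1}\cdots s_{i_\ell}$ of $w$ together with a reduced subword of $\mathbf{w}$ equal to $v$ (subword property). Running the recipe of \cite{GLS08} --- form modules $M_k$ by truncating an injective along an initial segment of $\mathbf{w}$, then truncate further by the dual functors recorded by the chosen subword for $v$ --- produces modules lying in $\mathcal{C}_{v,w}$; the surviving $M_k$, together with the relative projective-injectives of $\mathcal{C}_{v,w}$ as frozen summands, assemble into $T$. One checks that $T$ is rigid, that $\operatorname{add}T$ is functorially finite, and that $\operatorname{add}T$ equals the right (and left) $\operatorname{Ext}^1$-perpendicular of $T$, so that $T$ is cluster-tilting; a bookkeeping count of which $M_k$ survive then yields $|T|=\ell(w)-\ell(v)=\dim\mathcal{R}_{v,w}$, which is the cardinality assertion.

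Step (iii) is the application of \cite{BIRS}: a Hom-finite stably $2$-CY Frobenius category with a cluster-tilting object $T$ such that the Gabriel quiver of $\operatorname{End}(T')$ has no loops or $2$-cycles for every cluster-tilting object $T'$ carries a cluster structure, with exchange triangles $X\to B\to X^\ast\to\Sigma X$ governing the mutation of cluster-tilting objects and the endomorphism quivers transforming by quiver mutation. The technical work in this step is to verify the no-loops/no-$2$-cycles condition, and the remaining \cite{BIRS} axioms, for $\mathcal{C}_{v,w}$; this should follow from the explicit module-theoretic description but requires care.

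For (iv), use the identification of $\mathcal{R}_{v,w}$ with a reduced unipotent-type subvariety of the flag variety, under which a prescribed family of generalized minors restricts to the frozen variables, and restrict the cluster character $\varphi$ on $\mathbb{C}[N]$ from \cite{GLS08} (defined via the dual semicanonical basis and Euler characteristics of quiver Grassmannians) to obtain $\varphi_{v,w}$. The key computation is the multiplication formula: for an exchange triangle one gets $\varphi_{v,w}(X)\,\varphi_{v,w}(X^\ast)=\varphi_{v,w}(B_+)+\varphi_{v,w}(B_-)$, where $B_\pm$ are the two middle terms of the exchange triangles, proved using the $2$-CY property together with an Euler-form computation. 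Combined with (iii) and with the algebraic independence of $\varphi_{v,w}(T)$, this shows that $\varphi_{v,w}$ carries the categorical cluster structure to an honest cluster subalgebra of $\mathbb{C}[\mathcal{R}_{v,w}]$ with initial seed $\varphi_{v,w}(T)$ of rank $\dim\mathcal{R}_{v,w}$. I expect the main obstacle to be precisely this last step: $\varphi$ is a priori only a map into $\mathbb{C}[N]$, and one must show that its values descend to $\mathbb{C}[\mathcal{R}_{v,w}]$, that the multiplication formula survives the restriction, and that the resulting functions generate exactly a cluster subalgebra with the expected coefficient ring --- controlling the image and the frozen part is considerably more delicate than the purely categorical input (i)--(iii).
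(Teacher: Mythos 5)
This is a result the paper quotes verbatim from \cite[Theorem 4.5]{Leclerc}; the paper contains no proof of it and uses it (together with its refinement, Theorem~\ref{Lec-seed}) as a black box, so there is no in-paper argument to compare against. Your sketch is, however, broadly the route Leclerc himself takes: set up the Frobenius and stably $2$-Calabi--Yau structures via \cite{GLS08}, construct a cluster-tilting object by truncating injectives along a reduced word for $w$ and then stripping off socle contributions dictated by the positive distinguished subexpression for $v$, invoke \cite{BIRS}, and transport the categorical structure by a cluster character.

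That said, three of your steps are phrased as if routine when they carry most of the weight. For (i), extension-closed subcategories of $\operatorname{mod}\Lambda$ are not automatically Frobenius, and showing that $\mathcal{C}^v\cap\mathcal{C}_w$ is Frobenius with the correct relative projective--injectives (which are precisely what become the frozen variables in Theorem~\ref{Lec-seed}(b)) is substantive, not a formal consequence of commutation of the $\mathcal{E}_i$, $\mathcal{E}_i^\dagger$. For (ii), the count $|T|=\ell(w)-\ell(v)$ requires proving that each $U_j$ with $j\in J$ is nonzero and indecomposable and that the $U_j$ are pairwise non-isomorphic; that is a theorem, not bookkeeping, and it is exactly the kind of fact this paper has to re-derive explicitly (Remark~\ref{rmk:irreducible}) in the skew Schubert case. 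For (iv), Leclerc's cluster character is constructed to land directly in $\mathbb{C}[\mathcal{R}_{v,w}]$ rather than being restricted from $\mathbb{C}[N]$; proving the exchange identity $\varphi_{T_i}\varphi_{T_i'}=\varphi_B+\varphi_{B'}$ and the algebraic independence of the initial cluster there is the technical core, and you correctly flag it as the delicate step but give no indication of how the descent would actually be carried out. In short, the shape of your outline is right, but as written it compresses the three hardest proofs in the source into single sentences, so as a self-contained proof it has genuine gaps at (i), (ii), and (iv).
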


In particular, the theorem says that $\mathcal{C}_{v,w}$ is a Frobenius category that admits a cluster-tilting object.  
Given a basic cluster-tilting module $T$ we can associate
the \emph{endomorphism quiver} $\Gamma_T$ as follows.
The vertices of $\Gamma_T$ are in bijection with indecomposable direct summands $T_i$ of $T$.  The number of arrows $T_i\to T_j$ in $\Gamma_T$ corresponds to the dimension of the space of irreducible morphisms $T_i \to T_j$ in $\text{add}\,T$.

Given a basic cluster-tilting module $T\in \mathcal{C}_{v,w}$, there is a notion of mutation of $T$ at an indecomposable summand $T_i$ of $T$, provided that $T_i$ is not projective-injective in $\mathcal{C}_{v,w}$.  
The {\it mutation} of $T$ at $T_i$ is a new cluster-tilting module $\mu_{T_i}(T):=T/T_i\oplus T_i'$, obtained by replacing $T_i$ by a unique different indecomposable module $T_i'\in \mathcal{C}_{v,w}$. Moreover, $T_i'$ is defined by the two short exact sequences 
$$\xymatrix{0\ar[r]&T_i' \ar[r] & B \ar[r]^g & T_i \ar[r] & 0 && 0\ar[r]&T_i \ar[r]^f & B' \ar[r] & T_i' \ar[r] & 0}$$
where $g$ and $f$ are minimal right and left $\text{add}\,(T/T_i)$-approximations of $T_i$. 
Thus, $B$ is a direct sum of $T_j \in\textup{ind}\,T$ for every arrow $T_j\to T_i$ in $\Gamma_T$, and  $B'$ is a direct sum of $T_j \in\textup{ind}\,T$ for every arrow $T_i\to T_j$ in $\Gamma_T$.

Furthermore, there is a cluster character $\varphi: \textup{obj}\,\mathcal{C}_{v,w}\to 
\mathbb{C}[\mathcal{R}_{v,w}]$ 
that maps modules $N\in \mathcal{C}_{v,w}$ to functions $\varphi_N \in 
\mathbb{C}[\mathcal{R}_{v,w}]$. 
   While the definition of $\varphi$ is rather complicated, $\varphi$ 
   satisfies several nice properties.  
   In particular, 
   for every $N,N'\in \mathcal{C}_{v,w}$, we have 
$$\varphi_{N\oplus N'}=\varphi_{N}\varphi_{N'}.$$
Moreover, for every mutation $\mu_{T_i}$ of a cluster-tilting module $T$, 
we obtain an exchange relation in $\mathbb{C}[\mathcal{R}_{v,w}]$:  
$$\varphi_{T_i}\varphi_{T_i'} = \varphi_{B}+\varphi_{B'},$$
where $B$ and $B'$ come from the short exact sequences above.
In this way the cluster character $\varphi$ induces a cluster algebra structure in $\mathbb{C}[\mathcal{R}_{v,w}]$ from a categorical cluster structure in $\mathcal{C}_{v,w}$.  

Next we want to give a more explicit version of \cref{Lec-main-thm}.

\begin{definition}\label{def:Uj}
Given $v \leq w$ in $W$ and a reduced expression 
 ${\bf w}=s_{i_t}\cdots s_{i_2}s_{i_1}$ 
	for $w$, we construct a set of modules $\{U_j\}$ 
	which will give rise to 
	a cluster in $\mathbb{C}[{\mathcal R}_{v,w}]$. 
Let $\bf{v}$ be the reduced subexpression for $v$ in $\bf{w}$ that is ``rightmost" in $\bf{w}$, called the \emph{positive distinguished subexpression} for $v$ in $\mathbf{w}$ (see \cref{def:posdistsubexpression}).  
Set $w_{(j)}=s_{i_j}\cdots s_{i_2}s_{i_1}$ for $1\leq j \leq t$, and let $w_{(j)}^{-1}:=(w_{(j)})^{-1}$. 
Let $v_{(j)}$ be the product of all simple reflections in $w_{(j)}$ that are  part of $\bf{v}$.   Define $J \subseteq \{1, \dots, t\}$ to be the collection of indices $j$ such that the corresponding reflection $s_{i_j}$ in the expression $\bf{w}$ is \emph{not} a part of $\bf{v}$.    

For every $j\in J$ we construct a module $U_j$ from the injective module $Q_{i_j}$.  For $N\in\text{mod}\, \Lambda$ let $\text{Soc}_{s_i} (N)$ be the direct sum of all submodules of $N$ isomorphic to the simple module $S_i$. Given a reduced word 
$z=s_{i_r}\dots s_{i_2}s_{i_1}$ in $W$, there is a unique sequence

$$0=N_0\subseteq N_1 \subseteq \cdots \subseteq N_r \subseteq N$$
of submodules of $N$ such that $N_p/N_{p-1}=\text{Soc}_{s_{i_p}}(N/N_{p-1})$.  Define $\text{Soc}_{z}(N)=N_r$.  
 For every $j\in J$,
let 
	\begin{equation}
		V_j = \text{Soc}_{w_{(j)}^{-1}} (Q_{i_j}) \hspace{1.5cm} \text{and} \hspace{1.5cm} U_j= \mathcal{E}^{\dagger}_{v_{(j)}^{-1}} V_j.
	\end{equation}
\end{definition}
\cref{ex:running} gives a detailed construction of a module  $U_j$.

The following theorem describes the cluster algebra structure in the coordinate ring of $\mathcal{R}_{v,w}$ and its additive categorification provided by $\mathcal{C}_{v,w}$.
	
\begin{theorem}\cite[Theorem 4.5 and Proposition 5.1]{Leclerc}\label{Lec-seed}
	Each pair $(v,{\bf w})$ as in \cref{def:Uj} gives a 
	cluster-tilting module 
		$U_{v,{\bf w}}:= 
		\bigoplus_{j\in J} U_j$ in $\mathcal{C}_{v,w}$, 
		that corresponds via the cluster character $\varphi$ to 
	a seed in $\C[\mathcal{R}_{v, w}]$ 
	as follows. 

\begin{itemize}

	\item[(a)] The cluster variables in $\C[\mathcal{R}_{v,w}]$ are the irreducible factors of $\varphi_{U_j} = \Delta_{v^{-1}_{(j)}([i_j]),w^{-1}_{(j)}([i_j])}$ for $j\in J$; they correspond to the indecomposable summands of $U_j$.  


\item[(b)] The frozen variables are the irreducible factors of $\Pi_{i\in I}\Delta_{v^{-1}([i]),w^{-1}([i])}$; they correspond to the  
	indecomposable summands of $\bigoplus_{i\in I} \mathcal{E}^{\dagger}_{v^{-1}} \mathcal{E}_{w^{-1} w_0} (Q_i)$ (which 
		are the projective-injective objects).

\item[(c)] The 
	 \emph{extended cluster} is the set of cluster and frozen variables, 
		which has cardinality $\textup{dim}\,\mathcal{R}_{v,w} = 
	\ell(w)-\ell(v)  = 
		\abs{U_{v,{\bf w}}}$.

	\item[(d)] The quiver associated to the seed is the endomorphism quiver $\Gamma_{U_{v,{\bf w}}}$ of the cluster-tilting module.  Moreover, the quiver has no loops and no 2-cycles, and the mutation of $U_{v,{\bf w}}$ induces  mutation on the quiver $\Gamma_{U_{v,{\bf w}}}$, in the sense of \cref{def:mutation}.
	\item[(e)] The cluster algebra
		$\tilde{R}_{v,w}$ 
		generated by all cluster variables 
		is a subalgebra of 
	 $\C[\mathcal{R}_{v, w}]$; when $w$ can be factored as $w=xv$ with 
		$\ell(w) = \ell(x)+\ell(v)$, the cluster algebra 
		$\tilde{R}_{v,w}$  is equal to 
	 $\C[\mathcal{R}_{v, w}]$.
\end{itemize}
\end{theorem}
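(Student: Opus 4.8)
The plan is to follow the strategy of Leclerc \cite{Leclerc}, which rests on the additive categorification of coordinate rings of unipotent cells by Gei\ss--Leclerc--Schr\"oer \cite{GLS08} and on the general theory of cluster structures on stably $2$-Calabi--Yau Frobenius categories of Buan--Iyama--Reiten--Scott \cite{BIRS}. \textbf{Step 1 (the category).} First one checks that $\mathcal{C}_{v,w} = \mathcal{C}^v\cap \mathcal{C}_w$ is a Frobenius category whose stable category is Hom-finite and $2$-Calabi--Yau, with projective-injective objects exactly the indecomposable summands of $\bigoplus_{i\in I}\mathcal{E}^{\dagger}_{v^{-1}}\mathcal{E}_{w^{-1}w_0}(Q_i)$. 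Here $\mathcal{C}_w=\mathcal{E}_{w^{-1}w_0}(\mathrm{mod}\,\Lambda)$ is one of the subcategories of Gei\ss--Leclerc--Schr\"oer, already known to have these properties, and $\mathcal{C}^v$ is the analogous construction on the socle side; the content is that the intersection inherits the Frobenius structure and that the functors $\mathcal{E}, \mathcal{E}^{\dagger}$ are exact enough to compute its projective-injectives. Once a basic cluster-tilting object is exhibited, \cite{BIRS} endows $\mathcal{C}_{v,w}$ with a cluster structure: mutation of cluster-tilting objects via the exchange sequences, a cluster character $\varphi$, and compatibility of module mutation with quiver mutation.

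\textbf{Step 2 (the cluster-tilting module).} Next one shows that $U_{v,\mathbf{w}}=\bigoplus_{j\in J}U_j$ is basic cluster-tilting in $\mathcal{C}_{v,w}$: it is rigid, its $|J|=\ell(w)-\ell(v)$ indecomposable summands are pairwise non-isomorphic, and $|J|=\dim\mathcal{R}_{v,w}$ equals the number of summands of any cluster-tilting object of $\mathcal{C}_{v,w}$. The argument is by induction on $\ell(w)$: writing $\mathbf{w}=s_i\mathbf{w}'$, the functors $\mathcal{E}_i$ and $\mathcal{E}_i^{\dagger}$ identify $\mathcal{C}_{v,w}$ with a copy of $\mathcal{C}_{v',w'}$ for a smaller pair, and one tracks how the modules $U_j=\mathcal{E}^{\dagger}_{v_{(j)}^{-1}}\mathrm{Soc}_{w_{(j)}^{-1}}(Q_{i_j})$ transform, reducing rigidity and the summand count to the inductive hypothesis together with the $\mathcal{C}_w$ case already settled by Gei\ss--Leclerc--Schr\"oer. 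That the endomorphism quiver $\Gamma_{U_{v,\mathbf{w}}}$ has no loops and no $2$-cycles is then a general feature of rigid objects in a $2$-Calabi--Yau category, and that module mutation induces quiver mutation in the sense of \cref{def:mutation} follows by transporting the arguments of \cite[Proposition 5.1]{GLS08} to $\mathcal{C}_{v,w}$.

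\textbf{Step 3 (transport to $\mathbb{C}[\mathcal{R}_{v,w}]$ and identification of the seed).} The cluster character satisfies $\varphi_{N\oplus N'}=\varphi_N\varphi_{N'}$ and the exchange relation $\varphi_{T_i}\varphi_{T_i'}=\varphi_B+\varphi_{B'}$, so $\varphi$ sends $U_{v,\mathbf{w}}$ to a seed inside a cluster subalgebra $\tilde R_{v,w}\subseteq\mathbb{C}[\mathcal{R}_{v,w}]$ (all $\varphi_N$ being regular functions). It remains to evaluate $\varphi$ on the $U_j$: using the Gei\ss--Leclerc--Schr\"oer formula expressing $\varphi_{\mathrm{Soc}_z(Q_i)}$ as a generalized (flag) minor and the effect of $\mathcal{E}^{\dagger}$ on such functions, one obtains $\varphi_{U_j}=\Delta_{v^{-1}_{(j)}([i_j]),\,w^{-1}_{(j)}([i_j])}$, and likewise the projective-injective summands give the frozen minors $\Delta_{v^{-1}([i]),\,w^{-1}([i])}$, so the cluster and frozen variables are the irreducible factors of these functions as in (a)--(d), with $|J|=\dim\mathcal{R}_{v,w}$ giving (c). \textbf{Step 4 (equality in the length-additive case).} When $w=xv$ with $\ell(w)=\ell(x)+\ell(v)$, the positive distinguished subexpression of $v$ in a standard reduced expression $\mathbf{w}$ is literally the suffix $\mathbf{v}$, which makes the set $J$ and the modules $U_j$ transparent; $\mathcal{R}_{v,w}$ is then isomorphic to an open subset of a unipotent cell whose coordinate ring Gei\ss--Leclerc--Schr\"oer generate by exactly the relevant minors, and comparing generating sets yields $\tilde R_{v,w}=\mathbb{C}[\mathcal{R}_{v,w}]$.

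I expect \textbf{Step 2} to be the crux: proving that the explicitly built module $U_{v,\mathbf{w}}$ is cluster-tilting -- in particular rigid with pairwise non-isomorphic summands -- and that its endomorphism quiver has no loops or $2$-cycles, since both require delicate homological control of the functors $\mathcal{E}_i,\mathcal{E}_i^{\dagger}$ on $\mathrm{mod}\,\Lambda$ and an induction whose steps each involve nontrivial computations with composition series of preprojective-algebra modules. The identification $\varphi_{U_j}=\Delta_{v^{-1}_{(j)}([i_j]),\,w^{-1}_{(j)}([i_j])}$ in Step 3 is also substantive but is essentially bookkeeping once the Gei\ss--Leclerc--Schr\"oer evaluation formulas are in hand.
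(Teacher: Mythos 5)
This statement is imported verbatim from Leclerc's paper (cited as \cite[Theorem 4.5 and Proposition 5.1]{Leclerc}); the present paper gives no proof of it, so there is nothing internal to compare against. Your sketch is a reasonable reconstruction of what Leclerc actually does: he shows $\mathcal{C}_{v,w}$ is a stably $2$-Calabi--Yau Frobenius subcategory with projective-injectives $\bigoplus_i\mathcal{E}^\dagger_{v^{-1}}\mathcal{E}_{w^{-1}w_0}(Q_i)$, verifies that $U_{v,\mathbf{w}}$ is basic cluster-tilting (his key technical lemma, roughly your Step 2), invokes \cite{BIRS} for the cluster structure and compatibility of module and quiver mutation, evaluates $\varphi$ on the $U_j$ via the GLS flag-minor formulas to get the generalized minors $\Delta_{v^{-1}_{(j)}([i_j]),\,w^{-1}_{(j)}([i_j])}$, and in the length-additive case identifies $\tilde R_{v,w}$ with $\mathbb{C}[\mathcal{R}_{v,w}]$ through the unipotent-cell picture. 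One small inaccuracy worth flagging: in Step 2 your inductive reduction ``$\mathbf{w}=s_i\mathbf{w}'$ identifies $\mathcal{C}_{v,w}$ with $\mathcal{C}_{v',w'}$'' oversimplifies the bookkeeping; Leclerc's argument is closer to realizing $U_{v,\mathbf{w}}$ as $\mathcal{E}^\dagger_{v^{-1}}$ applied to the GLS cluster-tilting module $V_{\mathbf{w}}$ of $\mathcal{C}_w$ and then tracking rigidity and summand counts through that functor, rather than peeling one reflection off $\mathbf{w}$ at a time. You are correct that this rigidity/cluster-tilting step is where the real work is.
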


\begin{example}\label{ex:running}
Let $n=7$ and consider the pair $(v,w)$ corresponding to a cell in $\text{Gr}(3,7)$, where $v=w_Ks_3$ and $w$ is given by the reduced expression
	$${\bf w}=s_5s_6s_4s_5s_2s_3s_4s_1s_2s_3{\bf s_1s_2s_1s_4s_5s_4s_6s_5s_4s_3} = s_{i_{20}} \dots s_{i_2} s_{i_1}.$$

The positive distinguished subexpression for $v$ in ${\bf w}$ is indicated in bold,
	and corresponds to the last ten transpositions at the end of ${\bf w}$.  The remaining transpositions determine the index set $J=\{11, 12, \dots, 20\}$, and for each $j\in J$ we obtain a summand $U_j$ of the cluster-tilting module $U_{v,{\bf w}}$.  Because the subexpression for $v$ appears at the end of ${\bf w}$, we have $v_{(j)}=v$ for all $j\in J$. We first compute $U_{14}$, denoting modules by their composition factors throughout.  Recall that 
	$$Q_4= {\begin{smallmatrix}&&&3&&\\&&4&&2&\\
	& 5 && 3 && 1\\6&&4&&2\\&5&&3\\&&4\end{smallmatrix}}. $$

Informally, we ``build up'' the composition diagram of $V_{14}=\text{Soc}_{w_{(14)}^{-1}}(Q_4)$ by adding composition factors from the diagram of $Q_4$, working from the bottom up. This process is illustrated below. We add composition factors in the order specified by the reduced expression $w_{(14)}^{-1}=\underline{s_3} s_4 \underline{s_5} \underline{s_6} \underline{s_4} \underline{s_5} s_4 \underline{s_1} \underline{s_2} s_1 \underline{s_3} s_2 s_1 \underline{s_4}$ (reading right to left). The underlined $s_i$'s indicate when an $i$ is added.

$$
\begin{smallmatrix}& && && \\&&&&\\&&&\\&&4\end{smallmatrix} 
\to 
\begin{smallmatrix}&  &&  && \\&&&&\\&&&3\\&&4\end{smallmatrix}
\to 
\begin{smallmatrix}& && && \\&&&&2\\&&&3\\&&4\end{smallmatrix}
\to 
\begin{smallmatrix}& && && 1\\&&&&2\\&&&3\\&&4\end{smallmatrix}
\to 
\begin{smallmatrix}& && && 1\\&&&&2\\&5&&3\\&&4\end{smallmatrix}
\to
\begin{smallmatrix}&  &&  && 1\\&&4&&2\\&5&&3\\&&4\end{smallmatrix}
\to
\begin{smallmatrix}& && && 1\\6&&4&&2\\&5&&3\\&&4\end{smallmatrix}
\to
\begin{smallmatrix}& 5 && && 1\\6&&4&&2\\&5&&3\\&&4\end{smallmatrix}
\to
\begin{smallmatrix}& 5 && 3 && 1\\6&&4&&2\\&5&&3\\&&4\end{smallmatrix}=V_{14}
$$

To get the composition diagram of $U_{14}=\mathcal{E}^{\dagger}_{v^{-1}}V_{14}$, we remove composition factors from the diagram of $V_{14}$, as illustrated below. We remove these factors from the bottom up, in the order specified by reading the reduced expression $v^{-1}=\underline{s_3} s_4 s_5 \underline{s_6} s_4 \underline{s_5} \underline{s_4} s_1 s_2 s_1$ right to left. The underlined $s_i$'s indicate when an $i$ is removed. 

$$V_{14}=\begin{smallmatrix}& 5 && 3 && 1\\6&&4&&2\\&5&&3\\&&4\end{smallmatrix}
\to 
\begin{smallmatrix}& 5 && 3 && 1\\6&&4&&2\\&5&&3\end{smallmatrix}
\to
\begin{smallmatrix}& 5 && 3 && 1\\6&&4&&2\\&&&3\end{smallmatrix}
\to
\begin{smallmatrix}& 5 && 3 && 1\\&&4&&2\\&&&3\end{smallmatrix}
\to
\begin{smallmatrix}& 5 && 3 && 1\\&&4&&2\end{smallmatrix}=U_{14}
$$

Performing similar computations for the remaining elements of $J$ we obtain the following set of modules:

\[U_{11}={\begin{smallmatrix}6\\&5&&3&&1\\&&4&&2\end{smallmatrix}} \hspace{1cm} U_{12}={\begin{smallmatrix}6\\&5&&3\\&&4&&2\end{smallmatrix}} 
\hspace{1cm} U_{13}={\begin{smallmatrix}6\\&5\\&&4\end{smallmatrix}}
\hspace{1cm} U_{15}={\begin{smallmatrix}&5&&3\\6&&4&&2\\&5&&3&&1\\&&4&&2\end{smallmatrix}}\]

\[U_{16}={\begin{smallmatrix}&5\\6&&4\\&5&&3\\&&4&&2\end{smallmatrix}}
\hspace{1cm}U_{17}={\begin{smallmatrix}&3&&1\\4&&2\end{smallmatrix}}
\hspace{1cm}U_{18}={\begin{smallmatrix}&& 3\\ & 4&&2\\5&&3&&1\\&4&&2\end{smallmatrix}}
\hspace{1cm}U_{19}={\begin{smallmatrix}&1\\2\end{smallmatrix}}
\hspace{1cm}U_{20}={\begin{smallmatrix}&&2\\&3&&1\\4&&2\end{smallmatrix}}\]

	The projective-injective objects of $\mathcal{C}_{v,w}$ are precisely $U_{13}, U_{15}, U_{16}, U_{18}, U_{19}, U_{20}$.

	

		The endomorphism quiver $\Gamma_{U_{v,{\bf w}}}$ is given below. 

$$\xymatrix{
{\begin{smallmatrix}6\\&5&&3&&1\\&&4&&2\end{smallmatrix}} \ar[dr] & {\begin{smallmatrix}5&&3&&1\\&4&&2\end{smallmatrix}} \ar[l]\ar[dr] & {\begin{smallmatrix}&3&&1\\4&&2\end{smallmatrix}} \ar[l]\ar[dr] & {\begin{smallmatrix}&1\\2\end{smallmatrix}} \ar[l]
\\
{\begin{smallmatrix}6\\&5&&3\\&&4&&2\end{smallmatrix}} \ar[u] \ar[dr] & {\begin{smallmatrix}&5&&3\\6&&4&&2\\&5&&3&&1\\&&4&&2\end{smallmatrix}} \ar[l]\ar[u] &{\begin{smallmatrix}&& 3\\ & 4&&2\\5&&3&&1\\&4&&2\end{smallmatrix}} \ar[l]\ar[u] &{\begin{smallmatrix}&&2\\&3&&1\\4&&2\end{smallmatrix}} \ar[u]\ar[l] 
\\
{\begin{smallmatrix}6\\&5\\&&4\end{smallmatrix}} \ar[u] & {\begin{smallmatrix}&5\\6&&4\\&5&&3\\&&4&&2\end{smallmatrix}} \ar[l]\ar[u]
}$$

In general, it is difficult to construct the endomorphism quiver 
		$\Gamma_{U_{v,{\bf w}}}$, because it is 
	 difficult to determine whether a given morphism is irreducible in $\text{add}\,U_{v,{\bf w}}$.
For example, there is a nonzero morphism $f: U_{15}\to U_{11}$ with image ${\begin{smallmatrix}5&&3\\&4&&2\end{smallmatrix}}$ but it factors through $U_{12}$.  Thus, $f$ does not induce an arrow in 
	 $\Gamma_{U_{v,{\bf w}}}$.
\end{example}

Our goal is to find an explicit description of the seed associated to 
a pair $(v,\mathbf{w})$, where 
$v\in W^K_{max}$, 
$w=xv$ is a length-additive factorization, and 
$\mathbf{w}= \mathbf{x} \mathbf{v}$ is a standard expression for $w$.  
In \cref{Lec-variables} we will analyze the cluster variables 
coming from \cref{Lec-seed}
(interpreting generalized minors
as Pl\"ucker coordinates), 
and in \cref{sec:modules} and \cref{sec:morphisms} we will 
analyze the modules $U_j$ and the morphisms between them, so as to obtain the quiver.  
The modules $U_j$ were previously defined constructively, so we 
need to develop a more explicit construction, which then enables us to understand the morphisms.
  In the case $v=w_K$, the modules have a particularly nice structure, which 
   allows us to explicitly compute the irreducible morphisms in $\text{add}\,U_{w_K,{\bf w}}$.  
  Next, we use a result of \cite{BKT} we show that there is a bijection between morphisms 
  $U_i\to U_j$ in $\text{add}\,U_{w_k,{\bf w}}$ and morphisms  $U'_i\to U'_j$ in $\text{add}\,U_{w_kv',{\bf wv'}}$. 
    Then we conclude that the quiver for the pair $(v,{\bf w})$ 
  coming from this representation theoretic construction agrees with the quiver coming from a plabic graph.

\subsection{Projecting the categorical cluster variables to Grassmannians}\label{Lec-variables}

When $v \leq w$ and 
$v\in W^K_{\max}$, the Richardson variety $\mathcal{R}_{v,w}$ in the complete flag variety
projects isomorphically to a 
positroid variety $\pi_k(\mathcal{R}_{v,w})$ in the Grassmannian $Gr_{k,n}$.
(Concretely, elements of this positroid variety are given by the span
of rows $v^{-1}\{1,\dots,k\}=v^{-1}[k]$ in a matrix representative $g$ for 
$Bg \in \mathcal{R}_{v,w}$).
When additionally there is a 
length-additive
factorization 
 $\mathbf{w}=\mathbf{xv}$, the positroid variety is a skew Schubert variety, and 
 \cref{Lec-seed} produces a cluster algebra which is equal to the coordinate ring
 $\CC[\mathcal{R}_{v,w}]$. 
 In this section we will determine how to interpret the cluster 
 variables in 
 $\CC[\mathcal{R}_{v,w}]$ as functions on the Grassmannian.
In particular, since 
 each generalized minor from \cref{Lec-seed} is a minor of a unipotent matrix, 
we can restrict that matrix to rows $v^{-1}[k]$,  and then identify
the minor with a Pl\"ucker coordinate of the resulting
$k \times n$ matrix.

For example, continuing \cref{ex:running} with
$v=w_Ks_3$ and $w$ given by 
$${\bf w}=s_5s_6s_4s_5s_2s_3s_4s_1s_2s_3{\bf s_1s_2s_1s_4s_5s_4s_6s_5s_4s_3},$$
 we obtain generalized minors which map
to the following Pl\"ucker coordinates:
\begin{enumerate}
\item $\Delta_{v^{-1}[3], v^{-1}s_3 [3]} = \Delta_{124, 247} = \Delta_{247}.$
\item $\Delta_{v^{-1}[2], v^{-1} s_3 s_2 [2]} = \Delta_{24, 47} = \Delta_{147}.$
\item $\Delta_{v^{-1}[1], v^{-1} s_3 s_2 s_1 [1]} = \Delta_{4, 7} = \Delta_{127}.$
\item $\Delta_{v^{-1}[4], v^{-1} s_3 s_2 s_1 s_4 [4]} = \Delta_{1247,2476} = \Delta_{246}.$
\item $\Delta_{v^{-1}[3], v^{-1} s_3 s_2  s_1 s_4 s_3 [3]} = \Delta_{124, 467} = \Delta_{467}.$
\item $\Delta_{v^{-1}[2], v^{-1} s_3 s_2 s_1 s_4 s_3 s_2 [2]} = \Delta_{24, 67} = \Delta_{167}.$
\item $\Delta_{v^{-1}[5],v^{-1} s_3 s_2 s_1 s_4 s_3 s_2 s_5 [5]} = \Delta_{12467, 24567} = \Delta_{245}.$
\item $\Delta_{v^{-1}[4], v^{-1} s_3 s_2 s_1 s_4 s_3 s_2 s_5 s_4 [4]} = 
\Delta_{1247, 4567} = \Delta_{456}.$
\item $\Delta_{v^{-1}[6], v^{-1} s_3 s_2 s_1 s_4 s_3 s_2 s_5 s_4 s_6 [6]} = 
\Delta_{124567, 234567} = \Delta_{234}.$
\item $\Delta_{v^{-1}[5], v^{-1} s_3 s_2 s_1 s_4 s_3 s_2 s_5 s_4 s_6 s_5 [5]} = 
\Delta_{12467, 34567} = \Delta_{345}.$
\end{enumerate}



\begin{remark}\label{rem:genminor} 
	Let $J \subseteq [n]$ with $|J| = \ell$.
If we project an $n \times n$ unipotent matrix $g$ to the Grassmannian element represented by the 
span of rows 
$v^{-1}[\ell]$ of $g$, the generalized minor
	$\Delta_{v^{-1}[\ell],J}$ of $g$
	equals the following Pl\"ucker coordinate of $Gr_{k,n}$:
\begin{itemize}
\item If $\ell<k$ and $|J \cup 
	v^{-1}([k]\setminus [\ell])|
		=k$, then 
	$\Delta_{v^{-1}[\ell],J} = 
		\Delta_{J \cup 
		v^{-1}([k]\setminus [\ell])}$.
\item If $\ell=k$ then 
	$\Delta_{v^{-1}[\ell],J} = 
\Delta_{J}$.
\item If $\ell>k$ and 
	$|J \setminus v^{-1}([\ell]\setminus [k])| = k$, then	
	$\Delta_{v^{-1}[\ell],J} = 
		\Delta_{J \setminus v^{-1}([\ell]\setminus [k])}$.	
\end{itemize}
\end{remark}

Using \cref{rem:genminor},
the following lemma implies that Leclerc's cluster variables in the seed corresponding to $(v, \mathbf{w})$ coincide
with those obtained from the rectangles seed defined in 
\cref{sec:seed}.


\begin{lemma}\label{lem:pluecker}
Choose a Young diagram contained in a $k \times (n-k)$ rectangle,
and label its boxes by simple reflections as in the right of 
\cref{fig1Leclerc}.
Choose a reading order for the boxes as in the left  of 
\cref{fig1Leclerc}.  
Choose any box $b$ and let $s_{\ell}$ be its label.
Let $w_b$ be the word obtained by reading boxes in order up through $b$
and recording the corresponding simple reflections.  For example
if $b$ is the box indicated by the bold $s_6$ in the right of 
\cref{fig1Leclerc}, then $w_b = (s_5 s_4  s_3 s_2) (s_6 s_5 s_4)(s_7 s_6)$.

\begin{figure}[h]
\setlength{\unitlength}{0.7mm}
\begin{center}
 \begin{picture}(50, 44)
  \put(5,41){\line(1,0){45}}
  \put(5,32){\line(1,0){45}}
  \put(5,23){\line(1,0){45}}
  \put(5,14){\line(1,0){36}}
  \put(5,5){\line(1,0){9}}
  \put(5,-4){\line(0,1){45}}
  \put(14,5){\line(0,1){36}}
  \put(23,14){\line(0,1){27}}
  \put(32,14){\line(0,1){27}}
  \put(41,14){\line(0,1){27}}
  \put(50, 23){\line(0, 1){18}}
  \put(8,35){$1$}
  \put(17,35){$5$}
  \put(26,35){$8$}
  \put(35,35){$11$}
  \put(44, 35){$14$}
  \put(8,26){$2$}
  \put(17,26){$6$}
  \put(26,26){$9$}
  \put(35,26){$12$}
  \put(44, 26){$15$}
  \put(8,17){$3$}
  \put(17,17){$7$}
  \put(26, 17){$10$}
  \put(35, 17){$13$}
  \put(8,8){$4$}
 \end{picture}
    \qquad
	\begin{picture}(50,44)
  \put(5,41){\line(1,0){45}}
  \put(5,32){\line(1,0){45}}
  \put(5,23){\line(1,0){45}}
  \put(5,14){\line(1,0){36}}
  \put(5,5){\line(1,0){9}}
  \put(5,-4){\line(0,1){45}}
  \put(14,5){\line(0,1){36}}
  \put(23,14){\line(0,1){27}}
  \put(32,14){\line(0,1){27}}
  \put(41,14){\line(0,1){27}}
  \put(50, 23){\line(0, 1){18}}
  \put(8,35){$s_k$}
  \put(14,35){$s_{k+1}$}
  \put(23,35){$s_{k+2}$}
  \put(35,35){$\cdots$}
  \put(5,26){$s_{k-1}$}
  \put(17,26){$s_k$}
  \put(23,26){$s_{k+1}$}
  \put(35,26){$\cdots$}
  \put(5,17){$s_{k-2}$}
  \put(17,17){$\vdots$}
  \put(26, 17){$\vdots$}
  \put(8,8){$\vdots$}
  \end{picture}
   \qquad
	\begin{picture}(50,44)
  \put(5,41){\line(1,0){45}}
  \put(5,32){\line(1,0){45}}
  \put(5,23){\line(1,0){45}}
  \put(5,14){\line(1,0){36}}
  \put(5,5){\line(1,0){9}}
  \put(5,-4){\line(0,1){45}}
  \put(14,5){\line(0,1){36}}
  \put(23,14){\line(0,1){27}}
  \put(32,14){\line(0,1){27}}
  \put(41,14){\line(0,1){27}}
  \put(50, 23){\line(0, 1){18}}

\linethickness{.5mm}
 \put(5,-4){\line(0,1){27}}
 \put(5,23){\line(1,0){27}}
 \put(32,23){\line(0,1){18}}
 \put(32, 41){\line(1, 0){18}}

  \put(8,35){$s_5$}
  \put(17,35){$s_6$}
  \put(26,35){$s_7$}
  \put(8,26){$s_4$}
  \put(17,26){$s_5$}
  \put(26,26){$\mathbf{s_6}$}
  \put(8,17){$s_3$}
  \put(17,17){$s_4$}
  \put(8,8){$s_2$}
  
	\put(1, -1){$1$}
	\put(1, 8){$2$}
	\put(1, 17){$3$}
	\put(33, 26){$7$}
	\put(33 , 35){$8$}
  \end{picture}
\end{center}

\caption{}
\label{fig1Leclerc}
\end{figure}

Also let $J(b):=\vertne{\Rect(b)}$ (see \cref{def:labeledquiver}).  In 
the right of 
\cref{fig1Leclerc}, 
$J(b) = \{1,2,3,7,8\}$.

Then for any $b$ and $\ell$ as above, let $J = w_b [\ell]$.
We have that:
\begin{enumerate}
\item  If $\ell < k$, then $J(b) = J \cup ([k]\setminus [\ell]) = 
J \cup \{\ell+1,\ell+2,\dots, k\}$.
\item If $\ell=k$, then $J(b)=J$.
\item If $\ell>k$, then $J(b) = J \setminus ([\ell]\setminus [k]) = 
J \setminus \{k+1, k+2,\dots, \ell\}$.
\end{enumerate}
\end{lemma}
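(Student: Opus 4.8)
The plan is to prove the three claimed identities by induction on $|\lambda|$, or equivalently on the number of boxes read up to and including $b$. The base case is when $b$ is the first box in the reading order, which is necessarily the box labeled $s_k$ in the top-left corner; then $\Rect(b)$ is a single box, $J(b) = \{k\}$, $\ell = k$, $w_b = s_k$, and $J = s_k[k] = \{1,\dots,k-1,k+1\}$... wait, that is $\{1,\dots,k-1,k+1\}$, whereas $J(b) = \vertne{\Rect(b)}$. I need to double-check the base case matches case (2): $J(b) = \vertne{\text{single box}}$. Actually one should first unwind the definitions: for a rectangle $\mu$ sitting in the top-left of the $k\times(n-k)$ grid with $a$ rows and $c$ columns, $\vertne{\mu}$ is the set of labels of north steps along $\pathne{\mu}$, which works out to $\{1,2,\dots\}$ in a way I would compute explicitly once. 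So step one is: give a clean closed formula for $\vertne{\Rect(b)}$ in terms of the row index $r$ and column index $c$ of $b$ in $\lambda$ (equivalently, the number of rows and columns of $\Rect(b)$), and simultaneously verify the base case.

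\textbf{Inductive step.} First I would set up the inductive step carefully. Let $b$ be a box and let $b'$ be the box immediately preceding $b$ in the (columnar) reading order, with label $s_{\ell}$ for $b$ and $s_{\ell'}$ for $b'$. In the columnar reading order there are two cases: either $b$ is directly below $b'$ in the same column (so $\ell = \ell' - 1$, since moving down one row decreases the simple-reflection index by one — recall the box in row $r$, column $c$ carries $s_{k-c+r}$), or $b$ is the top box of the next column to the right (so $\ell = k + (c-1)$ where $c$ is the new column index). In either case $w_b = w_{b'}\, s_{\ell}$, so $J = w_b[\ell] = w_{b'}(s_\ell[\ell])$. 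The key combinatorial input is that $s_\ell[\ell] = [\ell]$ unless... no: $s_\ell[\ell] = ([\ell]\setminus\{\ell\})\cup\{\ell+1\}$ always differs from $[\ell]$. Hmm — so I must instead track $w_{b'}[\ell]$ and $w_{b'}[\ell+1]$ and relate them. The cleaner route: prove by a parallel induction the auxiliary fact that $w_b[m] = J(b) \,\triangle\, (\text{an explicit correction set})$ for all relevant $m$, or more economically, observe $w_b[\ell] = w_b[\ell+1] \setminus \{w_b(\ell+1)\} \cup \{w_b(\ell)\}$ only if... Actually the honest approach is: compute $w_b$ as a permutation directly. Using \cref{lem:redexpression}, $w_b$ read right-to-left is a reduced word, and I would identify $w_b^{-1}$ (or $w_b$) with an explicit Grassmannian-type permutation determined by the "staircase" shape $\Rect(b)$ together with the already-read boxes; then $w_b[\ell]$ is simply the image of $\{1,\dots,\ell\}$ under this permutation, which can be read off. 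In fact this is essentially the content of \cref{prop:grassrect} applied to the prefix $y = s_{i_1}\cdots s_{i_j}$ of the columnar word whose associated partition is exactly $\Rect(b)$ after the right relabeling — the Grassmann necklace computation there already tells us $\vertne{J_m}$ for the relevant index. So I would try to deduce \cref{lem:pluecker} as a corollary of \cref{prop:grassrect} (or its proof), matching up $J(b)$ with the appropriate necklace entry and $w_b[\ell]$ with $y[\ell]$ where $y$ is the trip permutation after reading up to $b$.

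\textbf{Reconciling the three cases.} With the permutation $w_b$ in hand, cases (1)–(3) become the statement that $w_b$ fixes $[\ell]\cap[k]$-related indices in a predictable way: since the letters $s_1,\dots,s_{k-1}$ and $s_{k+1},\dots,s_{n-1}$ appearing in $w_b$ come in a controlled pattern (the columnar word for an element of $^{K}W$), $w_b$ restricted appropriately is "Grassmannian at position $\ell$ relative to $k$." Concretely: if $\ell<k$, then $w_b$ moves none of $\{\ell+1,\dots,k\}$ out of the top rows in a way that affects the comparison, so $J(b)$, which corresponds to the vertical steps of the full rectangle $\Rect(b)$ (a set of size $=$ number of rows of $\Rect(b)$), is recovered from $J = w_b[\ell]$ by adjoining $\{\ell+1,\dots,k\}$; the cases $\ell = k$ and $\ell>k$ are analogous with "adjoin" replaced by "do nothing" and "delete $\{k+1,\dots,\ell\}$" respectively. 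I would verify each of the three by tracking the rows versus columns of $\Rect(b)$ against the value of $\ell - k$.

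\textbf{Main obstacle.} The hard part will be pinning down the permutation $w_b$ — or equivalently the precise matching with the Grassmann necklace of the relevant prefix in \cref{prop:grassrect} — cleanly enough that all three cases fall out uniformly rather than by separate ad hoc bookkeeping. The columnar reading order mixes "go down within a column" steps (which decrement the index) with "jump to the next column" steps (which reset to $s_k$ and then increment), and keeping track of how $w_b$ acts on $\{1,\dots,\ell\}$ through this mixture is where the real care is needed; once that is done, passing from $w_b[\ell]$ to $J(b)$ via adjoining or deleting the interval between $\ell$ and $k$ is routine. I would also need to double-check the edge conventions in the definition of $\vertne{\cdot}$ (whether the NE lattice path is labeled $1$ to $n$ starting from the SW corner) so that the three formulas come out with exactly the index ranges stated.
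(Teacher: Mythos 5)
Your plan converges, after some wavering, on the paper's own route: identify $w_b$ as an explicit permutation by factoring it into its column words, then read off $w_b[\ell]$ directly. That is indeed the ``honest approach,'' and it is precisely what the paper does (writing $b$ in row $r$, column $c$ so that $\ell=k-r+c$, observing each column word $s_b s_{b-1}\cdots s_a$ is the cycle $(b+1,b,\dots,a)$, and tracking $w_b(j)$ for $j\in[\ell]$ to land in $\{1,\dots,k-r\}\cup\{k+1,\dots,k+c\}$, after which adjoining $[\ell+1,k]$ or deleting $[k+1,\ell]$ visibly gives $\vertne{\Rect(b)}$). But your proposal never carries this computation out; the crucial middle step --- ``identify $w_b$ with an explicit Grassmannian-type permutation'' --- is left as a black box, so as written this is a strategy, not a proof.

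The detour through \cref{prop:grassrect} contains a concrete error and an unfilled gap. The error: you assert that the columnar prefix $y=s_{i_1}\cdots s_{i_j}$ read up to $b$ has associated partition $\Rect(b)$. It does not. Its partition $\mu_b$ is the union of the \emph{full} columns $1,\dots,c-1$ of $\lambda$ with the top $r$ boxes of column $c$; in general this strictly contains $\Rect(b)$. What is true is that $\Rect(b)$ is a $\mu_b$-frozen rectangle and therefore appears as a Grassmann necklace entry of the (decorated) permutation $w_b$, but that observation alone is not enough. The gap: you would still need to prove that the particular necklace entry $J_{\ell+1}$ matches $w_b[\ell]$ (modified by the interval between $\ell$ and $k$), and this identity --- for an arbitrary box $b$, not just one on the southeast boundary of $\lambda$ --- is essentially the content of \cref{lem:pluecker} itself, so invoking \cref{prop:grassrect} here risks circularity or at minimum requires a separate nontrivial argument that you have not supplied. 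The inductive setup you started with was also abandoned without resolving the recursion; as you noticed, $J=w_{b'}(s_\ell[\ell])$ does not cleanly relate $J(b)$ to $J(b')$ because the relevant $\ell$ jumps when one moves to a new column. To complete the proof you should drop both detours and execute the direct computation of $w_b[\ell]$ from the factorization into column cycles.
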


\begin{proof}
The proofs of the three cases are quite analogous, so we will 
just prove the first one, where $\ell<k$.

Let box $b$ be in row $r$ and column $c$, as in 
\cref{fig2Leclerc}, so that its label is $s_{\ell} = s_{k-r+c}$.
We have that $r>c$.

\ytableausetup{boxsize=6.5mm}

\begin{figure}[h]
\centering
\setlength{\unitlength}{3.25mm}
\begin{picture}(30, 34)
\put(4, 30){\begin{ytableau}
\scriptstyle{s_k} & \scriptstyle{s_{k+1}} & \scriptstyle{s_{k+2}} & & & & & & & \\
\scriptstyle{s_{k-1}} & \scriptstyle{s_{k}} & \scriptstyle{s_{k+1}} & & & & & & & \\
\scriptstyle{s_{k-2}} &  & \scriptstyle{s_{k}} & & & & & & & \\
 & & & \scriptstyle{s_{k}} & & & & & & \\
 & & & & \scriptstyle{s_{k}} & & & & & \\
 & & & & & \scriptstyle{s_{k}}& & & \\ 
 & & & & & & \scriptstyle{s_{k}} & &\\ 
 & & & & &\text{\rotatebox[origin=c]{45}{$\scriptstyle{s_{k-r+c}}$}} &  &\scriptstyle{s_{k}}\\ 
 & & & & & \\
 & & & & & \\
 & & \scriptstyle{s_{i_3}}& \cdots&\scriptstyle{s_{i_{c-1}}} \\
\scriptstyle{s_{i_1}} &\scriptstyle{s_{i_2}} \\
\end{ytableau}}
\linethickness{.5mm}
\put(16.25, 32){\line(1, 0){16}}
\put(4, 15.6){\line(1, 0){12.25}}
\put(4, 4){\line(0, 1){11.6}}
\put(16.3, 15.6){\line(0, 1){16.4}}
\put(0, 30.5){Row 1}
\put(0, 28.5){Row 2}
\put(0, 16.5){Row $r$}
\put(0, 4.5){Row $k$}
\put(4, 33.5){Col}
\put(4.5, 32.5){1}
\put(6, 33.5){Col}
\put(6.5, 32.5){2}
\put(12, 33.5){Col}
\put(12, 32.5){$c-1$}
\put(14.5, 33.5){Col}
\put(15, 32.5){$c$}
\put(30, 33.5){Col}
\put(30, 32.5){$n-k$}
\end{picture}

\caption{}
\label{fig2Leclerc}
\end{figure}

Then $J(b) = \{1,2,\dots, k-r\} \cup 
\{k-r+c+1, k-r+c+2,\dots, k+c\}$,
and $J(b) \setminus \{\ell+1,\ell+2,\dots, k\} = 
J(b) \setminus \{k-r+c+1, k-r+c+2,\dots, k\} = 
\{1,2,\dots, k-r\} \cup \{k+1, k+2,\dots, k+c\}$.
We need to show that 
$$w_b \{1,2,\dots, k-r+c\} = 
J(b) \setminus \{k-r+c+1, k-r+c+2,\dots, k\} = 
\{1,2,\dots, k-r\} \cup \{k+1, k+2,\dots, k+c\}.$$

Let the labels of the simple generators in the bottom boxes
of columns $1, 2, \dots, c-1$ be 
$i_1, i_2, \dots, i_{c-1}$, respectively.  We also write
$i_c = k-r+c$.  Then we have that 
\begin{equation} \label{wb}
w_b = (s_k s_{k-1} s_{k-2} \dots, s_{i_1})(s_{k+1} s_k s_{k-1} \dots, s_{i_2}) \dots (s_{k+c-2} s_{k+c-3}, \dots, s_{i_{c-1}}) (s_{k+c-1} s_{k+c-2} \dots, s_{i_c}).
\end{equation}
Note that 
$$1 \leq i_1 < i_2 < i_3 \dots < i_{c-1} < i_c = k-r+c$$ so that 
$i_s \leq k-r+s$ for all $1 \leq s \leq c$.

We will now explicitly analyze $w_b(j)$ for $1 \leq j \leq k-r+c$.
 Towards this end, it's useful to observe
that for $a<b$,  the product
$s_b s_{b-1} s_{b-2} \dots s_a$  is equal to the cycle
$(b+1, b, b-1, \dots, a+1, a)$ (in cycle notation).

Then looking at \eqref{wb}, we see that:
\begin{itemize}
\item for $1 \leq j \leq i_1-1$, $w_b(j) = j \in \{1,2,\dots, k-r\}$.
\item for $j \in \{i_1, i_2, \dots, i_c\}$, 
$w_b(j) \in \{k+1, k+2,\dots, k+c\}$.
\end{itemize}
We also see that 
\begin{itemize}
\item for $i_1 < j < i_2$, $w_b(j) = j-1$
\item for $i_2 < j < i_3$, $w_b(j) = j-2$
\item $\vdots$
\item for $i_{c-1} < j < i_c$, $w_b(j) = j-(c-1)$.
\end{itemize}
So for $i_{s-1} < j < i_s$, we have that 
$w_b(j) = j - (s-1) < i_s - (s-1) \leq k-r+s-(s-1) = k-r+1$, 
and so $w_b(j) \leq k-r$.
This shows that for each 
$j \in \{1,2,\dots, k-r+c\}$, 
$w_b(j) \in \{1,2,\dots, k-r\} \cup \{k+1, k+2,\dots, k+c\}$,
and so $w_b[k-r+c] = 
 \{1,2,\dots, k-r\} \cup \{k+1, k+2,\dots, k+c\}$.  
This completes the proof of the lemma.
\end{proof}

\begin{corollary}\label{cor:quiverlabels}
Consider a  skew Schubert variety $\pi_k(\mathcal{R}_{v, w}) \subset Gr_{k,n}$, 
where $v\leq w$, $v\in W^K_{max}$, and with $w=xv$ length-additive.   Consider 
the seed for $\mathcal{R}_{v,w}$ 
given by \cref{Lec-seed} which is associated to a standard (columnar) reduced expression 
$\mathbf{w} = \mathbf{xv}$.
When we project the cluster variables
to $\pi_k(\mathcal{R}_{v,w})$, we obtain precisely the set of Pl\"ucker coordinates from the 
rectangles seed (\cref{def:labeledquiver}).  In other words, they are indexed by 
boxes $b$ in 
	$\partne{x([k])}$, 
	and are equal to the Pl\"ucker coordinates 
	$\Delta_{v^{-1}(J(b))}$ in the Grassmannian. 
\end{corollary}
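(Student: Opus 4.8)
The plan is to combine \cref{Lec-seed}(a), \cref{rem:genminor}, and \cref{lem:pluecker}, essentially bookkeeping to match indices. First I would recall from \cref{Lec-seed}(a) that, given the standard (columnar) reduced expression $\mathbf{w}=\mathbf{xv}$ with positive distinguished subexpression $\mathbf{v}$, the cluster variables in $\C[\mathcal{R}_{v,w}]$ are the irreducible factors of $\varphi_{U_j}=\Delta_{v_{(j)}^{-1}([i_j]),\,w_{(j)}^{-1}([i_j])}$ for $j\in J$, where $J$ indexes exactly the simple reflections of $\mathbf{w}$ not used by $\mathbf{v}$ — i.e. the reflections coming from $\mathbf{x}$. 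Since $\mathbf{v}$ is the rightmost subexpression and, in a standard expression, $\mathbf{x}$ sits to the left of $\mathbf{v}$, we have $v_{(j)}=v$ for all $j\in J$; thus $\varphi_{U_j}=\Delta_{v^{-1}([i_j]),\,w_{(j)}^{-1}([i_j])}$. The reflections of $\mathbf{x}$ are, by \cref{lem:redexpression} and \cref{rem:columnar}, in bijection with the boxes $b$ of $\lambda:=\partne{x([k])}$, the box in row $r$ and column $c$ carrying the label $s_{k-c+r}$; so $J$ is naturally indexed by the boxes of $\lambda$, and for the box $b$ we have $i_j=\ell$ where $s_\ell$ is its label.

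Next I would identify $w_{(j)}^{-1}([\ell])$ with the set $J=w_b[\ell]$ appearing in \cref{lem:pluecker}. Here I need to be slightly careful about conventions: $w_{(j)}=s_{i_j}\cdots s_{i_2}s_{i_1}$ is the length-$j$ prefix (reading right to left) of $\mathbf{w}$, so $w_{(j)}^{-1}=s_{i_1}s_{i_2}\cdots s_{i_j}$; reading the boxes of $\lambda$ up through $b$ in columnar order and multiplying the corresponding generators gives exactly $w_b$, so $w_{(j)}^{-1}([\ell])=w_b[\ell]$. (The frozen variables of \cref{Lec-seed}(b) correspond similarly to the $\lambda$-frozen boxes; these are the boundary faces, and under the projection they become the $\Delta_{v^{-1}(J(b))}$ with $b$ on the south or east boundary of $\lambda$, matching \cref{def:labeledquiver}. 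The lexicographically minimal Plücker coordinate $\Delta_{v^{-1}([k])}$, corresponding to the empty partition / projective-injective directions with $i=k$, is set to $1$ by \cref{rem:lex_min} and is the deleted vertex.) Then I apply \cref{rem:genminor} to the generalized minor $\Delta_{v^{-1}([\ell]),\,w_b[\ell]}$: in case $\ell<k$ it projects to $\Delta_{w_b[\ell]\,\cup\,v^{-1}([k]\setminus[\ell])}$, in case $\ell=k$ to $\Delta_{w_b[\ell]}$, and in case $\ell>k$ to $\Delta_{w_b[\ell]\setminus v^{-1}([\ell]\setminus[k])}$ — provided the relevant cardinality hypotheses hold.

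Finally, \cref{lem:pluecker} computes $w_b[\ell]$ explicitly in each of the three cases: $w_b[\ell]=J(b)\setminus([k]\setminus[\ell])$ when $\ell<k$, $w_b[\ell]=J(b)$ when $\ell=k$, and $w_b[\ell]=J(b)\cup([\ell]\setminus[k])$ when $\ell>k$, where $J(b)=\vertne{\Rect(b)}$. Plugging these into the previous paragraph, and using that $v^{-1}$ fixes $[\ell]$ and $[k]\setminus[\ell]$ pointwise only up to the relabeling being applied uniformly — more precisely, applying $v^{-1}$ to the whole index set commutes with the set operations, so $v^{-1}(J(b))=v^{-1}(w_b[\ell])\cup v^{-1}([k]\setminus[\ell])$ etc. — the three cases collapse to the single statement that the projected cluster variable attached to the box $b$ equals $\Delta_{v^{-1}(J(b))}$. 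In particular the cardinality hypotheses of \cref{rem:genminor} are automatically satisfied because $|J(b)|$, after the appropriate union/difference with the $v^{-1}$-image of an interval, has size $k$ — this is exactly what \cref{lem:pluecker} guarantees, since $|\vertne{\Rect(b)}|$ together with the complementary interval makes up a $k$-subset.

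The main obstacle I anticipate is not any single deep step but rather pinning down the bookkeeping between the several competing conventions: left-to-right versus right-to-left reading of reduced words (\cref{lem:redexpression} and \cref{rem:columnar} versus the indexing $w_{(j)}=s_{i_j}\cdots s_{i_1}$ in \cref{def:Uj}), the fact that $v_{(j)}=v$ relies on the subexpression for $v$ sitting at the right end of the standard expression, and verifying that the columnar reading order used to define $w_b$ in \cref{lem:pluecker} is the same one implicitly used when listing the reflections $s_{i_1},\dots,s_{i_r}$ of $\mathbf{x}$. Once these identifications are made carefully, the corollary is an immediate consequence of stringing together \cref{Lec-seed}(a), \cref{rem:genminor}, and \cref{lem:pluecker}.
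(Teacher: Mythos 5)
Your overall strategy is the same as the paper's (combine \cref{Lec-seed}(a), \cref{rem:genminor}, and \cref{lem:pluecker}), but there is a genuine bookkeeping error in the middle step that breaks the chain of identities. You claim that $w_{(j)}^{-1}([\ell]) = w_b[\ell]$, reasoning that $w_{(j)}^{-1} = s_{i_1}s_{i_2}\cdots s_{i_j}$ is obtained by ``reading the boxes of $\lambda$ up through $b$ in columnar order.'' But the initial stretch $s_{i_1}\cdots s_{i_{\ell(v)}}$ of $\mathbf{w}=\mathbf{xv}$ is the reduced expression $\mathbf{v}$, not part of $\mathbf{x}$; the boxes of $\lambda = \partne{x([k])}$ only produce the segment $s_{i_{\ell(v)+1}}\cdots s_{i_j}$, which is $w_b$. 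Hence $w_{(j)}^{-1} = (s_{i_1}\cdots s_{i_{\ell(v)}})(s_{i_{\ell(v)+1}}\cdots s_{i_j}) = v^{-1}w_b$, not $w_b$. This is exactly what the paper's own proof records as ``$w_b = x_{(i)}^{-1}$, so $v^{-1}w_b = w_{(j)}^{-1}$.'' Because of this slip, the generalized minor you then project is $\Delta_{v^{-1}([\ell]),\,w_b[\ell]}$, which for $\ell<k$ gives $\Delta_{w_b[\ell]\cup v^{-1}([k]\setminus[\ell])}$; that is \emph{not} equal to $\Delta_{v^{-1}(J(b))} = \Delta_{v^{-1}(w_b[\ell])\cup v^{-1}([k]\setminus[\ell])}$ in general, since $w_b[\ell]\neq v^{-1}(w_b[\ell])$. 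Your closing sentence invokes $v^{-1}(J(b))=v^{-1}(w_b[\ell])\cup v^{-1}([k]\setminus[\ell])$, which is the correct target, but it does not match what you actually computed. Once you replace $w_b$ by $v^{-1}w_b$ in the identification of $w_{(j)}^{-1}$, the three cases do collapse as you describe and the argument becomes correct.

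One further minor omission: \cref{Lec-seed}(a) only asserts that the cluster variables are the \emph{irreducible factors} of the $\varphi_{U_j}$, so to conclude that they are the Pl\"ucker coordinates $\Delta_{v^{-1}(J(b))}$ themselves, one still needs to know that each projected $\Delta_{v^{-1}(J(b))}$ is irreducible in $\CC[\widehat{\pi_k(\mathcal{R}_{v,w})}]$; the paper cites \cref{rmk:irreducible} for this. You should add that citation (or an equivalent argument) to complete the proof.
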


\begin{proof}
Let $\mathbf{x}$ be the columnar expression for $x$ and $\mathbf{w}$ be a standard reduced expression for $w$. 
	Let $b$ be a box in 
	$\partne{x([k])}$, 
	and let $s_\ell$, $w_b$, and $J(b)$ be as defined in \cref{lem:pluecker}. Note that $w_b=x_{(i)}^{-1}$ for some $1\leq i \leq \ell(x)$, so $v^{-1}w_b=w_{(j)}^{-1}$ for some $j$. 
	Using \cref{rem:genminor} and applying $v^{-1}$ to \cref{lem:pluecker} implies 
	that the generalized minor $\Delta_{v^{-1}([\ell]), w_{(j)}^{-1}([\ell])}$ equals the Pl\"ucker coordinate
	$\Delta_{v^{-1}(J(b))}$ in the Grassmannian. 
	Each of these Pl\"ucker coordinates is irreducible in  $\CC[\widehat{\pi_k(\mathcal{R}_{v,w})}]$ (\cref{rmk:irreducible}), so we are done.
\end{proof}

It is not hard to see which Pl\"{u}cker coordinates are frozen in the rectangles seed.

\begin{lem} Let $x$ be a Grassmannian permutation of type $(k, n)$. Let $b$ be a $\lambda$-frozen box of $\lambda=\partne{x([k])}$, and let $s_\ell$ and $w_b$ be as defined in \cref{lem:pluecker}. Then $w_b([\ell])=x^{-1}([\ell])$. Thus $\Delta_{v^{-1}(J(b))}$ is frozen in the rectangles seed.
\end{lem}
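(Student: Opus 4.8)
The heart of the matter is the set identity $w_b([\ell]) = x^{-1}([\ell])$; once this is established the rest is a short deduction. Recall from \cref{lem:redexpression} and \cref{rem:columnar} that reading the boxes of $\lambda=\partne{x([k])}$ in columnar order and recording the corresponding simple reflections produces, \emph{left to right}, a reduced word for $x^{-1}$. Hence, if $b$ is the box in row $r$ and column $c$ carrying the label $s_{\ell}$ (so $\ell=k-c+r$), we get a factorization $x^{-1} = w_b\,\rho$, where $w_b$ is as in \cref{lem:pluecker} and $\rho$ is the product of the simple reflections labeling the boxes read \emph{after} $b$ in columnar order --- namely the boxes of column $c$ strictly below row $r$, followed by all boxes of columns $c+1, c+2, \dots$. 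Since $x^{-1}([\ell]) = w_b(\rho([\ell]))$, it suffices to show that the permutation $\rho$ preserves the set $[\ell]$, and this is where the hypothesis that $b$ is $\lambda$-frozen will be used.

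The plan is then to bound the indices of the simple reflections occurring in $\rho$. A box in row $r'$ and column $c'$ carries the label $s_{k-c'+r'}$, so the boxes of column $c$ below row $r$ contribute reflections of index $\ell+1, \ell+2, \dots$ (all $>\ell$). Let $\lambda^{T}$ denote the conjugate partition, so $\lambda^{T}_{c}$ is the number of boxes in column $c$. I would split into two (overlapping) cases according to \cref{def:hook-maximal}. If $b$ touches the south boundary of $\lambda$, i.e.\ $\lambda^{T}_{c}=r$, then $\rho$ has no factors from column $c$, and for every $c'>c$ we have $\lambda^{T}_{c'}\le\lambda^{T}_{c}=r$, so each label $s_{k-c'+r'}$ occurring in $\rho$ has index $\le k-(c+1)+r = \ell-1$; a product of reflections $s_{j}$ with $j\le\ell-1$ clearly preserves $[\ell]$. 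If instead $b$ touches the east boundary, i.e.\ $\lambda_{r}=c$, then for $c'>c$ no box of column $c'$ can lie in row $\ge r$ (as $\lambda_{r'}\le\lambda_{r}=c<c'$ for $r'\ge r$), so $\lambda^{T}_{c'}\le r-1$ and the corresponding labels have index $\le k-(c+1)+(r-1)=\ell-2$; together with the column-$c$ factors (index $\ge\ell+1$) we conclude that $\rho$ is a product of $s_{j}$ with $j\notin\{\ell-1,\ell\}$, and each such $s_{j}$ fixes $[\ell]$ setwise (indeed fixes $\ell$). The southeast-corner case is covered by either argument. This yields $\rho([\ell])=[\ell]$ and hence $w_b([\ell])=x^{-1}([\ell])$.

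For the final assertion, I would apply $v^{-1}$ to the identity just proved: using $w=xv$ we get $v^{-1}w_b([\ell]) = v^{-1}x^{-1}([\ell]) = w^{-1}([\ell])$, so by \cref{rem:genminor} and \cref{lem:pluecker} (exactly as in the proof of \cref{cor:quiverlabels}) the Pl\"ucker coordinate $\Delta_{v^{-1}(J(b))}$ equals $\Delta_{v^{-1}([\ell]),\,w^{-1}([\ell])}$, which is one of the frozen variables of Leclerc's seed by \cref{Lec-seed}(b); since frozenness of a vertex of $\Sigma_{v,w}$ amounts by \cref{def:labeledquiver} and \cref{def:hook-maximal} precisely to its box being $\lambda$-frozen, $\Delta_{v^{-1}(J(b))}$ is frozen in the rectangles seed, consistently with Leclerc's construction.

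\textbf{Main obstacle.} The only delicate point is the bookkeeping in the case analysis: one must be careful that in the east-boundary case the index bound is $\le\ell-2$ (not merely $\le\ell-1$), so that neither $s_{\ell-1}$ nor $s_{\ell}$ appears in $\rho$. For a box that is \emph{not} $\lambda$-frozen one typically does pick up a factor $s_{\ell-1}$ or $s_{\ell}$ in $\rho$, and then $\rho$ fails to preserve $[\ell]$; thus the frozenness hypothesis really is being used in an essential way, and the proof should make this transparent.
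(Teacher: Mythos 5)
Your route is the paper's route: factor $x^{-1} = w_b\,\rho$ in columnar order, show the tail $\rho$ preserves $[\ell]$ so that $w_b([\ell]) = x^{-1}([\ell])$, apply $v^{-1}$, and invoke \cref{rem:genminor}, \cref{lem:pluecker}, and \cref{Lec-seed}(b) to recognize $\Delta_{v^{-1}(J(b))}$ as one of Leclerc's frozen minors. Your observation that $\rho$ need only preserve $[\ell]$ \emph{setwise} is in fact more careful than the paper's own one-line argument, which asserts that the tail fixes $[\ell]$ \emph{pointwise}; that is literally true only when $b$ has no boxes below it in its column, and in general $\rho$ will contain simple reflections $s_j$ with $j\neq\ell$ on the smaller side, which permute $[\ell]$ without fixing it elementwise. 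So that part of your argument is an improvement.

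There is, however, a real gap in your case split. Your two hypotheses $\lambda^T_c = r$ (south edge) and $\lambda_r = c$ (east edge) do not exhaust the $\lambda$-frozen boxes, and the claim ``the southeast-corner case is covered by either argument'' is wrong. The phrase ``at the southeast corner'' in \cref{def:hook-maximal} means that the box's only contact with the boundary of $\lambda$ is through its southeast corner \emph{vertex} --- these are the inner-corner boxes with $\lambda_r > c$, $\lambda^T_c > r$, and $\lambda_{r+1} = c$; equivalently, the box immediately southeast of $b$ is missing from $\lambda$, which is precisely the frozen criterion in \cref{def:labeledquiver}. A concrete example is $b = (1,2)$ in $\lambda = (3,2)$: here $\lambda_1 = 3 \neq 2$ and $\lambda^T_2 = 2 \neq 1$, so neither of your hypotheses holds, yet $b$ is $\lambda$-frozen. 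The fix is to argue uniformly from the single inequality that encodes frozenness, namely $\lambda_{r+1} \leq c$: this forces $\lambda^T_{c'} \leq r$ for every $c' > c$, which gives all the column-$(>c)$ indices strictly on one side of $\ell$, while the column-$c$ indices below $b$ are strictly on the other side; in particular $s_\ell$ never occurs in $\rho$, and all three subcases (south, east, corner) are handled at once. Finally, a small caution: you use the formula $s_{k-c+r}$ as printed in \cref{lem:redexpression}, but the accompanying figure and every computation in the paper instead use $s_{k-r+c}$ (the printed formula is a typo --- note that with $s_{k-c+r}$, box $(1,3)$ in the example above would receive $s_0$ when $k=2$). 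With the correct formula the roles of ``column $c$ below $b$'' and ``columns $>c$'' in your bounds swap sides of $\ell$, but the essential conclusion --- that $s_\ell$ never appears --- is unaffected.
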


\begin{proof}
It is clear from the filling of $\partne{x([k])}$ that the boxes in columns to the right of the column of $b$ are filled with $s_i$ such that $i > \ell$. So $x^{-1}=w_b u$, where $u$ is a permutation that fixes $[\ell]$ pointwise, so $w_b([\ell])=x^{-1}([\ell])$.

Using \cref{rem:genminor} and applying $v^{-1}$ to \cref{lem:pluecker} implies that $\Delta_{v^{-1}(J(b))}$ is the projection of $\Delta_{v^{-1}([\ell]), v^{-1}x^{-1}([\ell])}$ to the Grassmannian, which is frozen by \cref{Lec-seed}.
\end{proof}

\subsection{An explicit description of the modules $U_j$ when $w=xv$ 
is length-additive}
\label{sec:modules}
Throughout this section we fix a pair $(v,w)$, where $w=xv$ is a length additive factorization and $v\in W^K_{max}$.  
Let ${\bf w}$ be a standard reduced expression for $w$ 
(see Definition~\ref{def:stdredexpression}).
Thus, we can write $v=w_K v'$ where $v' \in W^K_{min}$, and choose reduced expressions ${\bf x}, {\bf v}'$ for ${x, v'}$ respectively as described in \cref{lem:redexpression}.  
Our goal in \cref{sec:modules} and 
\cref{sec:morphisms}
is to  prove that in this case,
the quiver from \cref{Lec-seed} agrees with the quiver from the rectangles seed.
Recall that the vertices of the quiver from \cref{Lec-seed} are indexed by 
 modules $U_j$.  In this section we will give an explicit
(non-recursive) description of the composition diagrams 
of these modules.



Let 
$$\mathbf{w} = \mathbf{x w_k v'} = (s_{i_t} \dots s_{i_{r+1}}) (s_{i_r} \dots s_{i_{p+1}}s_{i_p} \dots s_{i_{l+1}}) (s_{i_l} \dots s_{i_1})$$
where the parenthesis separate the subexpressions ${\bf x}, {\bf w_k}, {\bf v'}$.
In what follows, we will define a diagram $\mathcal{D}_{v,w}$ 
   (see \cref{fig:diagD})
whose boxes are filled with simple reflections in such a way 
that a natural reading order of the boxes gives the reduced expression $\mathbf{w} = \mathbf{x w_K v'}$.  Then to 
each $j\in J$ (see \cref{def:Uj}), we will associate a subdiagram $D_j$ with the property that if we replace each 
$s_i$ by $i$, $D_j$ is precisely the composition diagram of the module $U_j$.  More precisely, given a subdiagram of $\mathcal{D}_{v,w}$ the associated module is obtained as follows: for every $s_i$ directly followed by $s_{i+1}$ to the right (resp. $s_{i-1}$ below) in the subdiagram we obtain $\begin{smallmatrix}i\\&i+1\end{smallmatrix}$ (resp. $\begin{smallmatrix}&i\\i-1\end{smallmatrix}$) in the composition factor diagram of the module (see Figure~\ref{fig:ex2}). 


\begin{definition} (Diagram $\mathcal{D}_{v,w}$)
	Extending ideas from \cref{lem:redexpression}, we will build a diagram 
	$\mathcal{D}_{v,w}$ which encodes the reduced expression $\mathbf{w}$.
	We start by taking the 
	union of diagrams
	$R^*(v') \cup R(w_K) \cup R(x)$, 
	glued as in \cref{fig:diagD}, where
\begin{itemize}
\item $R^*(v')$ is a (rotated) Young diagram filled with simple reflections which give
	a reduced expression for $v'$, when read in the reading order indicated at the right of  \cref{fig:diagD};
\item $R(w_K)$ is a pair of staircase Young diagrams filled with simple reflections
	which give a reduced expression for $w_K$;
\item 	$R(x)$ is a Young diagram filled with simple reflections which give a 
	reduced expression for $x$.
\end{itemize}
	We additionally embed $R^*(v')$ into an $(n-k)\times k$ rectangle
	$D^*$ (with boxes filled with simple reflections as shown in \cref{fig:diagD}) and embed
        $R(x)$ into a $k \times (n-k)$ rectangle $D$ (with boxes filled
	with simple reflections as shown in \cref{fig:diagD}).
	We let $\mathcal{D}_{v,w}$ denote the union of $D$, $D^*$ and 
	$R(w_K)$, together with the paths defining $R^*(v')$ and $R(x)$.
	Note that $R^*(v') \cup R(w_K) \cup R(x)$ 
	encodes the reduced expression $\mathbf{w}$.

	Note that $R^*(v')$ is defined by the path 
	 $\pathsw{v'^{-1}([k])}$ rotated clockwise 90 degrees and then reflected across a vertical axis,
	 while 
	 $R(x)$ is defined by the path
	$\pathne{x ([k])}$.
	Finally we define the region $R(v')$ to be the subset of boxes of $D$ below  
	 $\pathsw{v'^{-1}([k])}$ (up to a rotation and and reflection, it agrees with $R^*(v')$).  Note that $v'^{-1}([k])=v^{-1}([k])$, so $R(x)\cap R(v')=\emptyset$ by \cref{prop:lengthsadd}.  
%
%
\end{definition}

\begin{figure}
\hspace*{0cm}\scalebox{.6}{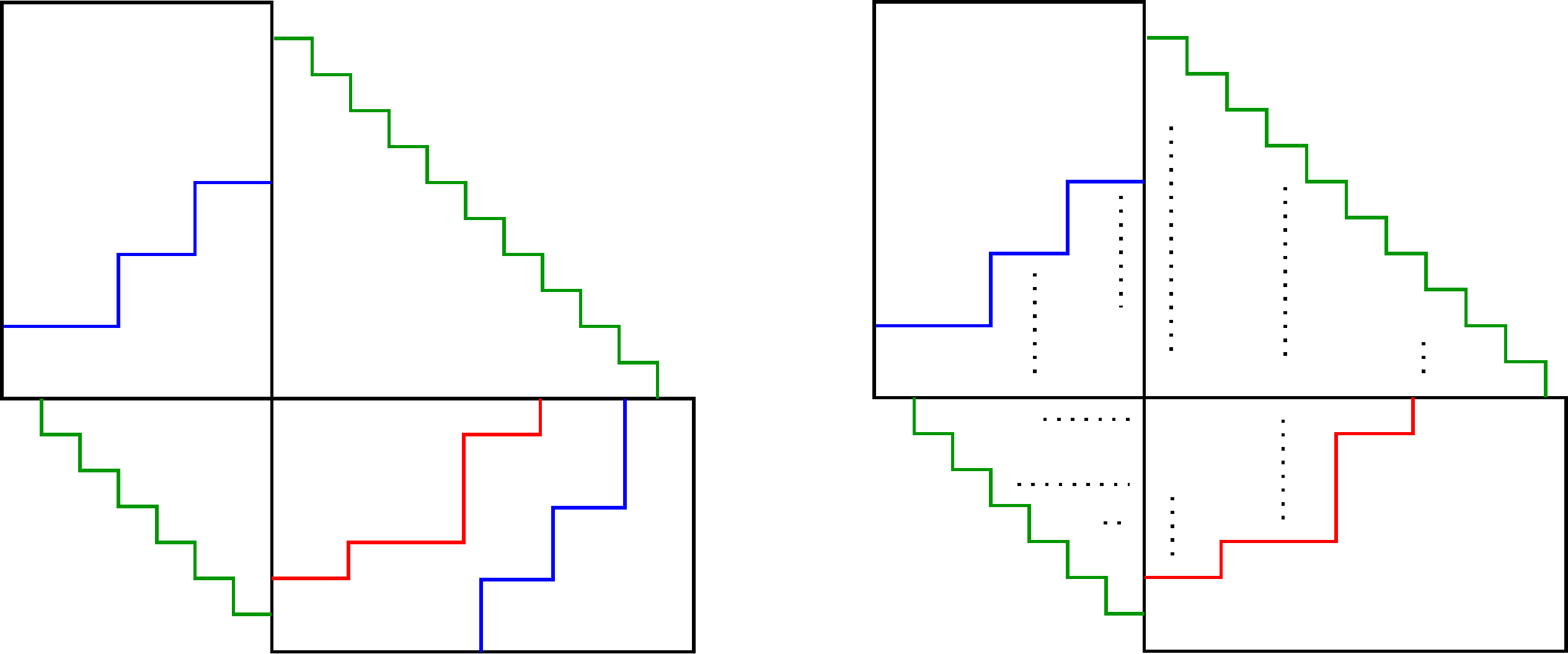}
 \caption{Diagram $\mathcal{D}_{v,w}$ (left) and reading order in each region (right).}
   \label{fig:diagD}
\end{figure}

Let $J, L$ be lattice paths from $(0, 0)$ to $(n-k, k)$ taking steps north and east and suppose $\vertne{J}=\set{j_1 < \cdots < j_k}$ and $\vertne{L}=\set{l_1 < \cdots <l_k}$. We say $J \leq L$ if $j_r \leq l_r$ for all $r$; that is, $J$ ``lies above'' $L$ when drawn in the plane (see \cref{fig:lengthsadd}). 
We leave the proof of \cref{prop:lengthsadd} to the reader.
\begin{lemma} \label{prop:lengthsadd} Let $\A=\{(v, w)\ \vert \ v \in W^K_{\max}, w\in W \text{ with length-additive factorization } w=xv\}$. Then the following map is a bijection:
\begin{align*} \A &\to \{(J, L)\ \vert \  J \leq L \text{ lattice paths from } (0, 0) \text{ to }(n-k, k)\}\\
(v, xv) & \mapsto (\pathne{x([k])}, \pathsw{v^{-1}([k])}).
\end{align*}

In particular, if $(v, xv) \in \A$ then $\pathne{x([k])} \leq \pathsw{v^{-1}([k])}$.
\end{lemma}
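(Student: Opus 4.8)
The plan is to reduce the lemma to a combinatorial criterion for length-additivity and then verify that criterion by an explicit inversion count. First I would rewrite $\A$ so that $x\in{^KW}$ is built in: given $(v,w)\in\A$ the permutation $x=wv^{-1}$ is forced, and writing $x=\bar{x}u$ for its coset factorization for $W/W_K$ (so $\bar{x}\in{^KW}$, $u\in W_K$, $\ell(x)=\ell(\bar{x})+\ell(u)$) we get $w=\bar{x}(uv)$ with $uv$ in the same left $W_K$-coset as $v$. Since $v\in W^K_{\max}$ is the longest element of that coset, $\ell(uv)\le\ell(v)$, and the chain $\ell(w)\le\ell(\bar{x})+\ell(uv)\le\ell(\bar{x})+\ell(v)\le\ell(\bar{x})+\ell(u)+\ell(v)=\ell(w)$ forces $u=e$, so $x\in{^KW}$ and $\A=\{(v,xv):v\in W^K_{\max},\ x\in{^KW},\ \ell(xv)=\ell(x)+\ell(v)\}$. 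I would then recall that $v\mapsto v^{-1}([k])$ and $x\mapsto x([k])$ identify $W^K_{\max}$ and ${^KW}$ with $\binom{[n]}{k}$ (cf. the discussion preceding the lemma and \cref{lem:minlengthantiexc}), hence with north-and-east lattice paths from $(0,0)$ to $(n-k,k)$ via $\pathsw{v^{-1}([k])}$ and $\pathne{x([k])}$; this already makes the proposed map injective, so only its image needs to be identified.

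Next I would invoke the standard identity $\ell(xv)=\ell(x)+\ell(v)-2\,\lvert\mathrm{Inv}(x)\cap\mathrm{Inv}(v^{-1})\rvert$, where $\mathrm{Inv}(\sigma)=\{(a,b):a<b,\ \sigma(a)>\sigma(b)\}$ (it falls out of splitting the inversions of $xv$ according to whether the pair is already an inversion of $v$). Thus $(v,xv)\in\A$ if and only if $\mathrm{Inv}(x)\cap\mathrm{Inv}(v^{-1})=\emptyset$. The core of the argument is to compute these two sets in terms of the subsets $x([k])=\{\alpha_1<\cdots<\alpha_k\}$ and $v^{-1}([k])=\{b_1<\cdots<b_k\}$. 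Since $x$ is increasing on $[k]$ and on $[k+1,n]$, $\mathrm{Inv}(x)$ contains only ``cross'' pairs, and $(r,k+s)\in\mathrm{Inv}(x)\iff s\le\alpha_r-r$. Since $v\in W^K_{\max}$ makes $v^{-1}$ decreasing on $[k]$ and on $[k+1,n]$ with $v^{-1}(r)=b_{k+1-r}$, the set $\mathrm{Inv}(v^{-1})$ contains all pairs internal to $[k]$ and to $[k+1,n]$, and its cross pairs are the $(r,k+s)$ with $b_{k+1-r}-(k+1-r)\ge n-k+1-s$. As $\mathrm{Inv}(x)$ meets only the cross region, emptiness of the intersection says exactly that for each $r$ the corresponding two intervals of $s$ are disjoint; using the bounds $0\le\alpha_r-r\le n-k$ and $0\le b_{k+1-r}-(k+1-r)\le n-k$, this collapses to the single family of inequalities $\alpha_r+b_{k+1-r}\le n+1$ for all $r\in[k]$.

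Finally I would translate this family into the partial order on paths. Drawn as a north-and-east path from $(0,0)$ to $(n-k,k)$ (see \cref{fig:lengthsadd}), the path $\pathsw{v^{-1}([k])}$ has $r$-th smallest north step equal to $n+1-b_{k+1-r}$, while the $r$-th smallest north step of $\pathne{x([k])}$ is $\alpha_r$; hence ``$\alpha_r+b_{k+1-r}\le n+1$ for all $r$'' is precisely the componentwise condition $\pathne{x([k])}\le\pathsw{v^{-1}([k])}$. Combining the steps, $(v,xv)\in\A$ if and only if $\pathne{x([k])}\le\pathsw{v^{-1}([k])}$: this is simultaneously well-definedness of the map and the displayed ``in particular'' clause. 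Read backwards it gives surjectivity — given $J\le L$, take the unique $x\in{^KW}$ and $v\in W^K_{\max}$ with $\pathne{x([k])}=J$ and $\pathsw{v^{-1}([k])}=L$ provided by the bijections of the first step, and then $(v,xv)\in\A$ maps to $(J,L)$ — while injectivity was already noted. Hence the map is a bijection.

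I expect the main obstacle to be the middle step: pinning down $\mathrm{Inv}(x)$ and $\mathrm{Inv}(v^{-1})$ exactly in terms of the two $k$-subsets, and carrying out the index bookkeeping (including the correct indexing of $v^{-1}$ on each block, which is where $v\in W^K_{\max}$ rather than $W^K_{\min}$ matters) that turns ``the intersection is empty'' into the clean family $\alpha_r+b_{k+1-r}\le n+1$ — together with the convention-chasing needed to recognize that family as the lattice-path order $\le$ from the figure. The surrounding steps (the coset factorization argument and the inversion identity) are routine Coxeter-group combinatorics.
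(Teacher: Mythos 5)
The paper does not actually supply a proof of this lemma (``We leave the proof of \cref{prop:lengthsadd} to the reader''), so there is no internal argument to compare against; your proof is correct and complete. The parabolic factorization at the start correctly forces $x\in{^KW}$ (the chain of inequalities pins $u=e$), the identity $\ell(xv)=\ell(x)+\ell(v)-2\,|\mathrm{Inv}(x)\cap\mathrm{Inv}(v^{-1})|$ is the right reduction, and the explicit inversion counts --- which exploit that $v^{-1}$ is strictly decreasing on $[k]$ and on $[k+1,n]$, precisely because $v\in W^K_{\max}$ rather than $W^K_{\min}$ --- do yield the family $\alpha_r+b_{k+1-r}\le n+1$, which matches the paper's definition of $J\le L$ once one converts $\vertsw{L}$ to $\vertne{L}$ via $b\mapsto n+1-b$. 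The one step worth spelling out is the ``collapse'': at a row $r$ where one of the two intervals of $s$ is empty, one should verify that disjointness is still equivalent to $\alpha_r+b_{k+1-r}\le n+1$; this is automatic since $\alpha_r\le n-k+r$ (from $\alpha_r<\dots<\alpha_k\le n$) and $b_{k+1-r}\ge k+1-r$, but it should be said so that the single family of inequalities is genuinely equivalent at every $r$, not just when both intervals are nonempty.
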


\begin{figure}
\setlength{\unitlength}{0.7mm}
\centering
 \begin{picture}(52, 30)
 \put(5,28){\line(1,0){45}}
 \put(5,19){\line(1,0){45}}
 \put(5,10){\line(1,0){45}}
 \put(5,1){\line(1,0){45}}
 \put(5, 1){\line(0, 1){27}}
 \put(14, 1){\line(0, 1){27}}
 \put(23, 1){\line(0, 1){27}}
 \put(32, 1){\line(0, 1){27}}
 \put(41, 1){\line(0, 1){27}}
 \put(50, 1){\line(0, 1){27}}
 \linethickness{.5mm}
 \put(4.75, 1){\color{red}\line(0, 1){9.25}}
 \put(4.75, 10.25){\color{red}\line(1, 0){9.25}}
 \put(14, 10.25){\color{red}\line(0, 1){8.75}}
 \put(14, 19){\color{red}\line(1, 0){18}}
 \put(32, 19){\color{red}\line(0, 1){9.25}}
 \put(32, 28.25){\color{red}\line(1, 0){18}}
 \put(1, 3){\color{red} \small $1$}
 \put(7, 11.5){\color{red} \small $2$}
 \put(11, 13){\color{red} \small $3$}
 \put(44, 30){\color{red} \small $8$}
 \put(0, 14){\color{red} $J$}
 \put(5.25, 1){\line(0, 1){9}}
 \put(5.25, 9.75){\line(1, 0){35.75}}
 \put(41, 9.75){\line(0, 1){18}}
 \put(41, 27.75){\line(1, 0){9}}
 \put(45, 23){\small $1$}
 \put(42, 21){\small $2$}
 \put(42, 12){\small $3$}
 \put(6,3){\small $8$}
 \put(52, 20){$L$}
 \end{picture}
\caption{\label{fig:lengthsadd} Let $k=3$ and $n=7$. Let $x=(1, 3, 6, 2, 4, 5, 7, 8) \in {^K}W$, $v=(8, 3, 2, 7, 6, 5, 4, 1) \in W^K_{\max}$ and $w=xv$. The upper lattice path $J$ is $\pathne{x([k])}$, with $\vertne{J}=x([3])=\set{1, 3, 6}$; the lower lattice path $L$ is $\pathsw{v^{-1}([k])}$, with $\vertsw{L}=v^{-1}([3])=\set{2, 3, 8}$. Since  $w=xv$ is  length-additive, the bijection of \cref{prop:lengthsadd} sends $(v, w)$ to $(J, L)$.}
\end{figure}

Next, we associate specific regions $D^*_j, D_j$ in $\mathcal{D}_{v,w}$ to the modules  $V_j, U_j$. 

{\noindent\bf Construction of $D^*_j\subset \mathcal{D}_{v,w}$.}
Given $j\in J$ there exists a corresponding box $b_j \in R(x)$ filled with the simple generator $s_{i_j}$.

By \cref{def:Uj}, 
$V_j = \text{Soc}_{w_{(j)}^{-1}}(Q_{i_j})$; we will construct a subdiagram $D^*_j$ 
of $\mathcal{D}_{v,w}$ that yields a composition diagram for the module $V_j$.  See Figure~\ref{fig:RegV}.

\begin{figure}
\hspace*{0cm}\scalebox{.65}{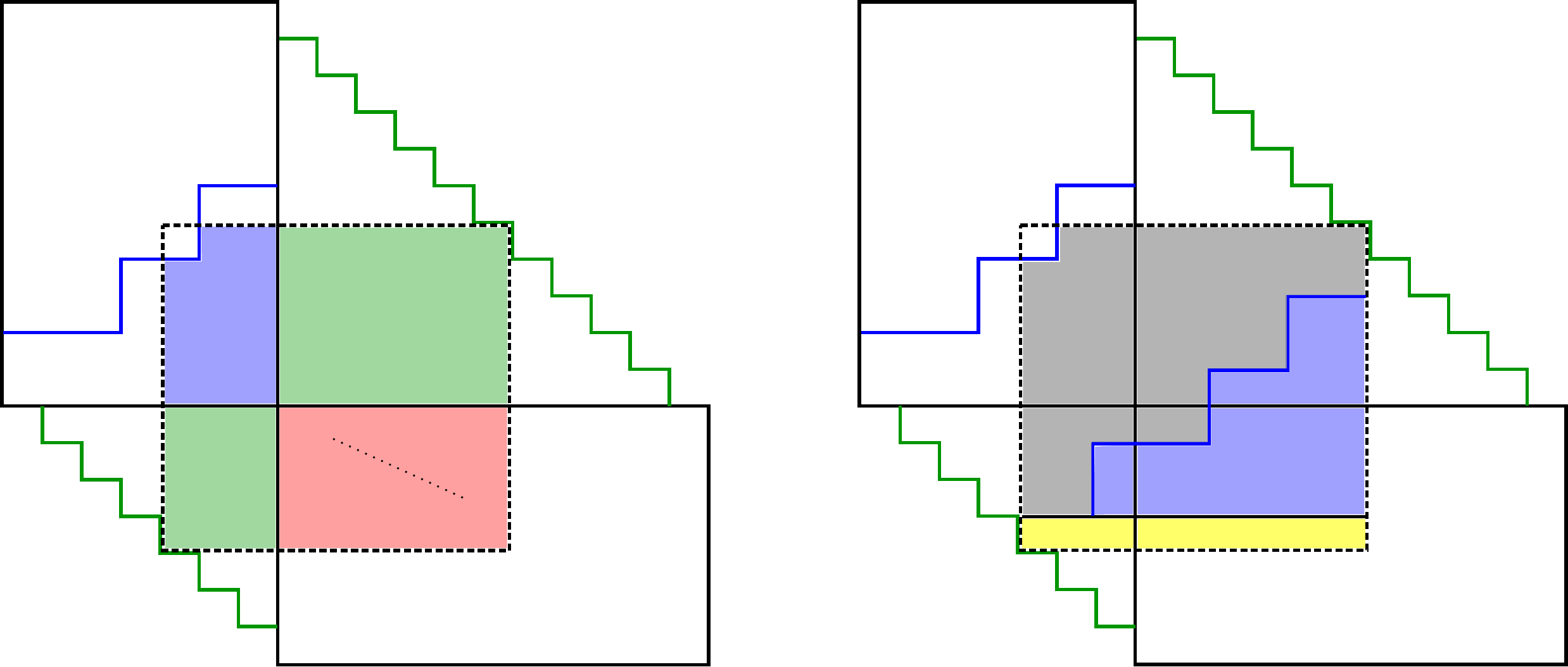}
 \caption{Construction of subdiagrams $D^*_j$ (left) and $D_j$ (right).}
   \label{fig:RegV}
\end{figure}

We can write $w_{(j)}=x_{(j)}w_Kv'$, where ${\bf x}_{(j)}$ comes from entries in $R(x)$ to the left and above $b_{j}$.    
In the definition of $V_j$, we begin with the injective module $Q_{i_j}$.   Note that $Q_{i_j}$ corresponds to the maximal rectangle 
$R(Q_{i_j})$ 
in $\mathcal{D}_{v,w}$ whose lower right corner is $b_{j}$. 
Next, consider the subdiagram associated to the module $\text{Soc}_{x_{(j)}^{-1}}(Q_{i_j})$.
The columnar expression for $x_{(j)}$ can be written as follows
$$x_{(j)}=s_{i_j}s_{i_j+1} s_{i_j+2} \dots s_a s_{i_j-1} s_{i_j} s_{i_j +1} \dots s_{a-1} \dots s_b s_{b+1} s_{b+2}\dots s_k$$
where $s_a$ (resp. $s_b$) is the filling of the box in the first row (resp. column) of $D$ and in the same column (resp. row) as $b_{j}$.  
It is compatible with the structure of $Q_{i_j}$ depicted in Section~\ref{sec:category-cluster} in the following sense. 

$$\text{Soc}_{s_{i_j}} (Q_{i_j})= \begin{smallmatrix}i_j \end{smallmatrix} \hspace{1.5cm} 
\text{Soc}_{s_{i_j+1}s_{i_j}}(Q_{i_j})=\begin{smallmatrix}i_j+1\\&i_j \end{smallmatrix}  \hspace{1.5cm}
\text{Soc}_{s_a\dots s_{i_j+2}s_{i_j+1}s_{i_j}}(Q_{i_j})=\begin{smallmatrix}a \\ & \ddots \\ && i_j+1 \\&&&i_j \end{smallmatrix}
$$ 

$$\text{Soc}_{s_{i_j-1} s_a\dots s_{i_j+2}s_{i_j+1}s_{i_j}}(Q_{i_j})=\begin{smallmatrix}a \\ & \ddots \\ && i_j+1 && i_j-1\\&&&i_j \end{smallmatrix} 
\hspace{1.5cm}
\text{Soc}_{s_{a-1}\dots s_{i_j} s_{i_j-1} s_a\dots s_{i_j+2}s_{i_j+1}s_{i_j}}(Q_{i_j})=\begin{smallmatrix}& a-1 \\a && a-2 \\ & \ddots & & \ddots \\ && i_j+1 && i_j-1\\&&&i_j \end{smallmatrix} 
$$

Continuing in this way, we see that the module $\text{Soc}_{x_{(j)}^{-1}}(Q_{i_j})$ is given by the rectangle $\Rect(b_{j})\subseteq D$,
whose southeast box is $b_{j}$.
Next, in the definition of $V_j$ we need to compute $\text{Soc}_{w_Kx_{(j)}^{-1}}(Q_{i_j})$.  First, observe that $s_k$ does not appear in a reduced expression for $w_K$, therefore the subdiagram of  $\mathcal{D}_{v,w}$ associated to $\text{Soc}_{w_Kx_{(j)}^{-1}}(Q_{i_j})$ has trivial intersection with 
$D^*$.  Recall that
the boxes in $R(w_K)$ yield a reduced expression for $w_K$.  Thus, we see that the subdiagram for $\text{Soc}_{w_Kx_{(j)}^{-1}}(Q_{i_j})$ is obtained by extending $\Rect(b_{j})$ to the north and west as much as possible, while avoiding boxes with entries $s_k$.  
In particular, we have 
$$R( \text{Soc}_{w_Kx_{(j)}^{-1}}(Q_{i_j}))=\Rect(b_{j})\cup R(w_K)_{j}$$
where $R(w_K)_{j}= R(w_K)\cap R(Q_{i_j})$.
Finally, $D^*_j$ is obtained from $R( \text{Soc}_{w_Kx_{(j)}^{-1}}(Q_{i_j}))$ by adding as many boxes in $R^*(v')$ as possible, such that the result is still contained in $R(Q_{i_j})$.  Let $R^*(v')_{j}=R(Q_{i_j})\cap R^*(v')$.  Then 
\begin{equation}\label{eq:Vj}
D^*_j= \Rect(b_{j})\cup R(w_K)_{j}\cup R^*(v')_{j}.
\end{equation}

\begin{remark}
The subdiagram $D^*_j$ can also be obtained as follows.  Given the box $b_j$, let $R$ be the maximal rectangle contained in $\mathcal{D}_{v,w}$ with $b_j$ as its southeast corner.   Then $D^*_j$ results from $R$ by removing boxes of $R\cap D^*$ which are not in $R^*(v')$.  
\end{remark}

{\noindent\bf Construction of $D_j\subset \mathcal{D}_{v,w}$.}  
By \cref{def:Uj}, 
 $U_j=\mathcal{E}^{\dagger}_{v_{(j)}^{-1}} V_j$ is obtained by removing simple modules from the socle of $V_j$ according to the simple reflections appearing in a reduced expression for $v_{(j)}^{-1}$.  Note that in this case $v_{(j)}=w_Kv'$.   Again since $w_K$ does not have $s_k$ in its reduced expression and it is the longest element of $W_K$, we see that $\mathcal{E}^{\dagger}_{w_K} V_j$ is obtained from $V_j$ by quotienting out the largest 
submodule of $V_j$ not supported at vertex $k$.  In particular, if $i_j=k$ then $\mathcal{E}^{\dagger}_{w_K} V_j=V_j$.  
If $i_j < k$ then there exists a unique box $b^k_{j}\in \Rect(b_{j})$ with entry $s_k$ located above $b_{j}$ and in the same column as $b_j$. 
Let $R(b_{j}, b^k_{j})$ be the maximal rectangle in $D^*_j$ with lower right corner $b_j$ and height $k-i_j$ (see Figure~\ref{fig:RegV}).  That is, the upper right corner of $R(b_{j}, b^k_{j})$ is the box directly below $b^k_{j}$.
We see that the module associated to $R(b_{j}, b^k_{j})$ is the largest submodule of $V_j$ not supported at vertex $k$.  Therefore, $R(\mathcal{E}^{\dagger}_{w_K} V_j)=D^*_j\setminus R(b_{j}, b^k_{j})$.  Similarly, if $i_j>k$ then there exists a unique box $b^k_{j}\in \Rect(b_{j})$ with entry $s_k$ located to the left of $b_{j}$ and in the same row as $b_j$.
Let $R(b_{j}, b^k_{j})$ be the maximal rectangle in $D^*_j$ with lower right corner $b_j$ of width $i_j-k$.  That is, the lower left corner of $R(b_{j}, b^k_{j})$ is the box directly to the right of $b^k_{j}$, and as before we obtain $R(\mathcal{E}^{\dagger}_{w_K} V_j)=D^*_j\setminus R(b_{j}, b^k_{j})$.


Finally, it remains to compute $\mathcal{E}^{\dagger}_{v'^{-1}} \mathcal{E}^{\dagger}_{w_K} V_j$.  Note that the columnar expression for $v'$ is again compatible with the structure of $\mathcal{E}^{\dagger}_{w_K} V_j$, see the computations below.  We have 
$$v'=s_k s_{k+1} \dots s_{l_1} s_{k-1} s_k \dots s_{l_2} \dots s_{k-q-1} s_{k-q} \dots s_{t_q}$$
where $s_{l_i}$ is the filling of the upper-most box in column $i$ of region $R^*(v')\subset \mathcal{D}_{v,w}$, if we label the columns of $R^*(v')$ by $1, 2, \dots, q$ right to left.

The socle of $\mathcal{E}^{\dagger}_{w_K} V_j$ is precisely $S_k$, and $s_k$ is the first reflection in ${\bf v}'$. Thus, $\mathcal{E}^{\dagger}_{s_kw_K} V_j$  is obtained from $\mathcal{E}^{\dagger}_{w_K} V_j$ by removing the simple module $S_k$ from the socle.  Similarly, $S_{k+1}$ is in the socle of $\mathcal{E}^{\dagger}_{s_kw_K} V_j$, and $s_{k+1}$ is the second reflection in ${\bf v}'$, provided the first column of $R^*(v')$ has at least two boxes.  Therefore, $\mathcal{E}^{\dagger}_{s_{l_1}\dots s_{k+1}s_kw_K} V_j$ is obtained from $\mathcal{E}^{\dagger}_{w_K} V_j$ by removing a portion of the left-most diagonal between labels $k$ and $l_1$ from the composition diagram for $\mathcal{E}^{\dagger}_{w_K} V_j$.  


$$\mathcal{E}^{\dagger}_{w_K} V_j = \begin{smallmatrix} \ddots && \vdots && && &&& &\vdots && \udots \\ & l_1+1 &  & l_1+3&& && &&& & k-q\\ && l_1 && &&& && & k-q-1\\ &&&\ddots && \ddots && \vdots & && \udots \\ &&& &k+2&  & k && k-2 \\ &&&& &k+1&& k-1\\ &&&&&&k\end{smallmatrix}
\hspace{1cm}
\mathcal{E}^{\dagger}_{s_{l_1} \dots s_{k+1}s_k w_K} V_j = \begin{smallmatrix} \ddots && \vdots && && &&& &\vdots && \udots \\ & l_1+1 &  & l_1+3&& && &&& & k-q\\ &&  && &&& && & k-q-1\\ &&& && \ddots && \vdots & && \udots \\ &&& &&  & k && k-2 \\ &&&& &&& k-1\\ &&&&&&\end{smallmatrix}$$
 
Continuing in this way, we see that $D_j$, the subdiagram associated to $U_j$, results from $R(\mathcal{E}^{\dagger}_{w_K} V_j)$ by removing the diagram $R^k(v')$, where $R^k(v')$ is obtained from $R^*(v')$ by 
shifting $R^*(v')$ southeast until its bottom right corner box is $b^k_{j}$ (see \cref{fig:RegV}).  This completes the construction of the region 
\begin{equation}\label{eq:Uj}
D_j=D^*_j\setminus (R(b_{j}, b^k_{j}) \cup R^k(v') ).
\end{equation}

Now, we use the above constructions to find a simple description of $D_j$ as a subdiagram of $D$.  See Figure~\ref{fig:ex2} for an example of such a transformation.

\begin{figure}
\hspace*{0cm}\scalebox{.8}{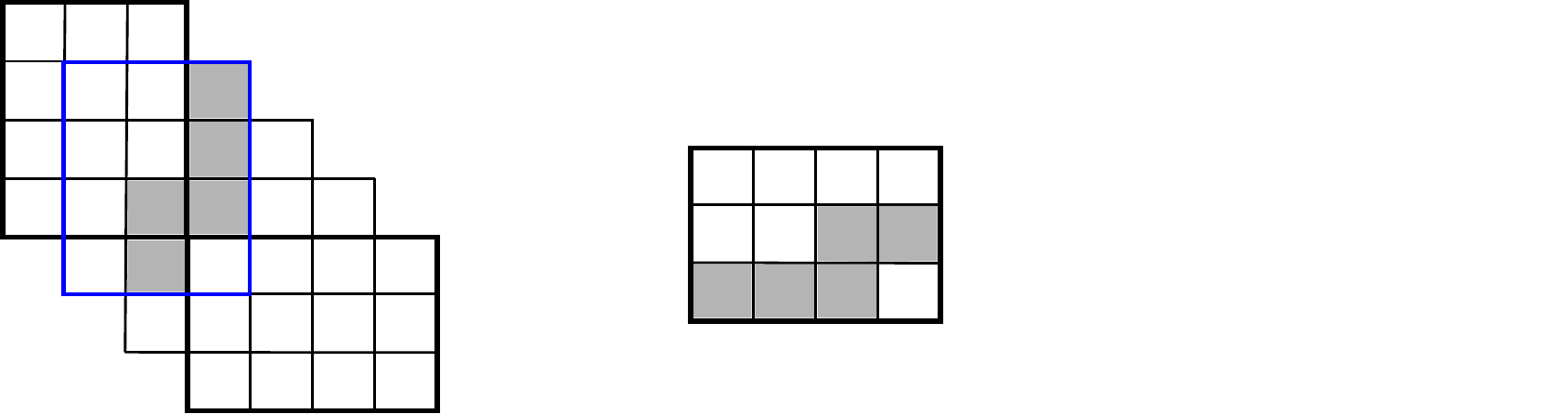}
 \caption{$D_{12}$ from Example~\ref{ex:running}.}
   \label{fig:ex2}
\end{figure}


Let $b_{j}$ be a box in $\partne{x([k])}$ (or equivalently a box in $R(x)$).
This box corresponds to the module $U_j$ and 
by \cref{cor:quiverlabels} the associated  
 Pl\"ucker coordinate is 
 $\Delta_{P_j}$ where $P_j = v^{-1} (J(b_{j}))$
 (see \cref{def:labeledquiver}).


Let $r(D)$ be the $k\times(n-k)$ diagram obtained by rotating $D$ clockwise 180 degrees. 
Define $R(P_j)$ to be the region in $r(D)$
bounded by the lattice paths $\pathsw{P_j}$ and $\pathsw{v^{-1}([k])}$ (see Figure~\ref{fig:RegJ}).  

\begin{figure}
\hspace*{0cm}\scalebox{.8}{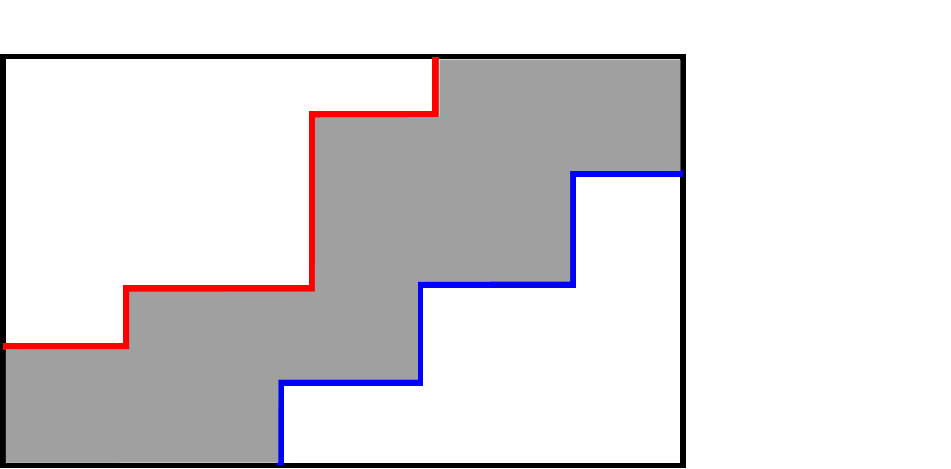}
 \caption{Region $R(P_j)$}
   \label{fig:RegJ}
\end{figure}

\begin{theorem}\label{thm-module-structure}
Let $w=xv$ where $v\in W^K_{max}$ and $\ell(w)=\ell(x)+\ell(v)$.  Given a pair $(v, {\bf w})$, where ${\bf w}$ is a standard reduced expression for $w$, for each $j\in J$ the region $R(P_j)$ gives the composition diagram for $U_j$.
\end{theorem}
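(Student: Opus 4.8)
The plan is to deduce the theorem by comparing $R(P_j)$ with the subdiagram $D_j\subseteq\mathcal{D}_{v,w}$ of \eqref{eq:Uj}. Recall that $D_j$, filled by the reflections inherited from $\mathcal{D}_{v,w}$ and read off by the recipe that turns an occurrence of $s_i$ immediately left of $s_{i+1}$ (resp.\ immediately above $s_{i-1}$) into an edge of the composition diagram, already yields $U_j$ by the construction preceding the theorem. It therefore suffices to produce a bijection of boxes $D_j\to R(P_j)$ carrying the filling of $\mathcal{D}_{v,w}$ to the filling of $r(D)$ and compatible with this reading recipe; since a filled diagram determines its module up to isomorphism, this identifies the two composition diagrams.

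The first step is to unwind \eqref{eq:Vj} and \eqref{eq:Uj} into a closed-form description of $D_j$. Here $D^*_j$ is the maximal rectangle of $\mathcal{D}_{v,w}$ with southeast corner $b_j$, with the boxes of $D^*$ lying outside $R^*(v')$ removed, and $D_j=D^*_j\setminus(R(b_j,b^k_j)\cup R^k(v'))$. Tracing the three pieces $\Rect(b_j)\subseteq D$, $R(w_K)_j\subseteq R(w_K)$, $R^*(v')_j\subseteq R^*(v')$ and the two excised rectangles, one reads off that $D_j$ is a skew-shape-like region whose ``southeast'' boundary is the portion of $\pathne{x([k])}$ near $b_j$ coming from $R(x)$, and whose ``northwest'' boundary is assembled from the $R(w_K)$-staircase together with the path $\pathsw{v^{-1}([k])}$ defining $R^*(v')$; the removal of $R(b_j,b^k_j)$ records the $s_k$-avoidance in $w_K$, and the removal of $R^k(v')$ records the $\mathcal{E}^{\dagger}$-truncation by $v_{(j)}^{-1}=(w_Kv')^{-1}$.

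The second step identifies the bounding paths of $R(P_j)$ and matches them with those of $D_j$. By \cref{cor:quiverlabels} and \cref{lem:pluecker} the Pl\"ucker label of $U_j$ is $\Delta_{P_j}$ with $P_j=v^{-1}(J(b_j))$ and $J(b_j)=\vertne{\Rect(b_j)}$, and by \cref{prop:lengthsadd} we have $\pathne{x([k])}\le\pathsw{v^{-1}([k])}$, so $R(P_j)\subseteq r(D)$ is well-defined. I would then verify that the $180^{\circ}$ rotation $r$, which reassembles the three pieces of $D_j$ into the single rectangle $r(D)$ while reversing both reading directions (hence preserving the reading recipe), carries the ``southeast'' boundary of $D_j$ --- the one cut out by $\Rect(b_j)$ --- exactly onto $\pathsw{v^{-1}(\vertne{\Rect(b_j)})}=\pathsw{P_j}$, and carries the ``staircase-plus-$R^*(v')$'' boundary of $D_j$ exactly onto $\pathsw{v^{-1}([k])}$. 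Since $\mathcal{D}_{v,w}$ fills $D$ with $s_{k-r+c}$ in row $r$, column $c$ --- labels constant along anti-diagonals and increasing to the right --- and this pattern continues consistently over $R(w_K)$ and $R^*(v')$, the rotation $r$ intertwines the $\mathcal{D}_{v,w}$-filling of $D_j$ with the $r(D)$-filling of $R(P_j)$. Hence the reading recipe applied to $R(P_j)$ produces the composition diagram of $U_j$, as claimed.

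The main obstacle is the bookkeeping: one must simultaneously track the rotation $r$, the relabeling $v^{-1}$ on index sets, and the passage $s_i\mapsto i$ from diagrams to modules, and check that these are mutually compatible. The subtlest point is the $R^*(v')$ contribution: $R^*(v')$ is attached to $D$ ``vertically'' inside $\mathcal{D}_{v,w}$, and one must show that after rotation it lands inside $r(D)$ and accounts for exactly the boxes of $R(P_j)$ adjacent to $\pathsw{v^{-1}([k])}$ --- equivalently, that removing $R^k(v')$ from $D^*_j$ corresponds under $r$ to cutting along $\pathsw{v^{-1}([k])}$ rather than along the boundary of the ambient rectangle $D^*$. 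A clean way to organize this is to argue socle layer by socle layer, as in the derivation of $D_j$: each elementary truncation $\mathcal{E}_i^{\dagger}$, and each socle layer adjoined to the injective $Q_{i_j}$, changes $U_j$ by a single anti-diagonal strip, and one checks step by step that this matches moving the relevant boundary path of $R(P_j)$ by one lattice step.
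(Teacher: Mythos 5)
You have the right overall plan — pass from $U_j$ to its diagram $D_j\subseteq\mathcal{D}_{v,w}$ via \eqref{eq:Vj}--\eqref{eq:Uj}, transport $D_j$ by rotation into $r(D)$, and show the two bounding contours land on $\pathsw{P_j}$ and $\pathsw{v^{-1}([k])}$. That is also the paper's strategy: the paper embeds $D_j$ in a shifted rectangle $r(D)^*_j$, identifies a \emph{bottom contour} determined by $R^k(v')$ (whose horizontal steps are $v^{-1}([k])$ directly from \cref{lem:redexpression}) and a \emph{top contour}, and then rotates and reflects to land in $r(D)$.

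The genuine gap is in the claim that the rotation ``carries the southeast boundary of $D_j$ --- the one cut out by $\Rect(b_j)$ --- exactly onto $\pathsw{v^{-1}(\vertne{\Rect(b_j)})}=\pathsw{P_j}$.'' First, the boundary in question is not cut out by $\Rect(b_j)$ alone: it is assembled from $\Rect(b_j)\cup R(w_K)_j\cup R^*(v')_j$ and then has $R(b_j,b^k_j)$ excised, so its shape depends on how far the rectangle $R(Q_{i_j})$ extends into $R(w_K)$ and $R^*(v')$. Second, and more seriously, the identity between the horizontal steps of this contour and $v^{-1}(\vertne{\Rect(b_j)})$ is precisely the content of the theorem; it is not a tautology of the rotation. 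The paper has to prove it in two nontrivial steps: a base case $v'=e$ in which one explicitly writes down $w_K$ as a permutation and checks that applying it to $\vertne{\Rect(b_j)}$ produces a set $\{1,\dots,e-1\}\cup\{d+1,\dots,k\}\cup\{c+1,\dots,n\}$ whose three blocks match the three straight segments of the contour; and an inductive step in $\ell(v')$ in which one must rule out the ``bad'' case $t+1\in P_j$, $t\notin P_j$, $b'\notin R(Q_{i_j})$ by invoking \cref{prop:lengthsadd} to derive a contradiction with $R(x)\cap R(v's_t)=\emptyset$. You flag a ``socle layer by socle layer'' induction as the right way to organize this, and indeed it is, but you neither set up the base case computation nor handle the case analysis in the inductive step, so the heart of the argument is missing. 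As written, the proposal asserts the conclusion of \cref{lem:pluecker} and the induction rather than proving them.
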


\begin{proof}  
	In \eqref{eq:Uj} we defined a region $D_j$ in $\mathcal{D}_{v,w}$ that yields the desired module $U_j$, and we want to realize it as a region in $r(D)$. 

	Let $r(D)^*_{j}$ be a diagram in $\mathcal{D}_{v,w}$, that has the same shape as $D^*$ but whose southeast corner box coincides with the box $b^k_{j}$ (\cref{fig:RegV})

	
	We see that $D_j$ is contained in $r(D)^*_{j}$.  Moreover, by construction, region $D_j\subset r(D)^*_{j}$ is determined by two contours, 
see \cref{fig:RegV}.  Next, we provide an explicit formula for these contours, and then realize them as lattice paths in the diagram $r(D)$.  Note that the bottom contour of $D_j$ is always below or at most coincides with the top contour of $D_j$, therefore we can consider them separately.    

{\it The bottom contour of $D_j$.}  
By (\ref{eq:Uj}) the bottom contour of $D_j$ is determined by the Young diagram $R^k(v')$ associated to $v'$ (see Figure~\ref{fig:RegV}).  Observe that the bottom contour of $D_j$ can be realized as a northeast lattice path in $r(D)^*_{j}$.   By Lemma~\ref{lem:redexpression} the horizontal steps of this path are labeled by $(v')^{-1}([k])$.  Since $(v')^{-1}([k]) = v^{-1}([k])$, we obtain a desired description of the horizontal steps of the bottom contour of $D_j$ in $r(D)^*_j$.

{\it The top contour of $D_j$.}  
We proceed by induction on the length of $v'$.  
Let $\pathne{r(D)^*_{j}}$ denote the lattice path in $r(D)^*_{j}$ resulting in the top contour of $D_j$.  First, we consider the base case.
If $v'=e$, then by construction $R^*(v')=R^k(v')=\emptyset$, therefore by (\ref{eq:Uj})
$$D_j=\Rect(b_{j})\cup R(w_K)_{j} \setminus R(b_{j},b^k_{j}).$$
We depict $D_j$ in Figure~\ref{fig:top-contour}, where we consider the case $i_j\leq k$.  The remaining case can be proved similarly.  Recall that $\Rect(b_{j})$ is a rectangle in $D$, with entries at the corners being $s_{i_j}, s_k, s_a, s_b$ for some $k\leq a\leq n-1$ and $1\leq b \leq i_j$.  
We see that the horizontal steps of $\pathne{r(D)^*_{j}}$ consist of three intervals 
$$P=\{1, 2 \dots, e-1\}\cup \{d+1, d+2, \dots, k\}\cup \{c+1, c+2, \dots, n\}.$$
We claim that $P_j=P$, where $\Delta_{P_j}$ denotes the Pl\"ucker coordinate associated to the box $b_{j}$.

\begin{figure}
\hspace*{0cm}\scalebox{1.2}{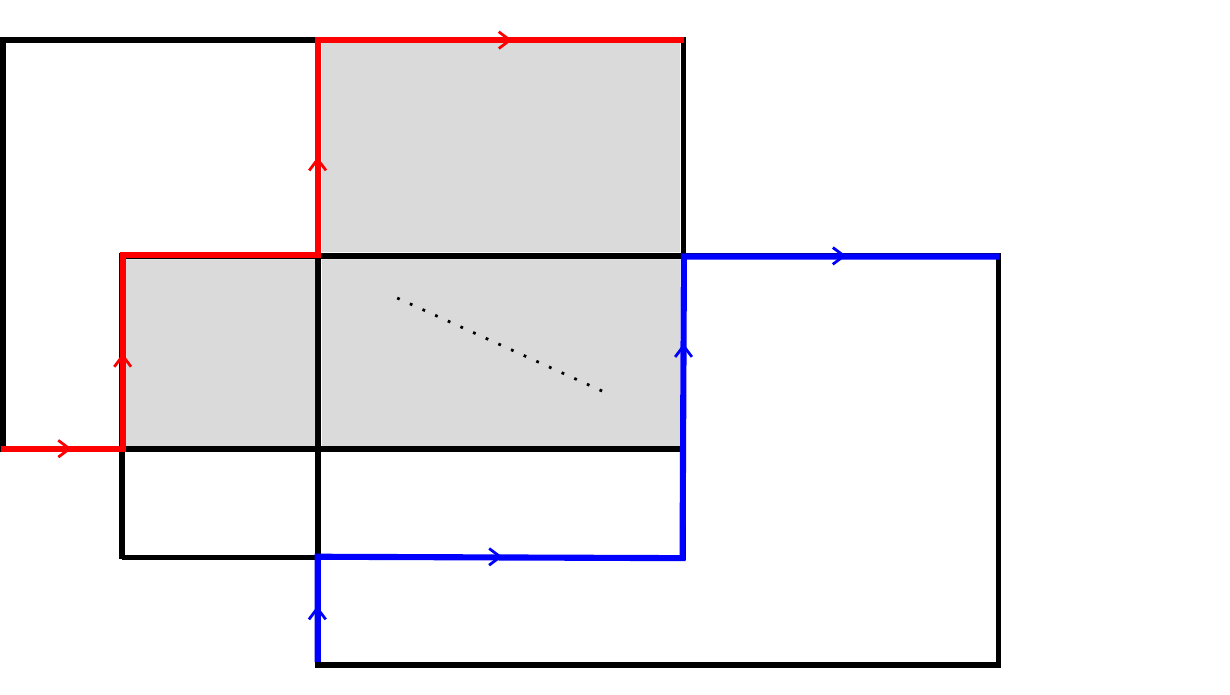}
 \caption{Base case $v'=e$}
   \label{fig:top-contour}
\end{figure}

By Lemma~\ref{lem:pluecker}, the lattice path $\pathne{v(P_j)}$ in $D$ cuts out a rectangle in the northwest corner of $D$.
In our case, this statement implies that 
$$v(P_j)=w_K(P_j) = \{1, 2, \dots, b-1\}\cup \{i_j+1, i_j+2, \dots, a+1\}.$$
Applying $w_K$ to this set where 
$$w_K=\Big( \begin{array}{cccccccccccc}1 & 2 & \dots & i & \dots & k & k+1 & \dots & j &\dots & n-1 & n \\ k & k-1 & \dots & k+1-i & \dots & 1 & n & \dots & n-(j+k-1) &\dots & k+2 & k+1\end{array}\Big)$$
we see that 
$$P_j = \{k, k-1, \dots, k-b+2\}\cup \{k-i_j, k-i_j-1, \dots, 1\}\cup \{n, n-1, \dots, n-a+k\}.$$
It follows from Figure~\ref{fig:top-contour} that $e=k-i_j+1, d=k-b+1, c=n-a+k-1$, which implies that $P_j=P$.   This completes the proof that the top contour of $D_j$ given by $\pathne{r(D)^*_{j}}$  has horizontal steps $P_j$  in the case $v'=e$. 

Now, consider the pair $(v,{\bf w})$ and some $(vs_t, {\bf ws_t})$ such that the length of each element increases by one.  By induction hypothesis assume that the horizontal steps of top contour of $D_j$, coming from the pair $(v, {\bf w})$, are given by ${P_j}$.  

Let $U'_{j}$ be the module associated to the box $b_{j}$ and coming from the pair $(vs_t,ws_t)$.  Let $P'_j$ denote the corresponding element of $\binom{[n]}{k}$ for the pair $(vs_t,ws_t)$.  Observe that by changing $v$ to $vs_t$ the top contour of $D'_j$ is obtained from the top contour of $D_j$ by adding a box $b'$ with entry $s_t$ to $R^*(v')_{j}$ provided that this box lies in $R(Q_{i_j})$; otherwise the top contour does not change.  If the box  $b'\in R(Q_{i_j})$ then the south and east edges of $b'$ are part of the top contour for $D_j$.  The south edge of $b'$ is a  horizontal step of $\pathne{r(D)^*_{j}}$ with label $t$, while the east edge is a vertical step with label $t+1$.  By induction hypothesis, $t\in P_j$ and $t+1\not\in P_j$.   At the same time since the contour of $D'_j$ changes by adding this box $b'$, we see that $t+1\in P'_j$ and $t\not\in P'_j$.  On the other hand, we have $P'_j=s_t (P_j)$, which precisely interchanges $t\in P_j$ for $t+1$.  Thus, we see that the two agree and in the case $b' \in R(Q_{i_j})$ the claim holds.  

\begin{figure}
\hspace*{0cm}\scalebox{1}{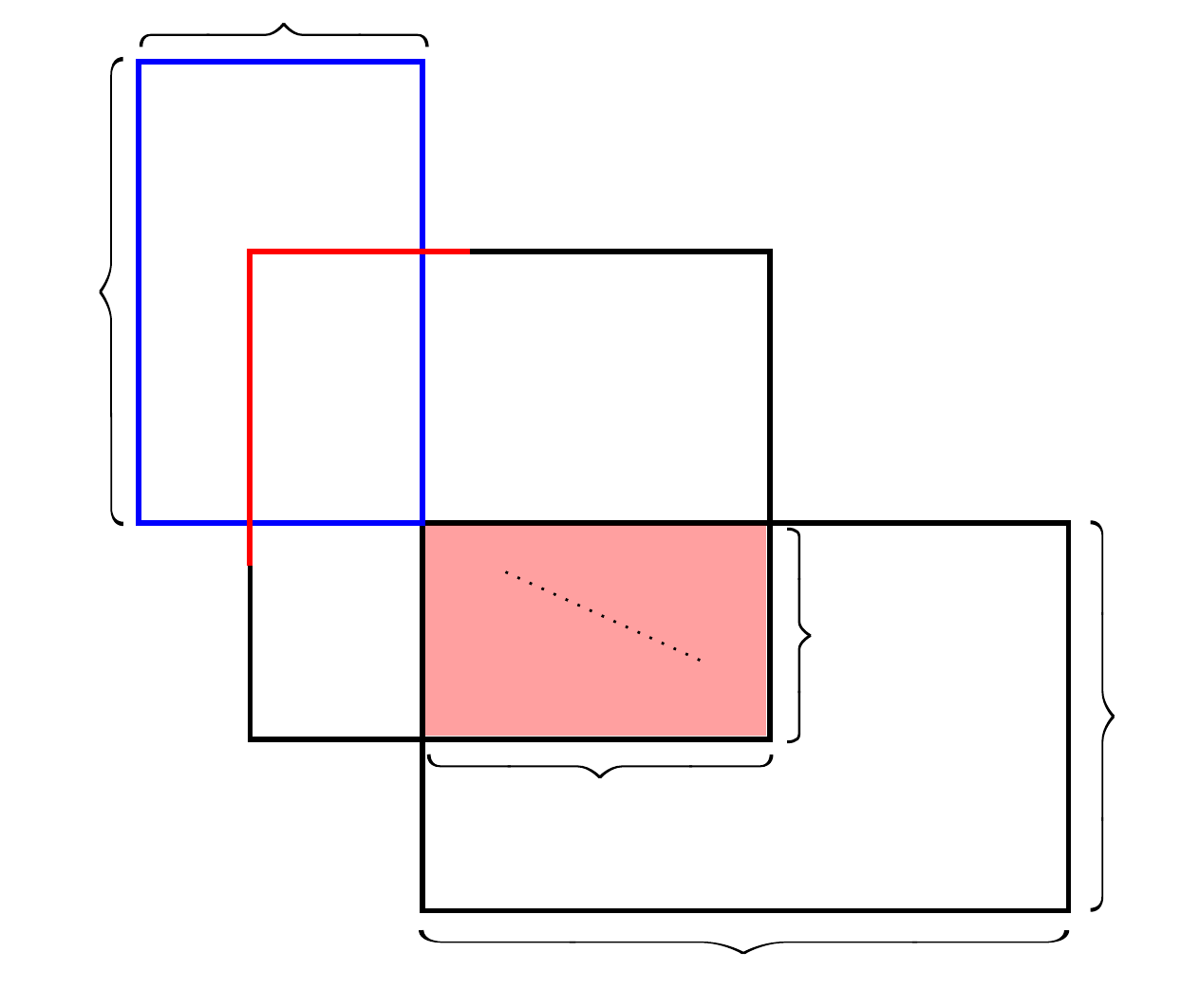}
 \caption{Inductive step for the top contour}
   \label{fig:top-contour2}
\end{figure}

Now, suppose that the box $b'\not\in R(Q_{i_j})$.  Then we know that by construction $D_j$ and $D'_j$ have the same top contour, and we want to show $P_j=P'_j$.  Clearly, if $t,t+1\in P'_j$ or $t,t+1\not\in P'_j$ then $P'_j=s_t(P_j)=P'_j$ as desired.  The remaining possibility is that $t+1\in P_j$ but $t\not\in P_j$.  Thus, we must be in the situation as depicted in \cref{fig:top-contour2}. Note that $R^*(v's_t)$, considered as a region of $D^*$, must contain a rectangle of height at least $b$ and of width at least $n-a$.  Similarly, $R(x)\in D$ must contain a rectangle of height at least $k+1-b$ and width at least $a+1-k$.  Since $D$ has height $k$ and width $n-k$, this contradicts Lemma~\ref{prop:lengthsadd} saying that $R(v's_t)\cap R(x)=\emptyset$ in $D$.  Therefore, it is not possible that $t+1\in P_j$, $t\not\in P_j$, and $b'\not\in R(Q_{i_j})$.  This completes the proof of the induction step for the top contour of $D_j$.
\smallskip

Thus, we showed that the bottom and top contour of $D_j\subset r(D)^*_{j}$ has horizontal steps given by $v^{-1}[k]$ and $P_j$ respectively.  Moreover, after rotating $r(D)^*_{j}$ clockwise 90 degrees and reflecting it across a vertical axis, we obtain the desired region $R(P_j)$, that yields the composition factor diagram for $U_j$, as a subset of $r(D)$.  For an example of this transformation see Figure~\ref{fig:ex2}.  Note that in this way a northeast lattice path $\pathne{}$ in $r(D)^*_{j}$ becomes a southwest 
path $\pathsw{}$ in $r(D)$.  Also, horizontal steps of  $\pathne{}$ become vertical steps of $\pathsw{}$.  This completes the proof of the theorem.
\end{proof}

\begin{remark}\label{rmk:irreducible}
Note the module $U_j$ is actually indecomposable because by construction of the diagram $D_j$ its bottom and top contour do not intersect, except on the boundary of $r(D)^*_j$,  
	and $R(P_j)\subset R(Q_{k})$.  In particular, the cluster-tilting module $U_{v,{\bf w}}$ is basic and the Pl\"{u}cker coordinate $\Delta_{P_j}$ is irreducible.
\end{remark}



\subsection{An explicit description of the endomorphism quiver
	$\Gamma_{U_{v,{\bf w}}}$}
\label{sec:morphisms}
In order to understand the endomorphism quiver, we need
to analyze morphisms between indecomposable summands of $U_{v,{\bf w}}$.   

Recall that for a box $b_i \in R(x)$, $U_i$ denotes 
the associated summand of $U_{v,{\bf w}}$.  Also, recall that $\Rect(b_i)$ is the maximal rectangle in $D$ whose southeast corner is $b_i$. 

\begin{theorem}
	\label{thm:morphisms}
Consider $(v,w)$ where 
	$v\in W^K_{max}$ and 
	 $w = xv$ is length-additive.
Let ${\bf w}=\mathbf{xv = xw_k v'}$ 
be a standard reduced expression for $w$.  
For any pair of modules $U_i, U_j \in \textup{ind}\,U_{v,{\bf w}}$ there exists an irreducible morphism $U_i\to U_j$ in $\textup{add}\,U_{v,{\bf w}}$ if and only if one of the following conditions holds: 
\begin{itemize}
\item[(i)] $\Rect(b_j)$ is obtained from $\Rect(b_i)$ by removing a row
\item[(ii)] $\Rect(b_j)$ is obtained from $\Rect(b_i)$ by removing a column
\item[(iii)] $\Rect(b_j)$ is obtained from $\Rect(b_i)$ by adding a hook shape.
\end{itemize}
Moreover, there exists at most one irreducible morphism between $U_i$ and  $U_j$.  
\end{theorem}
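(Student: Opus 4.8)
The plan is to follow the two-step strategy flagged just before Section~\ref{Lec-variables}: prove the statement first in the case $v=w_K$, where the modules $U_j$ are combinatorially transparent, and then transport it to arbitrary $v$ using the result of \cite{BKT}. Write $v=w_Kv'$ with $v'\in W^K_{\min}$ and let $\mathbf{w}=\mathbf{x}\mathbf{w}_K\mathbf{v}'$ be the standard reduced expression (Definition~\ref{def:stdredexpression}). Since length-additivity of $w=xw_Kv'$ forces $\ell(xw_K)=\ell(x)+\ell(w_K)$, the pair $(w_K,xw_K)$ with standard expression $\mathbf{w}^{\flat}=\mathbf{x}\mathbf{w}_K$ is again of the form treated by the theorem, and its cluster-tilting module $U_{w_K,\mathbf{w}^{\flat}}=\bigoplus_{j\in J}U^{\flat}_j$ is indexed by the very same boxes $b_j$ of $\lambda:=\partne{x([k])}$; in particular the rectangles $\Rect(b_j)$, and hence conditions (i)--(iii), are identical for the two pairs. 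I would then invoke \cite{BKT} to obtain, for each $i,j\in J$, a composition-compatible bijection $\Hom_{\operatorname{add}U_{w_K,\mathbf{w}^{\flat}}}(U^{\flat}_i,U^{\flat}_j)\xrightarrow{\ \sim\ }\Hom_{\operatorname{add}U_{v,\mathbf{w}}}(U_i,U_j)$; such a bijection sends irreducible morphisms to irreducible morphisms and preserves the number of arrows of the endomorphism quiver, so it is enough to settle the case $v=w_K$.

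\textbf{Explicit modules and the shape of $\Hom$.} Specializing Theorem~\ref{thm-module-structure} to $v'=e$, the module $U_j$ has composition diagram $R(P_j)$, namely the region of $r(D)$ between the southwest path $\pathsw{P_j}$, with $P_j=w_K(\vertne{\Rect(b_j)})$, and the fixed southwest path $\pathsw{w_K([k])}=\pathsw{[k]}$. These are ``parallelogram'' modules over the preprojective algebra $\Lambda$ of type $A_{n-1}$: every composition factor occurs once and the module is generated by a single element at the top. From this description I would compute $\Hom_\Lambda(U_i,U_j)$ by tracking the image of the generator of $U_i$, establishing that $\dim_{\mathbb C}\Hom_\Lambda(U_i,U_j)\le 1$; that it is nonzero exactly when $\pathsw{P_j}$ is obtained from $\pathsw{P_i}$ by a sequence of the moves ``delete a row'', ``delete a column'', ``add a hook'' on the corresponding rectangle (a relation I will make precise in terms of the bounding paths and the fixed bottom path); and that the image of a nonzero homomorphism is again a parallelogram module whose top boundary is that of $U_j$ and whose bottom boundary is that of $U_i$. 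Since the number of arrows $U_i\to U_j$ in $\Gamma_{U_{w_K,\mathbf{w}}}$ is at most $\dim_{\mathbb C}\operatorname{rad}(U_i,U_j)/\operatorname{rad}^2(U_i,U_j)\le \dim_{\mathbb C}\Hom_\Lambda(U_i,U_j)\le 1$ and there are no $2$-cycles by Theorem~\ref{Lec-seed}(d), the ``at most one irreducible morphism'' claim follows immediately.

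\textbf{Isolating the irreducible morphisms.} Because all Hom-spaces between summands are at most one-dimensional, a nonzero $f\colon U_i\to U_j$ is \emph{reducible} in $\operatorname{add}U_{w_K,\mathbf{w}}$ precisely when it factors as $U_i\xrightarrow{g}U_\ell\xrightarrow{h}U_j$ with $g,h$ nonzero for some box $b_\ell\neq b_i,b_j$. Using the ``image of a nonzero map $=$ parallelogram with prescribed top and bottom'' description, the composite $hg$ is nonzero exactly when $\Rect(b_\ell)$ lies strictly between $\Rect(b_i)$ and $\Rect(b_j)$ in the preorder $\preceq$ on rectangles generated by the three elementary moves, and in that case $hg$ spans $\Hom_\Lambda(U_i,U_j)$ and hence equals $f$ up to a scalar. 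Consequently $f$ is irreducible if and only if $(\Rect(b_i),\Rect(b_j))$ is a covering relation of $\preceq$ among rectangles contained in $\lambda$. It then remains to verify the purely combinatorial fact that these covering relations are exactly moves (i)--(iii) --- removing a row, removing a column, adding a hook --- which is the same elementary check as the ``Equivalently'' clause of Definition~\ref{def:labeledquiver}. Together with the reduction of the first paragraph, this proves the theorem.

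\textbf{Main obstacle.} The crux is the reducibility analysis of the previous paragraph, exactly the subtlety the text stresses with the morphism $U_{15}\to U_{11}$ of Example~\ref{ex:running} that factors through $U_{12}$: nonzero morphisms among the $U_j$ are abundant, and the real content is determining precisely which composites of nonzero maps stay nonzero. This is where the explicit ``image is a parallelogram with top boundary of the target and bottom boundary of the source'' description must do the work, allowing one to separate the genuinely irreducible morphisms --- those realizing the elementary rectangle moves --- from all the others.
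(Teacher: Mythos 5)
Your high-level plan --- first prove the case $v=w_K$ using the explicit description of the modules, then transport to arbitrary $v$ via the categorical equivalence coming from \cite{BKT} --- matches the paper's strategy exactly (Proposition~\ref{lem:morphismsbasecase} followed by Lemma~\ref{morphism:ind}). But the core of the $v=w_K$ case contains gaps serious enough that the proof does not go through as written.

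First, the claim $\dim_{\mathbb C}\Hom_\Lambda(U_i,U_j)\le 1$ is asserted by ``tracking the image of the generator of $U_i$,'' but the modules $U_i$ of Theorem~\ref{thm-module-structure} are not generated by a single element --- their tops are direct sums of several simples (see, e.g., $U_{14}$ and $U_{15}$ in Example~\ref{ex:running}). What is true is that each $U_i$ has simple socle $S_k$, hence embeds in $Q_k$, so $\Hom_\Lambda(U_i,U_j)\hookrightarrow\Hom_\Lambda(U_i,Q_k)$ and the latter has dimension $\dim(U_i)_k$, which is frequently larger than $1$. The paper deliberately avoids any such uniform one-dimensionality claim and instead proves the ``at most one irreducible morphism'' clause by carefully factoring an arbitrary nonzero $f$ through explicitly constructed intermediate summands. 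Second, the assertion that the image of a nonzero $f\colon U_i\to U_j$ is ``a parallelogram whose top boundary is that of $U_j$ and whose bottom boundary is that of $U_i$'' is incorrect: since for $v'=e$ both $U_i$ and $U_j$ have bottom boundary $\pathsw{[k]}$, that description would force $\operatorname{im}f=U_j$, i.e.\ surjectivity, which is false. The image instead has its own intermediate parameters (the paper's Figure~\ref{fig:mapf} introduces $a_z,r_z,c'_z,c_z$ for exactly this reason), and pinning them down is where the work lives.

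Finally, the reducibility criterion via ``covering relations of the preorder $\preceq$ generated by the three moves'' is not a well-posed notion: because (i)--(ii) shrink the rectangle while (iii) enlarges it, the reachability relation they generate has short cycles (remove row, remove column, add hook returns you to where you started), so all rectangles are mutually $\preceq$-comparable and ``strictly between'' carries no information. This is precisely the subtlety you flag at the end --- the $U_{15}\to U_{11}$ example factoring through $U_{12}$ --- but naming the obstacle is not the same as resolving it. The paper resolves it by a case analysis on the parameters $a,r,c$ of $U_i,U_j$ and the image, constructing an explicit intermediate $U_z,U_t,U_q$ or $U_p$ in $\operatorname{add}U$ through which any candidate morphism factors, and checking directly when such a factorization can be trivial. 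Without an analogue of that step your argument lacks the mechanism that actually distinguishes moves (i)--(iii) from longer composites.
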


Before proving \cref{thm:morphisms}, we make the following key observation. 

\begin{remark}\label{rem-image}
Let $f: U_i\to U_j$ be a homomorphism, and 
	suppose that 
	$N$ is an indecomposable direct summand of $\text{im}\,f$.
Then because 
	$\text{im}f$ is a submodule of $U_j$ and is (isomorphic to) a quotient of $U_i$, 
	the composition diagram for $N$ embeds into those for $U_i, U_j$. 
	Moreover, 
 $N$ is \emph{closed under predecessors} in $U_i$:
	for all vertices $x$ and $y\in I$ in the composition diagrams 
	for $N$ and $U_i$, respectively, 
	such that $y$ lies immediately above $x$ in $U_i$ (that is $\begin{smallmatrix}y\\&x\end{smallmatrix}$ or $\begin{smallmatrix}&y\\x\end{smallmatrix}$) 
		we have that $y$ is also in the composition diagram for $N$. 
	Similarly, $N$ is 
	  \emph{closed under successors} in $U_j$:
		for all vertices $x,y\in I$ in the diagrams for $N,U_j$ such that $y$ lies immediately below $x$ in $U_j$ (that is  $\begin{smallmatrix}x\\&y\end{smallmatrix}$ or $\begin{smallmatrix}&x\\y\end{smallmatrix}$),
			we have that $y$ is also in the diagram for $N$.

Conversely, for any $N$ that is closed under predecessors in $U_i$ and closed under successors in $U_j$ we get a morphism $f: U_i\to U_j$ with image $N$.
\end{remark}

We will prove \cref{thm:morphisms} in two steps.  First we treat the case $v'=e$, i.e. $v=w_K$.

\begin{proposition}\label{lem:morphismsbasecase}
	\cref{thm:morphisms} is true when $v=w_K$, i.e. 
$v' = e$.
\end{proposition}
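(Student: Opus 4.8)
The plan is to feed the explicit composition diagrams of the modules $U_j$ from \cref{thm-module-structure} into the image criterion of \cref{rem-image}, and to match the outcome with the combinatorial quiver $Q_{w_K,w}$ of \cref{def:labeledquiver} (equivalently the dual quiver of the bridge graph in \cref{cor:dualquiver}). First, since $v=w_K$ we have $v^{-1}([k])=[k]$, so \cref{thm-module-structure} identifies the composition diagram of $U_j$ with the region $R(P_j)\subset r(D)$ cut out by $\pathsw{P_j}$ and the trivial path $\pathsw{[k]}$. I would record, in terms of the position of the rectangle $\Rect(b_j)\subseteq\lambda:=\partne{x([k])}$, the corners of $R(P_j)$ (hence $\operatorname{top}U_j$ and $\operatorname{soc}U_j$), together with the fact that along every NE--SW line of the diagram each label occurs at most once. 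This rigidity gives the key structural statement: any indecomposable subdiagram of $R(P_j)$ that is closed under successors (resp.\ predecessors) is again a staircase region of the same type, determined by a single SE (resp.\ NW) corner, so its isomorphism type is determined by the rectangle it ``comes from''.

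Next I would compute $\operatorname{Hom}$-spaces. Given boxes $b_i,b_j$, by \cref{rem-image} a nonzero morphism $U_i\to U_j$ amounts to an indecomposable $N$ whose diagram embeds in both $R(P_i)$ and $R(P_j)$, is closed under predecessors in $R(P_i)$, and is closed under successors in $R(P_j)$. Using the rigidity from the first step I would show such an $N$ exists if and only if $\Rect(b_i)$ and $\Rect(b_j)$ are comparable in the partial order on rectangles generated by (i)~deleting a row, (ii)~deleting a column, (iii)~adjoining a hook; and that in that case the maximal valid $N$, which by the converse part of \cref{rem-image} is realized as $\operatorname{im}f$ for a suitable $f$, is the region of the intermediate rectangle ``between'' $\Rect(b_i)$ and $\Rect(b_j)$. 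Thus $\dim\operatorname{Hom}(U_i,U_j)\le 1$ up to maps with non-maximal image, and in particular the space of irreducible morphisms $U_i\to U_j$ is at most one-dimensional.

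To finish, I would analyze irreducibility: a nonzero $f:U_i\to U_j$ is reducible in $\operatorname{add}U_{w_K,{\bf w}}$ precisely when it factors as $U_i\to U_\ell\to U_j$ with both maps nonzero, and by the previous step such a factorization exists exactly when there is a box $b_\ell$ of $\lambda$ with $\Rect(b_\ell)$ strictly between $\Rect(b_i)$ and $\Rect(b_j)$ in the above order. Hence $U_i\to U_j$ supports an irreducible morphism if and only if $(\Rect(b_i),\Rect(b_j))$ is a covering pair in the restriction of this order to $\{\Rect(b)\ \vert\ b\in\lambda\}$; a direct check on rectangles (using $\Rect(b_i),\Rect(b_j)\subseteq\lambda$) shows the covering pairs are exactly those of types (i), (ii), (iii). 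Together with $\dim\operatorname{Hom}\le1$ this yields the proposition, and the resulting endomorphism quiver is visibly $Q_{w_K,w}$.

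The delicate part is the last step: one must show that whenever $\Rect(b_j)$ is obtained from $\Rect(b_i)$ by a non-elementary combination of moves — e.g.\ removing a row together with a column, or removing rows/columns after adjoining a hook — the corresponding morphism genuinely factors through some summand $U_\ell$ of $U_{w_K,{\bf w}}$, which requires exhibiting the intermediate box $b_\ell$ and checking via the rigidity of the diagrams that the composite equals $f$ up to scalar; and conversely that for each of (i), (ii), (iii) no such factorization exists, since any factorization would force the image $N$ to be the entire common region, contradicting strict containment of the rectangles. Controlling the interplay of the two ``downward'' deletions with the ``upward'' hook move — so that a hook-addition is neither accidentally decomposed nor combined with a deletion into an arrow — is where the argument must be carried out most carefully.
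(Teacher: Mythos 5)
Your outline uses the same ingredients as the paper's own proof---the image criterion of \cref{rem-image}, the explicit composition diagrams from \cref{thm-module-structure} (equivalently the parameters $(a_i,r_i,c_i)$ of Figure~\ref{fig:moduleU}), and factorization through intermediate summands of $U_{w_K,\mathbf{w}}$---so the overall strategy matches. However, two things are wrong or missing. The ``partial order on rectangles generated by (i), (ii), (iii)'' is not a partial order: deleting a row, deleting a column, and adjoining a hook generate a directed graph on rectangles that has cycles; already for the big cell
$r\times c\ \xrightarrow{\ (i)\ }\ (r{-}1)\times c\ \xrightarrow{\ (iii)\ }\ r\times(c{+}1)\ \xrightarrow{\ (ii)\ }\ r\times c$.
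So ``comparable'' and ``covering pair'' have no meaning in the sense you invoke, and the stated equivalence ``$\Hom(U_i,U_j)\neq 0$ iff the rectangles are comparable'' does not parse. Even reading it as existence of a directed path, the ``if'' direction would require that compositions of irreducible morphisms along a path be nonzero, which is not automatic and is not argued.

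More seriously, the whole content of the proposition---that every nonzero nonidentity $f\colon U_i\to U_j$ genuinely factors through some $U_\ell\in\operatorname{add} U_{w_K,\mathbf{w}}$ unless $(\Rect(b_i),\Rect(b_j))$ is one of the three elementary moves, and that the elementary ones admit no such factorization---is exactly what you flag as ``the delicate part'' and then describe without carrying out. That case analysis \emph{is} the proof. The paper does it by parameterizing the summands by $(a_i,r_i,c_i)$ with $c_i+r_i=k$ or $a_i+r_i=n-k$, and, for an arbitrary morphism $f$, exhibiting concrete intermediate modules $U_z, U_t, U_q, U_p, U_u$, verifying each lies in $\operatorname{add} U$ and each factorization is proper, until only cases (ia)--(iiib) survive; a further paragraph handles the mixed situation where one index satisfies $c+r=k$ and the other $a+r=n-k$. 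Without performing this analysis the proposal is a plan rather than a proof, and the poset framing would actively mislead you when you tried to fill it in.
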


\begin{proof}
By the proof of Theorem~\ref{thm-module-structure} all indecomposable summands of $U=U_{v,{\bf w}}$ are of the form given in Figure~\ref{fig:moduleU}.  Moreover, we must have $S_k=\textup{Soc}\,U_i=\text{Soc}\,U_j$ and either $c_i+r_i =k$ or $a_i+r_i=n-k$ for any $U_i\in \text{ind}\,U$.   Thus, we can rephrase the statement of the theorem in terms of these new parameters $a_i, c_i, r_i$ that define a given summand of $U$.  Here both (ia) and (ib) correspond to case (i) of the theorem, depending if $b_i$ is above or below the main diagonal. Similar correspondences hold for the remaining cases.  

\begin{itemize}

\item[(ia)] $r_i=r_j+1$, $a_i=a_j$, and $c_i+r_i=c_j+r_j=k$ 
\item[(ib)] $r_i=r_j$, $c_i=c_j-1$, and $a_i+r_i=a_j+r_j=n-k$

\vspace{.3cm}
\item[(iia)] $r_i=r_j$, $a_i=a_j-1$, and $c_i+r_i=c_j+r_j=k$
\item[(iib)] $r_i=r_j+1$, $c_i=c_j$, and  $a_i+r_i=a_j+r_j=n-k$

\vspace{.3cm}
\item[(iiia)] $r_i=r_j-1$, $a_i=a_j+1$,  and $c_i+r_i=c_j+r_j=k$
\item[(iiib)] $r_i=r_j-1$, $c_i=c_j+1$,  and $a_i+r_i=a_j+r_j=n-k$.
\end{itemize}



By the 
	construction of the region 
	$D_j\subset \mathcal{D}_{v,w}$ (see Figure~\ref{fig:top-contour}), given $U_i \in \text{add}\,U$ defined by $a_i, r_i, c_i$ a module $U_z$ defined by $a_z,r_z,c_z$ is also in $\text{add}\,U$ if $r_z\leq a_z$ and either $a_z=a_i, c_z\geq b_i$ or $c_z=c_i, a_z\geq a_i$.  Indeed, every module in $\text{add}\,U$ corresponds to a unique box in $R(x)$.  Given a box $b_i \in R(x)$ associated to the module $U_i$, all the boxes $b_z\in D$ above and to the left of $b_i$ are also in $R(x)$.  The module $U_z$ with the above properties is precisely the one coming from such a box $b_z\in R(x)$.  Thus, $U_z\in \text{add}\,U$ as claimed. 

	Below we consider an arbitrary morphism $f:U_i\to U_j$, and using the particular structure of the modules we show that it factors through another summand $U'$ of $U$.  Moreover, we obtain two maps $U_i\to U'$ and $U'\to U_j$ whose composition is $f$ together with additional conditions on the structure of $U'$.  Since we are interested in the case when $f$ is irreducible, we can reduce $f$ to the case $U_i\to U' = U_j$ or $U'=U_i \to U_j$.  We then continue in the same way replacing $f$ by one of the two morphisms.  At every step we obtain more information about the particular structure of $U_i$ and $U_j$ until we recover the case listed in the theorem. 

\begin{figure}
\hspace*{0cm}\scalebox{1}{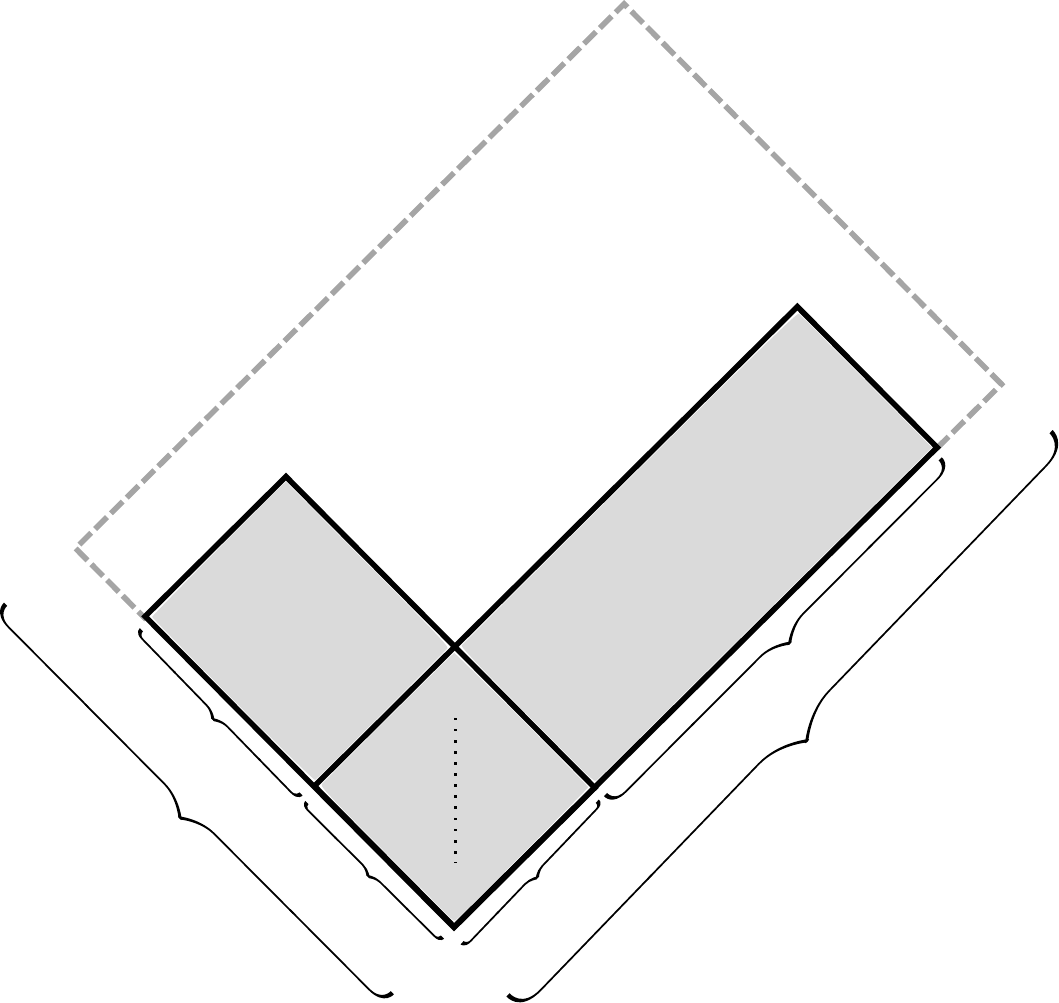}
 \caption{Module $U_i$}
   \label{fig:moduleU}
\end{figure}

Let $f: U_i\to U_j$ be a nonzero nonidentity morphism in $\text{mod}\,\Lambda$.  Since $U_j$ has a one-dimensional socle it follows that $\text{im}\,f$, which is a submodule of $U_j$, is indecomposable.  Let $N=\text{im}\,f$. By Remark~\ref{rem-image} it is closed under predecessors in $U_i$ and closed under successors in $U_j$.  Moreover, the socle of $N$ is also $S_k$, and we obtain the configuration depicted in Figure~\ref{fig:mapf}.  Here, $r_z\leq r_i, r_j$, $r_z+c'_z\leq r_j+c_j$, and $r_z+a_z\leq r_j+a_j$.  Conversely, for every such $N$ as in the figure we obtain a nonzero morphism $U_i\to U_j$.  

\begin{figure}
\hspace*{0cm}\scalebox{.8}{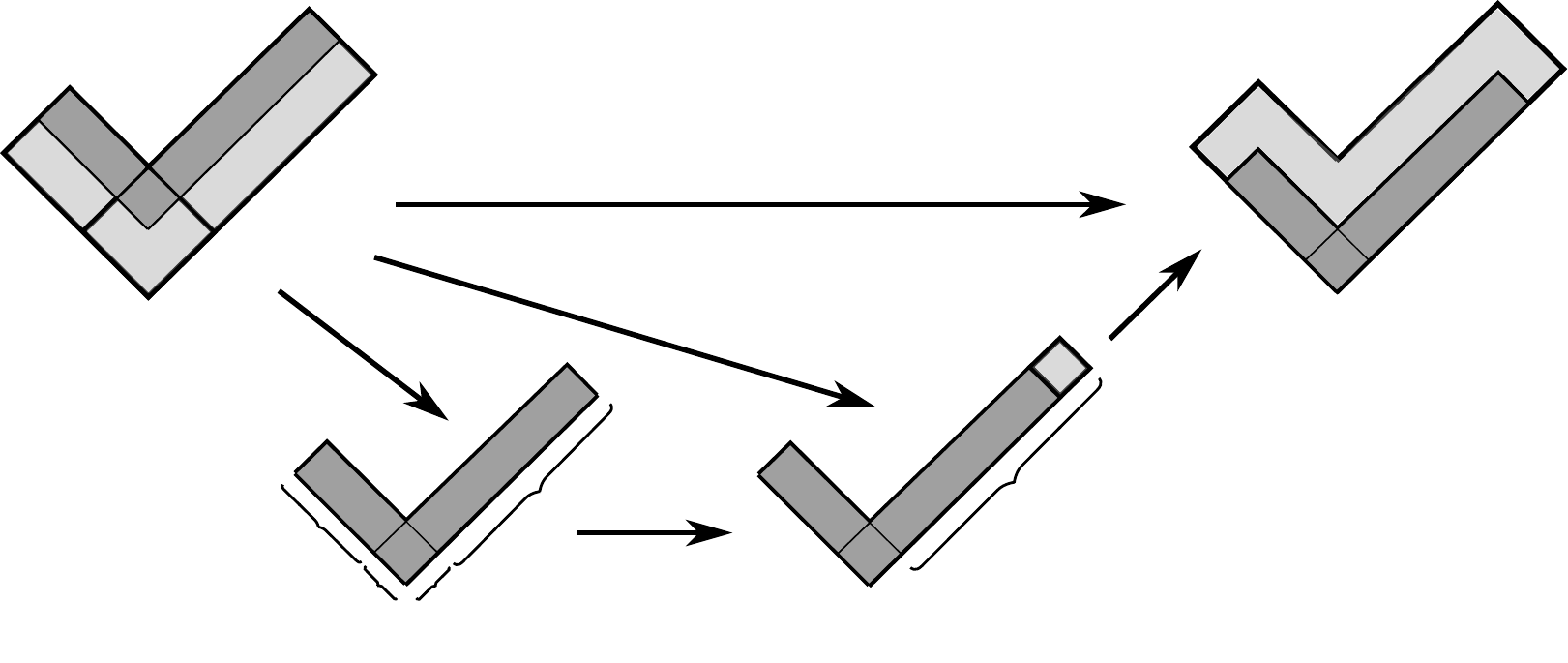}
 \caption{Morphism $f:U_i\to U_j$ with image $N$}
   \label{fig:mapf}
\end{figure}

First, we consider the case $r_i+c_i=k$ and $r_j+c_j=k$.  Note that $N$ is not necessarily in $\text{add}\,U$.  Thus, we construct a module $U_z\in\text{ind}\,\Lambda$ of the same structure as $U_i, U_j$ defined by $a_z=a_i, r_z$, and $c_z$ such that $c_z+r_z=k$.  Since $r_z\leq r_i$ and $c_z\geq c_i$ it follows that $U_z\in\text{ind}\,U$.  We also obtain maps $g:U_i\to U_z$ and $h:U_z\to U_j$  such that $f=hg$.  This implies that $f$ is reducible in $\text{add}\,U$ unless $g=1$ or $h=1$.


As we are interested in irreducible morphisms $f$, suppose first that $h=1$.  Thus, $U_z=U_j$ and $f=g$.   If $r_z=r_i$, then $U_i=U_z$ and $g=f=1$ contrary to our original assumption that $f$ is not the identity morphism.  Now, if $r_z<r_i$ consider a module $U_t$ defined by $a_t=a_i, r_t=r_z+1$ and $c_t$ such that $c_t+r_t=k$.  In particular, $c_t=c_z-1$.  Since $r_t\leq r_i$ and $c_t>c_i$ it follows that $U_t\in \text{add}\,U$.   In this case, we note that $f$ factors through $U_t$.  That is, there exist maps $\rho, \pi$ as below
$$\xymatrix@R=5pt{U_i\ar[rr]^{f=g} \ar[dr]_{\rho}&& U_z\\&U_t\ar[ur]_{\pi}}$$
such that $f=\pi\rho$. Note that by definition $\pi\not=1$ as $c_t\not=c_z$.  Since we are interested in irreducible morphisms $f$, we consider the case $\rho=1$ and $f=\pi$.   If $f=\pi$ then we have $U_i=U_t$ and $U_j=U_z$.  
By construction, $a_i=a_j$, $r_i=r_j+1$ and $c_i+r_i=c_j+r_j=k$, which agrees with case (ia). Conversely, by the structure of $U_i$ and $U_j$ it is easy to see that such $f$ is indeed irreducible in $\text{add}\,U$.

Now, consider the case $g=1$.  Thus, $h=f$ and $U_z=U_i=X$.  Let $U_q$ be the module defined by $a_q=a_z+1$, $r_q=r_z$, $c_q=c_z$, provided that $a_z+r_z<n-k$. Observe that $U_q\in \text{add}\,U$ because $r_q=r_z$ and $c_q>a_z$.  If $a_q+r_q\leq a_j+r_j$, we see that $f$ factors through $U_q$.  In particular, there exist morphisms $\sigma,\delta$ as below
$$\xymatrix@R=5pt{U_z\ar[rr]^{f=h} \ar[dr]_{\sigma}&& U_j\\&U_q\ar[ur]_{\delta}}$$
where $f=\delta\sigma$.  Since we are looking for irreducible maps we take $\delta=1$.  Note that $\sigma\not=1$ as $a_z<a_q$.  In the case $\delta=1$ we have $f=\sigma$ is injective, and $U_z=U_i$, $U_q=U_j$.  Therefore, $r_i=r_j$, $a_i=a_j-1$, and $r_i+c_i=r_j+c_j=k$, which is precisely the conditions of case (iia).   Also, because $f$ is injective, we can see by the particular structure of $U_i$ and $U_q$ that it is actually irreducible in $\text{add}\,U$.

It remains to consider the the case $f=h$ and $a_z+r_z=a_j+r_j$.  First, we observe that $r_z\not=r_j$, as otherwise $U_i=U_z=U_j$ and $f$ is the identity map.  Thus, let $U_p$ be the module defined by $r_p=r_z+1, a_p=a_z-1, c_p=c_z-1$, provided $a_z,c_z$ are both nonzero.  In this case, $U_p\in\text{add}\,U$ because $r_p\leq r_j$ and $a_p\geq a_j$.  Thus, we see that $f=h$ factors through $U_p$ 
$$\xymatrix@R=5pt{U_z\ar[rr]^{f=h} \ar[dr]_{\epsilon}&& U_j\\&U_p\ar[ur]_{\theta}}$$
where $f=\theta\epsilon$.   Note that $\epsilon\not=1$ by construction, therefore we consider the case $\theta=1$.  Thus, $f=\epsilon$ and $U_z=U_i, U_j=U_p$, where $r_i=r_j-1$, $a_i=a_i+1$, and $c_i+r_i=c_j+r_j=k$.  In particular, this agrees with case (iiia) of the lemma.  Again, since $f$ is injective it is easy to see that it is irreducible in $\text{add}\,U$.  

Finally, suppose that $f=h$ and $a_z+r_z=a_j+r_j$ as above, but $a_z=0$ or $c_z=0$.   If $a_z=0$ then $r_z=a_j+r_j$.  We also know that $U_z$ maps invectively into $U_j$ via $f$.  Therefore, $r_z\leq r_j$ which implies that $a_j=0$.  We obtain $U_z=U_i=U_j$ and $f$ is the identity morphism.   This is a contradiction.   On the other hand, if $c_z=0$ then $r_z=k$.  Since $r_z\leq r_j\leq k$, we obtain $r_j=k$.  Also, $a_j+r_j\leq k$ implies that $a_j=0$ and we deduce a contradiction as above.  

This completes the proof when $c_i+r_i=c_j+r_j=k$.  A similar argument applies in the case $a_i+r_i=a_j+r_j=n-k$.  Therefore, it remains to consider the situation when $r_i+c_i=k$ and $a_j+r_j=n-k$ while $r_i+a_i<n-k$ and $c_j+r_j<k$ and vice versa.  In particular, we want to show that every morphism in this case is reducible.  Suppose that $f: U_i\to U_j$ where $r_i+c_i=k$ and $r_j+c_j=n-k$ while $r_i+a_i<n-k$ and $c_j+r_j<k$.  The other case follows similarly.   Now, obtain a module $U_u$ defined by $r_u=r_i, a_u+r_u=n-k, c_u+r_u=k$.  Note that $U_u$ is different from both $U_i$ and $U_j$.  Moreover, $U_u\in \text{add}\,U$ because $r_u=r_i$ and $a_u>a_i$.   We obtain that $f$ factors through $U_u$.  In particular, $f$ is reducible in $\text{add}\,U$ and the resulting maps $U_i\to U_u$ and $U_u\to U_j$ are between types of modules that we considered earlier.  This shows that such $f$ does not yield any new irreducible morphisms, as desired.
\end{proof}

In the second step in the proof of Theorem~\ref{thm:morphisms}, we relate morphisms between summands of $U_{v,{\bf w}}$, and morphisms between summands of $U_{w_K,{\bf xw_K}}$ where $w=xv$ and $v\in W^K_{max}$.  

\begin{lemma}\label{morphism:ind}
Let $w=xv$, where 
$v\in W^K_{max}$
and $\ell(w)=\ell(x)+\ell(v)$.  Denote the cluster-tilting modules coming from a standard reduced expressions for the pairs $(w_K, xw_K)$ and $(v,w)$ by $U, U'$ respectively.   Let $U_i, U_j \in \text{ind}\, U$ and let $U'_i, U'_j \in \text{ind}\, U'$ be the corresponding summands of $U'$.  Then, there exists a bijection between irreducible morphisms $U_i\to U_j$ in $\textup{add}\,U$ and irreducible morphisms $U'_i\to U'_j$ in $\textup{add}\,U'$.
\end{lemma}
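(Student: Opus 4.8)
The plan is to obtain explicit composition diagrams for all indecomposable summands of $U$ and of $U'$, to observe that passing from one family to the other is governed by a single combinatorial operation depending only on $v'$, and then to transfer the morphism analysis across that operation using the cited result of \cite{BKT}.

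First I would assemble the data. Fix the columnar expression $\mathbf{x}$ for $x$ and a reduced expression $\mathbf{w}_K$ for $w_K$; take $\mathbf{x}\mathbf{w}_K$ as the standard reduced expression for $xw_K$ and $\mathbf{x}\mathbf{w}_K\mathbf{v}'$ (with $\mathbf{v}'$ the columnar expression for $v'$) as the standard reduced expression for $w = xw_Kv'$. In both cases the positive distinguished subexpression of the left factor is the suffix, so the index set $J$ coincides, and by \cref{cor:quiverlabels} it is naturally identified with the boxes of $\lambda := \partne{x([k])}$; for a box $b_j$ the corresponding partial words are $p_jw_K$ (for the pair $(w_K,xw_K)$) and $p_jw_Kv'$ (for the pair $(v,w)$), where $p_j$ is the common prefix of $\mathbf{x}$, both factorizations being length-additive. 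By \cref{thm-module-structure}, the composition diagram of $U'_j$ is the region $R(P_j)$ in $r(D)$ bounded by $\pathsw{P_j}$ and $\pathsw{v^{-1}([k])}$, where $P_j = v^{-1}(J(b_j))$; specializing the same theorem to $v = w_K$ (so $v' = e$) gives the composition diagram of $U_j$ as the region bounded by $\pathsw{w_K(J(b_j))}$ and $\pathsw{[k]}$. Note that $U$ and $U'$ both have $|\lambda|$ indecomposable summands, so $U_j \mapsto U'_j$ is a genuine bijection $\textup{ind}\,U \to \textup{ind}\,U'$.

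Second I would compare the two families. Since $v^{-1} = v'^{-1}w_K$ and $w_K([k]) = [k]$, the region for $U'_j$ is obtained from the region for $U_j$ by the \emph{same} operation for every box $b_j$ — applying $v'^{-1}$ to the two bounding paths — which, unwinding the construction in the proof of \cref{thm-module-structure}, amounts to attaching a fixed block $R^*(v')_j$ along the top and deleting the fixed block $R^k(v')$ (sitting at the box labelled $k$, hence adjacent to the socle $S_k$) along the bottom. Call this operation $\Phi$. Using \cref{rem-image} — which describes a homomorphism between two such modules by its image, an indecomposable module that is both a predecessor-closed quotient of the source and a successor-closed submodule of the target — I would check that $\Phi$ carries the images of homomorphisms $U_i \to U_j$ bijectively to the images of homomorphisms $U'_i \to U'_j$. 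To upgrade this from a statement about images to one about the Hom-spaces themselves, compatibly with composition, I would invoke the result of \cite{BKT} on the functors $\mathcal{E}^\dagger$: applied to modules of the shape occurring here it shows that the relevant $\mathcal{E}^\dagger$ induces an isomorphism on Hom-spaces respecting composition. Combining these gives a bijection $\Hom_\Lambda(U_i, U_j) \xrightarrow{\sim} \Hom_\Lambda(U'_i, U'_j)$ compatible with composition and with the identification $\textup{ind}\,U \leftrightarrow \textup{ind}\,U'$.

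Finally, a morphism $f\colon U_i \to U_j$ is reducible in $\textup{add}\,U$ precisely when it factors through some $U_z$ with $U_z \in \textup{ind}\,U \setminus \{U_i, U_j\}$; since $\Phi$ matches factorizations through $U_z$ with factorizations through $U'_z$ in both directions, $f$ is irreducible in $\textup{add}\,U$ if and only if its counterpart is irreducible in $\textup{add}\,U'$. Hence $\Phi$ restricts to the asserted bijection between irreducible morphisms $U_i \to U_j$ in $\textup{add}\,U$ and irreducible morphisms $U'_i \to U'_j$ in $\textup{add}\,U'$. The step I expect to be the main obstacle is the compatibility with composition in the third paragraph: one needs a sufficiently precise statement about $\mathcal{E}^\dagger_{v'^{-1}}$ — from \cite{BKT}, or, failing a directly applicable statement there, extracted by hand from the explicit diagrams of \cref{thm-module-structure} — to guarantee that transport across $\Phi$ neither identifies two distinct irreducible morphisms nor converts a reducible morphism into an irreducible one.
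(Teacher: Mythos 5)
The gap you flag in your third paragraph is real, and it points at the key piece your plan has not yet understood about the cited reference. What \cite[Proposition 5.16]{BKT} actually provides is an equivalence of \emph{categories} $\mathcal{C}_x \xrightarrow{\sim} \mathcal{C}_{v,w}$ and $\mathcal{C}_x \xrightarrow{\sim} \mathcal{C}_{w_K,xw_K}$ (hence $\mathcal{C}_{v,w} \simeq \mathcal{C}_{w_K,xw_K}$), and \cite[Remark 5.2]{Leclerc} says this equivalence matches the two cluster-tilting modules $U'$ and $U$. Once you know you have a genuine equivalence identifying the tilting objects, the rest is formal: an equivalence is fully faithful and sends $\textup{add}\,U'$ isomorphically to $\textup{add}\,U$, so it gives a bijection on Hom-spaces compatible with composition, hence a bijection on irreducible morphisms. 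This is the paper's entire argument, and it needs none of the composition-diagram machinery.

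Your proposed route instead reasons at the level of images of homomorphisms between explicitly described modules and then tries to patch up the multiplicity and composition issues afterward by appealing to a vaguely stated ``Hom-space isomorphism respecting composition'' from \cite{BKT}. The problem is exactly the one you anticipate: a bijection on images does not control the dimension of a Hom-space (several linearly independent morphisms can share an image), and it certainly does not by itself tell you that a factorization $U_i \to U_z \to U_j$ in $\textup{add}\,U$ corresponds to a factorization $U'_i \to U'_z \to U'_j$ in $\textup{add}\,U'$. To salvage the argument you would, in the end, be reproving (or precisely re-invoking) the statement that the passage $U \rightsquigarrow U'$ is implemented by a category equivalence --- at which point the entire composition-diagram detour is superfluous. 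So the plan is not wrong, but it is aiming at the right reference in a way that misses the clean form of what that reference gives you; if you read \cite[Proposition 5.16]{BKT} as a category equivalence rather than a statement about the functor $\mathcal{E}^\dagger$ on Hom-spaces, the proof collapses to three lines.
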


\begin{proof}
By \cite[Proposition 5.16]{BKT} there are equivalences of categories $\mathcal{C}_{x} \xrightarrow{\sim} \mathcal{C}_{v,w}$ and $\mathcal{C}_{x} \xrightarrow{\sim} \mathcal{C}_{w_K,xw_K}$. In particular, the categories $\mathcal{C}_{v,w}$ and $\mathcal{C}_{w_K,xw_K}$ are also equivalent.  By \cite[Remark 5.2]{Leclerc} this equivalence identifies the two cluster-tilting modules $U$ and $U'$.   In particular, this implies that there is a bijection between irreducible morphisms $U_i\to U_j$ in $\textup{add}\,U$ and irreducible morphisms $U'_i\to U'_j$ in $\textup{add}\,U'$.
\end{proof}

Together Proposition \ref{lem:morphismsbasecase} and Corollary \ref{morphism:ind} prove Theorem~\ref{thm:morphisms}.
Next, we present the main theorem of this section. 

\begin{theorem}\label{thm:seedscoincide}
Let $w=xv$ be a length additive factorization and $v\in W^K_{max}$. For a standard reduced expression ${\bf w}$ of $w$, the labeled quiver 
	$\Gamma_{U_{v,{\bf w}}}$ 
	coincides with $Q_{v, w}$. 
\end{theorem}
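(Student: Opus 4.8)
The plan is to read off the labeled endomorphism quiver $\Gamma_{U_{v,\mathbf w}}$ using \cref{cor:quiverlabels} and \cref{thm:morphisms}, and to compare it, vertex by vertex and arrow by arrow, with the combinatorial quiver $Q_{v,w}$ of \cref{def:labeledquiver}. First I would fix the bijection of vertex sets: since $\mathbf w=\mathbf{x w_K v'}$ is a standard reduced expression and $\mathbf x$ is the columnar expression for $x$, the index set $J$ of \cref{def:Uj}---equivalently the set of indecomposable summands of $U_{v,\mathbf w}$---is in bijection with the boxes $b_j$ of $R(x)=\partne{x([k])}=\lambda$, which is exactly the vertex set of $Q_{v,w}$. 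Under this identification I would check that the Pl\"ucker labels agree: the summand $U_j$ is labeled by $\varphi_{U_j}$, which by \cref{cor:quiverlabels} projects on $\pi_k(\mathcal R_{v,w})$ to the Pl\"ucker coordinate $\Delta_{v^{-1}(J(b_j))}$, and this is precisely the label attached to the box $b_j$ in \cref{def:labeledquiver}. Thus the two labeled vertex sets coincide.

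Next I would match the arrows. By \cref{Lec-seed}(d) the quiver of Leclerc's seed is the endomorphism quiver $\Gamma_{U_{v,\mathbf w}}$, so the number of arrows $U_i\to U_j$ equals the dimension of the space of irreducible morphisms $U_i\to U_j$ in $\textup{add}\,U_{v,\mathbf w}$. By \cref{thm:morphisms} this dimension is $0$ or $1$, and it equals $1$ exactly when $\Rect(b_j)$ is obtained from $\Rect(b_i)$ by removing a row, removing a column, or adding a hook shape. These three cases are exactly the three ways \cref{def:labeledquiver} creates an arrow $b_i\to b_j$ in $Q_{v,w}$: removing a row corresponds to the ``up'' arrow between vertically adjacent boxes, removing a column to the ``left'' arrow, and adding a hook to the diagonal arrow from the upper-left to the lower-right box of a $2\times 2$ block; in each case the orientation is the same. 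Combined with the fact (again \cref{Lec-seed}(d)) that $\Gamma_{U_{v,\mathbf w}}$ has no loops and no $2$-cycles, and with the standing convention (see the remark following \cref{def:seed-mutation0}) that arrows between two frozen vertices are irrelevant to the seed and may be dropped, this yields agreement of the underlying quivers.

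It remains to match the frozen/mutable dichotomy. In $Q_{v,w}$ the vertex $b$ is frozen iff the box immediately southeast of $b$ is not in $\lambda$, i.e. iff $\Rect(b)$ is $\lambda$-frozen in the sense of \cref{def:hook-maximal}; in $\Gamma_{U_{v,\mathbf w}}$ the vertex $U_j$ is frozen iff $U_j$ is projective-injective in $\mathcal C_{v,w}$ (\cref{Lec-seed}(b)). The implication ``$b_j$ is $\lambda$-frozen $\Rightarrow$ $U_j$ is projective-injective'' is exactly the content of the final lemma of \cref{Lec-variables}, since for such $b_j$ the associated generalized minor is $\Delta_{v^{-1}([i_j]),\,w^{-1}([i_j])}$, one of the variables listed in \cref{Lec-seed}(b). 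For the converse I would note that if $b_j$ is not $\lambda$-frozen then $\Delta_{v^{-1}(J(b_j))}$ vanishes somewhere on $\pi_k(\mathcal R_{v,w})$, so it cannot be a unit in the coordinate ring and hence cannot be a frozen (invertible) variable of Leclerc's seed; alternatively one can simply compare cardinalities, using $|\lambda|=\ell(x)=\dim\mathcal R_{v,w}=|U_{v,\mathbf w}|$ from \cref{Lec-seed}(c) together with the inclusion just established. Assembling the three matchings gives $\Gamma_{U_{v,\mathbf w}}=Q_{v,w}$ as labeled quivers.

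The main obstacle will be this last step---pinning down the projective-injective summands of $U_{v,\mathbf w}$ and verifying that they are exactly the $U_j$ with $b_j$ a $\lambda$-frozen box, while also reconciling the two descriptions of a $\lambda$-frozen box and keeping track of the convention on frozen--frozen arrows. The genuinely hard inputs, however, are already available: the explicit composition diagrams of the modules $U_j$ (\cref{thm-module-structure}) and the complete classification of the irreducible morphisms among them (\cref{thm:morphisms}). Given those, the proof of \cref{thm:seedscoincide} is, as above, an assembly of \cref{cor:quiverlabels}, \cref{thm:morphisms}, and \cref{Lec-seed}.
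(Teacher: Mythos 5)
Your proposal is correct and follows essentially the same route as the paper's proof, which is simply an assembly of \cref{def:labeledquiver}, \cref{thm:morphisms}, \cref{lem:pluecker}, and \cref{cor:quiverlabels}. The extra care you take over the frozen/mutable dichotomy is a worthwhile elaboration (the paper's proof leaves it implicit, resting on the lemma at the end of \cref{Lec-variables}), but it does not change the argument.
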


\begin{proof}
	By 
	\cref{def:labeledquiver} and 
	\cref{thm:morphisms}, 
	the quivers coincide.
And by the construction of $\Delta_{P_j}$ and \cref{lem:pluecker},
	the labels of the vertices coincide as well.
\end{proof}

As a corollary, we obtain \cref{prop:rectanglesseed}.


\section{The proofs of \cref{thm:main} and \cref{thm:main2}}\label{sec:proofs}

In this section we first prove \cref{thm:main2}, and then deduce \cref{thm:main} from it.

\subsection{The proof of \cref{thm:main2}}

Let $v \leq w$ be permutations where $v \in W^K_{\max}$ and $w=xv$ is a length-additive factorization. Let \textbf{w'} be a standard reduced expression for $w':=xw_K$ and let $G_{v, w}$ be the graph obtained from the bridge graph $B_{w_K, \textbf{w'}}$ by applying $v^{-1}$ to the boundary vertices. We label the faces of $G_{v, w}$ using the target labeling and let $Q_{v, w}$ be the labeled dual quiver of $G_{v, w}$ with the vertex labeled $v^{-1}([k])$ removed. So far, we have shown that $Q_{v, w}$ is the rectangles seed (\cref{cor:dualquiver}), and that $Q_{v, w}$ agrees with $\Gamma_{U_{v, \textbf{w}}}$ (\cref{thm:seedscoincide}). 

Now, let $G$ be a plabic graph obtained from $G_{v, w}$ by a sequence of moves (M1)-(M3)
. The boundary faces of $G$ have the same labels as the boundary faces of $G_{v, w}$. Let $Q$ be the dual quiver of $G$, with the vertex labeled $v^{-1}([k])$ removed. Recall that a square move at a face of a plabic graph changes the dual quiver via mutation at the corresponding vertex. So we can obtain $Q$ from $Q_{v, w}$ by a sequence of mutations. On the other hand, this same sequence of mutations can be performed on the corresponding cluster-tilting module $U_{v, {\bf w}}$ and its labeled quiver $\Gamma_{U_{v, {\bf w}}}$ resulting in a new module $U$ and its labeled quiver $\Gamma_U$. Now, labeling $Q$ with target labels, we claim that $Q=\Gamma_U$. The two quivers are clearly equal if we ignore the labels, so we only need to show that the labelings coincide. In order to do so, we first establish that the face labels of $G$ have the following property.

\begin{definition} Let $I, J \in \binom{[n]}{k}$. We say $I$ and $J$ are \emph{weakly separated} if for all $a, b \in I\setminus J$ and $c, d \in J \setminus I$ with $a<b$ and $c<d$, we never have that $a < c< b<d$ or $c < a < d<  b$. 
\end{definition}

\begin{proposition} \label{prop:rectweaklysep} Let $v \leq w$ be permutations where $v \in W^K_{\max}$ and $w=xv$ is a length-additive factorization. Let $G$ be a reduced plabic graph that can be obtained from $G_{v, w}$ by a sequence of moves (M1)-(M3). If $I, J \in \mathcal{F}_{\target}(G)$, then $I$ and $J$ are weakly separated.
\end{proposition}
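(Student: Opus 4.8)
The plan is to exploit the well-known fact (due to Oh--Postnikov--Speyer \cite{OPS}) that the source-labeling face labels of any reduced plabic graph form a \emph{maximal weakly separated collection} inside the positroid. Since weak separation is preserved under passing to subsets of a maximal weakly separated collection, it suffices to check that the \emph{target} labels of $G$ also arise as the source labels of some reduced plabic graph (with possibly relabeled boundary) corresponding to a positroid variety, or else directly that the target labels of $G_{v,w}$ are pairwise weakly separated and that this property is preserved under (M1)--(M3). First I would recall that square moves (M1) send a weakly separated collection to a weakly separated collection: the face label produced by a square move is obtained by the three-term Pl\"ucker/mutation relation $I x y$, $I x z$, $I y z$, $I x w$, $\dots$, and the mutated label $\Delta_{Iac}\Delta_{Ibd} = \Delta_{Iab}\Delta_{Icd} + \Delta_{Iad}\Delta_{Ibc}$ pattern guarantees weak separation of the new label with all the others (this is exactly the content of the fact that clusters in the Grassmannian cluster algebra are weakly separated collections, \cite{OPS, Scott}). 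Moves (M2) and (M3) do not change the set of face labels at all. So the proposition reduces to the \textbf{base case}: showing that $\mathcal{F}_{\target}(G_{v,w})$ is a weakly separated collection.

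For the base case I would use the explicit description of the target face labels established earlier: by \cref{cor:facelabels} together with \cref{lem:sameseed}, the target labels of $G_{v,w}$ are exactly the sets $v^{-1}(J(b))$ as $b$ ranges over the boxes of $\lambda = \partne{x([k])}$, together with $v^{-1}([k])$, where $J(b) = \vertne{\Rect(b)}$. So the task becomes purely combinatorial: show that the family of sets $\{\vertne{\Rect(b)} : b \in \lambda\} \cup \{[k]\}$ is weakly separated, and then show that applying the single permutation $v^{-1}$ to all of them preserves weak separation. The first claim is quite clean because the $\Rect(b)$ are nested rectangles: for two rectangles $\mu \subseteq \nu$ contained in $\lambda$, the symmetric differences $\vertne{\mu}\triangle\vertne{\nu}$ decompose into at most two ``intervals'' of the lattice path, and one checks directly that the forbidden cyclic pattern $a<c<b<d$ (with $a,b$ in one difference and $c,d$ in the other) cannot occur — essentially because rectangles are determined by a single ``staircase corner'' and their vertical-step sets are order-convex in a strong sense. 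For rectangles $\mu, \nu$ neither of which contains the other, one can still compare them via their common refinement/coarsening (intersection and union as rectangles are again rectangles inside $\lambda$), reducing to the nested case. The second claim — that $v^{-1}$ preserves weak separation of \emph{this particular} collection — is the subtler point: weak separation is emphatically not preserved by arbitrary permutations, so one must use that $v \in W^K_{\max}$, equivalently that $v^{-1}$ is a specific Grassmannian-type permutation, and that the rectangles all ``fit above'' the lattice path $\pathsw{v^{-1}([k])}$ by \cref{prop:lengthsadd}. I expect this is where the real work is.

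The \textbf{main obstacle} is precisely this last step. I would handle it by not treating $v^{-1}$ as an abstract relabeling but by using \cref{prop:grassrect} and the region $R(P_j)$ picture from \cref{thm-module-structure}: the set $P_j = v^{-1}(J(b_j))$ is cut out, inside the rotated diagram $r(D)$, by a lattice path $\pathsw{P_j}$ lying between $\pathsw{v^{-1}([k])}$ and the southwest boundary. Two such sets $P_i, P_j$ are then compared by comparing their lattice paths in a common rectangular diagram, and weak separation of $P_i$ and $P_j$ is equivalent to the two paths being \emph{non-crossing} (or crossing in a controlled ``nested'' way). Since all these paths lie in the same planar region and are obtained from nested rectangles by the \emph{same} rotation-and-reflection transformation, their relative position is the same as that of the original $\vertne{\Rect(b)}$'s, hence they are pairwise non-crossing, hence weakly separated. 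Concretely I would phrase this as: the map ``rectangle $\Rect(b) \mapsto$ lattice path $\pathsw{P_j}$ in $r(D)$'' is a bijection onto a set of pairwise non-crossing lattice paths, and a standard lemma (weak separation $\iff$ non-crossing-ness of the associated paths, cf.\ \cite{OPS}) finishes the argument. An alternative, possibly cleaner, route: observe that $G_{v,w}$ \emph{is} itself a generalized plabic graph for the positroid variety $\pi_k(\mathcal{R}_{v,w})$, so \cite[Theorem 1.1 / the main result on weak separation]{OPS} — which holds for generalized plabic graphs as well, since relabeling boundary vertices by a permutation conjugates the whole setup — applies directly to give weak separation of $\mathcal{F}_{\target}(G_{v,w})$, and then (M1)--(M3) invariance extends it to all $G$. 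I would likely present the proof via this second route if the generalized-plabic version of \cite{OPS} is citable in the form needed, and fall back to the explicit lattice-path computation otherwise.
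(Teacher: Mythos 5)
You have identified the right obstruction (``weak separation is emphatically not preserved by arbitrary permutations''), but then the ``alternative, possibly cleaner route'' you would prefer to present contradicts exactly that observation and is, in fact, false. The claim that OPS ``holds for generalized plabic graphs as well, since relabeling boundary vertices by a permutation conjugates the whole setup'' does not go through: weak separation is defined relative to the fixed cyclic order on $[n]$, and relabeling by a permutation destroys it in general (e.g.\ $\{123,456\}$ is weakly separated, and the permutation $(2\,3\,5)(4\,6)$ sends it to $\{135,246\}$, which is not). So one cannot cite \cite{OPS} directly for $G_{v,w}$.

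Moreover your primary route also leaves a real gap at the inductive step. That moves (M1)--(M3) preserve weak separation of face labels is a statement about \emph{standard} reduced plabic graphs (where, by OPS, both sides of a square move are maximal weakly separated collections inside the same positroid); invoking it for the generalized graph $G_{v,w}$ and its move class presupposes some version of the very conclusion you are trying to prove. And the base case (that $\{v^{-1}(\vertne{\Rect(b)})\}_b$ is weakly separated, which is where you correctly say the real work is) is only sketched via a lattice-path heuristic in $r(D)$ and not carried out.

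The paper circumvents both issues by a different trick, using \cref{lem:sameseed}. One replaces $G_{v,w}$ by the graph $H^{\mathrm{mir}}_{v,w}$, which has boundary relabeled by $w^{-1}$ (not $v^{-1}$) and is then reflected; by \cref{lem:sameseed} it has the \emph{same} target face labels as $G_{v,w}$, and the same holds after performing corresponding sequences of moves. When $w=w_0$, the composition of ``apply $w_0$ to boundary labels'' and ``reflect in the mirror'' exactly restores the standard clockwise labeling, so $H^{\mathrm{mir}}_{v,w_0}$ is an honest reduced plabic graph and \cite[Theorem~1.5]{OPS} applies directly, moves and all. For $w<w_0$, one observes that $H^{\mathrm{mir}}_{v,w}$ is a subgraph of the standard plabic graph $H^{\mathrm{mir}}_{v,w_0}$ with compatible trip-derived face labels (since $B_{w_K,\mathbf{w}'_0}$ is obtained from $B_{w_K,\mathbf{w}'}$ by adding bridges), and any graph produced from $H^{\mathrm{mir}}_{v,w}$ by moves sits as a subgraph of a standard reduced plabic graph with the same inherited labels, so its face labels are a \emph{subcollection} of a weakly separated collection. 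This makes the preservation under (M1)--(M3) automatic rather than something that must be argued separately in the generalized setting.
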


\begin{proof}

Recall from \cref{lem:sameseed} that $H^{mir}_{v, w}$ is the graph obtained from $B_{w_K, \textbf{w'}}$ by reflecting in the mirror and applying $w^{-1}$ to the boundary vertices. There is a clear one-to-one correspondence between faces of $G_{v, w}$ and faces of $H^{mir}_{v, w}$, and the target labels of corresponding faces in each graph agree. Further, performing a sequence of moves to corresponding faces of $G_{v, w}$ and $H^{mir}_{v, w}$ will result in two graphs with the same target face labels. So instead of considering the plabic graph $G$, we will consider the plabic graph $H$ we obtain by performing an analogous sequence of moves to $H^{mir}_{v, w}$. 

First, we deal with the case when $w=w_0$.  From the definition of $H^{mir}_{v, w}$, $H^{mir}_{v, w_0}$ is a normal plabic graph with boundary vertices labeled $1, \dots, n$ going clockwise. It follows immediately from \cite[Theorem 1.5]{OPS} that $\mathcal{F}_{\target}(H)$ consists of pairwise weakly separated sets.

Now, suppose $w<w_0$. Note that by construction, $H^{mir}_{v, w_0}$ can be obtained from $H^{mir}_{v, w}$ by adding additional bridges. In other words, $H^{mir}_{v, w}$ is a subgraph of $H^{mir}_{v, w_0}$, whose boundary labels are inherited from the trips of $H^{mir}_{v, w_0}$. Thus, one can perform a sequence of moves to this subgraph to obtain $H$ as a subgraph of a reduced plabic graph. The weak separation of target labels of $H$ follow again from \cite[Theorem 1.5]{OPS}.

\end{proof}

This property is important because of the following lemma, which will ensure that square moves on $G_{v, w}$ correspond to valid 3-term Pl\"ucker relations. 

\begin{lemma} \label{lem:cyclicsquares} Let $G$ be a generalized plabic graph such that the elements of $\mathcal{F}_{\target}(G)$ are pairwise weakly separated, and let $f$ be a square face of $G$ whose vertices are all of degree 3. Suppose the trips coming into the vertices of $f$ are $T_{i \to a}$, $T_{j \to b}$, $T_{k \to c}$, and $T_{l\to d}$ reading clockwise around $f$ (see \cref{fig:mut-lemma}). Then $a, b, c, d$ are cyclically ordered.
\end{lemma}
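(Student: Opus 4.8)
The claim is local: it concerns a single square face $f$ of a reduced (generalized) plabic graph, the four trips entering its vertices, and where those trips exit. The key structural fact is that at a square face with all trivalent vertices, the trips interact in a very constrained way. Let the four vertices of $f$ be $v_1, v_2, v_3, v_4$ in clockwise order, with $v_1, v_3$ white and $v_2, v_4$ black (the two colorings are symmetric). The plan is first to analyze, purely locally, how the trips behave at $f$: a trip entering $v_1$ from outside $f$ turns left at the white vertex $v_1$, so it crosses an edge of $f$, arrives at an adjacent black vertex (say $v_2$), turns maximally right there, and leaves $f$; similarly for the other three ``incoming'' trips. Thus each of the four trips $T_{i\to a}, T_{j\to b}, T_{k\to c}, T_{l\to d}$ enters at one vertex of $f$, traverses exactly one edge of $f$, and leaves at the neighboring vertex; moreover the four trips pairwise ``cross'' inside the square in the pattern of a complete bipartite graph $K_{2,2}$ drawn in a disk. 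The upshot of this local analysis is that the four faces of $G$ immediately surrounding $f$ (north, east, south, west of $f$, say) get labels which differ from the label of $f$ in a controlled way: if $S$ is the common part, then the label of $f$ is $S\cup\{x,y\}$ and the four neighbors are $S\cup\{x\}$ appearing twice (opposite each other) and $S\cup\{y\}$ appearing twice (opposite each other) — this is exactly the combinatorics underlying a square move and a $3$-term Pl\"ucker relation.

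\textbf{Key steps.} (1) Do the local trip analysis at the square face to extract: the four ``entering'' boundary endpoints $i,j,k,l$ and ``exiting'' endpoints $a,b,c,d$, and the precise labels of $f$ and of its four neighboring faces in terms of a common set $S$ with $|S| = k-2$ and two extra elements. Record which trip is $T_{i\to a}$ etc.\ relative to the cyclic position around $f$. (2) Translate: by \cref{def:faces}, the target label of a face is obtained from the exit-endpoints of trips for which that face lies to the \emph{right}; comparing $f$ to each of its neighbors across a single edge, the label changes by swapping exactly one element, and that element is controlled by the trip crossing that edge. Conclude that (up to relabeling $a,b,c,d$ cyclically around $f$) the multiset $\{a,b,c,d\}$ together with $S$ realizes the faces $S\cup\{a,c\} = S\cup\{b,d\}$-type relation; i.e.\ $\{S\cup a, S\cup b, S\cup c, S\cup d\}$ are (among) the labels appearing on and around $f$. (3) Now invoke weak separation: the four neighboring faces of $f$, and $f$ itself, all lie in $\mathcal F_{\target}(G)$, hence are pairwise weakly separated by hypothesis. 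Apply the characterization of weak separation to the pairs among $\{S\cup\{a\}, S\cup\{b\}, S\cup\{c\}, S\cup\{d\}\}$ (where after deleting the common set $S$ we are comparing singletons, essentially comparing $a,b,c,d$ directly). Weak separation of these sets forces $a,b,c,d$ to be cyclically ordered — indeed, the weak-separation condition "never $a<c<b<d$ or $c<a<d<b$" applied to the relevant pairs is literally the statement that no two of the four "chords" $\{a,c\}$ and $\{b,d\}$ (the diagonals of the putative square) cross, which is equivalent to cyclic order of $a,b,c,d$. (4) Finally check the degenerate cases: some of $a,b,c,d$ may coincide, or a trip may be a closed loop / lollipop situation — but since all four vertices are trivalent and $G$ is reduced these degeneracies either cannot occur or trivially satisfy cyclic ordering.

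\textbf{Main obstacle.} The genuinely delicate step is (1)–(2): correctly bookkeeping which of $a,b,c,d$ sits in which cyclic slot around $f$ and which common set $S$ is shared, so that the weak-separation input in (3) is applied to the \emph{right} four subsets. The "rules of the road" at a square of alternating colors are standard, but one must be careful that a trip can enter $f$, leave, and conceivably re-enter (it cannot, for a reduced graph, but this needs the reducedness hypothesis — a non-reduced graph could have a trip using two edges of the same square), and that the "left/right" convention in the target labeling is tracked consistently with the clockwise reading of the trips. Once the correspondence "square face $\leftrightarrow$ two pairs of opposite faces labeled $S\cup\{x\}$ and $S\cup\{y\}$, with the two diagonal exit-endpoints being $\{a,c\}$ and $\{b,d\}$" is pinned down, the conclusion that weak separation $\Rightarrow$ cyclic order of $a,b,c,d$ is immediate from the definition, since weak separation of $S\cup\{a,c\}$-type sets is exactly non-crossing of the chords $ac$ and $bd$ on the circle.
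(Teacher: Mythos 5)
Your high-level plan — read off the labels of the faces around the square, then invoke pairwise weak separation of those labels to pin down the cyclic order of $a,b,c,d$ — is exactly the paper's approach. But the bookkeeping in your steps (2)--(3) is wrong in ways that matter, so as written the proof does not go through.

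First, the labels of the four faces adjacent to $f$ are not of the form you describe. You assert they are ``$S\cup\{x\}$ appearing twice (opposite each other) and $S\cup\{y\}$ appearing twice,'' i.e.\ two distinct $(k-1)$-element sets. In fact they are four generically distinct $k$-element sets: if $R$ denotes the common $(k-2)$-element part, the four neighbors (read cyclically around $f$) are $Rab$, $Rbc$, $Rcd$, $Rad$, and $f$ itself is labeled $Rac$. Because of this, your subsequent ``after deleting the common set $S$ we are comparing singletons'' gives no information: singletons are always weakly separated, so no constraint on $a,b,c,d$ falls out of that comparison.

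Second, and more seriously, the weak-separation instance you ultimately invoke is the wrong one. You appeal to ``weak separation of $S\cup\{a,c\}$-type sets'' and frame the conclusion as ``non-crossing of the chords $ac$ and $bd$.'' But $Rbd$ is \emph{not} among the face labels of $G$ — it is the label $f$ would acquire \emph{after} the square move — so the hypothesis ``the elements of $\mathcal{F}_{\target}(G)$ are pairwise weakly separated'' does not apply to the pair $(Rac,Rbd)$ at all. And even setting that aside, non-crossing of the chords $\{a,c\}$ and $\{b,d\}$ is equivalent to $a,b,c,d$ \emph{not} being cyclically ordered (when the four points lie on a circle in the order $a,b,c,d$, the diagonals $ac$ and $bd$ of the inscribed quadrilateral cross). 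So this particular weak-separation instance, if it were usable, would give the opposite of the desired conclusion. The two usable instances are the ones for the two pairs of opposite neighbors: $(Rab,Rcd)$, forcing $\{a,b\}$ and $\{c,d\}$ not to cross, and $(Rbc,Rad)$, forcing $\{b,c\}$ and $\{a,d\}$ not to cross. Each one alone eliminates only some non-cyclic orders; together they force $a,b,c,d$ to be cyclically ordered. This two-step deduction — the first weak-separation fact narrows to two possible cyclic patterns, the second one selects between them — is what the paper's short proof does, and it is what your argument is missing.
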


\begin{proof}
Consulting \cref{fig:mut-lemma}, the target labels of faces around $f$ are $Rab, Rbc, Rcd, Rad$, where $R$ is some $(k-2)$-element subset of $[n]$ and $Rab:=R \cup \{a, b\}$. The fact that $Rad$ and $Rbc$ are weakly separated implies that either $a, b, c, d$ or $a, c, b, d$ is cyclically ordered. The fact that $Rab$ and $Rcd$ are weakly separated implies that the former is true.
\end{proof}

We can now show that if $G$ is a generalized plabic graph move-equivalent to $G_{v, w}$, square moves on $G$ agree with the categorical mutation of modules in $\mathcal{C}_{v, w}$. This, together with \cref{thm:seedscoincide}, completes the proof of \cref{thm:main2}.

\begin{lemma}\label{mutation}
Let $G$ be a reduced plabic graph that is move-equivalent to $G_{v, w}$.  Suppose that the (target) labeled quiver $Q(G)=\Gamma_{U}$, for some cluster-tilting module $U\in\mathcal{C}_{v, w}$.   If $G'$ is obtained from $G$ by performing a square move at some face $F$ of $G$,  then 
$$Q(G')=\Gamma_{U'}$$
as labeled quivers, where $U'$ denotes the mutation of $U$ at the corresponding indecomposable summand $U_F$ of $U$.
\end{lemma}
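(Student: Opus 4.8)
\textbf{Proof plan for \cref{mutation}.}

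The plan is to reduce the statement to two facts: that the underlying quivers agree (which we already get from \cref{lem:mutG} together with the fact that categorical mutation realizes quiver mutation, \cref{Lec-seed}(d)), and that the target face labels transform under a square move in exactly the same way that the Pl\"ucker coordinates $\varphi_{U_i}$ transform under categorical mutation. Since the first fact is immediate, the entire content is in matching the labels. First I would set up notation following \cref{fig:mut-lemma}: let $F$ be the square face on which we perform the square move, with the four neighboring faces labeled $Rab$, $Rbc$, $Rcd$, $Rad$ (reading around $F$), where $R$ is a $(k-2)$-element set and $a,b,c,d$ are determined by the trips entering the vertices of $F$ as in \cref{lem:cyclicsquares}. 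The face $F$ itself is labeled by some $k$-element set; a standard computation with the rules of the road shows that before the move $F$ is labeled $Rac$ (say) and after the move it is labeled $Rbd$, while the four neighboring labels are unchanged. So on the plabic-graph side, the square move replaces $\Delta_{Rac}$ by $\Delta_{Rbd}$ and leaves all other cluster and frozen variables fixed.

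Next I would invoke \cref{prop:rectweaklysep}: since $G$ is move-equivalent to $G_{v,w}$, all of its target labels are pairwise weakly separated, so \cref{lem:cyclicsquares} applies and $a,b,c,d$ are cyclically ordered. This is exactly what is needed to guarantee that the three-term Pl\"ucker relation
$$\Delta_{Rac}\,\Delta_{Rbd} = \Delta_{Rab}\,\Delta_{Rcd} + \Delta_{Rad}\,\Delta_{Rbc}$$
holds with the correct signs — cyclic ordering is precisely the hypothesis under which the Pl\"ucker relation takes this positive form. On the categorical side, \cref{Lec-seed} tells us that $\varphi_{U_F}\,\varphi_{U_F'} = \varphi_B + \varphi_{B'}$, and since the underlying quiver $Q(G) = \Gamma_U$ agrees with the dual quiver of $G$, the modules $B$ and $B'$ are sums of the summands of $U$ corresponding to the four neighbors of $F$ in the quiver, i.e. $\varphi_B\varphi_{B'}$-side matches $\Delta_{Rab}\Delta_{Rcd}$ and $\Delta_{Rad}\Delta_{Rbc}$ (up to which product is which). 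By induction on the number of square moves, $\varphi_{U_F} = \Delta_{Rac}$ and the four neighbor labels agree with the $\varphi$-images of the corresponding summands of $U$; then the exchange relation forces $\varphi_{U_F'} = \Delta_{Rbd}$, since both satisfy the same degree-one equation over the field of fractions and $\Delta_{Rac} \neq 0$ on the (irreducible) variety. Hence $Q(G')$ and $\Gamma_{U'}$ agree as labeled quivers.

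The main obstacle I expect is the bookkeeping of \emph{which} neighboring face corresponds to which of $B, B'$, and ensuring the signs in the Pl\"ucker relation line up with the $+$ in the categorical exchange relation — this is where the cyclic-ordering conclusion of \cref{lem:cyclicsquares} does the real work, and one has to be careful that the orientation conventions for $Q(G)$ (the ``white to the left, black to the right'' rule of \cref{def:faces} and the dual quiver) are consistent with the direction of the arrows read off from $\Gamma_U$. A secondary point to handle with care is the base case of the induction: one must know that for $G = G_{v,w}$ itself the labeled quiver $Q(G_{v,w})$ equals $\Gamma_{U_{v,\mathbf{w}}}$, which is exactly \cref{thm:seedscoincide} combined with \cref{cor:dualquiver} and \cref{lem:sameseed}, so no new work is needed there. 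Everything else is the routine verification that the rules of the road produce labels $Rac$ and $Rbd$ on the mutated face, which I would relegate to a short local computation.
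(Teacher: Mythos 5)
Your plan follows essentially the same route as the paper: set up the labels $Rac, Rab, Rbc, Rcd, Rad$ around the square face using the configuration of \cref{fig:mut-lemma}, use \cref{prop:rectweaklysep} to conclude the target labels are weakly separated and hence, by \cref{lem:cyclicsquares}, that $a,b,c,d$ are cyclically ordered, then match the three-term Pl\"ucker relation to the categorical exchange relation $\varphi_{U_F}\varphi_{U_F'}=\varphi_B+\varphi_{B'}$ from \cref{Lec-seed}(d) to identify $\varphi_{U_F'}$ with $\Delta_{Rbd}$. The induction you mention is unnecessary within the lemma itself, since the hypothesis $Q(G)=\Gamma_U$ already grants you that $\varphi_{U_F}=\Delta_{Rac}$ and that the neighbor labels agree; the induction (or rather the chain of applications of this lemma) lives in the proof of \cref{thm:main2}.

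The one real omission is the boundary case created by \cref{rem:lex_min}. Because the vertex labeled $v^{-1}([k])$ has been deleted from the dual quiver (the affine cone is taken with $\Delta_{v^{-1}([k])}=1$), a square move may occur at a face one of whose four neighbors carries that label, yet that neighbor no longer indexes a summand of $U$. You must observe, as the paper does, that the ``module'' $U_{v^{-1}([k])}$ is the zero module, its cluster character is $\varphi_{U_{v^{-1}([k])}}=\Delta_{v^{-1}([k])}=1$, and therefore the exchange relation read off from $\Gamma_U$ still coincides with the Pl\"ucker relation even when one factor on the right-hand side is missing from the quiver. Without this observation, the identification of $\varphi_B$ and $\varphi_{B'}$ with the products $\Delta_{Rbc}\Delta_{Rad}$ and $\Delta_{Rab}\Delta_{Rcd}$ would break down whenever $F$ is adjacent to the deleted boundary face, so this is a step you cannot elide.
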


\begin{proof}

The label of the square face $F$ and its surrounding faces are given in Figure~\ref{fig:mut-lemma}.   Here, $R$ is a $(k-2)$-element subset of $[n]$ and $Rac$ stands for $R\cup\{a,c\}$.  
Thus, $F$ has label $Rac$ in $G$ and after the mutation it has label $Rbd$. By \cref{prop:rectweaklysep}, the target face labels of $G$ are pairwise weakly separated, so by \cref{lem:cyclicsquares}, $a,b,c,d$ are cyclically ordered.  
Now, consider the local configuration in $\Gamma_U$ around the vertex $\Delta_{Rac}$ corresponding to the summand $U_F$ of $U$.   By definition of mutation, $U'=U/U_F\oplus U_F'$, where $U_F'$ is defined by the two short exact sequences as follows. 
$$\xymatrix@C=15pt{0\ar[r]&U_F' \ar[r] & U_{Rbc}\oplus U_{Rad} \ar[r] & U_F \ar[r] & 0 && 0\ar[r]&U_F \ar[r] & U_{Rab}\oplus U_{Rcd} \ar[r] & U_F' \ar[r] & 0}$$
where we identify summand of $U$ with the labels of the corresponding faces in $G$. 
By the properties of the cluster-character map $\varphi$ this yields the relation
$$\varphi_{U_F}\varphi_{U_F'} = \varphi_{U_{Rbc}}\varphi_{U_{Rad}}+\varphi_{U_{Rab}}\varphi_{U_{Rcd}}.$$
Note that if one of the faces adjacent to $F$ has label $v^{-1}([k])$ then the associated module $U_{v^{-1}([k])}$ is the zero module and $\varphi_{U_{v^{-1}([k])}} = \Delta_{v^{-1}([k])}=1$ by Remark~\ref{rem:lex_min}.   In this case, the relation above still holds.
Since the two labeled quivers $Q(G)$ and $\Gamma_U$ coincide, each function $\varphi_{U_E}\in \mathbb{C}[\mathcal{R}_{w_k, w}]$, where $E$ is a face in $G$, is simply a Pl\"ucker coordinate coming from the label of the face. 
In particular, we have the following.
$$\varphi_{U_F}=\varphi_{Rac}=\Delta_{Rac}  \hspace{.5cm}
\varphi_{U_{Rab}}=\Delta_{Rab}\hspace{.5cm}
\varphi_{U_{Rbc}}=\Delta_{Rbc}\hspace{.5cm}
\varphi_{U_{Rcd}}=\Delta_{Rcd}\hspace{.5cm}
\varphi_{U_{Rad}}=\Delta_{Rad}$$
Therefore, the relation above becomes 
$$\Delta_{Rac}\varphi_{U_F'} = \Delta_{{Rbc}}\Delta_{{Rad}}+\Delta_{{Rab}}\Delta_{{Rcd}}$$
which is precisely a three-term Pl\"ucker relation in the corresponding skew Schubert variety.  Thus, we conclude that $\varphi_{U_F'}=\Delta_{Rbd}$.  
This shows that the two labeled quivers $Q(G')$ and $\Gamma_{U'}$ agree.   
\end{proof}

\begin{remark} \label{rmk:manymutate} Since all graphs in \cref{lem:sameseed} give rise to the same labeled seed (up to reversing all arrows in the quiver, which does not affect mutation), and a sequence of moves on any one can be translated to a sequence of moves on any other that effects the dual quiver in the same way , \cref{mutation} shows that any reduced plabic graph move-equivalent to a graph in \cref{lem:sameseed} gives rise to a seed for ${\pi_k(\mathcal{R}_{v, w}})$.
\end{remark}

\begin{figure}
\centering
\includegraphics[width=\textwidth]{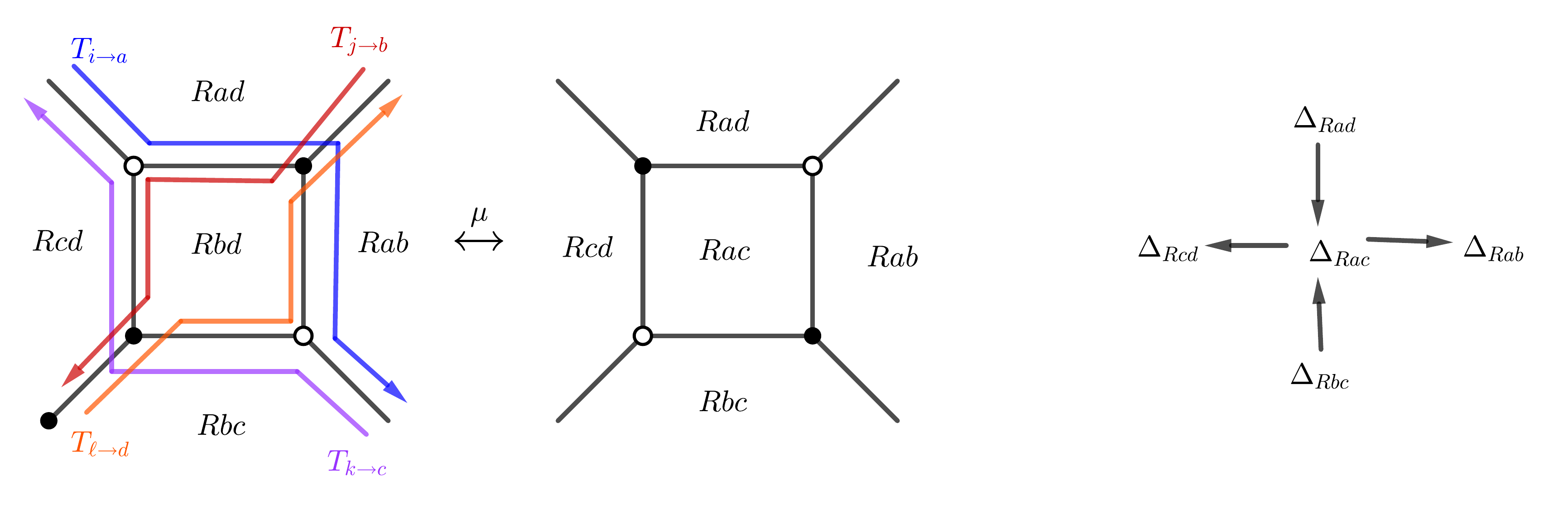}
 \caption{Plabic graphs $G'$ and $G$ respectively, and the labeled quiver $Q(G)$}
   \label{fig:mut-lemma}
\end{figure}

\subsection{The proof of \cref{thm:main}}\label{section:1.5}

We now explain how to deduce \cref{thm:main} from \cref{thm:main2}.

Recall that for $v \in W^K_{\max}$, ${\pi_k(\mathcal{R}_{v, w_0})} = X^\circ_\lambda$, where $\vertsw{\lambda}=v^{-1}([k])$. The decorated permutation corresponding to $\pi_k(\mathcal{R}_{v, w_0})$ is $v^{-1}w_0$. 

Recall also that we can obtain $v^{-1}$ in list notation from $\lambda$ by labeling the southeast border of $\lambda$ with $1, \dots, n$ going southwest and first reading the labels of vertical steps going northeast and then reading the labels of the horizontal steps going northeast. To obtain $v^{-1}w_0$, we reverse the order in which we read the border of $\lambda$, first reading the labels of horizontal steps going southwest and then reading the labels of the vertical steps going southwest. So $v^{-1}w_0$ is equal to the permutation $\ppermsw{\lambda}$ appearing in \cref{thm:main}.

Let $x:=w_0 v^{-1}$. The factorization $w_0=xv$ is length-additive. Let $\textbf{w'}$ be a standard reduced expression for $w':=xw_K$. If we take $B_{w_K, \textbf{w'}}$, apply $w_0^{-1}$ to the boundary vertices, and ``reflect in the mirror", we obtain a graph $H^{mir}_{v, w_0}$ which has trip permutation $\ppermsw{\lambda}$ and whose boundary vertices are labeled with $1, \dots, n$ clockwise. According to \cref{thm:seedscoincide} and \cref{lem:sameseed}, if we label the dual quiver of $H^{mir}_{v, w_0}$ using target labels, we obtain a seed for the coordinate ring of (the affine cone over) $X^\circ_\lambda$. And by  \cref{rmk:manymutate}, if $G$ is any reduced plabic graph move-equivalent to $H^{mir}_{v, w_0}$ (that is, with boundary vertices labeled $1, \dots, n$ clockwise and trip permutation $\ppermsw{\lambda}$), then the (target) labeled dual quiver $Q(G)$ gives a seed.

%

\section{Applications}\label{sec:conclusion}

In this section we give applications of \cref{thm:main} and \cref{thm:main2}.

\subsection{The coordinate rings of Schubert and skew-Schubert varieties}\label{sec:skew}

Combining \cref{thm:main} and \cref{thm:main2} with
\cite[Theorem 3.3]{MullerSpeyer0} and \cite{Muller}, we obtain the following corollary.

\begin{cor}
Let $v \leq w$, where $v \in W^K_{\max}$ and $w=xv$ is length-additive. Then the cluster algebra $\C[\widehat{\pi_k(\mathcal{R}_{v, w})}]$ is locally acyclic, and thus is finitely generated, normal, locally a complete
intersection, and equal to its own upper cluster algebra.
\end{cor}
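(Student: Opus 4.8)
The plan is to combine the identifications already made in this paper with the literature on locally acyclic cluster algebras. By \cref{thm:main2} (in the Schubert case, \cref{thm:main}), and equivalently by \cref{prop:rectanglesseed}, the coordinate ring $\C[\widehat{\pi_k(\mathcal{R}_{v, w})}]$ is the cluster algebra $\mathcal{A}(\Sigma_{v,w})$, and by \cref{thm:seedscoincide} the underlying quiver of this seed is $Q_{v,w}$. By \cref{cor:dualquiver} together with \cref{lem:sameseed}, $Q_{v,w}$ is, up to reversing all of its arrows, the dual quiver $Q(B_{w_K,\mathbf{w}'})$ of the reduced plabic graph $B_{w_K,\mathbf{w}'}$. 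Since reversing all arrows of a quiver produces the opposite seed and hence an isomorphic cluster algebra, and since local acyclicity is a mutation invariant (indeed a property of the cluster algebra, not of a particular seed), it suffices to know that the cluster algebra attached to the dual quiver of a reduced plabic graph is locally acyclic.

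This last statement is precisely the content of \cite[Theorem 3.3]{MullerSpeyer0}. Applying it to $Q(B_{w_K,\mathbf{w}'})$, we conclude that $\mathcal{A}(\Sigma_{v,w}) = \C[\widehat{\pi_k(\mathcal{R}_{v,w})}]$ is locally acyclic. Then, by Muller's structure theory for locally acyclic cluster algebras \cite{Muller}, every locally acyclic cluster algebra is finitely generated as a $\C$-algebra, is integrally closed in its field of fractions (in particular normal), is locally a complete intersection, and coincides with its own upper cluster algebra. Combining this with the previous paragraph yields all four assertions for $\C[\widehat{\pi_k(\mathcal{R}_{v, w})}]$.

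The one point requiring care, and the only genuine obstacle, is bookkeeping about the coefficient convention. The cluster algebras of \cref{def:cluster-algebra0} have the frozen variables inverted, whereas the cleanest forms of local acyclicity and of its consequences in \cite{MullerSpeyer0, Muller} are stated for the cluster algebra in which the frozen variables are not inverted. One must therefore observe that local acyclicity is insensitive to this choice — inverting the frozen variables merely localizes the cluster algebra at a set of its own frozen cluster variables, an operation under which local acyclicity is preserved — and that finite generation, normality, the local complete intersection property, and equality with the upper cluster algebra all descend to (and are compatible with) this localization. These compatibilities are routine and are exactly the form in which the cited theorems are invoked, so no further input is needed; the corollary then follows immediately.
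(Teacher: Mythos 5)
Your argument follows the same route as the paper: identify the rectangles seed with (the reversal of) the dual quiver of the reduced plabic graph $B_{w_K,\mathbf{w}'}$ via \cref{thm:main2}, \cref{thm:seedscoincide}, \cref{cor:dualquiver}, and \cref{lem:sameseed}, and then invoke \cite[Theorem 3.3]{MullerSpeyer0} for local acyclicity and \cite{Muller} for the listed consequences. The only superfluous step is the coefficient-convention caveat in the final paragraph --- \cref{def:cluster-algebra0} and the cluster algebras in \cite{MullerSpeyer0,Muller} all invert the frozen variables, so no localization argument is needed there.
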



Combining our result with \cite[Theorem 1.2]{FordSer}, 
we find that the quivers giving
rise to the cluster structures for Schubert and skew Schubert varieties admit
\emph{green-to-red sequences}, which by \cite{GHKK} implies that the cluster algebras have
\emph{Enough Global Monomials}. Hence, we have the following corollary. 

\begin{cor} Let $v \leq w$, where $v \in W^K_{\max}$ and $w=xv$ is length-additive. Then the cluster algebra $\C[\widehat{\pi_k(\mathcal{R}_{v, w})}]$ has a canonical basis of
\emph{theta functions}, parameterized by the lattice of $g$-vectors.
\end{cor}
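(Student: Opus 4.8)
The plan is simply to chain together three facts that the paper has already put in place, so this corollary is a formal consequence rather than the site of new work. First, by \cref{thm:main2} (and \cref{thm:main} in the Schubert case) the coordinate ring $\C[\widehat{\pi_k(\mathcal{R}_{v,w})}]$ is identified with the cluster algebra $\mathcal{A}(\Sigma^{\target}_{v^{-1}(G)})$, and by \cref{lem:sameseed} and \cref{cor:dualquiver} this cluster algebra has an initial seed whose quiver is the rectangles quiver $Q_{v,w}$ of \cref{def:labeledquiver}, equivalently the dual quiver of the bridge graph $B_{w_K,\mathbf{w}'}$. Thus the assertion about theta functions reduces to a statement about the mutation class of $Q_{v,w}$, and it suffices to check the hypotheses of the relevant theorem of \cite{GHKK} for this quiver.

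Second, I would apply \cite[Theorem 1.2]{FordSer} to the quiver $Q_{v,w}$: since $Q_{v,w}$ is the dual quiver of a reduced plabic graph associated to a positroid variety, that theorem guarantees it admits a \emph{green-to-red sequence}. The only thing to verify here is that our conventions match those of \cite{FordSer} — namely the frozen/mutable labeling of $Q_{v,w}$ and the (irrelevant) treatment of arrows between frozen vertices — which is immediate from the explicit description in \cref{def:labeledquiver} and \cref{cor:dualquiver}.

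Third, I would invoke \cite{GHKK}: the existence of a green-to-red sequence for $Q_{v,w}$ implies that the cluster algebra has \emph{Enough Global Monomials}, and this in turn implies the full Fock--Goncharov conjecture for it, so that the theta functions $\{\vartheta_m\}$ — indexed by the integer tropical points of the Fock--Goncharov dual, i.e.\ by the lattice of $g$-vectors attached to our chosen initial seed — form a vector-space basis of the upper cluster algebra $\mathcal{U}(Q_{v,w})$. Combining this with the preceding corollary, which asserts that $\C[\widehat{\pi_k(\mathcal{R}_{v,w})}]$ is locally acyclic and hence equal to its own upper cluster algebra, we conclude that $\C[\widehat{\pi_k(\mathcal{R}_{v,w})}]$ itself carries the claimed canonical basis of theta functions parameterized by $g$-vectors.

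The genuine (and still only minor) obstacle is bookkeeping: one must confirm that \cite[Theorem 1.2]{FordSer} indeed covers the quivers $Q_{v,w}$ for \emph{all} length-additive pairs $(v,w)$ — which it does, since every such $Q_{v,w}$ arises as the dual quiver of a reduced plabic graph for a positroid variety — and that "the lattice of $g$-vectors" is read consistently as $\mathbb{Z}^N$ with the $g$-vector parameterization determined by the rectangles seed. No substantively new argument beyond the cited black boxes is needed.
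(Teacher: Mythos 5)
Your proposal is correct and follows the same route the paper takes: identify the coordinate ring with the cluster algebra via \cref{thm:main2}, note the initial seed is the rectangles seed with quiver $Q_{v,w}$, apply \cite[Theorem 1.2]{FordSer} to obtain a green-to-red (equivalently, reddening) sequence, and then invoke \cite{GHKK} to conclude Enough Global Monomials and hence a theta-function basis parameterized by $g$-vectors. The one small thing you make explicit that the paper leaves implicit is the final bookkeeping step, that theta functions a priori span the upper cluster algebra and one should combine this with the preceding corollary's $\mathcal{A}=\mathcal{U}$ to land the basis in $\C[\widehat{\pi_k(\mathcal{R}_{v,w})}]$ itself; this is a reasonable clarification, not a departure.
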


\subsection{Skew Schubert varieties whose cluster structure has finite type}

%
%

In \cite{Scott}, Scott classified the Grassmannians whose coordinate rings
have a cluster algebra of finite type.  He showed that in general the
cluster algebras have infinite type, except in the following cases:
the coordinate ring of $Gr_{2,n}$ has a cluster algebra of type $A_{n-3}$,
while the coordinate rings of $Gr_{3,6}$, $Gr_{3,7},$ and $Gr_{3,8}$ have
cluster algebras of types $D_4$, $E_6$, and $E_8$, respectively.

It is straightforward to classify for which skew Schubert varieties ${\pi_k(\mathcal{R}_{v, w})}$ the cluster structure described here is finite type. It depends only on $wv^{-1}$. We will need the following two facts. 

\begin{proposition}[\cite{CalKel}] \label{prop:sametree} Let $Q$ and $Q'$ be orientations of trees $T$ and $T'$, respectively. If $Q$ can be obtained from $Q'$ by a sequence of mutations, then $T$ and $T'$ are isomorphic.
\end{proposition}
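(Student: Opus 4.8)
The plan is to cite \cite{CalKel} directly: the proposition is exactly a consequence of Caldero and Keller's comparison between quiver mutation and derived equivalence, and in the paper itself I would not reprove it. For completeness, here is the structure of the argument I would give if a self-contained proof were wanted.

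First I would upgrade the combinatorial hypothesis ($Q\sim Q'$ under mutation) to a homological one (the path algebras $\C Q$ and $\C Q'$ are derived equivalent). Since $Q$ and $Q'$ are orientations of trees, they are in particular acyclic, so $\C Q$ and $\C Q'$ are finite-dimensional hereditary algebras. I would then invoke the Caldero--Keller description of the cluster category: a mutation of an acyclic quiver is realized (possibly after passing through quivers with oriented cycles) by a triangle autoequivalence of the cluster category, so $Q\sim Q'$ yields a triangle equivalence $\mathcal{C}_{\C Q}\simeq\mathcal{C}_{\C Q'}$; and for an acyclic quiver the bounded derived category $D^b(\C Q)$, together with the shift of its standard heart, is recovered canonically from $\mathcal{C}_{\C Q}$. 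Hence $D^b(\C Q)\simeq D^b(\C Q')$ as triangulated categories. The second step is then to note that the underlying tree is a derived invariant: on $K_0(D^b(\C Q))\cong\Z^{m}$, with $m$ the number of vertices (recovered as the rank), the symmetrized Euler form equals $2I-A$ in the basis of classes of simple modules, where $A$ is the adjacency matrix of the underlying graph; a triangle equivalence carries this symmetric bilinear lattice to the one for $Q'$, and since $Q,Q'$ are tree orientations this forces $A$ and $A'$ to be permutation-conjugate, i.e. the underlying trees are isomorphic. Combining the two steps proves the proposition.

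The main obstacle is the first step: mutation may pass through quivers with oriented cycles, which have no hereditary path algebra, so deducing a derived equivalence between the acyclic endpoints is genuinely nontrivial — this is precisely the content of the Caldero--Keller machinery, and I would quote it rather than reconstruct it. The second step is by contrast routine linear algebra over $\Z$, using that a tree is determined by its adjacency matrix up to simultaneous permutation.
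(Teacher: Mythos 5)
The paper does not prove this statement at all; it simply cites \cite{CalKel}, and your top-level plan to do the same is exactly the paper's ``proof.'' The sketch you then offer of a self-contained argument is, however, not how Caldero--Keller establish the result, and its second step has a real gap. A triangle equivalence $D^b(\C Q)\simeq D^b(\C Q')$ does give a $\Z$-isometry of Grothendieck lattices $(\Z^m, 2I-A)\cong(\Z^m, 2I-A')$; but that isometry is an arbitrary element of $\mathrm{GL}_m(\Z)$ intertwining the two forms, and nothing forces it to send the basis of simple $\C Q$-modules to simple $\C Q'$-modules. So you cannot conclude from the isometry alone that $A$ and $A'$ are permutation-conjugate. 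Extracting graph isomorphism from abstract $\Z$-equivalence of $2I-A$ requires extra input (for ADE one can appeal to the classification of positive-definite root lattices, but for affine and wild trees the form is not definite and the step is genuinely nontrivial), and you have not supplied it.

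The argument the citation is actually pointing to is different and more direct: Caldero--Keller use the cluster-category machinery to show that any two \emph{acyclic} quivers in the same mutation class are connected by a chain of source/sink reflections. A mutation at a source or sink has no oriented two-paths through the mutated vertex, so it adds no arrows and merely reverses the incident ones, hence preserves the underlying graph exactly. The proposition is then immediate. Your first step (upgrading mutation equivalence to an equivalence of cluster categories) is indeed part of that machinery, but the right conclusion to draw from it is the reflection statement, not a lattice-theoretic argument on $K_0$.
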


\begin{lemma}[\protect{\cite[Remark 5.10.9]{introCA}}]\label{lem:infinitesubquiver} Let $Q$ be a quiver and let $Q'$ be a subquiver of $Q$ consisting of some vertices of $Q$, which inherit being frozen or mutable from $Q$, and all arrows between them. Then if $Q$ is mutation equivalent to a (disjoint union of) type ADE Dynkin diagram, so is $Q'$.
\end{lemma}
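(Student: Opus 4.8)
The plan is to reduce the Lemma to one elementary principle: \emph{freezing a mutable vertex of a finite-type quiver again produces a finite-type quiver}. Granting this, the general statement follows quickly, because finite type is a property of the principal part of a quiver alone. Indeed, full-quiver mutation at a mutable vertex restricts to ordinary quiver mutation on the full subquiver spanned by the mutable vertices (the clause in \cref{def:mutation} forbidding an arrow between two frozen vertices never applies when both endpoints are mutable), so the mutation-equivalence class of the principal part is well defined; and by the Fomin--Zelevinsky classification, a skew-symmetric cluster algebra $\mathcal A(Q)$ is of finite type precisely when this principal part is mutation equivalent to a disjoint union of type $ADE$ Dynkin diagrams. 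Thus, starting from $Q$, I would freeze one at a time every mutable vertex of $Q$ lying outside the chosen vertex set $S$; applying the freezing principle repeatedly, the resulting quiver $Q_1$ (same vertices and arrows as $Q$, but with everything outside $S$ now frozen) still has $\mathcal A(Q_1)$ of finite type. The principal part of $Q_1$ is exactly the full subquiver of $Q$ on those vertices of $S$ that are mutable in $Q$, which is precisely the principal part of $Q'$. Hence $\mathcal A(Q')$ is of finite type, i.e.\ $Q'$ is mutation equivalent to a disjoint union of type $ADE$ Dynkin diagrams.

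It remains to establish the freezing principle. Let $\tilde Q$ be a quiver with $\mathcal A(\tilde Q)$ of finite type, fix a mutable vertex $v$, and let $\tilde Q^+$ be $\tilde Q$ with $v$ redesignated frozen. The key observation is that along \emph{any} sequence $\sigma$ of mutations at vertices mutable in $\tilde Q^+$ (in particular never mutating at $v$), the quiver reached in the $\mathcal A(\tilde Q)$-pattern and the quiver reached in the $\mathcal A(\tilde Q^+)$-pattern \emph{agree on every arrow incident to a vertex mutable in $\tilde Q^+$}. This is a straightforward induction on the length of $\sigma$: the only place the frozen/mutable labelling enters \cref{def:mutation} is the prohibition on creating an arrow between two frozen vertices, and the two labellings differ only in the status of $v$, so the two quivers can differ only in arrows \emph{both} of whose endpoints lie in $\{v\}\cup(\text{frozen vertices of }\tilde Q)$ — a set of arrows that never touches a mutable-in-$\tilde Q^+$ vertex. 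Consequently, at each step the exchange relation at a mutable-in-$\tilde Q^+$ vertex reads off the same incident arrows in both patterns, so every cluster variable of $\mathcal A(\tilde Q^+)$ appears, as an element of the common ambient field, also as a cluster variable of $\mathcal A(\tilde Q)$. Since $\mathcal A(\tilde Q)$ has finitely many cluster variables, so does $\mathcal A(\tilde Q^+)$, and therefore $\mathcal A(\tilde Q^+)$ is of finite type by the Fomin--Zelevinsky classification; being skew-symmetric, its principal part is mutation equivalent to a disjoint union of type $ADE$ Dynkin diagrams.

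The main obstacle here is conceptual rather than computational: one must resist the tempting but false shortcut of first replacing $Q$ by a mutation-equivalent disjoint union of Dynkin quivers and \emph{then} passing to the full subquiver on $S$, since taking full subquivers does not commute with mutation. The correct maneuver, as above, is to \emph{freeze} (neither delete nor mutate away) the unwanted vertices and only afterwards invoke the fact that finite type is detected by the principal part. The one genuinely technical point is the inductive claim that the frozen/mutable status of $v$ is invisible to the evolution of arrows incident to mutable vertices; this is routine given the explicit mutation rule, but it is exactly what makes the cluster-variable containment — and hence the whole argument — go through. (If one prefers, the same bookkeeping can be phrased as a count of seeds rather than of cluster variables, with no change in substance.)
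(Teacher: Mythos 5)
The paper itself offers no proof of this statement: the lemma is cited directly (to Fomin--Williams--Zelevinsky's book) rather than argued, so there is no in-text argument to compare against. Your proof is correct and is essentially the standard one. The key choice---freeze the vertices outside $S$ rather than delete them---is exactly right, and you correctly identify why the naive route (mutate $Q$ to an ADE orientation, then take the induced subquiver on $S$) fails: the relevant mutation sequence may pass through vertices outside $S$, and mutation at such a vertex does change the induced subquiver on $S$. The inductive verification of the freezing principle---that along a mutation sequence avoiding $v$, the two patterns can disagree only on arrows joining two $\tilde Q^+$-frozen vertices---does go through on each of the three steps of a quiver mutation, and is enough to transfer the exchange relations (and hence the cluster variables) verbatim from $\mathcal A(\tilde Q^+)$ into $\mathcal A(\tilde Q)$, giving the finiteness you need.

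One remark for context: there is a shortcut if one invokes the $2$-finiteness characterization of finite type (a skew-symmetric cluster algebra is of finite type iff no entry of absolute value at least $2$ ever appears in the mutation class of its principal exchange matrix). Because the exchange-matrix recursion for $b_{rs}$ under $\mu_i$ involves only entries indexed by $\{r,s,i\}$, mutation at $i\in S$ \emph{does} commute with restriction to the full subquiver on $S$; hence the mutation class of the principal part of $Q'$ consists entirely of restrictions of quivers in the mutation class of the principal part of $Q$, and the entry bound is inherited. This sidesteps both the freezing step and the cluster-variable count, though your route has the advantage of exhibiting an explicit containment of seed patterns rather than leaning on the $2$-finiteness theorem as a black box.
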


\begin{proposition} Let $v \leq w$, where $v \in W^K_{\max}$ and $w=xv$ is length-additive. Let $\lambda=\partne{x([k])}$ and let $\lambda'$ be the diagram obtained from $\lambda$ by removing all boxes that touch the southeast boundary of $\lambda$. Then the cluster algebra 
	$\mathcal{A} = \C[\widehat{\pi_k(\mathcal{R}_{v, w})}]$ given in \cref{thm:main2} is

\begin{enumerate}
\item type $A$ if and only if $\lambda'$ does not contain a $2 \times 2$ rectangle;
\item type $D$ if and only if $\lambda'=(i, 2)$ or its transpose for $i \geq 2$;
\item type $E_6$, $E_7$, or $E_8$ if and only if $\lambda'$ or its transpose 
	is one of $(3, 3)$, $(3, 2, 1)$, $(4, 3)$, $(4, 2, 1)$, $(3, 3, 1)$, $(5, 3)$, $(5, 2, 1)$, $(4, 4)$, $(4, 2, 2)$.
\end{enumerate}
 
In particular, the cluster algebra associated to 
	the Schubert variety $X_{\lambda}$ is of finite type if and only if $\lambda'$ is in the above list.
\end{proposition}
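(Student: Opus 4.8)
The plan is to reduce everything to the combinatorics of the quiver $Q_{v,w}$ from \cref{def:labeledquiver}, using the fact (from \cref{thm:seedscoincide} and \cref{thm:main2}) that the cluster algebra is determined by the mutation class of $Q_{v,w}$, and that $Q_{v,w}$ depends only on $\lambda = \partne{x([k])}$, hence only on $wv^{-1} = x$. The first observation is that the mutation type is unaffected by frozen vertices: deleting the frozen part of $Q_{v,w}$ leaves the mutable quiver, call it $Q_{v,w}^{\mathrm{mut}}$, and its vertices are precisely the boxes $b$ of $\lambda$ whose southeast neighbor is also in $\lambda$ — that is, the boxes of $\lambda'$ (the diagram obtained by deleting all boxes touching the SE boundary). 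So the whole classification is a statement about the quiver on the boxes of $\lambda'$ with the ``arrows up/left, plus $2\times 2$ diagonal'' structure inherited from \cref{def:labeledquiver}.

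First I would identify $Q_{v,w}^{\mathrm{mut}}$ explicitly as a quiver on the Young diagram $\lambda'$: horizontal and vertical arrows between adjacent boxes, plus a diagonal arrow in each $2\times 2$ block. This is exactly the ``triangular'' / staircase quiver that appears in the literature on Grassmannian cluster structures (Scott's $Gr_{k,n}$ quiver restricted to a Young diagram shape). The key structural dichotomy is: if $\lambda'$ contains no $2\times 2$ block, then $Q_{v,w}^{\mathrm{mut}}$ has no diagonal arrows and no oriented cycles, and in fact it is an orientation of a tree (a ``caterpillar'' of type $A$), so by \cref{prop:sametree} the cluster type is type $A$ (with rank $|\lambda'|$), and conversely any quiver mutation-equivalent to a type $ADE$ tree with an oriented cycle cannot occur — but more simply, the presence of a $2\times 2$ block forces a $3$-cycle, hence the underlying graph is not a tree, so by \cref{prop:sametree} it is not type $A$. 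This establishes part (1).

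For parts (2) and (3) I would argue by the standard ``minimal forbidden subquiver'' method together with an explicit finite check. In the direction ``listed shape $\Rightarrow$ finite type'': for each $\lambda'$ on the list, one computes directly that $Q_{v,w}^{\mathrm{mut}}$ is mutation-equivalent to the corresponding Dynkin diagram $D$ or $E$ — this is a finite computation (e.g. via the quiver mutation / Keller applet, or by hand), and one records which explicit sequence of mutations does it. In the converse direction, suppose $\lambda'$ contains a $2\times 2$ block but is not on the list; I would exhibit a full subquiver on a small set of vertices which is known to be of infinite mutation type (the standard obstructions here are the affine quivers $\widetilde{D}_4$, $\widetilde{E}_6,\widetilde{E}_7,\widetilde{E}_8$, or any quiver with an oriented cycle of length $\ge 4$, or the ``$3\times 3$ square'' quiver which is the $E_6$-affine-exceeding case). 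Concretely: if $\lambda'$ contains a $3\times 3$ square, or a $2\times 2$ square with a long enough arm/leg, or two $2\times 2$ squares in the wrong configuration, the induced full subquiver is one of these infinite-type quivers, and by \cref{lem:infinitesubquiver} the whole cluster algebra is of infinite type. The bookkeeping is a finite case analysis on the shapes of $\lambda'$ that contain a $2\times 2$ block: either $\lambda'$ is (a transpose of) a ``hook plus one row'' shape $(i,2)$ — giving type $D$ — or it is small enough to be one of the nine exceptional $E$-shapes, or it is large enough to contain a concrete infinite-type subquiver.

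The main obstacle will be the converse/minimality step in part (3): pinning down the exact finite list of exceptional $\lambda'$ and verifying that every other $2\times2$-containing shape contains an explicit infinite-type full subquiver. This requires knowing the list of minimal infinite-type quivers that can be embedded as full subquivers of the staircase quivers on Young diagrams, and a careful (but elementary, since it is a finite computation once the right obstructions are named) check that the nine listed shapes and the family $(i,2)$ are exactly the maximal shapes avoiding all of them. The type-$A$ and type-$D$ cases, and the ``listed shape $\Rightarrow$ finite type'' direction, are routine given \cref{prop:sametree} and \cref{lem:infinitesubquiver}; only this exhaustive exceptional-case analysis needs genuine care.
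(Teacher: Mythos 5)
Your overall reduction is sound and matches the paper's: the mutation type depends only on the mutable quiver $Q_{v,w}^{\mathrm{mut}}$, which is the ``staircase'' quiver on the boxes of $\lambda'$, and parts (2)--(3) become a finite case analysis combining explicit computation with \cref{prop:sametree} and the exclusion of infinite-type full subquivers via \cref{lem:infinitesubquiver}. That part of your plan is essentially what the paper does, though the paper makes the case analysis concrete by isolating four specific infinite-type shapes of size $8$ and then sweeping the shapes of size $9$, whereas you leave it at the level of ``find an infinite-type obstruction.'' You correctly flag that this step is where the work lies.

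However, your argument for the necessity direction of part (1) has a genuine gap. You argue that a $2\times 2$ block produces an oriented $3$-cycle, hence the underlying graph of $Q_{v,w}^{\mathrm{mut}}$ is not a tree, hence by \cref{prop:sametree} it is not of type $A$. This does not follow: \cref{prop:sametree} applies only when \emph{both} quivers are orientations of trees, and more importantly, a quiver whose underlying graph has a cycle can perfectly well be mutation-equivalent to an acyclic tree quiver. For instance, a single oriented $3$-cycle is mutation-equivalent to $A_3$; in fact, the $2\times 2$ block quiver itself (two oriented triangles sharing the diagonal) is mutation-equivalent to $D_4$ (mutate at the northwest vertex), so it \emph{is} of finite type and \emph{does} mutate to a tree, just not a path. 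So the presence of a cycle gives you nothing. The paper closes this gap differently: it invokes the classification of type-$A$ mutation classes via triangulations of a polygon (\cite[Lemma 5.3.1]{introCA}) and observes that no arrangement of four arcs yields the $2\times 2$ quiver; this is what rules out type $A$. If you want to salvage your line of reasoning, you would have to first establish that the $2\times 2$ quiver is of type $D_4$, not merely that it has a cycle, and then argue that a type-$A$ mutation class cannot contain a full subquiver of type $D_4$ --- which again ultimately needs the triangulation (or some other structural) characterization, not \cref{prop:sametree} alone.
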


\ytableausetup{boxsize=1em}

\begin{figure}[h]
\centering
\begin{tabular}{l l | l l | l l}
Type $E_6$ & & Type $E_7$ & & Type $E_8$ & \\
 \noalign{\smallskip}
 \hline
 \noalign{\smallskip}
 $\ydiagram[*(gray)]{3, 3}*[*(white)]{3+1, 3+1, 4}$
  & $\raisebox{1em}{\xymatrix@R=.25cm@C=0.25cm{ \ar[dr] \bullet & \ar[l] \ar[dr] \bullet& \ar[l] \bullet\\
    \ar[u] \bullet & \ar[u] \ar[l]  \bullet & \ar[u] \ar[l] \bullet}}$
  & $\ydiagram[*(gray)]{4,3}*[*(white)]{4+1, 3+2, 4}$
  &  $\raisebox{1em}{\xymatrix@R=.25cm@C=0.25cm{ \ar[dr] \bullet & \ar[l] \ar[dr] \bullet& \ar[l] \bullet & \ar[l] \bullet\\
    \ar[u] \bullet & \ar[u] \ar[l]  \bullet & \ar[u] \ar[l] \bullet}}$
  &  $\ydiagram[*(gray)]{5,3}*[*(white)]{5+1, 3+3,4}$
  & $\raisebox{1em}{\xymatrix@R=.25cm@C=0.25cm{ \ar[dr] \bullet & \ar[l] \ar[dr] \bullet& \ar[l] \bullet & \ar[l] \bullet & \ar[l] \bullet\\
    \ar[u] \bullet & \ar[u] \ar[l]  \bullet & \ar[u] \ar[l] \bullet}}$ \\
   \noalign{\smallskip}
 \hline
 \noalign{\smallskip}
 $\ydiagram[*(gray)]{3, 2, 1}*[*(white)]{3+1, 2+2, 1+2, 2}$
  & $\raisebox{1em}{\xymatrix@R=.25cm@C=0.25cm{ \ar[dr] \bullet & \ar[l] \bullet& \ar[l] \bullet \\
    \ar[u] \bullet & \ar[u] \ar[l]  \bullet \\
    \ar[u] \bullet}}$
  & $\ydiagram[*(gray)]{4,2, 1}*[*(white)]{4+1,2+3, 1+2, 2}$
  & $\raisebox{1em}{\xymatrix@R=.25cm@C=0.25cm{ \ar[dr] \bullet & \ar[l] \bullet& \ar[l] \bullet & \ar[l] \bullet\\
    \ar[u] \bullet & \ar[u] \ar[l]  \bullet \\
    \ar[u] \bullet}}$
  &  $\ydiagram[*(gray)]{5,2, 1}*[*(white)]{5+1,2+4, 1+2, 2}$
  & $\raisebox{1em}{\xymatrix@R=.25cm@C=0.25cm{ \ar[dr] \bullet & \ar[l] \bullet& \ar[l] \bullet & \ar[l] \bullet & \ar[l] \bullet\\
    \ar[u] \bullet & \ar[u] \ar[l]  \bullet \\
    \ar[u] \bullet}}$\\
   \noalign{\smallskip}
 \hline
 \noalign{\smallskip}
&
& $\ydiagram[*(gray)]{3, 3, 1}*[*(white)]{3+1, 3+1, 1+3, 2}$
& $\raisebox{1em}{\xymatrix@R=.25cm@C=0.25cm{ \ar[dr] \bullet & \ar[l] \ar[dr] \bullet& \ar[l] \bullet\\
    \ar[u] \bullet & \ar[u] \ar[l]  \bullet & \ar[u] \ar[l] \bullet\\
    \ar[u] \bullet}}$
& $\ydiagram[*(gray)]{4, 4}*[*(white)]{4+1, 4+1, 5}$
& $\raisebox{1em}{\xymatrix@R=.25cm@C=0.25cm{ \ar[dr] \bullet & \ar[l] \ar[dr] \bullet& \ar[l] \ar[dr]\bullet & \ar[l] \bullet\\
    \ar[u] \bullet & \ar[u] \ar[l]  \bullet & \ar[u] \ar[l] \bullet & \ar[u] \ar[l] \bullet}}$\\
 \noalign{\smallskip}
 \hline
 \noalign{\smallskip}
&
&
&
&$\ydiagram[*(gray)]{3, 3, 1, 1}*[*(white)]{3+1,3+1, 1+3, 1+1, 2}$
& $\raisebox{1em}{\xymatrix@R=.25cm@C=0.25cm{ \ar[dr] \bullet & \ar[l] \ar[dr] \bullet& \ar[l] \bullet\\
    \ar[u] \bullet & \ar[u] \ar[l]  \bullet & \ar[u] \ar[l] \bullet\\
    \ar[u] \bullet\\
    \ar[u] \bullet}}$ \\
\end{tabular}
\caption{\label{fig:exceptionaltypes}Up to transposition, the smallest partitions giving rise to quivers of types $E_6, E_7, E_8$, whose mutable parts are shown on the right. The boxes corresponding to mutable vertices are shaded. Adding any number of boxes to the first row or column of these partitions only adds isolated frozen vertices to the quiver, and so also gives rise to a quiver of type $E_6, E_7, E_8$.}
\end{figure}
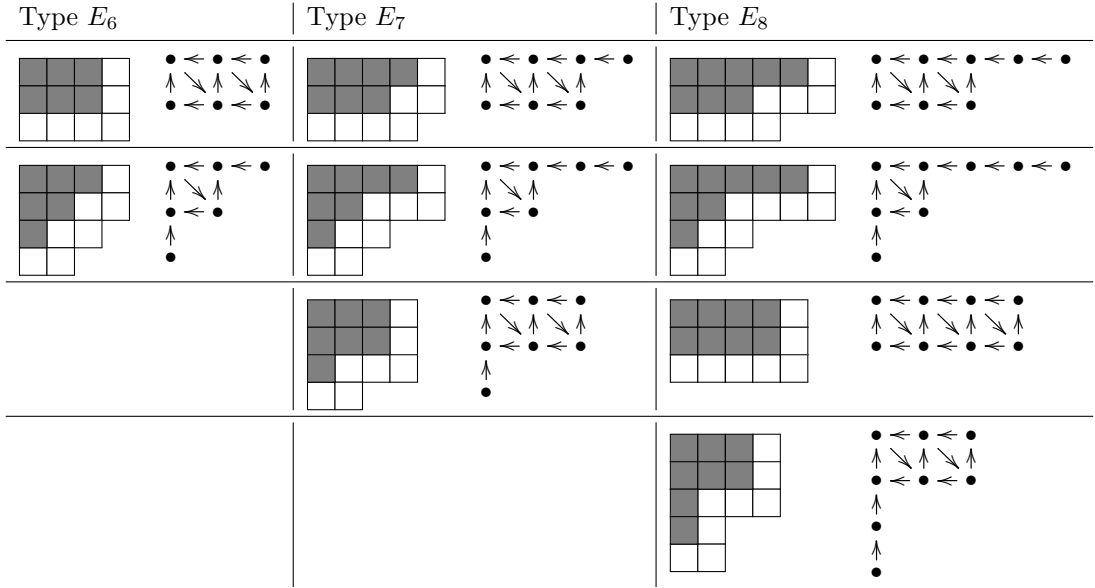

\begin{proof}
\begin{enumerate}
\item The backwards direction follows from the fact that if $\lambda'$ does not contain a $2 \times 2$ rectangle, then $Q_{v, w}$ is an orientation of a path. For the other direction, recall that the mutable part of all type A quivers can be obtained from a triangulation of a polygon \cite[Lemma 5.3.1]{introCA}. It is not hard to see that there is no arrangement of 4 arcs in a triangulation that gives the quiver we draw from a $2 \times 2$ rectangle according to \cref{def:labeledquiver}, so if $\lambda'$ contains a $2 \times 2$ rectangle as a subdiagram, $\A$ is not type $A$.
\item The backwards direction follows from inspection of the associated quivers; if one mutates at the vertex in the northwest box, one obtains an orientation of a type $D$ Dynkin diagram. If $|\lambda'|\leq 8$, then necessity follows from direct computation and \cref{prop:sametree}. Four partitions of $8$ are not finite type (see \cref{fig:mininfinite}), so by \cref{lem:infinitesubquiver} any partition containing one of these four will not be finite type. The partitions of $9$ that are not of type $A$, are not $(7, 2)$ or its transpose, and do not contain a partition of 8 that is infinite type are shown in \cref{fig:mininfinite}; they are all infinite type. Thus, the only partitions of $9$ that are finite type and not type $A$ are $(7, 2)$ and its transpose. From this, we can conclude that $\A$ is type $D$ only if $\lambda'=(i, 2)$ or its tranpose. Indeed, $\A$ is infinte type if $\lambda'$ is not type $A$ and contains any partition of $9$ that is not $(7, 2)$ or its transpose, or, equivalently, if $\lambda' \neq(i, 2)$ or its transpose. 
\item By direct computation, using \cref{prop:sametree}.
\end{enumerate}
\end{proof}

\begin{figure}[h]
\centering
\begin{tabular}{l  l | l  l}
 $\ydiagram[*(gray)]{4,3,1}*[*(white)]{4+1, 3+2, 1+3, 2}$ & 
 $\raisebox{1em}{\xymatrix@R=.25cm@C=0.25cm{ \ar[dr] \bullet & \ar[l] \ar[dr] \bullet& \ar[l] \bullet & \ar[l] \bullet\\
    \ar[u] \bullet & \ar[u] \ar[l]  \bullet & \ar[u] \ar[l] \bullet \\
    \ar[u] \bullet}}$
  & $\ydiagram[*(gray)]{5, 4}*[*(white)]{5+1, 4+2, 5}$ &
   $\raisebox{1em}{\xymatrix@R=.25cm@C=0.25cm{ \ar[dr] \bullet & \ar[l] \ar[dr] \bullet& \ar[l] \ar[dr] \bullet& \ar[l] \bullet & \ar[l] \bullet\\
    \ar[u] \bullet & \ar[u] \ar[l]  \bullet & \ar[u] \ar[l] \bullet &\ar[u] \ar[l] \bullet \\}}$
  \\ 
 \noalign{\smallskip}
 \hline
 \noalign{\smallskip}
 $\ydiagram[*(gray)]{4,2,1, 1}*[*(white)]{4+1, 2+3, 1+2, 1+1, 2}$ & 
  $\raisebox{1em}{\xymatrix@R=.25cm@C=0.25cm{ \ar[dr] \bullet & \ar[l] \bullet& \ar[l] \bullet & \ar[l] \bullet\\
    \ar[u] \bullet & \ar[u] \ar[l]  \bullet  \\
    \ar[u] \bullet&\\
    \ar[u] \bullet}}$
  &$ \ydiagram[*(gray)]{6, 3}*[*(white)]{6+1, 3+4, 4}$ 
  &  $\raisebox{1em}{\xymatrix@R=.25cm@C=0.25cm{ \ar[dr] \bullet & \ar[l] \ar[dr] \bullet& \ar[l] \bullet & \ar[l] \bullet & \ar[l] \bullet & \ar[l] \bullet\\
    \ar[u] \bullet & \ar[u] \ar[l]  \bullet & \ar[u] \ar[l] \bullet }}$ \\
  \noalign{\smallskip}
 \hline
 \noalign{\smallskip}
$\ydiagram[*(gray)]{3, 3, 2}*[*(white)]{3+1, 3+1, 2+2, 3}$
& 	$\raisebox{1em}{\xymatrix@R=.25cm@C=0.25cm{ \ar[dr] \bullet & \ar[l] \ar[dr] \bullet& \ar[l] \bullet \\
    		\ar[u] \ar[dr] \bullet & \ar[u] \ar[l]  \bullet & \ar[u] \ar[l] \bullet \\
    		\ar[u] \bullet & \ar[u] \ar[l]  \bullet}}$
 & 	$\ydiagram[*(gray)]{6, 2, 1}*[*(white)]{6+1, 2+5, 1+2, 2}$
 & $\raisebox{1em}{\xymatrix@R=.25cm@C=0.25cm{ \ar[dr] \bullet & \ar[l] \bullet& \ar[l] \bullet & \ar[l] \bullet & \ar[l] \bullet\\
    \ar[u] \bullet & \ar[u] \ar[l]  \bullet  \\
    \ar[u] \bullet}}$
   \\
 \noalign{\smallskip}
 \hline
 \noalign{\smallskip}
&& $\ydiagram[*(gray)]{3, 3, 1, 1, 1}*[*(white)]{3+1, 3+1, 1+3, 1+1, 1+1, 2}$ & 
$\raisebox{1em}{\xymatrix@R=.25cm@C=0.25cm{ \ar[dr] \bullet & \ar[l] \ar[dr] \bullet& \ar[l] \bullet &\\
    \ar[u] \bullet & \ar[u] \ar[l]  \bullet & \ar[u] \ar[l] \bullet \\
    \ar[u] \bullet \\
    \ar[u] \bullet \\
    \ar[u] \bullet }}$

\end{tabular}
\caption{\label{fig:mininfinite} These partitions (and their transposes) are the smallest partitions giving rise to quivers of infinite type, whose mutable parts are shown on the right. The boxes corresponding to mutable vertices are shown in green. Adding any number of boxes to the first row or column of these partitions only adds isolated frozen vertices to the quiver, and so also gives rise to a quiver of infinite type.}
\end{figure}
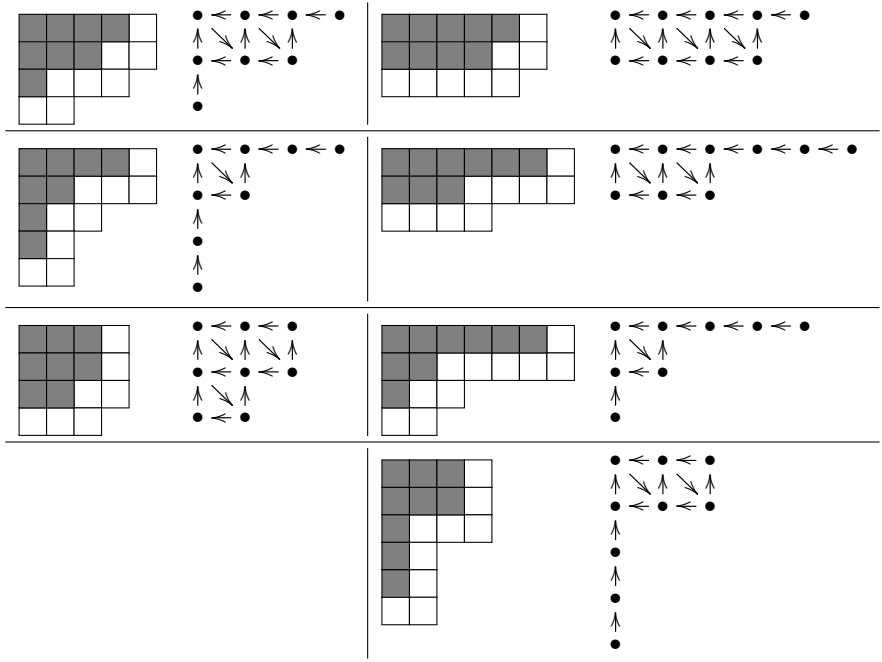

%


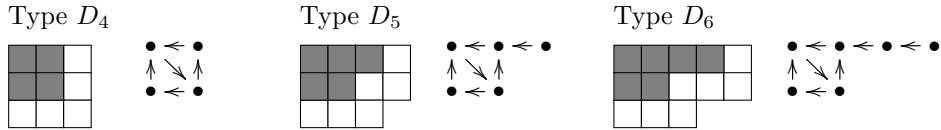
\begin{figure}[h]
\centering
\begin{tabular}{l l l l l l l l}
Type $D_4$ & && Type $D_5$ & &&Type $D_6$\\
$\ydiagram[*(gray)]{2, 2}*[*(white)]{2+1, 2+1, 3}$
  & $\raisebox{1em}{\xymatrix@R=.25cm@C=0.25cm{ \ar[dr] \bullet & \ar[l] \bullet& \\
    \ar[u] \bullet & \ar[u] \ar[l]  \bullet}}$
&&	$\ydiagram[*(gray)]{3, 2}*[*(white)]{3+1, 2+2, 3}$
  & $\raisebox{1em}{\xymatrix@R=.25cm@C=0.25cm{ \ar[dr] \bullet & \ar[l] \bullet&  \ar[l] \bullet\\
    \ar[u] \bullet & \ar[u] \ar[l]  \bullet}}$
&&$\ydiagram[*(gray)]{4, 2}*[*(white)]{4+1, 2+3, 3}$
  & $\raisebox{1em}{\xymatrix@R=.25cm@C=0.25cm{ \ar[dr] \bullet & \ar[l] \bullet& \ar[l] \bullet & \ar[l] \bullet\\
    \ar[u] \bullet & \ar[u] \ar[l]  \bullet}}$
\end{tabular}
\caption{A series of Schubert varieties which yield the type 
	$D_n$ cluster algebras.}
\label{FinType}
\end{figure}

\subsection{Applications to the preprojective algebra}

As an application of Theorem~\ref{thm-module-structure}, we obtain an explicit way to compute the summands of a cluster-tilting module $U_{v,w}$, whereas 
Leclerc's definition is constructive.  This provides a novel connection between Pl{\"u}cker coordinates and the structure of the summands of $U_{v,{\bf w}}$.  It is an interesting problem to determine whether this correspondence extends beyond the case of Schubert and skew-Schubert varieties.   Such a combinatorial interpretation of the modules would be useful in computing morphisms between the summands of $U_{v,{\bf w}}$ for arbitrary $(v,{\bf w})$.  Moreover, given two modules $U, U' \in \mathcal{C}_{v,w}$ that correspond to Pl{\"ucker} coordinates $\Delta_{P}, \Delta_{P'}$ on the positroid variety ${\pi_k(\mathcal{R}_{v,w})}$, it is natural to ask whether we can detect an extension between $U$ and $U'$ in terms of the corresponding lattice paths $\pathsw{P}, \pathsw{P'}$.  In particular, this would tell us whether two cluster variables $\Delta_{P}, \Delta_{P'}$ are compatible in the cluster algebra 
$\mathbb{C}[\widehat{\pi_k(\mathcal{R}_{v,w})}]$.  
This could
provide new insights into the representation theory of 
preprojective algebras.

Moreover, when $w=xv$ is  length additive and $v\in W^K_{max}$, we can 
explicitly write down 
many of the seeds for the pair $(v,w)$ using the combinatorics of 
plabic graphs.  
Thus we find that these cluster algebras have all the nice properties
 mentioned in \cref{sec:skew} (they are locally acyclic, equal to their upper cluster algebra,
 admit green-to-red sequences, have a canonical
 basis of theta functions, etc).

\appendix
\section{Skew Schubert varieties}\label{sec:whichpositroids}

Besides decorated permutations, $\Le$-diagrams are another combinatorial
object indexing positroid varieties.  In this section we will 
 give a recipe for the \Le-diagrams of the 
\emph{skew Schubert varieties}, i.e. the 
positroids of the form $\pi_k(\mathcal{R}_{v, w})$, where
$v\in W^K_{max}$ and 
$w$ has a length-additive factorization $xv$. 
Recall that in this case, $x \in {^KW}$.
The trip permutation of such a positroid is $v^{-1}xv$. 
While we do not know a combinatorial characterization of these trip permutations, we can describe the corresponding \Le-diagrams. 

\subsection{The \protect\Le-diagrams associated to skew Schubert varieties}
We first need some preliminary notions, following \cite{LW}. 

For a Young diagram $\lambda$ that fits inside of a $k \times (n-k)$ rectangle, let $\permne{\lambda} \in {^K W}$ be the Grassmannian permutation of type $(k,n)$ with $\permne{\lambda}([k])= \vertne{\lambda}$.

\begin{defn} An \emph{$\oplus$-diagram} (``o-plus diagram") $O$ of shape $\lambda$ is a Young diagram $\lambda$ that has been filled with $0$'s and $+$'s. We say $O$ is of \emph{type (k, n)} if $\lambda$ fits into a $k \times (n-k)$ rectangle. An $\oplus$-diagram is a \emph{\Le-diagram} (``Le-diagram") if the ``\Le-property" holds: there is no $0$ such that there is a $+$ above it in the same column and a $+$ to its left in the same row (see \cref{fig:leprop}).
\end{defn}

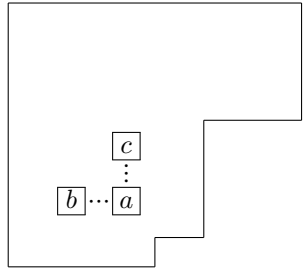
\begin{figure}
\centering
\setlength{\unitlength}{1.3mm}
\begin{picture}(35, 35)
\put(5, 32){\line(1, 0){30}}
\put(25, 20){\line(1, 0){10}}
\put(20, 8){\line(1,0){5}}
\put(5, 5){\line(1, 0){15}}
\put(5, 5){\line(0, 1){27}}
\put(20, 5){\line(0, 1){3}}
\put(25, 8){\line(0, 1){12}}
\put(35, 20){\line(0, 1){12}}
\put(10, 16){$\begin{ytableau}
 \none & \none & c \\
\none & \none & \none[\vdots]\\
 b & \none[\cdots] & a 
 \end{ytableau}$}
\end{picture}
\caption{\label{fig:leprop} The \protect\Le- property: if $b, c=+$ then $a=+$.}
\end{figure}

Suppose $\lambda$ fits inside of a $k \times (n-k)$ rectangle. By \cref{lem:redexpression}, given a reading order, we can obtain a reduced expression $\textbf{u}$ for $\permne{\lambda}$ from $\lambda$. Fixing a reading order, each $\oplus$-diagram $O$ of shape $\lambda$ gives a subexpression $\textbf{r}$ of $\textbf{u}$, obtained by replacing each simple transposition in a box filled with a $+$ by a 1. The permutation $r$ given by this subexpression does not depend on the reading order \cite[Proposition 4.6]{LW}, so we will denote it by $r(O)$ (for the ``reading word" of $O$). 

Note that by \cite[Lemma 19.3]{Postnikov}, $O$ is a \Le-diagram if and only if $\mathbf{r}$ is a \emph{positive distinguished subexpression} of $\mathbf{u}$ (see \cref{def:posdistsubexpression}).

\begin{prop} \label{prop:lebiject} Let $M$ be a \Le-diagram of shape $\lambda$ with reading word $r$, and let $u=\permne{\lambda}$. Then $M$ corresponds to the positroid variety $\pi_k(\mathcal{R}_{u^{-1}w_0, r^{-1}w_0})$
\end{prop}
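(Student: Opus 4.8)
The plan is to reduce the statement to known dictionaries between $\Le$-diagrams, positive distinguished subexpressions, and positroid varieties, together with the projection $\pi_k$ from Richardson varieties in $\Fl_n$ to positroid varieties in $Gr_{k,n}$. First I would recall the standard correspondence (due to Postnikov, with the flag-variety reformulation in \cite{KLS} and \cite{MR}) that associates to a pair $v \leq w$ in $W$ with a reduced word $\mathbf{w}$ and a positive distinguished subexpression $\mathbf{v}$ for $v$ a parametrization of $\mathcal{R}_{v,w}$; the $\Le$-diagram language is just a repackaging of this data when $w \in {^KW}$ (equivalently $w = \permne{\lambda}$), via \cite[Proposition 4.6]{LW} and \cite[Lemma 19.3]{Postnikov}, both of which are cited in the excerpt. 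So the first step is: given the $\Le$-diagram $M$ of shape $\lambda$, set $u = \permne{\lambda} \in {^KW}$, choose the columnar reading order (\cref{rem:columnar}), and extract from $\lambda$ the reduced expression $\mathbf{u}$ for $u$ provided by \cref{lem:redexpression}. The $+$'s and $0$'s of $M$ single out a positive distinguished subexpression $\mathbf{r}$ of $\mathbf{u}$ for the permutation $r = r(M)$, and this is well-defined independent of reading order by \cite[Proposition 4.6]{LW}.

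Next I would invoke the known fact that the $\Le$-diagram $M$ of type $(k,n)$ corresponds to the positroid variety in $Gr_{k,n}$ whose associated decorated permutation is determined by $(u, r)$ in the standard way, and then translate this into the Richardson-variety language. Concretely, the positroid cell attached to $M$ is the image under $\pi_k$ of a Richardson variety $\mathcal{R}_{v', w'}$ in $\Fl_n$, and the task is to identify $v'$ and $w'$ in terms of $u$ and $r$. The key computation is bookkeeping with $w_0$: because $u = \permne{\lambda}$ has $u([k]) = \vertne{\lambda}$, the relevant minimal/maximal coset representative conditions translate into $w' = r^{-1} w_0$ and $v' = u^{-1} w_0$ once one checks that the subexpression data matches. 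Here I would use that right multiplication by $w_0$ reverses Bruhat order and exchanges $W^K_{\min}$ with $W^K_{\max}$, so that $u \in {^KW}$ gives $u^{-1} \in W^K_{\min}$ and hence $u^{-1} w_0 \in W^K_{\max}$, which is exactly the condition needed for $\pi_k$ to be an isomorphism on $\mathcal{R}_{u^{-1}w_0, r^{-1}w_0}$ (as recalled in the Grassmannian background section). The containment $v' \leq w'$, i.e. $u^{-1}w_0 \leq r^{-1}w_0$, follows from $r \leq u$ (since $\mathbf{r}$ is a subexpression of $\mathbf{u}$) by the order-reversing property of $w_0$.

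The step I expect to be the main obstacle is matching the two parametrizations on the nose: showing that the positroid variety the literature attaches to the $\Le$-diagram $M$ is the \emph{same} as $\pi_k(\mathcal{R}_{u^{-1}w_0, r^{-1}w_0})$, rather than, say, its opposite or a shifted version. This requires being careful about which of the two conventional bijections between $\Le$-diagrams and positroids one is using (there are left/right and source/target variants, compare the discussion after \cref{thm:main2}), and about the direction in which reduced words are read (\cref{lem:redexpression} produces words read right-to-left for ${^KW}$ and left-to-right for $W^K$). I would resolve this by tracking a single explicit example through both constructions — e.g. the all-$+$ $\Le$-diagram, which on one side gives the open opposite Schubert variety $X^{I(u)}$ and on the other side must give $\pi_k(\mathcal{R}_{e, u^{-1}w_0})$ after the $w_0$-twist collapses $u^{-1}w_0$ appropriately — and then observing that both constructions are equivariant under the operation of changing a single entry of $M$ from $+$ to $0$ (which on the subexpression side deletes one letter from $\mathbf{r}$, and on the geometric side passes to a boundary positroid). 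Since both sides are determined by their behavior on the all-$+$ diagram together with this single-box recursion, they agree. Finally, the claimed bijectivity of $M \mapsto \pi_k(\mathcal{R}_{u^{-1}w_0, r^{-1}w_0})$ over all $\Le$-diagrams of type $(k,n)$ follows from the classical fact that $\Le$-diagrams of type $(k,n)$ biject with positroids in $Gr_{k,n}$ \cite{Postnikov}, so it suffices that the map as described lands in positroid varieties and is injective, both of which are immediate from the explicit recipe.
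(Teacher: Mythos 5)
Your general roadmap is aimed in the right direction---you correctly identify that a $w_0$-twist is needed and that the substance of the proposition is matching the $\Le$-diagram dictionary to a Richardson variety---but the proof leaves exactly the load-bearing step unfinished. You yourself flag ``matching the two parametrizations on the nose'' as ``the main obstacle,'' and then you only \emph{propose} how you would close it (track the all-$+$ case, then argue equivariance under single $+\to 0$ flips) rather than actually doing so. That recursion is not free: you would need to show that changing a $+$ to a $0$ always stays within $\Le$-diagrams (it does not for an arbitrary box; you need to choose the box carefully), that every $\Le$-diagram is reachable from the all-$+$ one by such flips, and that ``passes to a boundary positroid'' corresponds precisely to the $w_0$-twisted Richardson degeneration. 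None of this is worked out. There is also an error in your base case: the all-$+$ $\Le$-diagram of shape $\lambda$ corresponds to the \emph{open Schubert} variety $X_\lambda^\circ = \pi_k(\mathcal{R}_{u^{-1}w_0,\,w_0})$, not the open opposite Schubert variety $X^{I(u)}$ as you write. (With all $+$'s the reading word is $r = e$, not $r = u$; the remark in \cref{sec:whichpositroids} records this.)

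What makes the paper's proof short is a fact you do not quite invoke: Postnikov's \cite[Theorem 19.1]{Postnikov} already gives the \emph{untwisted} identification, namely that $M$ corresponds to $\pi_k(\mathcal{R}_{r^{-1},\,u^{-1}})$. Once you have that, the only remaining step is the elementary observation that right multiplication $Bx \mapsto Bxw_0$ on the flag variety is an isomorphism $\mathcal{R}_{v,w}\xrightarrow{\sim}\mathcal{R}_{ww_0,\,vw_0}$, from which $\pi_k(\mathcal{R}_{r^{-1},\,u^{-1}}) = \pi_k(\mathcal{R}_{u^{-1}w_0,\,r^{-1}w_0})$ follows immediately. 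You gesture at ``bookkeeping with $w_0$'' without writing down this isomorphism, and without Theorem~19.1 you are effectively trying to re-derive Postnikov's indexing from scratch---which is why your argument balloons. I would recommend citing Postnikov's theorem directly and reducing your proof to the one-line $w_0$-twist.
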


\begin{proof} By \cite[Theorem 19.1]{Postnikov}, $M$ corresponds to $\pi_k(\mathcal{R}_{r^{-1}, u^{-1}})$. Note that in the compete flag variety, the map $Bx \to Bxw_0$ gives an isomorphism between $\mathcal{R}_{v, w}$ and $\mathcal{R}_{ww_0, vw_0}$. The proposition follows immediately.
\end{proof}

\begin{remark} Let $v \in W^K_{\max}$ and $u \in W^K_{\min}$. The \Le-diagram of ${\pi_k(\mathcal{R}_{v, w_0})}\cong X^\circ_{v^{-1}([k])}$ has shape $\partsw{v^{-1}([k])}$ and every box contains a $+$. For $u\in W^K_{min}$, the \Le-diagram of 
	${\pi_k(\mathcal{R}_{w_K, w_K u})} \cong (X^{u^{-1}([k])})^\circ$ is the $k \times (n-k)$ rectangle where all boxes above $\pathsw{u^{-1}([k])}$ contain $0$'s and all boxes below contain $+$'s. 
\end{remark}

We can use \emph{\Le-moves} to change $\oplus$-diagrams into $\Le$-diagrams. 

\begin{defn} \cite[Section 5]{LW} Suppose $O$ is an $\oplus$-diagram containing a rectangular subdiagram where all non-corner boxes are filled with zeros and the northeast and southwest corners are filled with pluses (shown below). If $b$ is 0, a \emph{\Le-move} changes $b$ to $+$ and changes $a$ either from $0$ to $+$ or from $+$ to $0$.
\[\begin{ytableau}
a & 0 & 0 & 0 & +\\
0 & 0 & 0 & 0 & 0 \\
+ & 0 & 0 & 0 & b\\
\end{ytableau}
\]

\end{defn}

Note that these are actually the \emph{rectangular} \Le-moves of \cite[Definition 4.11]{LW}. 

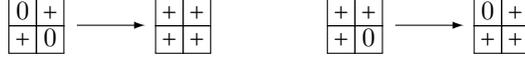
\begin{figure}
\setlength{\unitlength}{1.3mm}
\centering
\begin{picture}(30, 10)
\put(5, 8){$\begin{ytableau}
0 & +\\
+ & 0
\end{ytableau}$}
\put(12, 8){\vector(1, 0){7}}
\put(20, 8){$\begin{ytableau}
+ & +\\
+ & +
\end{ytableau}$}
\end{picture}
\hspace{.1cm}
\begin{picture}(30, 10)
\put(5, 8){$\begin{ytableau}
+ & +\\
+ & 0
\end{ytableau}$}
\put(12, 8){\vector(1, 0){7}}
\put(20, 8){$\begin{ytableau}
0 & +\\
+ & +
\end{ytableau}$}
\end{picture}
\caption{\label{fig:lemoves}Two \protect\Le-moves.}
\end{figure}

See \cref{fig:lemoves} for examples of \Le-moves.

The key properties of $\Le$-moves are as follows.
\begin{lemma}\cite[Lemma 4.13, Proposition 4.14]{LW} Let $O$ be an $\oplus$-diagram.
\begin{enumerate}
\item $O$ can be made into a $\Le$-diagram $M$ (``{\Le}-ified") by a finite sequence of \Le-moves. 
\item If $O'$ is related to $O$ by a sequence of $\Le$-moves, then $r(O)=r(O')$.
\item $M=:M(O)$ does not depend on the sequence of $\Le$-moves.
\end{enumerate}
\end{lemma}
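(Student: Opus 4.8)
The plan is to prove the three assertions in order, using the combinatorial bijection between $\oplus$-diagrams, subexpressions, and positroid cells together with the reading-word invariance coming from \cite[Proposition 4.6]{LW}. The underlying principle is that a \Le-move does not change the reading word $r(O)$, but does (weakly) move the diagram ``closer'' to being a \Le-diagram in a measurable sense; termination of a maximal sequence of \Le-moves then forces the result to be a \Le-diagram, and a confluence/Newman's-lemma style argument gives uniqueness.

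First, for part (2): I would verify directly from the definition of $r(O)$ that a single rectangular \Le-move preserves the reading word. Fix a reading order; the reduced expression $\mathbf{u}$ for $\permne{\lambda}$ is fixed, and $r(O)$ is the product of the simple reflections sitting in $+$-boxes. A \Le-move only alters the boxes $a$ and $b$ inside a rectangle whose non-corner boxes are all $0$, with the NE and SW corners being $+$. The claim is that the product of the relevant simple transpositions is unchanged. This is a local computation: reading across the rectangle, the two corner $+$'s together with whatever happens at $a,b$ contribute a product of simple reflections $s_i, s_j$ with $|i-j|\ge 2$ (so they commute) except for the precise interaction that makes the move valid; carrying this out carefully (using that the fillings of boxes in a row/column are consecutive simple reflections, as in \cref{lem:redexpression}) shows the two reading words coincide. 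Since $r(O)$ is independent of reading order, this suffices; iterating gives part (2) for arbitrary sequences of \Le-moves.

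Next, part (1): I would show that any $\oplus$-diagram can be \Le-ified in finitely many moves. The strategy is to define a non-negative integer statistic on $\oplus$-diagrams that strictly decreases (or more precisely, a statistic that decreases with respect to some well-founded order) under each \Le-move applied to a diagram violating the \Le-property, and to observe that a diagram admitting no \Le-move and still violating the \Le-property cannot exist — i.e. any \Le-violation (a $0$ with a $+$ above it in its column and a $+$ to its left in its row) can be ``fixed'' by choosing a minimal such rectangle and applying a move there. One natural statistic: order the $0$-entries by position and count \Le-violations, breaking ties by how far northwest the offending rectangle can be taken; a move at the northwest-most minimal violating rectangle turns a $0$ into a $+$ (strictly increasing the number of $+$'s, which is bounded by $|\lambda|$) while only possibly toggling the single entry $a$. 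Since the number of $+$'s is bounded and the toggling of $a$ is controlled, a suitable lexicographic argument shows the process terminates; the terminal diagram has no \Le-violations, hence is a \Le-diagram $M(O)$.

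Finally, part (3): uniqueness of $M(O)$. Here I expect the main obstacle. The clean approach is to invoke \cref{prop:lebiject} (equivalently \cite[Theorem 19.1]{Postnikov}) together with part (2): a \Le-diagram is uniquely determined by its reading word and shape, because the map sending a \Le-diagram $M$ of shape $\lambda$ to the positroid cell $\pi_k(\mathcal{R}_{r(M)^{-1}, \permne{\lambda}^{-1}})$ (before the $w_0$-twist) is a bijection onto the positroid cells contained in the corresponding Schubert cell. Since any \Le-ification $M(O)$ has the same shape as $O$ and the same reading word $r(O)$ by part (2), and since distinct \Le-diagrams of the same shape have distinct reading words (by the injectivity in \cite[Theorem 19.1]{Postnikov}, as $r(M)$ being a positive distinguished subexpression is what pins down $M$), any two \Le-ifications of $O$ must coincide. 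The delicate point to get right is the equivalence ``$O$ is a \Le-diagram $\iff$ $\mathbf r$ is a positive distinguished subexpression'' (quoted here from \cite[Lemma 19.3]{Postnikov}) and the fact that a PDS inside a fixed reduced word is uniquely determined by the permutation it represents — this last fact is exactly what makes the reading word a complete invariant among \Le-diagrams of a given shape, and hence forces confluence.
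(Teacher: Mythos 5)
This lemma is cited verbatim from \cite[Lemma 4.13, Proposition 4.14]{LW}; the present paper gives no proof of its own, so there is no ``paper proof'' to compare against, and I am judging your outline on its own merits.

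Your proof of part (3) is sound: combined with part (2), reducing uniqueness to the uniqueness of the positive distinguished subexpression for a fixed permutation inside a fixed reduced word (via \cite[Lemma 19.3]{Postnikov} and \cite[Theorem 19.1]{Postnikov}) pins down $M(O)$ from the pair $(\lambda, r(O))$, and no Newman-style diamond argument is needed. That is a clean and essentially complete route.

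Parts (1) and (2) have genuine gaps. In (2), you assert that the relevant simple reflections commute because $|i-j|\ge 2$; that is false. For an $m\times n$ rectangle with NW corner in row $r_1$, column $c_1$, and SE corner in row $r_2$, column $c_2$, the two corners carry $s_{k-c_1+r_1}$ and $s_{k-c_2+r_2}$, and $(k-c_2+r_2)-(k-c_1+r_1)=m-n$; these indices coincide only when the rectangle is a square, and are adjacent (hence \emph{non}-commuting) precisely when $m=n\pm1$. More to the point, the verification genuinely requires braid relations, not commutations. For the $3\times3$ rectangle filled as $\begin{smallmatrix}+&0&+\\0&0&0\\+&0&0\end{smallmatrix}$ and labelled in the standard way, the zero-box subwords read in columnar order before and after the move are $s_4 s_2 s_3 s_4 s_2 s_3$ and $s_3 s_4 s_2 s_3 s_4 s_2$; these are equal as permutations, but showing it takes two applications of $s_i s_{i+1} s_i = s_{i+1} s_i s_{i+1}$. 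Your overall plan --- verify a single move, then iterate, using reading-order independence --- is the right one, but the local calculation you gesture at is not what actually happens, and any real proof must track braid relations propagating through the rectangle.

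In (1), your proposed monovariant fails: you say the move ``turns a $0$ into a $+$ (strictly increasing the number of $+$'s),'' but the move $\begin{smallmatrix}+&+\\+&0\end{smallmatrix}\to\begin{smallmatrix}0&+\\+&+\end{smallmatrix}$ simultaneously turns a $+$ into a $0$, so the count is unchanged and the bound is not a termination argument. You allude to a ``suitable lexicographic argument'' but never define it. A statistic that actually works is, for instance, $\sum_{(r,c)\,:\,O(r,c)=0}(r+c)$, which strictly decreases under both variants of the move: in the $a\colon 0\to+$ case two $0$'s disappear, and in the $a\colon +\to0$ case the single $0$ moves strictly northwest from $(r_2,c_2)$ to $(r_1,c_1)$. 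You would need to name and verify something of this kind; as written, part (1) does not close.
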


Now, consider $x \in {^K W}$ and $v \in W_{max}^K$ such that $\ell(xv)=\ell(x)+\ell(v)$. Recall from \cref{prop:lengthsadd} that $\pathne{x([k])}$ lies above $\pathsw{v^{-1}([k])}$. Let $\lambda_x$ and $\lambda_v$ be the partitions above $\pathne{x([k])}$ and $\pathsw{v^{-1}([k])}$, respectively. 

\begin{definition}
Let $O_{x, v}$ be the $\oplus$-diagram of shape $\lambda_v$ with the boxes in $\lambda_x$ filled with $+$'s and all other boxes filled with $0$'s (see \cref{fig:leificationex}).
\end{definition}

\begin{proposition} \label{prop:lerichardson} $M(O_{x, v})$, the {\Le}-ification of $O_{x,v}$, is the \Le-diagram of $\pi_k(\mathcal{R}_{v, xv})$.
\end{proposition}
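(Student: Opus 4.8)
The plan is to reduce the claim about $M(O_{x,v})$ to \cref{prop:lebiject} by identifying the reading word $r(O_{x,v})$. First I would fix a reading order (say the columnar one of \cref{rem:columnar}) for the boxes of $\lambda_v = \partsw{v^{-1}([k])}$, and use \cref{lem:redexpression} to write down the resulting reduced expression $\mathbf{u}$ for $\permne{\lambda_v} =: u$, where $u([k]) = \vertne{\lambda_v}$. Since every box of $\lambda_v$ is filled with a simple transposition, the $\oplus$-diagram $O_{x,v}$ — which has $+$'s exactly in the subdiagram $\lambda_x$ sitting in the northwest corner — determines a subexpression $\mathbf{r}$ of $\mathbf{u}$: we keep the transpositions coming from boxes of $\lambda_x$ and delete the rest. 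By \cite[Proposition 4.6]{LW} the permutation $r(O_{x,v})$ defined by this subexpression is independent of the reading order.

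The key step is to show $r(O_{x,v}) = x$. Here I would use the fact that the boxes of $\lambda_x$ form a Young subdiagram of $\lambda_v$ occupying the top-left corner, and that the filling of a box in row $r$, column $c$ by $s_{k-c+r}$ (as in \cref{lem:redexpression}) depends only on the box's position, not on which ambient diagram it sits in. Hence the subexpression of $\mathbf{u}$ picked out by $\lambda_x$ is, after deleting the $1$'s, literally the reduced expression for $\permne{\lambda_x}$ obtained from $\lambda_x$ via \cref{lem:redexpression} with the induced reading order. So $r(O_{x,v}) = \permne{\lambda_x}$. Now $\lambda_x = \partne{x([k])}$ (the partition above $\pathne{x([k])}$), so $\permne{\lambda_x}$ is the Grassmannian permutation in ${^KW}$ sending $[k]$ to $x([k])$; since $x \in {^KW}$ is itself the unique such permutation, $\permne{\lambda_x} = x$, giving $r(O_{x,v}) = x$.

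Next I would invoke the invariance of the reading word under {\Le}-moves (\cite[Lemma 4.13, Proposition 4.14]{LW}, quoted in the excerpt): since $M(O_{x,v})$ is obtained from $O_{x,v}$ by a sequence of \Le-moves, we have $r(M(O_{x,v})) = r(O_{x,v}) = x$, and $M(O_{x,v})$ is a genuine \Le-diagram of shape $\lambda_v$. Applying \cref{prop:lebiject} with $u = \permne{\lambda_v}$ and $r = x$: the positroid variety attached to $M(O_{x,v})$ is $\pi_k(\mathcal{R}_{u^{-1}w_0,\, x^{-1}w_0})$. It remains to match this with $\pi_k(\mathcal{R}_{v, xv})$. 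I would compute $u^{-1} w_0$: since $u = \permne{\lambda_v}$ has $u([k]) = \vertne{\lambda_v} = v^{-1}([k])$ — using $\vertne{\lambda_v} = \vertsw{\lambda_v}$ read appropriately, or more directly the identity $\vertsw{v^{-1}([k])} = v^{-1}([k])$ — one checks $u^{-1} \in W^K$, and the involution $w \mapsto w w_0$ on minimal/maximal coset representatives together with the isomorphism $\mathcal{R}_{a,b} \cong \mathcal{R}_{b w_0, a w_0}$ (used in the proof of \cref{prop:lebiject}) should convert $\pi_k(\mathcal{R}_{u^{-1}w_0, x^{-1}w_0})$ into $\pi_k(\mathcal{R}_{v, xv})$, using length-additivity of $xv$ to guarantee $v \in W^K_{\max}$ is correctly paired.

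\textbf{Main obstacle.} The bookkeeping in the last paragraph — tracking the various $w_0$-twists, confirming that $u^{-1} w_0 = v$ (equivalently $u = w_0 v^{-1}$, so that $\permne{\lambda_v}$ is indeed $w_0 v^{-1}$ and not $v^{-1}$ itself or some other twist), and checking that $x^{-1} w_0$ twists back to $xv$ under the Richardson isomorphism — is where sign/order errors are most likely. I would double-check it against the Schubert special cases recorded in the Remark preceding \cref{prop:lerichardson}: when $x = e$ we should recover the all-$+$ \Le-diagram of shape $\partsw{v^{-1}([k])}$ for $X^\circ_{v^{-1}([k])} = \pi_k(\mathcal{R}_{v,w_0})$, and when $v = w_K$ we should recover the rectangular diagram with $0$'s above $\pathsw{v'^{-1}([k])}$ and $+$'s below for $\pi_k(\mathcal{R}_{w_K, w_K x^{-1}})$; matching both boundary cases will pin down the twist conventions.
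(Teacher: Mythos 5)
Your outline---compute the reading word of $O_{x,v}$, invoke invariance of reading words under \Le-moves, then apply \cref{prop:lebiject}---is exactly the paper's strategy. But there is a concrete error at the central step: you have the definition of the reading word inverted. The subexpression $\mathbf{r}$ is obtained ``by replacing each simple transposition in a box filled with a $+$ by a~$1$,'' so $r(O)$ is the product of the simple reflections sitting in the \emph{$0$-boxes}, not the $+$-boxes. Since $O_{x,v}$ has $+$'s precisely in $\lambda_x$ and $0$'s in $\lambda_v\setminus\lambda_x$, the reading word is built from $\lambda_v\setminus\lambda_x$, not from $\lambda_x$. Concretely: choose a standard-Young-tableau reading order on $\lambda_v$ that exhausts the subdiagram $\lambda_x$ first (possible since $\lambda_x$ sits at the northwest corner); writing reduced words right to left as in \cref{lem:redexpression}, this gives $\mathbf{u}=\mathbf{u}_2\mathbf{u}_1$ with $\mathbf{u}_1$ (the suffix, from $\lambda_x$) a reduced word for $\permne{\lambda_x}=x$ and $\mathbf{u}_2$ (the prefix, from $\lambda_v\setminus\lambda_x$) reduced. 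Deleting the $+$-boxes leaves $\mathbf{u}_2$, hence $r(O_{x,v})=u\,x^{-1}=w_0\,v^{-1}\,x^{-1}$, which is the value the paper's proof uses. You also asserted $\vertne{\lambda_v}=v^{-1}([k])$; in fact $\vertsw{\lambda_v}=v^{-1}([k])$ while $\vertne{\lambda_v}=w_0\bigl(v^{-1}([k])\bigr)$, which is what makes $u=\permne{\lambda_v}=w_0v^{-1}$.

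With the corrected $r$, \cref{prop:lebiject} gives $\pi_k\bigl(\mathcal{R}_{u^{-1}w_0,\,r^{-1}w_0}\bigr)=\pi_k\bigl(\mathcal{R}_{v,\,xv}\bigr)$ directly; no further twisting is needed or available. By contrast, your $r(O_{x,v})=x$ would yield $\pi_k\bigl(\mathcal{R}_{v,\,x^{-1}w_0}\bigr)$, which is not $\pi_k(\mathcal{R}_{v,xv})$ in general, and your own boundary-case sanity check would have flagged this had you run it: the all-$+$ diagram of shape $\lambda_v$ occurs not when $x=e$ (then $\lambda_x=\emptyset$, $O_{x,v}$ is all $0$'s, and $w=xv=v$ gives a point) but when $\lambda_x=\lambda_v$, i.e.\ $x=w_0v^{-1}$ and $w=w_0$. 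For that diagram the correct reading word is $e$, giving $\pi_k(\mathcal{R}_{v,w_0})$, the open Schubert variety; your formula $r=x=w_0v^{-1}$ would instead give $r^{-1}w_0=v$ and hence the point $\pi_k(\mathcal{R}_{v,v})$. Once the reading word is fixed, the rest of your argument carries through just as the paper's does.
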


%

\begin{remark}
	\cref{prop:lerichardson}, together with the definition of $O_{x,v}$ (which is determined by 
	two noncrossing lattice paths in a rectangle, i.e. a skew Young diagram),
	is the reason that we refer to these positroid varieties as 
	\emph{skew Schubert varieties}.
\end{remark}

\ytableausetup{boxsize=normal}

\begin{figure}
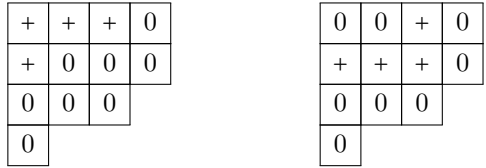

\centering
\begin{tabular}{l c r}
$\begin{ytableau}
+ & + & + & 0\\
+ & 0 & 0& 0\\
0 & 0 & 0\\
0
\end{ytableau}$ &\hspace{.5in} &
$\begin{ytableau}
0 & 0 & + & 0\\
+ & + & +& 0\\
0 & 0 & 0\\
0
\end{ytableau}$
\end{tabular}

\caption{\label{fig:leificationex} On the left, $O_{x,v}$ and on the right, $M(O_{x, v})$ for $x=(1, 2, 4, 7, 3, 5, 6, 8)$ and $v= (4, 3, 8, 2, 7, 6, 1, 5)$.}
\end{figure}

\begin{proof}
Let $M:=M(O_{x, v})$. Note that $\permne{\lambda_v}$ (that is, the Grassmannian permutation of type $(k, n)$ which maps $[k]$ to $\vertne{\lambda_v}$) is equal to $w_0 v^{-1}$. The reading word of $O_{x, v}$, and thus of $M$, is $w_0 v^{-1}x^{-1}$; this follows from the fact that there is a reading order for $\lambda_v$ which reads the boxes of $\lambda_v \setminus \lambda_x$ before the boxes of $\lambda_x$. So by \cref{prop:lebiject}, $M$ corresponds to $\pi_k(\mathcal{R}_{v, xv})$.
\end{proof}

\section{A cluster structure not realizable by generalized plabic graphs}
\label{sec:nonrealizable}

If $\pi_k(\mathcal{R}_{v, w})$ is not a skew Schubert variety, 
it is in general impossible to realize the seeds
from Leclerc's construction (\cref{Lec-seed}) as 
labeled quivers coming from (generalized) plabic graph. Indeed, this can fail even in $Gr_{2, 5}$. Before giving an example, we briefly review Leclerc's construction for the pair $(v, \mathbf{w})$, where $v \in W^{K}_{\max}$, $v<w$ and $\textbf{w}$ is a reduced expression for $w$.

\begin{defn} \label{def:posdistsubexpression} Let $v \leq w$ be permutations and $\textbf{w}=s_{i_t} \cdots s_{i_1}$ a reduced expression for $w$. The \emph{positive distinguished subexpression} for $v$ in $\textbf{w}$ is a reduced expression $\textbf{v}=v_t \dots v_1$ where $v_j \in \{s_{i_j}, e\}$. We give $\textbf{v}$ in terms of the products $v_{(j)}:=v_j \dots v_2 v_1$. We set $v_{(0)}=e$ and

\[ v_{(j)}=\left\{
\begin{array}{ll}
	s_{i_j}v_{(j-1)} & \text{ if } vv_{(j)}^{-1}s_{i_j}<vv_{(j)}^{-1} \\
v_{(j-1)} & \text{ otherwise}.
\end{array}
\right.
\]

In other words, the positive distinguished subexpression for $v$ is the rightmost subexpression for $v$ in $\textbf{w}$, working from right to left.

\end{defn}

Let $\mathbf{v} $ be the positive distinguished subexpression for $v$ in $\textbf{w}=s_{i_t}\cdots s_{i_2}s_{i_1}$. Let $w_{(j)}=s_{i_j}\cdots s_{i_2}s_{i_1}$ for $1\leq j \leq t$ and let $v_{(j)}=v_j \cdots v_1$ be as in the above definition. Let $J \subset \{1, \dots, t\}$ be the collection of indices $j$ such that $v_j=e$. According to \cref{Lec-seed}, the cluster variables in the seed corresponding to $(v, \mathbf{w})$ are the distinct irreducible factors of 
$\prod_{j\in J} \Delta_{v^{-1}_{(j)}\{[i_j]\},w^{-1}_{(j)}\{[i_j]\}}$. 

\begin{example}
Consider $v=(2, 5, 1, 4, 3)$, $w=(5, 3, 4, 2, 1)$ and the following reduced expression $\mathbf{w}$ for $w$, where the positive distinguished subexpression for $v$ is in bold:

$$\mathbf{w}=s_1 s_2 \mathbf{s_1} \mathbf{s_3} s_2 \mathbf{s_4} \mathbf{s_3} \mathbf{s_2} s_1.
$$

Note that $w$ does not have a length-additive factorization ending in $v$.

If one computes the generalized minors
$\Delta_{v^{-1}_{(j)}([i_j]),w^{-1}_{(j)}([i_j])}$
coming from \cref{Lec-seed}, they are not all irreducible.
However, if we associate Pl\"ucker coordinates to the irreducible 
factors of these generalized minors (as in \cref{Lec-variables}),  we obtain
 $\Delta_{13}$, $\Delta_{23}, \Delta_{14}, \Delta_{45}, \Delta_{15}$. 

However, 
$\{13, 23, 14, 45, 15\}$ 
cannot be the set of face labels of a generalized plabic graph for $Gr_{2, 5}$. 
This comes from the fact that the number $2$ appears only once
among 
the set $\{13, 23, 14, 45, 15\}$. 
In more detail, 
 suppose $G$ were such a generalized plabic graph. 
 $G$ has no internal faces and no lollipops. Without loss of generality, $G$ is source-labeled. The face $f$ labeled $23$, 
 is adjacent to one other face, labeled $13$. So consider the trip $T$ beginning at $2$ and ending at $j$. We know that $f$ is the only face to the left of this trip, and that $T$ must pass through vertices of degree 2 only. Then the trip beginning at $j$ is again $T$, traveled in the opposite direction. Thus, $j$ must be in the label of every face besides $f$, a contradiction.

On the other hand, $\{13, 23, 14, 45, 15\}$ is a \emph{subset} of the face labels of a plabic graph for the top cell in $Gr_{2, 5}$, since it is a weakly separated collection. Further, variables in the rectangles seed for the skew-Schubert varieties is always a subset of the face labels of a plabic graph $G$ for the top cell (and the quiver for the rectangles seed is obtained from $Q(G)$ by deleting some vertices and freezing others). One might ask if the seeds given in Leclerc's construction can always be obtained from a plabic graph for the top cell in this way. The following example will show that this is not the case.
\end{example}

\begin{example}
Consider $v=(3, 2, 7, 6, 1, 5, 4)$, $w=(7, 6, 4, 2, 5, 3, 1)$, and the following reduced expression $\mathbf{w}$ for $w$, where the positive distinguished subexpression for $v$ is in bold:
 $$ \mathbf{w}=\mathbf{s_1}s_2s_3\mathbf{s_2}s_1\mathbf{s_4 s_5 s_4 s_3} s_2 \mathbf{s_6 s_5 s_4 s_3} s_2 \mathbf{s_1}s_5 s_2
 $$

The irreducible factors of the generalized minors $\Delta_{v^{-1}_{(j)}([i_j]),w^{-1}_{(j)}([i_j])}$ are $\Delta_{135}$,  $\Delta_{126}$, $\Delta_{235}$, $\Delta_{345}$, $\Delta_{145}$, $\Delta_{467},$ $\Delta_{127}$, and $\Delta_{125}$ (the first variable is mutable and the others are frozen). Note that $467$ and $235$ are not weakly separated, so this set of Pl\"{u}cker coordinates cannot be a subset of the face labels of a plabic graph for the top cell.
\end{example}


\bibliographystyle{alpha}
\bibliography{bibliography}

\def\cprime{$'$} \def\cprime{$'$} \def\cprime{$'$} \def\cprime{$'$}
  \def\cprime{$'$} \def\cprime{$'$}
\begin{thebibliography}{GHKK14}

\bibitem[ASS06]{ASS}
Ibrahim Assem, Daniel Simson, and Andrzej Skowro\'nski.
\newblock {\em Elements of the representation theory of associative algebras.
  {V}ol. 1}, volume~65 of {\em London Mathematical Society Student Texts}.
\newblock Cambridge University Press, Cambridge, 2006.
\newblock Techniques of representation theory.

\bibitem[BIRS09]{BIRS}
A.~B. Buan, O.~Iyama, I.~Reiten, and J.~Scott.
\newblock Cluster structures for 2-{C}alabi-{Y}au categories and unipotent
  groups.
\newblock {\em Compos. Math.}, 145(4):1035--1079, 2009.

\bibitem[BKT14]{BKT}
Pierre Baumann, Joel Kamnitzer, and Peter Tingley.
\newblock Affine {M}irkovi\'{c}-{V}ilonen polytopes.
\newblock {\em Publ. Math. Inst. Hautes \'{E}tudes Sci.}, 120:113--205, 2014.

\bibitem[Che12]{Chevalier}
N.~Chevalier.
\newblock {\em Alg\`ebres amass\'ees et positivit\'e}.
\newblock 2012.
\newblock Thesis (Ph.D.)--Universit\'e de Caen.

\bibitem[CK06]{CalKel}
Philippe Caldero and Bernhard Keller.
\newblock From triangulated categories to cluster algebras. {II}.
\newblock {\em Ann. Sci. \'{E}cole Norm. Sup. (4)}, 39(6):983--1009, 2006.

\bibitem[Fra16]{Fraser}
Chris Fraser.
\newblock Quasi-homomorphisms of cluster algebras.
\newblock {\em Adv. in Appl. Math.}, 81:40--77, 2016.

\bibitem[FS18]{FordSer}
Nicolas Ford and Khrystyna Serhiyenko.
\newblock Green-to-red sequences for positroids.
\newblock {\em J. Combin. Theory Ser. A}, 159:164--182, 2018.

\bibitem[FSB19]{FSB}
Chris Fraser and Melissa Sherman-Bennett.
\newblock Quasi-isomorphic cluster structures on positroid varieties, 2019.
\newblock In preparation.

\bibitem[FWZ17]{introCA}
Sergey Fomin, Lauren Williams, and Andrei Zelevinsky.
\newblock Introduction to cluster algebras. {C}hapters 4-5, 2017.
\newblock preprint, {\tt arXiv:1707.07190}.

\bibitem[FZ02]{ca1}
Sergey Fomin and Andrei Zelevinsky.
\newblock Cluster algebras. {I}. {F}oundations.
\newblock {\em J. Amer. Math. Soc.}, 15(2):497--529 (electronic), 2002.

\bibitem[GHKK14]{GHKK}
Mark Gross, Paul Hacking, Sean Keel, and Maxim Kontsevich.
\newblock Canonical bases for cluster algebras, 2014.
\newblock preprint, {\tt arXiv:1411.1394}.

\bibitem[GLS08]{GLS08}
Christof Geiss, Bernard Leclerc, and Jan Schr\"oer.
\newblock Partial flag varieties and preprojective algebras.
\newblock {\em Ann. Inst. Fourier (Grenoble)}, 58(3):825--876, 2008.

\bibitem[GLS11]{GLS-KM}
Christof Geiss, Bernard Leclerc, and Jan Schr\"{o}er.
\newblock {K}ac-{M}oody groups and cluster algebras.
\newblock {\em Adv. Math.}, 228(1):329--433, 2011.

\bibitem[GSV03]{GSV03}
Michael Gekhtman, Michael Shapiro, and Alek Vainshtein.
\newblock Cluster algebras and {P}oisson geometry.
\newblock {\em Mosc. Math. J.}, 3(3):899--934, 1199, 2003.
\newblock \{Dedicated to Vladimir Igorevich Arnold on the occasion of his 65th
  birthday\}.

\bibitem[Kar16]{Karpman}
Rachel Karpman.
\newblock Bridge graphs and {D}eodhar parametrizations for positroid varieties.
\newblock {\em J. Combin. Theory Ser. A}, 142:113--146, 2016.

\bibitem[KL79]{KL}
David Kazhdan and George Lusztig.
\newblock Representations of {C}oxeter groups and {H}ecke algebras.
\newblock {\em Invent. Math.}, 53(2):165--184, 1979.

\bibitem[KLS13]{KLS}
Allen Knutson, Thomas Lam, and David~E. Speyer.
\newblock Positroid varieties: juggling and geometry.
\newblock {\em Compos. Math.}, 149(10):1710--1752, 2013.

\bibitem[Lec16]{Leclerc}
B.~Leclerc.
\newblock Cluster structures on strata of flag varieties.
\newblock {\em Adv. Math.}, 300:190--228, 2016.

\bibitem[LS15]{LeeSchiffler}
Kyungyong Lee and Ralf Schiffler.
\newblock Positivity for cluster algebras.
\newblock {\em Ann. of Math. (2)}, 182(1):73--125, 2015.

\bibitem[LW08]{LW}
Thomas Lam and Lauren Williams.
\newblock Total positivity for cominuscule {G}rassmannians.
\newblock {\em New York J. Math.}, 14:53--99, 2008.

\bibitem[MS16a]{MullerSpeyer}
G.~Muller and D.~Speyer.
\newblock The twist for positroids, 2016.
\newblock preprint, {\tt arXiv:1606.08383 [math.CO]}.

\bibitem[MS16b]{MullerSpeyer0}
Greg Muller and David~E. Speyer.
\newblock Cluster algebras of {G}rassmannians are locally acyclic.
\newblock {\em Proc. Amer. Math. Soc.}, 144(8):3267--3281, 2016.

\bibitem[Mul13]{Muller}
Greg Muller.
\newblock Locally acyclic cluster algebras.
\newblock {\em Adv. Math.}, 233:207--247, 2013.

\bibitem[OPS15]{OPS}
Suho Oh, Alexander Postnikov, and David~E. Speyer.
\newblock Weak separation and plabic graphs.
\newblock {\em Proc. Lond. Math. Soc. (3)}, 110(3):721--754, 2015.

\bibitem[Pos]{Postnikov}
A.~Postnikov.
\newblock Total positivity, {G}rassmannians, and networks.
\newblock Preprint. Available at
  \texttt{http://www-math.mit.edu/~apost/papers/tpgrass.pdf}.

\bibitem[Ric92]{Rich}
R.~W. Richardson.
\newblock Intersections of double cosets in algebraic groups.
\newblock {\em Indag. Math. (N.S.)}, 3(1):69--77, 1992.

\bibitem[Rie98]{RietschThesis}
K.~Rietsch.
\newblock {\em Total positivity and real flag varieties}.
\newblock ProQuest LLC, Ann Arbor, MI, 1998.
\newblock Thesis (Ph.D.)--Massachusetts Institute of Technology.

\bibitem[Rin98]{Rin96}
Claus~Michael Ringel.
\newblock The preprojective algebra of a quiver.
\newblock In {\em Algebras and modules, {II} ({G}eiranger, 1996)}, volume~24 of
  {\em CMS Conf. Proc.}, pages 467--480. Amer. Math. Soc., Providence, RI,
  1998.

\bibitem[Sch14]{Sch14}
Ralf Schiffler.
\newblock {\em Quiver representations}.
\newblock CMS Books in Mathematics/Ouvrages de Math\'ematiques de la SMC.
  Springer, Cham, 2014.

\bibitem[Sco06]{Scott}
Joshua~S. Scott.
\newblock Grassmannians and cluster algebras.
\newblock {\em Proc. London Math. Soc. (3)}, 92(2):345--380, 2006.

\bibitem[Ste96]{Stem}
John~R. Stembridge.
\newblock On the fully commutative elements of {C}oxeter groups.
\newblock {\em J. Algebraic Combin.}, 5(4):353--385, 1996.

\bibitem[Wil07]{shelling}
Lauren~K. Williams.
\newblock Shelling totally nonnegative flag varieties.
\newblock {\em J. Reine Angew. Math.}, 609:1--21, 2007.

\end{thebibliography}

\end{document}